\pdfoutput=1
\RequirePackage{ifpdf}
\ifpdf 
\documentclass[pdftex]{sigma}
\else
\documentclass{sigma}
\fi

\usepackage[mathscr]{eucal} 
\usepackage{epic}
\usepackage{eepic}
\usepackage{array}

\usepackage{bm}
\usepackage[all]{xy}
\usepackage{tikz}
\usetikzlibrary{calc}

\newcommand\qarrow[2]{\draw[->,shorten >=2pt,shorten <=2pt] (#1) -- (#2) [thick];} 
\newcommand\qdarrow[2]{\draw[->,dashed,shorten >=2pt,shorten <=2pt] (#1) -- (#2) [thick];} 

\newcommand{\Z}{{\mathbb Z}}
\newcommand{\R}{{\mathbb R}}
\newcommand{\C}{{\mathbb C}}

\newcommand{\bb}{{\mathfrak b}}
\newcommand{\e}{{\mathrm e}}

\def\ve{{\varepsilon}}

\newcommand\trop{{\mathrm{trop}}}
\newcommand\rY{{\mathsf{Y}}}
\newcommand\Ad{{\mathrm{Ad}}}
\newcommand\bY{Y}

\newcommand{\uu}{{\mathsf u}}
\newcommand{\ww}{{\mathsf w}}

\numberwithin{equation}{section}

\newtheorem{Theorem}{Theorem}[section]
\newtheorem{Corollary}[Theorem]{Corollary}
\newtheorem{Lemma}[Theorem]{Lemma}
\newtheorem{Proposition}[Theorem]{Proposition}
 { \theoremstyle{definition}
\newtheorem{Example}[Theorem]{Example}
\newtheorem{Remark}[Theorem]{Remark} }


\begin{document}

\allowdisplaybreaks

\newcommand{\arXivNumber}{2403.08814}

\renewcommand{\PaperNumber}{113}

\FirstPageHeading

\ShortArticleName{Solutions of Tetrahedron Equation from Quantum Cluster Algebra}

\ArticleName{Solutions of Tetrahedron Equation\\ from Quantum Cluster Algebra Associated\\with Symmetric Butterfly Quiver}

\Author{Rei INOUE~$^{\rm a}$, Atsuo KUNIBA~$^{\rm b}$, Xiaoyue SUN~$^{\rm c}$, Yuji TERASHIMA~$^{\rm d}$ and Junya YAGI~$^{\rm e}$}

\AuthorNameForHeading{R.~Inoue, A.~Kuniba, X.~Sun, Y.~Terashima and J.~Yagi}

\Address{$^{\rm a)}$~Department of Mathematics and Informatics, Faculty of Science, Chiba University,\\
\hphantom{$^{\rm a)}$}~Chiba, 263-8522, Japan}
\EmailD{\href{mailto:reiiy@math.s.chiba-u.ac.jp}{reiiy@math.s.chiba-u.ac.jp}}

\Address{$^{\rm b)}$~Institute of Physics, Graduate School of Arts and Sciences, University of Tokyo,\\
\hphantom{$^{\rm b)}$}~Komaba, Tokyo 153-8902, Japan}
\EmailD{\href{mailto:atsuo.s.kuniba@gmail.com}{atsuo.s.kuniba@gmail.com}}

\Address{$^{\rm c)}$~Department of Mathematical Sciences and Yau Mathematical Sciences Center,\\
\hphantom{$^{\rm c)}$}~Tsinghua University, Haidian District, Beijing, 100084, P.R.~China}
\EmailD{\href{mailto:sunxy20@mails.tsinghua.edu.cn}{sunxy20@mails.tsinghua.edu.cn}}

\Address{$^{\rm d)}$~Graduate School of Science, Tohoku University, 6-3, Aoba, Aramaki-aza, Aoba-ku,\\
\hphantom{$^{\rm d)}$}~Sendai, 980-8578, Japan}
\EmailD{\href{mailto:yujiterashima@tohoku.ac.jp}{yujiterashima@tohoku.ac.jp}}

\Address{$^{\rm e)}$~Yau Mathematical Sciences Center, Tsinghua University,\\
\hphantom{$^{\rm e)}$}~Haidian District, Beijing, 100084, P.R.~China}
\EmailD{\href{mailto:junyagi@tsinghua.edu.cn}{junyagi@tsinghua.edu.cn}}

\ArticleDates{Received March 26, 2024, in final form December 05, 2024; Published online December 21, 2024}

\Abstract{We construct a new solution to the tetrahedron equation by further pursuing the quantum cluster algebra approach in our previous works. The key ingredients include a symmetric butterfly quiver attached to the wiring diagrams for the longest element of type $A$ Weyl groups and the implementation of quantum $Y$-variables through the $q$-Weyl algebra. The solution consists of four products of quantum dilogarithms. By exploring both the coordinate and momentum representations, along with their modular double counterparts, our solution encompasses various known three-dimensional (3D) $R$-matrices. These include those obtained by Kapranov--Voevodsky (1994) utilizing the quantized coordinate ring, Bazhanov--Mangazeev--Sergeev (2010) from a quantum geometry perspective, Kuniba--Matsuike--Yoneyama (2023) linked with the quantized six-vertex model, and Inoue--Kuniba--Terashima (2023) associated with the Fock--Goncharov quiver. The 3D $R$-matrix presented in this paper offers a unified perspective on these existing solutions, coalescing them within the framework of quantum cluster algebra.}

\Keywords{tetrahedron equation; quantum cluster algebra; $q$-Weyl algebra}

\Classification{82B23; 81R12; 13F60}

\section{Introduction}

The tetrahedron equation \cite{Z80} is a generalization of the
Yang--Baxter equation \cite{B82} to three-dimensional systems. A
fundamental form of the equation in the so-called vertex formulation
reads
$R_{124}R_{135}R_{236}R_{456} = R_{456}R_{236}R_{135}R_{124}$,
where $R$ is a linear operator on $V^{\otimes 3}$ for some vector
space $V$, and the indices specify the tensor components in
$V^{\otimes 6}$ on which it acts non-trivially.\looseness=-1

In this paper, we construct a new solution to the tetrahedron equation
by an approach based on quantum cluster algebras \cite{FG09,FG09b}.
This method, initiated in \cite{SY22} and further developed in~\cite{IKT1, IKT2},
commences with the Weyl group of type $A$ and employing wiring diagrams
to represent reduced expressions of the longest element in a standard format.
One introduces a specific {\em quiver} and the corresponding quantum cluster algebra linked to these wiring diagrams.
The pivotal element of the approach is the {\em cluster transformation} $\hat{R}$
serving as a counterpart of the cubic Coxeter relation.
It acts on quantum $Y$-variables through a sequence of mutations and permutations.
From the consideration about the embedding $A_2 \hookrightarrow A_3$,
$\hat{R}$ is shown to satisfy the tetrahedron equation.
Apart from the monomial part, $\hat{R}$ is described as an adjoint action of quantum dilogarithms.
The next key step is to devise a realization of the quantum
$Y$-variables in terms of a direct product of $q$-Weyl algebras which
is an exponential version of the algebra of canonical coordinates
$\uu_i$ and momenta $\ww_i$ with relations
$\e^{\uu_i} \e^{\ww_j} = q^{\delta_{ij}}\e^{\ww_j} \e^{\uu_i}$,
$\e^{\uu_i} \e^{\uu_j} = \e^{\uu_j} \e^{\uu_i}$ and
$\e^{\ww_i} \e^{\ww_j} = \e^{\ww_j} \e^{\ww_i}$. It allows for the cluster
transformation, including its monomial part, to be fully expressed in
the adjoint form $\hat{R}= \mathrm{Ad}(R)$. It is this $R$ which has
many interesting features connected to existing solutions. The
operator $R$ can be endowed with several ``spectral parameters'' and
satisfies the tetrahedron equation on its own including these
parameters.

We execute the above program for the {\em symmetric butterfly} (SB)
quiver, which is a symmetrized version of the butterfly quiver
introduced in \cite{SY22}. The vertices of an SB quiver are placed
{\em both} on the vertices of the wiring diagram and within its
domains. This contrasts with the Fock--Goncharov (FG) and the square
quivers studied in \cite{IKT1} and \cite{IKT2,SY22}, respectively.
In the former, vertices are assigned to the domains of the wiring
diagrams, while in the latter, they are assigned to the edges.

Apart from $q=\e^\hbar$, our $R$-matrix $R=R_{123}$ involves
parameters $C_1, \dots, C_8$ subject to $C_5+C_6=C_7+C_8$.
(See Remark \ref{re:X}.)
Up to normalization, it is given by
\begin{gather}
R =
\Psi_q\bigl(\e^{2C_7+\uu_1+\uu_3+\ww_1-\ww_2+\ww_3}\bigr)^{-1}
\Psi_q\bigl(\e^{2C_5+\uu_1-\uu_3+\ww_1-\ww_2+\ww_3}\bigr)^{-1}\nonumber
\\
\phantom{R =}{} \times P
\Psi_q\bigl(\e^{2C_2+2C_3-2C_6+2C_8+\uu_1-\uu_3+\ww_1-\ww_2+\ww_3}\bigr)
\Psi_q\bigl(\e^{2C_2+2C_3+\uu_1+\uu_3+\ww_1-\ww_2+\ww_3}\bigr),\nonumber
\\
P= \e^{\tfrac{1}{\hbar}(\uu_3-\uu_2)\ww_1}
\e^{\tfrac{1}{\hbar}\lambda_0(-\ww_1-\ww_2+\ww_3)}
\e^{\tfrac{1}{\hbar}(\lambda_1\uu_1+\lambda_2\uu_2+\lambda_3\uu_3)}\rho_{23},\label{Rint}
\end{gather}
where $\Psi_q$ is the quantum dilogarithm \eqref{Psiq}, $\lambda_i$'s
are linear combinations of $C_1, \dots, C_8$ in \eqref{lac}, and
$\rho_{23}$ is the permutation $(\uu_2,\ww_2) \leftrightarrow (\uu_3,\ww_3)$.

The result \eqref{Rint} is universal within the current approach based on the SB quiver.
In fact, one can project it onto various representations of the canonical variables.
Our final result for the matrix elements $\langle \mathbf{n}| R\bigl| \mathbf{n}'\bigr\rangle$
(up to normalization) with bases labeled by
$\mathbf{n} = (n_1, n_2, n_3)$ and~${\mathbf{n}' = \bigl(n'_1, n'_2, n'_3\bigr) \in \Z^3}$ reads
\begin{align}
\bigl\langle \mathbf{n}| R| \mathbf{n}'\bigr\rangle
={}& \delta^{n_1+n_2}_{n'_1+n'_2} \delta^{n_2+n_3}_{n'_2+n'_3}
\e^{\lambda_1 n'_1+\lambda_2n'_3+\lambda_3 n'_2}
\bigl(\e^{2C_5}q^{n_1+g_3}\bigr)^{n_3+g_3}q^{n'_2+g_2}\nonumber
\\
& \times
\oint\frac{{\rm d}z}{2\pi{\rm i} z^{n'_2+g_2+1}}
\frac{\bigl(-z\e^{-2C_8}q^{2+n'_1+n'_3};q^2\bigr)_\infty\bigl(-z\e^{-2C_7}q^{-n_1-n_3};q^2\bigr)_\infty}
{\bigl(-z\e^{-2C_6}q^{n'_1-n'_3};q^2\bigr)_\infty\bigl(-z\e^{-2C_5}q^{n'_3-n'_1};q^2\bigr)_\infty}\label{co}
\end{align}
in the coordinate representation of the $q$-Weyl algebras where $\uu_i$ is diagonal (see Theorem \ref{th:re}),
and
\begin{gather}
\bigl\langle \mathbf{n}| R| \mathbf{n}'\bigr\rangle
 = q^{\psi_0}
\bigl(-\e^{-2C_7}\bigr)^{\frac{m_1}{2}}
\bigl(\e^{2C_8-2C_3}\bigr)^{\frac{m_2}{2}}
\bigl(\e^{-C_1-C_2-2C_3-C_4}\bigr)^{\frac{m_3}{2}}
\bigl(\e^{C_1-C_2-2C_3-C_4}\bigr)^{\frac{m_4}{2}}\label{mo}
\\
\phantom{\bigl\langle \mathbf{n}| R| \mathbf{n}'\bigr\rangle =}{} \times
\frac{\bigl(\e^{2C_3};q^2\bigr)_{\frac{m_1}{2}}
\bigl(\e^{-2C_2-2C_8};q^2\bigr)_{\frac{m_2}{2}}
\bigl(\e^{2C_1-2C_3+2C_5};q^2\bigr)_{\frac{m_3}{2}}
\bigl(\e^{-2C_1-2C_3+2C_6};q^2\bigr)_{\frac{m_4}{2}}}
{\bigl(\e^{-4C_3+2C_5+2C_6};q^2\bigr)_{\frac{m_3+m_4}{2}}}\nonumber
\end{gather}
in the momentum representation where $\ww_i$ is diagonal (see Theorem \ref{th:ew}).
Here {\samepage \[(g_1,g_2,g_3)=\frac{1}{\hbar}(C_7-C_6,-C_4,C_7-C_5),\]
$m_i$ and $\psi_0$ are linear and quadratic forms of
$\mathbf{n}$ and $\mathbf{n}'$ as given in~\eqref{mdef} and~\eqref{p0def}.}

Let us write \eqref{co} and \eqref{mo} as
\smash{$R^{n_1, n_2, n_3}_{n'_1,n'_2,n'_3}$} and
\smash{$S^{n_1, n_2, n_3}_{n'_1,n'_2,n'_3}$}, respectively. We have also
evaluated the matrix elements in the modular double setting with the
corresponding results \smash{$\mathcal{R}^{n_1, n_2, n_3}_{n'_1,n'_2,n'_3}$}
(see~Theorem~\ref{th:Rm}) and
\smash{$\mathcal{S}^{n_1, n_2, n_3}_{n'_1,n'_2,n'_3}$} (see Theorem \ref{th:ft}).
They are expressed in terms of the non-compact quantum dilogarithm
\eqref{ncq}. When the parameters are specialized appropriately, our
$R$-matrices yield those obtained in \cite{KV94} as the intertwiner of
the quantized coordinate ring of ${\rm SL}_3$ (see also~\cite{BS06,K22}), in
\cite{BMS10} from a quantum geometry consideration, in \cite{KMY23}
from a quantized six-vertex model, and in~\cite{IKT1} from the quantum
cluster algebra associated with the FG quiver. These results are
summarized in Table \ref{tab:R}.

\begin{table}[h]\centering
\begin{tabular}{c|c|c|c|c}
 & $\begin{matrix} \text{relevant}\\ \text{quantum} \\
 \text{dilogarithm} \end{matrix}$ & coordinate rep. & momentum rep.
 &
$\begin{matrix} \text{specialization}\\ \text{adapted} \\
\text{to the FG quiver} \end{matrix}$\\
\hline
$q$-dilog $R$ & $\Psi_q$ &
$\begin{matrix}R^{n_1, n_2, n_3}_{n'_1,n'_2,n'_3},\\ \text{Theorem~\ref{th:re}}\end{matrix}$
&
$\begin{matrix}S^{n_1, n_2, n_3}_{n'_1,n'_2,n'_3},\\ \text{Theorem~\ref{th:ew}}\end{matrix}$
&
\\
\eqref{RL12} & \eqref{Psiq} & \cite{BS06,KV94}, Remark~\ref{re:R} & \cite{KMY23}, Remark~\ref{re:kmy0}
 &
 \cite{IKT1}, Theorem~\ref{th:fg}
\\
\hline
modular $\mathcal{R}$ & $\Phi_\bb$ &
$\begin{matrix}\mathcal{R}^{n_1, n_2, n_3}_{n'_1,n'_2,n'_3},\\ \text{Theorem~\ref{th:Rm}}\end{matrix}$
 &
$\begin{matrix}\mathcal{S}^{n_1, n_2, n_3}_{n'_1,n'_2,n'_3},\\ \text{Theorem~\ref{th:ft}}\end{matrix}$
&
\\
\eqref{RRf} & \eqref{ncq} & \cite{BMS10}, Remark~\ref{re:bms} & \cite{KMY23}, Remark~\ref{re:kon}
 &
\cite{IKT1}, Proposition~\ref{pr:lim}
\end{tabular}
\caption{$R$-matrices in this paper. Relations to those in the
 literature and the relevant remarks or statements are given in the
 second line within each box.}
\label{tab:R}
\end{table}

In \cite{IKT2}, the $R$-matrix in \cite{S99} was reproduced in a
parallel story based on the {\em square} quiver. This solution also
involves four quantum dilogarithms, but it differs from the one in
this paper. In fact, even the special case of our solution mentioned
in Remark \ref{re:bms} is related to \cite{S99} only through a highly
non-trivial transformation called vertex-IRC (interaction round cube)
duality~\cite{S10}. Along with the current results obtained from the
SB quiver, the quantum cluster algebra approach has successfully
captured most of the significant solutions of the tetrahedron equation
known to date for a generic $q$. Additionally, this approach has been
extended to the 3D reflection equations \cite{IK97,K22}, as previously
demonstrated with the FG quiver in \cite{IKT1}. In this paper we
assume that $q$ is generic throughout. We hope to explore the $q$
root-of-unity case elsewhere.

The layout of the paper is as follows. In Section \ref{s:qca}, we
recall basic facts about quantum cluster algebras necessary in this
paper. In Section \ref{s:ct}, we introduce the SB quiver and study the
cluster transformation $\hat{R}$. In Section \ref{s:qw}, we realize
the quantum $Y$-variables by $q$-Weyl algebras and extract $R$ such
that $\hat{R}=\mathrm{Ad}(R)$. The contents of Sections \ref{s:ct}
and \ref{s:qw} are parallel with \cite{IKT2}. The matrix elements of
$R$ are calculated in Sections \ref{s:me} and \ref{s:md}. In Section
\ref{s:6v}, we explain that the $R$-matrix in this paper satisfies the
so-called $RLLL=LLLR$ relation for the $L$-operator which can be
regarded as a quantized six-vertex model \cite{BMS10,KMY23}. It
implies that the matrix elements obey linear recursion relations. In
Section \ref{s:red}, we explain that the $R$-matrix for the FG quiver
previously obtained in \cite{IKT1} arises as a special limit of the
$R$-matrix in this paper. Appendix~\ref{ap:sup} is a supplement to
Section \ref{ss:ms}. Appendix \ref{app:ruw} provides another formula
for $R$ corresponding to a~different choice of signs labeling the
decomposition of mutations into monomial and automorphism parts.
Appendix~\ref{ap:nc} contains integral formulas for non-compact
quantum dilogarithm. Appendix~\ref{app:rlll} is a list of explicit
forms of the $RLLL=LLLR$ relations.

\section{Quantum cluster algebra}\label{s:qca}

\subsection{Mutation}\label{ss:mutation}

Let us recall the definition of quantum cluster mutation following \cite{FG06}.
For a finite set $I$, set~${B = (b_{ij})_{i,j \in I}}$ with
$b_{ij} = -b_{ji} \in \Z/2$. We call $B$ the {\it exchange matrix}.
In this article we will only encounter skew-symmetric exchange matrices
with $b_{ij} \in \bigl\{\pm 1, \pm \frac{1}{2},0\bigr\}$. An exchange matrix
will be depicted as a {\em quiver}. It is an oriented graph
with vertices labeled with the elements of $I$ and a solid arrow
(resp.\ dotted arrow) from $i$ to $j$ when $b_{ij}=1$ \big(resp.\
$b_{ij}=\frac{1}{2}$\big).

Let $\mathcal{Y}(B)$ be a skew field generated by $q$-commuting variables
$\bY = (Y_i)_{i \in I}$ under the relations
\begin{align}\label{yyq}
 Y_i Y_j = q^{2b_{ij}} Y_j Y_i.
\end{align}
The data $(B, \bY)$ will be called a quantum $y$-seed and $Y_i$ a (quantum) $Y$-variable.
We assume that the parameter $q$ is generic throughout.
For $(B, \bY)$ and for $k \in I$ such that
$b_{ki} \neq \pm \frac{1}{2}$, the mutation $\mu_k$ transforms
$(B, \bY)$ to $\bigl(B', \bY'\bigr) := \mu_k (B,\bY)$, where
\begin{align}\label{muB}
 &b_{ij}' =
 \begin{cases}
 -b_{ij}, & i=k \text{ or } j=k,
 \\
 \displaystyle{b_{ij} + \frac{|b_{ik}| b_{kj} + b_{ik} |b_{kj}|}{2}},
 & \text{otherwise},
 \end{cases}
 \\ \label{muY}
 &Y_{i}' =
 \begin{cases}
 Y_k^{-1}, & i=k,
 \\
 \displaystyle{Y_i \prod_{j=1}^{|b_{ik}|}\bigl(1 + q^{2j-1} Y_k^{-\mathrm{sgn}(b_{ik})}\bigr)^{-\mathrm{sgn}(b_{ik})}}, & i \neq k.
 \end{cases}
\end{align}
The mutations are involutive, $\mu_k \mu_k = \mathrm{id.}$, and
commutative, $\mu_k \mu_j = \mu_j \mu_k$ if $b_{jk}=b_{kj}=0$. The
mutation $\mu_k$ induces an isomorphism of skew fields
$\mu_k^{\ast}\colon \mathcal{Y}\bigl(B'\bigr) \to \mathcal{Y}(B)$, where
$\mathcal{Y}\bigl(B'\bigr)$ is a skew field generated by the variables
$Y'=\bigl(Y'_i\bigr)_{i\in I}$ under the relations
\smash{$Y'_i Y'_j = q^{2b'_{ij}} Y'_j Y'_i$}.

The map $\mu_k^{\ast}$ is decomposed into two parts, a monomial part
and an automorphism part~\cite{FG09}, in two ways \cite{Ke11}. To explain it,
let us introduce an isomorphism $\tau_{k,\varepsilon}$ of the
skew fields for~${\varepsilon \in \{+,-\}}$ by
\begin{align}\label{tauy}
\tau_{k,\ve} \colon\ \mathcal{Y}\bigl(B'\bigr) \to \mathcal{Y}(B)
; \qquad Y'_i \mapsto
\begin{cases}
 Y_k^{-1}, & i= k,
 \\
 q^{-b_{ik}[\ve b_{ik}]_+}Y_i Y_k^{[\ve b_{ik}]_+}, & i \neq k,
\end{cases}
\end{align}
where $[a]_+ := \max[0,a]$.
The adjoint action $\mathrm{Ad}_{k,\ve}$ on $\mathcal{Y}(B)$ is defined by
$\mathrm{Ad}_{k,+} := \mathrm{Ad}(\Psi_{q}(Y_k)) $, \smash{$
\mathrm{Ad}_{k,-} := \mathrm{Ad}\bigl(\Psi_{q}\bigl(Y_k^{-1}\bigr)^{-1}\bigr)$},
where $\mathrm{Ad}(Y)(X) = YXY^{-1}$.
The symbol $\Psi_q(Y)$ appearing here denotes the quantum dilogarithm
\begin{align}\label{Psiq}
\Psi_q(Y) = \frac{1}{\bigl(-qY; q^2\bigr)_\infty}, \qquad (z;q)_\infty = \prod_{n=0}^\infty (1-zq^n).
\end{align}
One has the expansions
\begin{align}\label{expa}
\Psi_q(Y) = \sum_{n = 0}^\infty \frac{(-qY)^n}{\bigl(q^2;q^2\bigr)_n},
\qquad
\Psi_q(Y)^{-1} = \sum_{n = 0}^\infty \frac{q^{n^2} Y^n}{\bigl(q^2;q^2\bigr)_n},
\end{align}
where $\bigl(z;q^2\bigr)_n = \bigl(z;q^2\bigr)_\infty/\bigl(zq^{2n};q^2\bigr)_\infty$ for any $n$.
Basic properties of the quantum dilogarithm are{\samepage
\begin{align}
\label{Prec}
&\Psi_q\bigl(q^2 U\bigr) \Psi_q(U)^{-1} = 1+qU,
\\
&\Psi_q(U)\Psi_q(W) = \Psi_q(W) \Psi_q\bigl(q^{-1}UW\bigr) \Psi_q(U) \qquad \text{if}\quad UW = q^2WU,\nonumber
\end{align}
where the second one is called the pentagon identity.}

Now the decomposition of $\mu^\ast_k$ in two ways mentioned in the above
is given as
\begin{align}\label{mud}
\mu_k^{\ast} = \mathrm{Ad}_{k,+} \circ \tau_{k,+}
= \mathrm{Ad}_{k,-} \circ \tau_{k,-}.
\end{align}
Namely, one has the following diagram for both choices $\ve=+, -$:
\begin{align*}
\xymatrix{
\mathcal{Y}\bigl(B'\bigr) \ar[r]^{\mu_k^\ast} \ar[dr]_{\tau_{k,\ve}} & \mathcal{Y}(B)
\\
& \mathcal{Y}(B) \ar[u]_{\mathrm{Ad}_{k,\ve}}.
}
\end{align*}

\begin{Example}
 Let $I = \{1,2\}$ and the 2-by-2 exchange matrix be given by
 $B = \left(\begin{smallmatrix}0 & 1 \\ -1 & 0 \end{smallmatrix}\right)$, which implies
 $Y_1Y_2 = q^2Y_2Y_1$. Consider the mutation
 $\mu_2(B,\bY) = \bigl(B', \bY'\bigr)$, where $\bY = (Y_1, Y_2)$ and~${\bY' = (Y'_1, Y'_2)}$. Then $B'=-B$ from \eqref{muB} and
 \smash{$Y'_1 = Y_1\bigl(1+qY^{-1}_2\bigr)^{-1}$} from \eqref{muY}. On the other hand,
 the same result is obtained also in the form
 \smash{$Y'_1 \rightarrow Y_1\bigl(1+qY^{-1}_2\bigr)^{-1}$} in two ways according to
 \eqref{mud} as follows:
\begin{gather*}
Y'_1 \overset{\tau_{2,+}}{\longrightarrow} q^{-1}Y_1Y_2
\overset{\mathrm{Ad}_{2,+}}{\longrightarrow} q^{-1}\Psi_q(Y_2)Y_1Y_2\Psi_q(Y_2)^{-1}\\
\qquad= q^{-1}Y_1\Psi_q\bigl(q^{-2}Y_2\bigr)\Psi_q(Y_2)^{-1}Y_2
= q^{-1}Y_1\bigl(1+q^{-1}Y_2\bigr)^{-1}Y_2,
\\
Y'_1 \overset{\tau_{2,-}}{\longrightarrow} Y_1
\overset{\mathrm{Ad}_{2,-}}{\longrightarrow} \Psi_q\bigl(Y_2^{-1}\bigr)^{-1}Y_1 \Psi_q\bigl(Y_2^{-1}\bigr)
= Y_1\Psi_q\bigl(q^2Y_2^{-1}\bigr)^{-1}\Psi_q\bigl(Y_2^{-1}\bigr) =Y_1\bigl(1+qY^{-1}_2\bigr)^{-1}.
\end{gather*}
\end{Example}

For later use, we introduce the quantum torus algebra $\mathcal{T}(B)$ associated to $B$.
It is the $\mathbb{Q}(q)$-algebra generated by
non-commutative variables $\rY^\alpha$ $\bigl(\alpha \in \Z^{I}\bigr)$ satisfying the
relations
\begin{align}\label{qyy}
q^{\langle \alpha,\beta \rangle} \rY^\alpha \rY^\beta
= \rY^{\alpha + \beta},
\end{align}
where $\langle ~,~ \rangle$ is a skew-symmetric form defined by
$\langle \alpha,\beta \rangle = - \langle \beta,\alpha \rangle =
-\alpha \cdot B \beta$.
Let $e_i$ be the standard unit vector of
$\Z^I$. We write $\rY^{e_i}$ simply as $\rY_i$. Then
$\rY_i \rY_j = q^{2 b_{ij}} \rY_j \rY_i$ holds. We identify $\rY_i$ with
$Y_i$, which is consistent with \eqref{yyq}.

Let $\mathcal{FT}(B)$ be the fractional field of $\mathcal{T}(B)$.
The mutations $\mu_k^{\ast}$
and their decompositions
induce the morphisms for the fractional fields of the quantum torus algebras naturally.
In particular, the monomial part \eqref{tauy} of $\mu_k^{\ast}$ is written as
\begin{align}\label{taft}
\tau_{k,\ve} \colon\ \mathcal{FT}\bigl(B'\bigr) \to \mathcal{FT}(B); \qquad
\rY'_i \mapsto \begin{cases}
\rY_k^{-1}, & i= k,
\\
\rY^{e_i + e_k [\ve b_{ik}]_+}, & i \neq k
\end{cases}
\end{align}
under the identification $\rY_i = Y_i$, $\rY'_i = Y'_i$.
Hence $\mathcal{FT}(B)$ (resp. $\mathcal{FT}\bigl(B'\bigr)$) is identified with $\mathcal{Y}(B)$ (resp. $\mathcal{Y}\bigl(B'\bigr)$).

\subsection[Tropical y-variables and tropical sign]{Tropical $\boldsymbol{y}$-variables and tropical sign}

Let $\mathbb{P}(u)=\mathbb{P}_\trop(u_1,u_2,\dotsc,u_p) := \bigl\{\prod_{i=1}^{p} u_i^{a_i};\, a_i \in \Z \bigr\}$
be the tropical semifield of rank $p$, endowed with the addition $\oplus$ and multiplication $\cdot$ defined by
\[
 \prod_{i=1}^{p} u_i^{a_i} \oplus \prod_{i=1}^{p} u_i^{b_i}
 =
 \prod_{i=1}^{p} u_i^{\min(a_i,b_i)},
 \qquad
 \prod_{i=1}^{p} u_i^{a_i} \cdot \prod_{i=1}^{p} u_i^{b_i}
 =
 \prod_{i=1}^{p} u_i^{a_i+b_i}.
\]
For $s = \prod_{i \in I} u_i^{a_i} \in \mathbb{P}(u)$, we write $s = u^\alpha$
with $\alpha = (a_i)_{i \in I} \in \Z^{I}$.
We say that $s$ is positive if~${\alpha \in (\Z_{\geq 0})^{I}}$ and negative if
$\alpha \in (\Z_{\leq 0})^{I}$.

For a quiver $Q$ whose vertex set is $I$, let $\mathbb{P}(u)$ be a tropical semifield of rank $|I|$.
The data of the form $(B,y)$ with $B$ being the exchange matrix of $Q$ and
$y = (y_i)_{i \in I} \in \mathbb{P}(u)^{I}$ is called a~tropical $y$-seed.
For $k \in I$, the mutation\footnote{For simplicity, we use the same symbol $\mu_k$ to denote a mutation for
quantum $y$-seeds $(B,\bY)$ and tropical $y$-seeds $(B,y)$.}
 $\mu_k (B, y) =: \bigl(B', y'\bigr)$ is defined by
\eqref{muB} and
\begin{equation}\label{tmu}
y_{i}' =
 \begin{cases}
 y_k^{-1}, & i=k,
 \\
 y_i \bigl(1 \oplus y_k^{-\mathrm{sgn}(b_{ik})}\bigr)^{-b_{ik}}, & i \neq k.
 \end{cases}
\end{equation}
For a tropical $y$-variable $y_i' = u^{\alpha'}$, the vector $\alpha' \in \Z^I$
is called the {\it $c$-vector} of $y_i'$.
The following theorem states the {\em sign coherence} of the $c$-vectors.

\begin{Theorem}[\cite{FZ07,GHKK14}]
\label{thm:sign-coherence}
Let $\bigl(B',y'\bigr) = \mu_{i_L} \dotsm \mu_{i_2} \mu_{i_1}(B,u)$ be a
tropical $y$-seed with $y'=(y'_i)_{i \in I}$. For any sequence
$(i_1,\dotsc, i_L) \in I^L$, each $y'_i \in \mathbb{P}(u)$ is either
positive or negative.
\end{Theorem}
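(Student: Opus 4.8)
The plan is to prove sign coherence by induction on the mutation length $L$, using the fact that for a skew-symmetric $B$ (i.e.\ one coming from a quiver, as is always the case here) the $c$-vectors admit a representation-theoretic model in which each appears, up to a single overall sign, as the dimension vector of an honest module. Since dimension vectors lie in $(\Z_{\geq 0})^I$, every individual $c$-vector is then forced to have all entries of one sign, which is exactly the assertion.

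Concretely, the base case $L=0$ is immediate: the initial seed is $(B,u)$, so $y_i=u_i=u^{e_i}$ and the $c$-vectors are the standard unit vectors $e_i$, all positive. For the inductive step one must understand how a single mutation $\mu_k$ transforms a sign-coherent family of $c$-vectors through the tropical rule \eqref{tmu}. The difficulty is precisely that \eqref{tmu} is not sign-coherent term by term: the factor $\bigl(1\oplus y_k^{-\mathrm{sgn}(b_{ik})}\bigr)^{-b_{ik}}$ involves the tropical addition $\oplus$, i.e.\ a coordinatewise $\min$, so a priori the exponent vector of $y_i'$ could acquire both positive and negative entries. The whole content of the theorem is that this mixing never happens.

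To control the sign, I would invoke the categorification of the seed pattern. Choosing a non-degenerate potential $W$ on the quiver $Q$ with exchange matrix $B$ (in the sense of Derksen--Weyman--Zelevinsky), one attaches to each seed in the mutation class a collection of decorated representations $\mathcal{M}_1,\dots,\mathcal{M}_{|I|}$ of the associated Jacobian algebra, the initial seed corresponding to the negative simple decorated representations. The mutation of decorated representations intertwines with the cluster mutation $\mu_k$, and under this correspondence the $c$-vector of $y_i'$ equals $\varepsilon_i\,\underline{\dim}\,M_i$ for a genuine module $M_i$ and a global sign $\varepsilon_i\in\{+,-\}$. Because $\underline{\dim}\,M_i\in(\Z_{\geq 0})^I$, sign coherence follows at once, and $\varepsilon_i$ is exactly the tropical sign defined below. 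The combinatorial datum $[\varepsilon b_{ik}]_+$ entering the monomial part \eqref{tauy} is the shadow of whether the mutated module is built from a submodule or a quotient at vertex $k$, and it is this choice that keeps the dimension vectors non-negative across the mutation.

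The main obstacle is establishing this categorical dictionary: one must verify that the mutation of decorated representations is well defined along the chosen sequence (the potential stays non-degenerate) and that it reproduces \eqref{tmu} at the level of dimension vectors. Equivalently, in the scattering-diagram approach of Gross--Hacking--Keel--Kontsevich \cite{GHKK14}, the hard step is the \emph{consistency} of the scattering diagram attached to $B$, which forces the walls bounding the cluster chambers to have normal vectors equal to $\pm c$-vectors lying in a single half-space; the positivity of the scattering atlas is the genuinely deep input, and it is what carries the theorem, conjectured in \cite{FZ07}, beyond the reach of elementary induction. For the purposes of this paper it suffices to quote the result in this skew-symmetric setting from \cite{FZ07,GHKK14}.
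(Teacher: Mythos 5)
The paper offers no proof of this statement: it is quoted as a known result from \cite{FZ07,GHKK14}, which is exactly what your proposal ultimately does as well, so your treatment matches the paper's. Your supporting sketch---sign coherence via the Derksen--Weyman--Zelevinsky categorification by quivers with potentials in the skew-symmetric case, or via consistency and positivity of the scattering diagram in \cite{GHKK14}---is an accurate account of how the quoted result is actually established, and you correctly note that \cite{FZ07} only conjectured it while the deep input lies beyond elementary induction on the mutation sequence.
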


Based on Theorem \ref{thm:sign-coherence}, for any tropical $y$-seed $\bigl(B',y'\bigr)$ with
$y'=(y'_i)_{i \in I}$ obtained from $(B,u)$ by applying mutations, we
define the {\it tropical sign} $\ve_i'$ of $y_i'$ to be $+1$ (resp.\
$-1$) if $y_i'$ is positive (resp.\ $y_i$ is negative). We also write
$\ve_i'= \pm$ for $\ve_i'=\pm 1$ for simplicity.

\begin{Remark}\label{re:sgni}
For the mutation $\mu_k (B,y) = \bigl(B',y'\bigr)$ of a tropical $y$-seed,
let $c_i$, $c_i'$, $c_k$ be the $c$-vectors of $y_i$, $y_i'$, $y_k$, respectively,
and let $\ve_k$ be the tropical sign of $y_k$.
Then the tropical mutation \eqref{tmu} is expressed in terms of $c$-vectors as
\begin{align*}
 c_i' =
 \begin{cases}
 -c_k, & i=k,
 \\
 c_i + c_k [\ve_k b_{ik}]_+, & i \neq k.
\end{cases}
\end{align*}
This coincides with the transformation of quantum torus \eqref{taft}
on $\Z^I$ (i.e., the power of \eqref{taft}) when $\ve = \ve_k$.
\end{Remark}

\subsection{Sequence of mutations}

Let us describe the quantum $Y$-variables associated with the sequence
of mutations $\mu_{i_l} \mu_{i_{l-1}} \dots \allowbreak\mu_{i_2} \mu_{i_1}$:
 \begin{align}\label{bys}
\bigl(B^{(1)},\bY^{(1)}\bigr) \stackrel{\mu_{i_1}}{\longleftrightarrow}
\bigl(B^{(2)},\bY^{(2)}\bigr) \stackrel{\mu_{i_2}}{\longleftrightarrow}
\cdots
\stackrel{\mu_{i_l}}{\longleftrightarrow} \bigl(B^{(l+1)},\bY^{(l+1)}\bigr).
\end{align}
For $t=1, \dots, l+1$, let $\rY^\alpha(t)$ $\bigl(\alpha \in \Z^I\bigr)$ be the
generators of the quantum torus $\mathcal{T}\bigl(B^{(t)}\bigr)$ in the sense
explained around \eqref{qyy}. We set $\rY_i(t) =
\rY^{e_i}(t)$.
Especially for $t=1$, we use the simpler notations
$\rY^\alpha=\rY^\alpha(1)$ and $\rY_i=\rY_i(1)$. As in \eqref{taft},
we identify $\rY_i$ with \smash{$Y_i = Y^{(1)}_i$}, hence
\smash{$\mathcal{Y}\bigl(B^{(1)}\bigr)$} with \smash{$\mathcal{FT}\bigl(B^{(1)}\bigr)$}.
Then the quantum $Y$-variables \smash{$Y^{(t+1)}=\bigl(Y^{(t+1)}_i\bigr)_{i\in I}$}
($t=0, \dots, l$) appearing in~\eqref{bys} are expressed as
\begin{align}
Y_i^{(t+1)}
&= \Ad\bigl(\Psi_{q}\bigl(\rY_{i_1}(1)^{\delta_1}\bigr)^{\delta_1}\bigr) \tau_{i_1,\delta_1}\dotsm \Ad\bigl(\Psi_{q}\bigl(\rY_{i_{t}}(t)^{\delta_{t}}\bigr)^{\delta_{t}}\bigr) \tau_{i_{t},\delta_{t}}(\rY_i(t+1))\nonumber
\\
&=\Ad\bigl(\Psi_{q}\bigl(\rY^{\delta_1 \beta_1}\bigr)^{\delta_1} \dotsm \Psi_{q}\bigl(\rY^{\delta_{t} \beta_{t}}\bigr)^{\delta_{t}}\bigr) \tau_{i_1,\delta_1}\dotsm \tau_{i_{t},\delta_{t}}(\rY_i(t+1)).\label{yad}
\end{align}
This formula is valid for any choice of the signs $\delta_1, \dots,
\delta_l \in \{+, -\}$, on which the left-hand side is independent. Note that
\smash{$Y^{(t+1)}_i$} is in general a ``complicated'' element in
$\mathcal{Y}\bigl(B^{(1)}\bigr)$ generated from \smash{$\bigl(B^{(1)},Y^{(1)}\bigr)$} by applying
$\mu_{i_t}\dotsm \mu_{i_2}\mu_{i_1}$ according to \eqref{muY}.
On the other hand,
$\rY_i(t+1)$ is just a basis of \smash{$\mathcal{T}\bigl(B^{(t+1)}\bigr)$}. The first
line of \eqref{yad} says that \smash{$Y^{(t+1)}_i$} is also obtained as the
image of $\rY_i(t+1)$ under the composition
\smash{$\mu_{i_1}^\ast \dotsm \mu^\ast_{i_{t-1}} \mu^\ast_{i_t}$} which is an
isomorphism
$\mathcal{FT}\bigl(B^{(t+1)}\bigr) \rightarrow \mathcal{FT}\bigl(B^{(1)}\bigr) =
\mathcal{Y}\bigl(B^{(1)}\bigr)$. The second line is derived from the first line
by pushing $\tau_{i,\delta}$'s to the right. Thus we have $\beta_1 = e_{i_1}$,
and in general $\beta_r \in \Z^I$ is determined by
$\rY^{\beta_r} = \tau_{i_1,\delta_1} \dotsm \tau_{i_{r-1},
 \delta_{r-1}}(\rY_{i_r}(r))$.

\subsection{A useful theorem}

Let $\sigma_{r,s} \in \mathfrak{S}_I$ ($r, s \in I$) be a transposition.
We let it act on either classical $y$-seeds $(B,y)$ or quantum $y$-seeds $(B,Y)$
as the exchange of the indices $r$ and $s$. For quantum $y$-seeds, it
is given~by
\begin{equation}\label{qys}
((b_{ij})_{i,j\in I} , (Y_i)_{i \in I} ) \mapsto
((b_{\sigma_{r,s}(i), \sigma_{r,s}(j)})_{i,j\in I}, (Y_{\sigma_{r,s}(i)})_{i \in I} ),
\end{equation}
where $\sigma_{r,s}(r) = s$, $\sigma_{r,s}(s)=r$ and
$\sigma_{r,s}(i) = i$ for $i \neq r,s$. For classical $y$-seeds, the
rule is similar.

Let
\begin{equation}\label{nuL}
\nu = \nu_L\dotsm \nu_1 := \sigma_{r_m,s_m} \dotsm
\mu_{i_l} \dotsm \sigma_{r_1,s_1} \dotsm \mu_{i_1}, \qquad L = l+m,
\end{equation}
be a composition of $l$ mutations $\mu_{i_1}, \dots, \mu_{i_l}$ and
$m$ transpositions $\sigma_{r_1,s_1}, \dots, \sigma_{r_m, s_m}$ in
an arbitrary order. (So $\nu_L$ may actually be a mutation for
example.) For simplicity, we also call $\nu$ a~mutation sequence even
though a part of it may involve transpositions.

Consider the tropical $y$-seeds starting from $(B,y)$ and the quantum
$y$-seeds starting from $(B,Y)$ which are generated along the mutation
sequences $\nu=\nu_L\dotsm \nu_1$ and $\nu'= \nu'_{L'} \dotsm \nu'_1$
as follows:
\begin{align}
\label{seq0}
&(B,y) =:\bigl(B^{(1)},y^{(1)}\bigr) \stackrel{\nu_1}{\longleftrightarrow}
\bigl(B^{(2)},y^{(2)}\bigr) \stackrel{\nu_2}{\longleftrightarrow}
\dotsb
\stackrel{\nu_L}{\longleftrightarrow} \bigl(B^{(L+1)},y^{(L+1)}\bigr) = \nu(B,y),
\\
&(B,\bY) =:\bigl(B^{(1)},\bY^{(1)}\bigr) \stackrel{\nu_1}{\longleftrightarrow}
\bigl(B^{(2)},\bY^{(2)}\bigr) \stackrel{\nu_2}{\longleftrightarrow}
\dotsb
\stackrel{\nu_L}{\longleftrightarrow} \bigl(B^{(L+1)},\bY^{(L+1)}\bigr) = \nu(B,Y),
\\
&(B,y) =:\bigl(B^{(1)\prime },y^{(1)\prime}\bigr) \stackrel{\nu'_1}{\longleftrightarrow}
\bigl(B^{(2)\prime},y^{(2)\prime}\bigr) \stackrel{\nu'_2}{\longleftrightarrow}
\dotsb
\stackrel{\nu'_{L'}}{\longleftrightarrow} \bigl(B^{(L+1)\prime},y^{(L+1)\prime}\bigr) = \nu'(B,y),
\\
&(B,\bY) =:\bigl(B^{(1)\prime},\bY^{(1)\prime}\bigr) \!\stackrel{\nu'_1}{\longleftrightarrow}\!
\bigl(B^{(2)\prime},\bY^{(2)\prime}\bigr) \!\stackrel{\nu'_2}{\longleftrightarrow}
\dotsb
\stackrel{\nu'_{L'}}{\longleftrightarrow} \!\bigl(B^{(L+1)\prime},\bY^{(L+1)\prime}\bigr) = \nu'(B,Y).\!\!\!
\label{seq3}
\end{align}
The following theorem is established by combining the synchronicity \cite{N21}
among $x$-seeds, $y$-seeds and tropical $y$-seeds,
and the synchronicity between classical and quantum seeds
\cite[Lemma 2.22]{FG09b}, \cite[Proposition~3.4]{KN11}.

\begin{Theorem}\label{thm:piv}
 In the situation in \eqref{seq0}--\eqref{seq3}, the following two
 statements are equivalent:
\begin{itemize}\itemsep=0pt

\item[$(1)$]
The tropical $y$-seeds satisfy $\nu (B,y) = \nu'(B,y)$.

\item[$(2)$]
The quantum $y$-seeds satisfy $\nu (B,\bY) = \nu'(B,\bY)$.

\end{itemize}
\end{Theorem}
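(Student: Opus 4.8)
The plan is to route the asserted equivalence through an intermediate layer of \emph{classical} $y$-seeds (those valued in the universal semifield), establishing the chain
\[
\nu(B,y)=\nu'(B,y)
\;\Longleftrightarrow\;
\nu\bigl(B,y^{\mathrm c}\bigr)=\nu'\bigl(B,y^{\mathrm c}\bigr)
\;\Longleftrightarrow\;
\nu(B,\bY)=\nu'(B,\bY),
\]
where $y^{\mathrm c}$ denotes the classical $y$-variables. The first equivalence will come from the synchronicity among tropical, $y$- and $x$-seeds of \cite{N21}, and the second from the synchronicity of classical and quantum seeds, \cite[Lemma 2.22]{FG09b} and \cite[Proposition 3.4]{KN11}; concatenating two ``iff'' statements then yields $(1)\Leftrightarrow(2)$.

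Two preliminary observations make the cited theorems applicable. First, the exchange-matrix mutation \eqref{muB} and the transposition action \eqref{qys} on the $B$-part are given by identical formulas for tropical, classical and quantum seeds, so the terminal matrices $B^{(L+1)}$ and $B^{(L+1)\prime}$ are computed the same way at all three levels; their coincidence is thus a level-independent condition that may be assumed, leaving only the $y$-variable parts to be compared. Second, each transposition $\sigma_{r,s}$ acts as one and the same index-relabelling on the tropical, classical and quantum $y$-variables and satisfies $\sigma_{r,s}\mu_k=\mu_{\sigma_{r,s}(k)}\sigma_{r,s}$; hence the transpositions are symmetries of the entire synchronized cluster pattern rather than genuinely new operations, and interleaving them with mutations respects the tropical/classical/quantum correspondence. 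This is what transports the synchronicity results, stated for pure mutation sequences, to the sequences $\nu$ and $\nu'$ at hand.

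The engine behind the first equivalence is the sign-coherence of $c$-vectors, Theorem \ref{thm:sign-coherence}. Sign-coherence is precisely what forces the tropical data (the $c$-vectors, equivalently the tropical signs $\ve_i'$) together with $B$ to determine the $F$-polynomials, whereupon the Fomin--Zelevinsky separation formulas reconstruct the classical $y$- and $x$-variables from that data; equal tropical $y$-seeds therefore yield equal classical $y$-seeds, the converse being immediate by applying the tropicalization homomorphism. For the second equivalence, \cite[Lemma 2.22]{FG09b} and \cite[Proposition 3.4]{KN11} show that classical and quantum $y$-seeds mutate in lockstep, so one coincides along $\nu,\nu'$ exactly when the other does.

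The step I expect to demand the most care is the second observation above: since the synchronicity theorems in the literature are formulated for pure mutation sequences, one must verify that the interleaved transpositions neither create nor destroy coincidences of seeds. This is guaranteed by the compatibility of the relabelling action across all three levels together with the intertwining relation $\sigma_{r,s}\mu_k=\mu_{\sigma_{r,s}(k)}\sigma_{r,s}$, and it is the only point in the argument that exceeds a direct appeal to the cited results.
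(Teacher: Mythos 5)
Your proposal is correct and follows essentially the same route as the paper, which establishes the theorem precisely by combining the tropical/classical synchronicity of \cite{N21} with the classical/quantum synchronicity of \cite[Lemma 2.22]{FG09b} and \cite[Proposition 3.4]{KN11}; your intermediate classical layer, the appeal to sign-coherence, and the check that transpositions commute appropriately with mutations are exactly the ingredients implicit in the paper's citation-based argument.
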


It is remarkable that (2) follows from (1) which is much simpler to check.
We will utilize this fact efficiently
in the subsequent arguments.

\section[Cluster transformation R]{Cluster transformation $\boldsymbol{\widehat{R}}$}\label{s:ct}

\subsection{Wiring diagram and symmetric butterfly quiver}

Let us fix our convention of the wiring diagrams and associated square quivers
using examples.
See also \cite[Section 3]{SY22}.
Let $W(A_n)$ be the Weyl group of $A_n$ generated by the simple
reflections~${s_1,\dotsc, s_n}$ obeying the Coxeter relations $s_i^2=1$,
$s_is_js_i = s_js_is_j$ ($|i-j|=1$) and~${s_is_j=s_js_i}$~($|i-j|\ge 2$). A reduced expression
$s_{i_1}\dotsm s_{i_l}$ of an element in $W(A_n)$ is identified with
the (reduced) word $i_1\ldots i_l \in [1,n]^l$. A wiring diagram is a
collection of $n$ wires which are horizontal except the vicinity of
crossings. In the aforementioned context, $i_k$ indicates that the
$k$-th crossing from the left takes place at the $i_k$-th level,
measured from the top. Crossings are required to occur at distinct
horizontal positions, although this restriction can be relaxed due to
the identification of topologically equivalent diagrams which are
transformable by $s_is_j=s_js_i$ ($|i-j|\ge 2$).
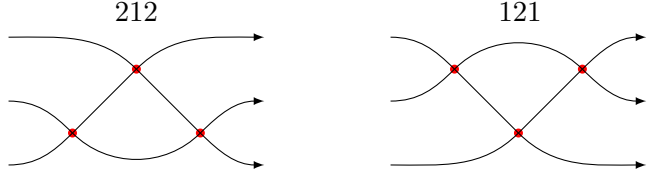
\begin{figure}[t]
\centering
\begin{tikzpicture}[scale=0.85]
\begin{scope}[>=latex,xshift=0pt]
\draw (2,2.1) circle(0pt) node[above]{212};
\draw (8,2.1) circle(0pt) node[above]{121};
{\color{red}
\fill (1,0.5) circle(2pt) coordinate(A) node[below]{};
\fill (2,1.5) circle(2pt) coordinate(B) node[above]{};
\fill (3,0.5) circle(2pt) coordinate(C) node[below]{};
}
\draw [-] (0,2) to [out = 0, in = 135] (B);
\draw [-] (B) -- (C);
\draw [->] (C) to [out = -45, in = 180] (4,0);
\draw [-] (0,1) to [out = 0, in = 135] (A);
\draw [-] (A) to [out = -45, in = -135] (C);
\draw [->] (C) to [out = 45, in = 180] (4,1);
\draw [-] (0,0) to [out = 0, in = -135] (A);
\draw [-] (A) -- (B);
\draw [->] (B) to [out = 45, in = 180] (4,2);
\coordinate (P1) at (4.5,1);
\coordinate (P2) at (5.5,1);
\end{scope}
\begin{scope}[>=latex,xshift=170pt]
{\color{red}
\fill (3,1.5) circle(2pt) coordinate(A) node[above]{};
\fill (2,0.5) circle(2pt) coordinate(B) node[below]{};
\fill (1,1.5) circle(2pt) coordinate(C) node[above]{};
}
\draw [-] (0,0) to [out = 0, in = -135] (B);
\draw [-] (B) -- (A);
\draw [->] (A) to [out = 45, in = 180] (4,2);
\draw [-] (0,1) to [out = 0, in = -135] (C);
\draw [-] (C) to [out = 45, in = 135] (A);
\draw [->] (A) to [out = -45, in = 180] (4,1);
\draw [-] (0,2) to [out = 0, in = 135] (C);
\draw [-] (C) -- (B);
\draw [->] (B) to [out = -45, in = 180] (4,0);
\end{scope}
\end{tikzpicture}

\caption{Wiring diagrams for the reduced words 212 and 121 of the
 longest element $s_2s_1s_2=s_1s_2s_1$ of $W(A_2)$.}
\end{figure}

Given a wiring diagram, the associated symmetric butterfly quiver has
the vertices in the domains and on the crossings of it. The vertices
are interconnected by elementary triangles which are oriented with
dotted arrows. A pair of dotted arrows pointing in the same (resp.\
opposite) direction are regarded as a solid arrow (resp.~none). We
choose the convention that quiver vertices on the crossings of the
wiring diagram become sources vertically and sinks horizontally.

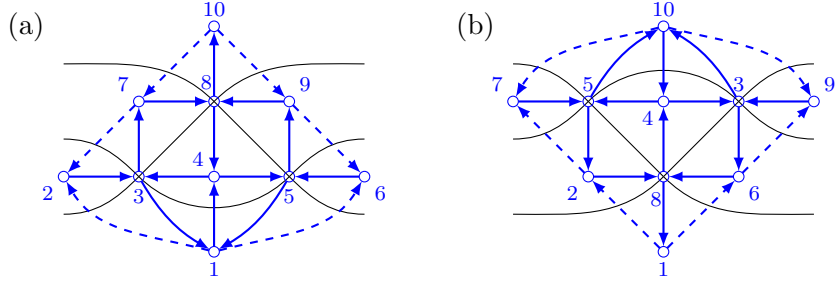
\begin{figure}[t]\centering
\begin{tikzpicture}
\begin{scope}[>=latex,xshift=0pt]
\draw (-0.5,2.5) node{(a)};
\coordinate (A) at (1,0.5);
\coordinate (B) at (2,1.5);
\coordinate (C) at (3,0.5);
%
\draw [-] (0,2) to [out = 0, in = 135] (B);
\draw [-] (B) -- (C);
\draw [-] (C) to [out = -45, in = 180] (4,0);
\draw [-] (0,1) to [out = 0, in = 135] (A);
\draw [-] (A) to [out = -45, in = -135] (C);
\draw [-] (C) to [out = 45, in = 180] (4,1);
\draw [-] (0,0) to [out = 0, in = -135] (A);
\draw [-] (A) -- (B);
\draw [-] (B) to [out = 45, in = 180] (4,2);
%
%
%
{\color{blue}
\draw (2,-0.5) circle(2pt) coordinate(1) node[below]{\scriptsize$1$};
\draw (0,0.5) circle(2pt) coordinate(2) node[below left]{\scriptsize$2$};
\draw (1,0.5) circle(2pt) coordinate(3) node[below=1pt]{\scriptsize$3$};
\draw (2,0.5) circle(2pt) coordinate(4) node[above left]{\scriptsize$4$};
\draw (3,0.5) circle(2pt) coordinate(5) node[below=1pt]{\scriptsize$5$};
\draw (4,0.5) circle(2pt) coordinate(6) node[below right]{\scriptsize$6$};
\draw (1,1.5) circle(2pt) coordinate(7) node[above left]{\scriptsize$7$};
\draw (2,1.5) circle(2pt) coordinate(8);
\draw (1.9,1.75) node {\scriptsize$8$};
\draw (3,1.5) circle(2pt) coordinate(9) node[above right]{\scriptsize$9$};
\draw (2,2.5) circle(2pt) coordinate(10) node[above]{\scriptsize$10$};
\draw[->,dashed, shorten >=2pt,shorten <=2pt] (1) to [out = 165, in = -60] (2) [thick];
\draw[->,shorten >=2pt,shorten <=2pt] (3) to [out = -60, in = 150] (1) [thick];
\qarrow{1}{4}
\draw[->,shorten >=2pt,shorten <=2pt] (5) to [out = -120, in = 30] (1) [thick];
\draw[->,dashed, shorten >=2pt,shorten <=2pt] (1) to [out = 15, in = -120] (6) [thick];
\qarrow{6}{5}
\qarrow{4}{5}
\qarrow{4}{3}
\qarrow{2}{3}
\qdarrow{7}{2}
\qarrow{3}{7}
\qarrow{8}{4}
\qarrow{5}{9}
\qdarrow{9}{6}
\qarrow{9}{8}
\qarrow{7}{8}
\qdarrow{10}{7}
\qarrow{8}{10}
\qdarrow{10}{9}
}
\end{scope}
\begin{scope}[>=latex,xshift=170pt]
\draw (-0.5,2.5) node{(b)};
\coordinate (A) at (3,1.5);
\coordinate (B) at (2,0.5);
\coordinate (C) at (1,1.5);
%
\draw [-] (0,0) to [out = 0, in = -135] (B);
\draw [-] (B) -- (A);
\draw [-] (A) to [out = 45, in = 180] (4,2);
\draw [-] (0,1) to [out = 0, in = -135] (C);
\draw [-] (C) to [out = 45, in = 135] (A);
\draw [-] (A) to [out = -45, in = 180] (4,1);
\draw [-] (0,2) to [out = 0, in = 135] (C);
\draw [-] (C) -- (B);
\draw [-] (B) to [out = -45, in = 180] (4,0);
{\color{blue}
\draw (2,-0.5) circle(2pt) coordinate(1) node[below]{\scriptsize$1$};
\draw (1,0.5) circle(2pt) coordinate(2) node[below left]{\scriptsize$2$};
\draw (3,1.5) circle(2pt) coordinate(3) node[above]{\scriptsize$3$};
\draw (2,1.5) circle(2pt) coordinate(4) node[below left]{\scriptsize$4$};
\draw (1,1.5) circle(2pt) coordinate(5) node[above]{\scriptsize$5$};
\draw (3,0.5) circle(2pt) coordinate(6) node[below right]{\scriptsize$6$};
\draw (0,1.5) circle(2pt) coordinate(7) node[above left]{\scriptsize$7$};
\draw (2,0.5) circle(2pt) coordinate(8);
\draw (1.9,0.2) node {\scriptsize$8$};
\draw (4,1.5) circle(2pt) coordinate(9) node[above right]{\scriptsize$9$};
\draw (2,2.5) circle(2pt) coordinate(10) node[above]{\scriptsize$10$};
\draw[->,dashed, shorten >=2pt,shorten <=2pt] (10) to [out = -165, in = 60] (7) [thick];
\draw[->,shorten >=2pt,shorten <=2pt] (5) to [out = 60, in = -150] (10) [thick];\qarrow{10}{4}
\draw[->,shorten >=2pt,shorten <=2pt] (3) to [out = 120, in = -30] (10) [thick];
\draw[->,dashed, shorten >=2pt,shorten <=2pt] (10) to [out = -15, in = 120] (9) [thick];
\qarrow{7}{5}
\qarrow{4}{5}
\qarrow{4}{3}
\qarrow{9}{3}
\qdarrow{2}{7}
\qarrow{8}{4}
\qarrow{5}{2}
\qarrow{3}{6}
\qdarrow{6}{9}
\qarrow{2}{8}
\qarrow{6}{8}
\qdarrow{1}{2}
\qarrow{8}{1}
\qdarrow{1}{6}
}
\end{scope}
\end{tikzpicture}

\vspace{-3mm}
\caption{Symmetric butterfly quivers (depicted in blue) associated
 with the wiring diagrams. Given the labels $1,\dotsc, 10$ of the
 quiver vertices in (a), those in (b) are determined following the
 mutation sequence in Figure \ref{fig:mus}.}\label{fig:3.2}\vspace{-2mm}
\end{figure}

\begin{Remark}\label{re:c1}
 Let $B$ be the exchange matrix corresponding to the symmetric
 butterfly quiver in Figure \ref{fig:3.2}\,(a). Then the skew filed
 $\mathcal{Y}(B)$ generated by $Y_1, \dots, Y_{10}$ has the center
 generated by
\[
Y^{-1}_1Y_7Y_8Y_9Y^2_{10},\qquad
Y_2Y^{-1}_4Y_6Y_{10},\qquad
Y_3Y^2_4Y^{-2}_6Y^2_7Y_8,\qquad
Y_5Y^2_6Y_8Y^2_9Y^2_{10}.
\]

\end{Remark}

\subsection[Cluster transformation R]{Cluster transformation $\boldsymbol{\widehat{R}}$}

Let $\bigl(B^{(1)}, \bY^{(1)}\bigr) = (B, \bY)$ and
$\bigl(B^{(6)}, \bY^{(6)}\bigr) = \bigl(B', \bY'\bigr)$ be the quantum $y$-seeds corresponding to Figure \ref{fig:3.2}\,(a) and (b), respectively.
We connect them by the following mutation sequence
\begin{align}
\bigl(B^{(1)}, \bY^{(1)}\bigr) &\underset{\varepsilon_1}{\overset{\mu_4}{\longleftrightarrow}}
\bigl(B^{(2)}, \bY^{(2)}\bigr) \underset{\varepsilon_2}{\overset{\mu_3}{\longleftrightarrow}}
\bigl(B^{(3)}, \bY^{(3)}\bigr)\nonumber
\\
&\underset{\varepsilon_3}{\overset{\mu_5}{\longleftrightarrow}}
\bigl(B^{(4)}, \bY^{(4)}\bigr) \underset{\varepsilon_4}{\overset{\mu_8}{\longleftrightarrow}}
\bigl(B^{(5)}, \bY^{(5)}\bigr) \overset{\sigma_{3,5}\sigma_{4,8}}{\longleftrightarrow}
\bigl(B^{(6)}, \bY^{(6)}\bigr),\label{mus}
\end{align}
where \smash{$\bY^{(t)}=\bigl(Y^{(t)}_1,\dotsc, Y^{(t)}_{10}\bigr)$}. The symbol
$\sigma_{ij}$ denotes the exchange of the indices $i$ and $j$ in the
exchange matrix and $Y$-variables. See \eqref{qys}. We have also
attached the signs $\varepsilon_i=\pm 1$ along which the decomposition
\eqref{mud} into the automorphism part and the monomial part will be
considered. See Figure \ref{fig:mus}.

\begin{figure}[t]\centering
\begin{tikzpicture}
\begin{scope}[>=latex,xshift=0pt]
\draw (-0.5,2.5) node{$B^{(1)}=B$};
{\color{blue}
\draw (2,-0.5) circle(2pt) coordinate(1) node[below]{\scriptsize$1$};
\draw (0,0.5) circle(2pt) coordinate(2) node[below left]{\scriptsize$2$};
\draw (1,0.5) circle(2pt) coordinate(3) node[below=1pt]{\scriptsize$3$};
\draw (2,0.5) circle(2pt) coordinate(4) node[above left]{\scriptsize$4$};
\draw (3,0.5) circle(2pt) coordinate(5) node[below=1pt]{\scriptsize$5$};
\draw (4,0.5) circle(2pt) coordinate(6) node[below right]{\scriptsize$6$};
\draw (1,1.5) circle(2pt) coordinate(7) node[above left]{\scriptsize$7$};
\draw (2,1.5) circle(2pt) coordinate(8) node[above left]{\scriptsize$8$};
\draw (3,1.5) circle(2pt) coordinate(9) node[above right]{\scriptsize$9$};
\draw (2,2.5) circle(2pt) coordinate(10) node[above]{\scriptsize$10$};
\draw[->,dashed, shorten >=2pt,shorten <=2pt] (1) to [out = 165, in = -60] (2) [thick];
\draw[->,shorten >=2pt,shorten <=2pt] (3) to [out = -60, in = 150] (1) [thick];
\qarrow{1}{4}
\draw[->,shorten >=2pt,shorten <=2pt] (5) to [out = -120, in = 30] (1) [thick];
\draw[->,dashed, shorten >=2pt,shorten <=2pt] (1) to [out = 15, in = -120] (6) [thick];
\qarrow{6}{5}
\qarrow{4}{5}
\qarrow{4}{3}
\qarrow{2}{3}
\qdarrow{7}{2}
\qarrow{3}{7}
\qarrow{8}{4}
\qarrow{5}{9}
\qdarrow{9}{6}
\qarrow{9}{8}
\qarrow{7}{8}
\qdarrow{10}{7}
\qarrow{8}{10}
\qdarrow{10}{9}
}
\draw[<->] (2,-1) -- (2,-1.8);
\draw (2,-1.4) node[right] {$\mu_4$};
\end{scope}
\begin{scope}[>=latex,xshift=190pt]
\draw (-0.5,2.5) node{$B^{(6)}=B'$};
{\color{blue}
\draw (2,-0.5) circle(2pt) coordinate(1) node[below]{\scriptsize$1$};
\draw (1,0.5) circle(2pt) coordinate(2) node[below left]{\scriptsize$2$};
\draw (3,1.5) circle(2pt) coordinate(3) node[above]{\scriptsize$3$};
\draw (2,1.5) circle(2pt) coordinate(4) node[below left]{\scriptsize$4$};
\draw (1,1.5) circle(2pt) coordinate(5) node[above]{\scriptsize$5$};
\draw (3,0.5) circle(2pt) coordinate(6) node[below right]{\scriptsize$6$};
\draw (0,1.5) circle(2pt) coordinate(7) node[above left]{\scriptsize$7$};
\draw (2,0.5) circle(2pt) coordinate(8) node[below left]{\scriptsize$8$};
\draw (4,1.5) circle(2pt) coordinate(9) node[above right]{\scriptsize$9$};
\draw (2,2.5) circle(2pt) coordinate(10) node[above]{\scriptsize$10$};
\draw[->,dashed, shorten >=2pt,shorten <=2pt] (10) to [out = -165, in = 60] (7) [thick];
\draw[->,shorten >=2pt,shorten <=2pt] (5) to [out = 60, in = -150] (10) [thick];\qarrow{10}{4}
\draw[->,shorten >=2pt,shorten <=2pt] (3) to [out = 120, in = -30] (10) [thick];
\draw[->,dashed, shorten >=2pt,shorten <=2pt] (10) to [out = -15, in = 120] (9) [thick];
\qarrow{7}{5}
\qarrow{4}{5}
\qarrow{4}{3}
\qarrow{9}{3}
\qdarrow{2}{7}
\qarrow{8}{4}
\qarrow{5}{2}
\qarrow{3}{6}
\qdarrow{6}{9}
\qarrow{2}{8}
\qarrow{6}{8}
\qdarrow{1}{2}
\qarrow{8}{1}
\qdarrow{1}{6}
}
\draw[<->] (2,-1) -- (2,-1.8);
\draw (2,-1.4) node[right] {$\sigma_{3,5} \sigma_{4,8}$};
\end{scope}
\begin{scope}[>=latex,xshift=0pt,yshift=-140pt]
\draw (-0.5,2.5) node{$B^{(2)}$};
{\color{blue}
\draw (2,-0.5) circle(2pt) coordinate(1) node[below]{\scriptsize$1$};
\draw (0,0.5) circle(2pt) coordinate(2) node[below left]{\scriptsize$2$};
\draw (1,0.5) circle(2pt) coordinate(3) node[below=1pt]{\scriptsize$3$};
\draw (2,0.5) circle(2pt) coordinate(4) node[above left]{\scriptsize$4$};
\draw (3,0.5) circle(2pt) coordinate(5) node[below=1pt]{\scriptsize$5$};
\draw (4,0.5) circle(2pt) coordinate(6) node[below right]{\scriptsize$6$};
\draw (1,1.5) circle(2pt) coordinate(7) node[above left]{\scriptsize$7$};
\draw (2,1.5) circle(2pt) coordinate(8) node[above left]{\scriptsize$8$};
\draw (3,1.5) circle(2pt) coordinate(9) node[above right]{\scriptsize$9$};
\draw (2,2.5) circle(2pt) coordinate(10) node[above]{\scriptsize$10$};
\draw[->,dashed, shorten >=2pt,shorten <=2pt] (1) to [out = 165, in = -60] (2) [thick];
\qarrow{4}{1}
\draw[->,dashed, shorten >=2pt,shorten <=2pt] (1) to [out = 15, in = -120] (6) [thick];
\qarrow{6}{5}
\qarrow{5}{4}
\qarrow{3}{4}
\qarrow{2}{3}
\qdarrow{7}{2}
\qarrow{3}{7}
\qarrow{8}{3}
\qarrow{4}{8}
\qarrow{8}{5}
\qarrow{5}{9}
\qdarrow{9}{6}
\qarrow{9}{8}
\qarrow{7}{8}
\qdarrow{10}{7}
\qarrow{8}{10}
\qdarrow{10}{9}
}
\draw[<->] (2,-1) -- (2,-1.8);
\draw (2,-1.4) node[right] {$\mu_3$};
\end{scope}
\begin{scope}[>=latex,xshift=190pt,yshift=-140pt]
\draw (-0.5,2.5) node{$B^{(5)}$};
{\color{blue}
\draw (2,-0.5) circle(2pt) coordinate(1) node[below]{\scriptsize$1$};
\draw (1,0.5) circle(2pt) coordinate(2) node[below left]{\scriptsize$2$};
\draw (3,1.5) circle(2pt) coordinate(3) node[above]{\scriptsize$5$};
\draw (2,1.5) circle(2pt) coordinate(4) node[below left]{\scriptsize$8$};
\draw (1,1.5) circle(2pt) coordinate(5) node[above]{\scriptsize$3$};
\draw (3,0.5) circle(2pt) coordinate(6) node[below right]{\scriptsize$6$};
\draw (0,1.5) circle(2pt) coordinate(7) node[above left]{\scriptsize$7$};
\draw (2,0.5) circle(2pt) coordinate(8) node[below left]{\scriptsize$4$};
\draw (4,1.5) circle(2pt) coordinate(9) node[above right]{\scriptsize$9$};
\draw (2,2.5) circle(2pt) coordinate(10) node[above]{\scriptsize$10$};
\draw[->,dashed, shorten >=2pt,shorten <=2pt] (10) to [out = -165, in = 60] (7) [thick];
\draw[->,shorten >=2pt,shorten <=2pt] (5) to [out = 60, in = -150] (10) [thick];\qarrow{10}{4}
\draw[->,shorten >=2pt,shorten <=2pt] (3) to [out = 120, in = -30] (10) [thick];
\draw[->,dashed, shorten >=2pt,shorten <=2pt] (10) to [out = -15, in = 120] (9) [thick];
\qarrow{7}{5}
\qarrow{4}{5}
\qarrow{4}{3}
\qarrow{9}{3}
\qdarrow{2}{7}
\qarrow{8}{4}
\qarrow{5}{2}
\qarrow{3}{6}
\qdarrow{6}{9}
\qarrow{2}{8}
\qarrow{6}{8}
\qdarrow{1}{2}
\qarrow{8}{1}
\qdarrow{1}{6}
}
\draw[<->] (2,-1) -- (2,-1.8);
\draw (2,-1.4) node[right] {$\mu_8$};
\end{scope}
\begin{scope}[>=latex,xshift=0pt,yshift=-280pt]
\draw (-0.5,2.5) node{$B^{(3)}$};
{\color{blue}
\draw (2,-0.5) circle(2pt) coordinate(1) node[below]{\scriptsize$1$};
\draw (1,0.5) circle(2pt) coordinate(2) node[below left]{\scriptsize$2$};
\draw (1,1.5) circle(2pt) coordinate(3) node[above]{\scriptsize$3$};
\draw (2,0.5) circle(2pt) coordinate(4) node[below right]{\scriptsize$4$};
\draw (3,0.5) circle(2pt) coordinate(5) node[below]{\scriptsize$5$};
\draw (4,0.5) circle(2pt) coordinate(6) node[below right]{\scriptsize$6$};
\draw (0,1.5) circle(2pt) coordinate(7) node[above left]{\scriptsize$7$};
\draw (2,1.5) circle(2pt) coordinate(8) node[above left]{\scriptsize$8$};
\draw (3,1.5) circle(2pt) coordinate(9) node[above right]{\scriptsize$9$};
\draw (2,2.5) circle(2pt) coordinate(10) node[above]{\scriptsize$10$};
\qdarrow{1}{2}
\qarrow{4}{1}
\draw[->,dashed, shorten >=2pt,shorten <=2pt] (1) to [out = 15, in = -120] (6) [thick];;
\qarrow{6}{5}
\qarrow{5}{4}
\qarrow{2}{4}
\qdarrow{2}{7}
\qarrow{3}{2}
\qarrow{4}{3}
\qarrow{8}{5}
\qarrow{5}{9}
\qdarrow{9}{6}
\qarrow{7}{3}
\qarrow{3}{8}
\qarrow{9}{8}
\draw[->,dashed, shorten >=2pt,shorten <=2pt] (10) to [out = -165, in = 60] (7) [thick];
\qarrow{8}{10}
\qdarrow{10}{9}
}
\draw[<->] (4.8,1) -- (5.8,1);
\draw (5.3,1.3) node {$\mu_5$};
\end{scope}
\begin{scope}[>=latex,xshift=190pt,yshift=-280pt]
\draw (-0.5,2.5) node{$B^{(4)}$};
{\color{blue}
\draw (2,-0.5) circle(2pt) coordinate(1) node[below]{\scriptsize$1$};
\draw (1,0.5) circle(2pt) coordinate(2) node[below left]{\scriptsize$2$};
\draw (3,1.5) circle(2pt) coordinate(3) node[above]{\scriptsize$5$};
\draw (2,1.5) circle(2pt) coordinate(4) node[below left]{\scriptsize$8$};
\draw (1,1.5) circle(2pt) coordinate(5) node[above]{\scriptsize$3$};
\draw (3,0.5) circle(2pt) coordinate(6) node[below right]{\scriptsize$6$};
\draw (0,1.5) circle(2pt) coordinate(7) node[above left]{\scriptsize$7$};
\draw (2,0.5) circle(2pt) coordinate(8) node[below left]{\scriptsize$4$};
\draw (4,1.5) circle(2pt) coordinate(9) node[above right]{\scriptsize$9$};
\draw (2,2.5) circle(2pt) coordinate(10) node[above]{\scriptsize$10$};
\draw[->,dashed, shorten >=2pt,shorten <=2pt] (10) to [out = -165, in = 60] (7) [thick];
\draw[->,dashed, shorten >=2pt,shorten <=2pt] (10) to [out = -15, in = 120] (9) [thick];
\qarrow{4}{10}
\qarrow{7}{5}
\qarrow{5}{4}
\qarrow{3}{4}
\qarrow{9}{3}
\qdarrow{2}{7}
\qarrow{4}{8}
\qarrow{8}{5}
\qarrow{8}{3}
\qarrow{5}{2}
\qarrow{3}{6}
\qdarrow{6}{9}
\qarrow{2}{8}
\qarrow{6}{8}
\qdarrow{1}{2}
\qarrow{8}{1}
\qdarrow{1}{6}
}
\end{scope}
\end{tikzpicture}
\caption{The quivers $B=B^{(1)}, \dots, B^{(6)}=B'$ and the
 mutations connecting them. We do not consider the wiring diagrams
 corresponding to the intermediate ones $B^{(2)}, \dots, B^{(5)}$.}
\label{fig:mus}
\end{figure}
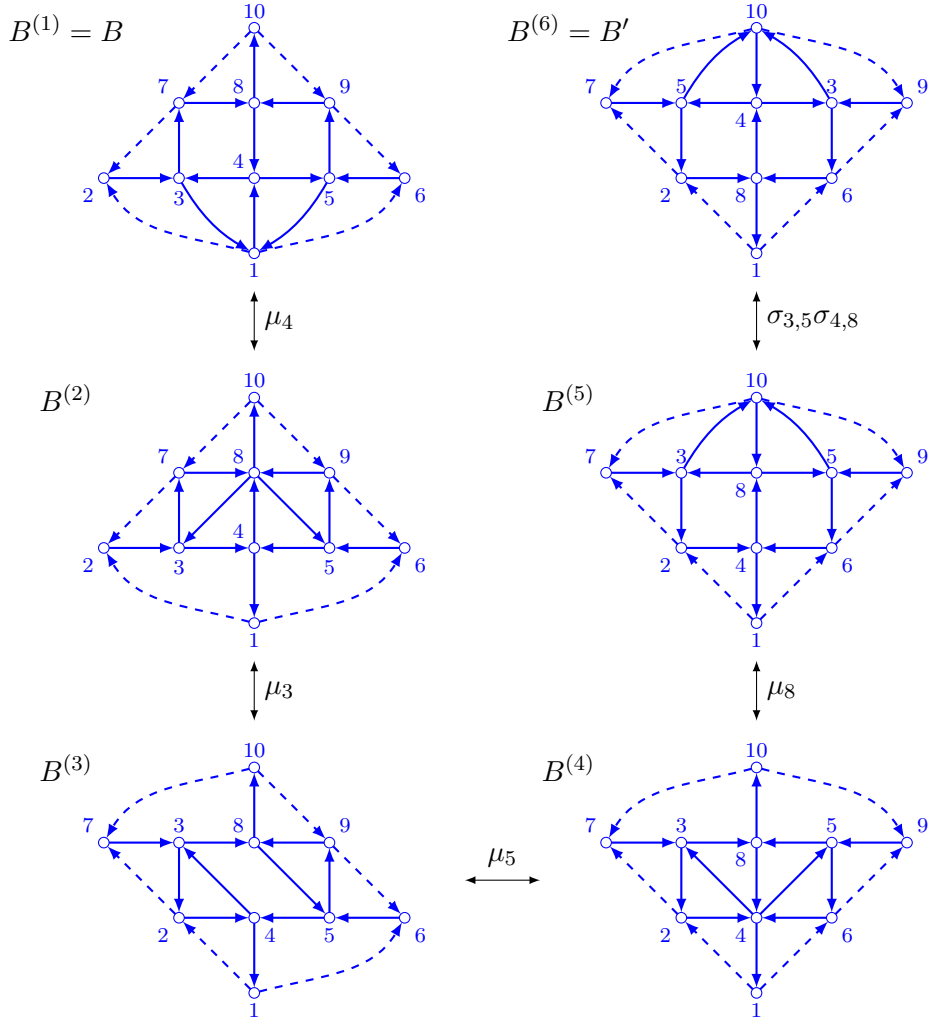

For simplicity, we identify \smash{$Y^{(t)}_i$} and $\rY_i(t)$ in the
description from now on. We introduce the cluster transformation
\smash{$\widehat{R}\colon \mathcal{Y}\bigl(B'\bigr) \rightarrow \mathcal{Y}(B)$}
corresponding to the mutation sequence \eqref{mus} by applying
\eqref{yad} as
\begin{align}
\widehat{R} ={}&
\mathrm{Ad}\bigl(\Psi_q\bigl(\bigl(Y^{(1)}_4\bigr)^{\varepsilon_1}\bigr)^{\varepsilon_1}\bigr)\tau_{4,\varepsilon_1}
\mathrm{Ad}\bigl(\Psi_q\bigl(\bigl(Y^{(2)}_3\bigr)^{\varepsilon_2}\bigr)^{\varepsilon_2}\bigr)\tau_{3,\varepsilon_2}\nonumber
\\
 & \times
\mathrm{Ad}\bigl(\Psi_q\bigl(\bigl(Y^{(3)}_5\bigr)^{\varepsilon_3}\bigr)^{\varepsilon_3}\bigr)\tau_{5,\varepsilon_3}
\mathrm{Ad}\bigl(\Psi_q\bigl(\bigl(Y^{(4)}_8\bigr)^{\varepsilon_4}\bigr)^{\varepsilon_4}\bigr)\tau_{8,\varepsilon_4}
\sigma_{3,5}\sigma_{4,8}.\label{rhat}
\end{align}
The selection of $(\varepsilon_1, \varepsilon_2, \varepsilon_3, \varepsilon_4) \in \{1,-1\}^4$
influences the expressions, but \smash{$\widehat{R}$} itself remains independent of it.
We set
\begin{align}\label{taue}
\tau_{\varepsilon_1, \varepsilon_2, \varepsilon_3, \varepsilon_4} =
\tau_{4,\varepsilon_1}
\tau_{3,\varepsilon_2}
\tau_{5,\varepsilon_3}
\tau_{8,\varepsilon_4} \sigma_{3,5}\sigma_{4,8}\colon\
\mathcal{Y}\bigl(B'\bigr) \rightarrow \mathcal{Y}(B),
 \end{align}
 and call it the monomial part of \smash{$\widehat{R}$}.

\begin{Example}\label{ex:1}
$\tau_{--++}$ and $\tau_{--++}^{-1}$ are given as follows:
\begin{align*}
&\tau_{--++}\colon\
\begin{cases}
Y'_1 \mapsto Y_1,\qquad
Y'_2 \mapsto Y_2, \qquad
Y'_3 \mapsto Y_8, \qquad
Y'_4 \mapsto Y^{-1}_4Y^{-1}_5Y^{-1}_8, \qquad
\\
Y'_5 \mapsto Y^{-1}_3Y_5Y_8, \qquad
Y'_6 \mapsto Y_4 Y_5 Y_6,\qquad
Y'_7 \mapsto Y_3Y_4Y_7,\qquad
Y'_8 \mapsto Y_3,\qquad
\\
Y'_9 \mapsto Y_9, \qquad
Y'_{10} \mapsto Y_{10},
\end{cases}
\\
&\tau_{--++}^{-1}\colon\
\begin{cases}
Y_1 \mapsto Y'_1, \qquad
Y_2 \mapsto Y'_2, \qquad
Y_3 \mapsto Y'_8, \qquad
Y_4 \mapsto Y'^{-1}_4Y'^{-1}_5Y'^{-1}_8,
\\
Y_5 \mapsto Y'^{-1}_3Y'_5Y'_8, \qquad
Y_6 \mapsto Y'_3Y'_4Y'_6, \qquad
Y_7 \mapsto Y'_4Y'_5Y'_7, \qquad
Y_8 \mapsto Y'_3,
\\
Y_9 \mapsto Y'_9, \qquad
Y_{10} \mapsto Y'_{10}.
\end{cases}
\end{align*}
By using them, $\widehat{R}$ in \eqref{rhat} for the choice
$(\varepsilon_1, \varepsilon_2, \varepsilon_3, \varepsilon_4)=(-,-,+,+)$
is expressed as
\begin{align}
\widehat{R}={}&
\mathrm{Ad}\bigl(\Psi_q\bigl(\bigl(Y^{(1)}_4\bigr)^{-1}\bigr)^{-1}\bigr)\tau_{4,-}
\mathrm{Ad}\bigl(\Psi_q\bigl(\bigl(Y^{(2)}_3\bigr)^{-1}\bigr)^{-1}\bigr)\tau_{3,-}
\nonumber \\
& \times
\mathrm{Ad}\bigl(\Psi_q\bigl(Y^{(3)}_5\bigr)\bigr)\tau_{5,+}
 \mathrm{Ad}\bigl(\Psi_q\bigl(Y^{(4)}_8\bigr)\bigr)\tau_{8,+}\sigma_{3,5}\sigma_{4,8}
\nonumber \\
={}&
\mathrm{Ad}\bigl(\Psi_q\bigl(Y_4^{-1}\bigr)^{-1}
\Psi_q\bigl(qY^{-1}_3Y^{-1}_4\bigr)^{-1}
\Psi_q\bigl(q^{-1}Y_4Y_5\bigr)
\Psi_q\bigl(Y_4Y_5Y_8\bigr)\bigr)\tau_{--++}
\label{dode0}\\
={}&
\mathrm{Ad}\bigl(
\Psi_q\bigl(Y_4^{-1}\bigr)^{-1}
\Psi_q\bigl(qY^{-1}_3Y^{-1}_4\bigr)^{-1}\bigr)
\tau_{--++}
\mathrm{Ad}\bigl(
\Psi_q\bigl(qY'^{-1}_3Y'^{-1}_4\bigr)
\Psi_q\bigl(Y'^{-1}_4\bigr)\bigr).
\label{dode}
\end{align}
The formula \eqref{dode} is derived from \eqref{dode0}
by moving $\tau_{--++}$ to the left by using $\tau_{--++}^{-1}$.
\end{Example}

\begin{Example}\label{ex:2}
$\tau_{-+-+}$ and $\tau_{-+-+}^{-1}$ are given as follows:
\begin{align*}
&\tau_{-+-+}\colon\
\begin{cases}
Y'_1 \mapsto Y_1,\qquad
Y'_2 \mapsto Y_2Y_3Y_4, \qquad
Y'_3 \mapsto Y_3Y^{-1}_5Y_8, \qquad
Y'_4 \mapsto q^2Y^{-1}_3Y^{-1}_4Y^{-1}_8,
\\
Y'_5 \mapsto Y_8, \qquad
Y'_6 \mapsto Y_6,\qquad
Y'_7 \mapsto Y_7,\qquad
Y'_8 \mapsto Y_5,
\\
Y'_9 \mapsto q^{-2}Y_4Y_5Y_9, \qquad
Y'_{10} \mapsto Y_{10},
\end{cases}
\\
& \tau_{-+-+}^{-1}\colon\
\begin{cases}
Y_1 \mapsto Y'_1, \qquad\!
Y_2 \mapsto Y'_2Y'_4Y'_5, \qquad\!
Y_3 \mapsto Y'_3Y'^{-1}_5Y'_8, \qquad\!
Y_4 \mapsto q^2Y'^{-1}_3Y'^{-1}_4Y'^{-1}_8,
\\
Y_5 \mapsto Y'_8, \qquad
Y_6 \mapsto Y'_6, \qquad
Y_7 \mapsto Y'_7, \qquad
Y_8 \mapsto Y'_5,
\\
Y_9 \mapsto q^2Y'_3Y'_4Y'_9, \qquad
Y_{10} \mapsto Y'_{10}.
\end{cases}
\end{align*}
By using them, $\widehat{R}$ in \eqref{rhat} for the choice
$(\varepsilon_1, \varepsilon_2, \varepsilon_3, \varepsilon_4)=(-,+,-,+)$
is expressed as
\begin{align}
\widehat{R}={}&
\mathrm{Ad}\bigl(\Psi_q\bigl(\bigl(Y^{(1)}_4\bigr)^{-1}\bigr)^{-1}\bigr)\tau_{4,-}
\mathrm{Ad}\bigl(\Psi_q\bigl(Y^{(2)}_3\bigr)\bigr)\tau_{3,+}
\nonumber \\
& \times
\mathrm{Ad}\bigl(\Psi_q\bigl(\bigl(Y^{(3)}_5\bigr)^{-1}\bigr)^{-1}\bigr)\tau_{5,-}
\mathrm{Ad}\bigl(\Psi_q\bigl(Y^{(4)}_8\bigr)\bigr)\tau_{8,+}\sigma_{3,5}\sigma_{4,8}
\nonumber \\
={}&
\mathrm{Ad}\bigl(\Psi_q\bigl(Y_4^{-1}\bigr)^{-1}
\Psi_q(qY_3Y_4)
\Psi_q\bigl(qY^{-1}_5Y^{-1}_4\bigr)^{-1}
\Psi_q\bigl(q^2Y_3Y_4Y_8\bigr)\bigr)\tau_{-+-+}
\label{dode1}\\
={}&
\mathrm{Ad}\bigl(
\Psi_q\bigl(Y_4^{-1}\bigr)^{-1}
\Psi_q(qY_3Y_4)^{-1}\bigr)
\tau_{-+-+}
\mathrm{Ad}\bigl(
\Psi_q(qY'_3Y'_4)^{-1}
\Psi_q\bigl(Y'^{-1}_4\bigr)\bigr).
\label{dode2}
\end{align}
\end{Example}

Performing a straightforward calculation using any one
of the formulas for $\widehat{R}$ in Examples~\ref{ex:1} and \ref{ex:2},
we get the following.

\begin{Proposition}\label{pr:RY}
The cluster transformation $\widehat{R}\colon \mathcal{Y}\bigl(B'\bigr) \rightarrow \mathcal{Y}(B)$ is given by
\begin{gather*}
Y'_1 \mapsto q\Lambda^{-1}_4Y_4Y_1,\qquad
Y'_2 \mapsto qY_3\Lambda_4\Lambda^{-1}_3Y_2,
\qquad
Y'_3 \mapsto q^2 \Lambda^{-1}_0Y_3Y_4Y_8,
\\
Y'_4 \mapsto q\Lambda^{-1}_4Y^{-1}_3Y^{-1}_4Y^{-1}_5Y^{-1}_8\Lambda_3\Lambda_5,
\qquad
Y'_5 \mapsto \Lambda^{-1}_0 Y_4Y_5Y_8,
\qquad
Y'_6 \mapsto q Y_5 \Lambda_4\Lambda^{-1}_5Y_6,
\\
Y'_7 \mapsto Y_7\Lambda_3,
\qquad
Y'_8 \mapsto Y^{-1}_4\Lambda_0,
\qquad
Y'_9 \mapsto Y_9\Lambda_5,
\qquad
Y'_{10} \mapsto Y_{10}\Lambda^{-1}_5\Lambda^{-1}_3\Lambda_0,
\end{gather*}
where $\Lambda_0$, $\Lambda_3$, $\Lambda_4$ and $\Lambda_5$ are given
as follows:
\begin{gather*}
\Lambda_0 = \Lambda_3\Lambda_5 + Y_4Y_3Y_5Y_8\Lambda_4,\qquad
\Lambda_3 = 1+qY_3+Y_4Y_3,\qquad
\Lambda_4 = 1 + q Y_4, \\
\Lambda_5 = 1+qY_5+Y_4Y_5.
\end{gather*}
\end{Proposition}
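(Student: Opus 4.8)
The plan is to evaluate $\widehat{R}$ on each of the ten generators $Y'_i$ directly, starting from the consolidated expression \eqref{dode0} of Example~\ref{ex:1}, which has the form $\widehat{R}=\mathrm{Ad}(D)\circ\tau_{--++}$ with $D=\Psi_q(Y_4^{-1})^{-1}\Psi_q(qY_3^{-1}Y_4^{-1})^{-1}\Psi_q(q^{-1}Y_4Y_5)\Psi_q(Y_4Y_5Y_8)$. Hence $\widehat{R}(Y'_i)=\mathrm{Ad}(D)\bigl(\tau_{--++}(Y'_i)\bigr)$, and the computation separates cleanly into two moves: first read the monomial image $\tau_{--++}(Y'_i)$ off the table in Example~\ref{ex:1}, then conjugate that monomial by the four quantum dilogarithms comprising $D$.

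Each conjugation collapses to an elementary rational factor. For a monomial $M$ and a dilogarithm argument $W$, the relevant commutation exponent $M^{-1}WM=q^{2a}W$ is read directly off the exchange matrix of the quiver in Figure~\ref{fig:3.2}\,(a), and the basic identity \eqref{Prec} then converts the conjugation into multiplication by a single factor $(1+q^{b}W)^{\pm1}$, the exponent sign being fixed by the power of $\Psi_q$ and by the sign of $a$; this is nothing but the automorphism part of a mutation in the decomposition \eqref{mud}. In particular a factor of $D$ contributes nothing unless its argument fails to $q$-commute with $M$. For $Y'_1$, for instance, $\tau_{--++}(Y'_1)=Y_1$ commutes with $Y_4Y_5$, with $Y_3^{-1}Y_4^{-1}$, and with $Y_4Y_5Y_8$, so only $\Psi_q(Y_4^{-1})^{-1}$ acts, producing $Y_1(1+qY_4^{-1})^{-1}=q\Lambda_4^{-1}Y_4Y_1$. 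I would carry out this bookkeeping for all ten variables, reading the commutation exponents from the quiver and tracking the powers of $q$ generated when the resulting non-commuting monomials are reordered into the normal form displayed in the proposition.

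The routine cases ($Y'_1,Y'_2,Y'_6,Y'_7,Y'_9$) involve only one or two contributing dilogarithms and assemble at once into powers of $\Lambda_3,\Lambda_4,\Lambda_5$; note that even $\Lambda_3=1+qY_3\Lambda_4$ and $\Lambda_5=1+qY_5\Lambda_4$ are already nested, reflecting the successive action of $\mu_4$ followed by $\mu_3$ or $\mu_5$ in \eqref{mus}. The crux is the group $Y'_3,Y'_5,Y'_8,Y'_{10}$, where several dilogarithms act and the factors must combine into the non-factorizable $\Lambda_0=\Lambda_3\Lambda_5+Y_4Y_3Y_5Y_8\Lambda_4$. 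Its additive shape is the fingerprint of the final mutation $\mu_8$: the value accumulated by $\mu_4,\mu_3,\mu_5$ is fed into the $1+(\cdot)$ of the last exchange relation \eqref{muY}, and I expect the bulk of the labor to be verifying that this nested substitution collapses precisely to $\Lambda_0$ with the correct prefactors (the $q^2$ in $Y'_3$, the $q$ in $Y'_4$, and so on), using $Y_4Y_3=q^2Y_3Y_4$ and $Y_4Y_5=q^2Y_5Y_4$ repeatedly to move $Y_4$ through $Y_3,Y_5,Y_8$. Finally, repeating the whole computation from the alternative factorization \eqref{dode2} of Example~\ref{ex:2} and recovering the same ten images both corroborates the result and confirms the independence of $\widehat{R}$ from the sign choice $(\varepsilon_1,\varepsilon_2,\varepsilon_3,\varepsilon_4)$ asserted after \eqref{rhat}.
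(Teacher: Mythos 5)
Your proposal is correct and is essentially the paper's own proof, which consists precisely of the ``straightforward calculation'' from the factorized expressions \eqref{dode0} and \eqref{dode2} of Examples~\ref{ex:1} and~\ref{ex:2}: read off the monomial image under $\tau_{--++}$ and conjugate it through the four quantum dilogarithms via \eqref{Prec}, exactly as in your worked case $Y'_1\mapsto q\Lambda_4^{-1}Y_4Y_1$, which checks out. One small caution: your remark that a factor of $D$ ``contributes nothing unless its argument fails to $q$-commute with $M$'' is true only for the initial monomial --- an outer dilogarithm commuting with $M$ can still act nontrivially on factors produced by inner conjugations (this is precisely how the non-factorizable $\Lambda_0$ accumulates for $Y'_3$, $Y'_5$, $Y'_8$, $Y'_{10}$) --- but the sequential bookkeeping you describe for that group already accommodates this, so the argument goes through.
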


\begin{Remark}\label{re:c2}
$\widehat{R}$ preserves the following combinations:
\begin{gather*}
\widehat{R}\bigl(Y'_3Y'_8\bigr) = Y_3Y_8,
\qquad
\widehat{R}\bigl(Y'_5Y'_8\bigr) = Y_5Y_8,
\\
\widehat{R}\bigl(Y'_1Y'_2Y'_4Y'_6Y'_7Y'^{-1}_8Y'_9Y'_{10}\bigr) =
Y_1Y_2Y_4Y_6Y_7Y^{-1}_8Y_9Y_{10}.
\end{gather*}
\end{Remark}

\subsection[R satisfies the tetrahedron equation]{$\boldsymbol{\widehat{R}}$ satisfies the tetrahedron equation}

In the situation in Figure \ref{fig:R123}, $\widehat{R}$ is a
transformation of the 10 variables $\{Y'_1,\dotsc, Y'_{10}\}$ into
$\{Y_1,\dotsc, Y_{10}\}$ as in Proposition \ref{pr:RY}. We denote it
simply by $\widehat{R}_{123}$, where the indices $1$, $2$, $3$ are the
vertices $1$, $2$, $3$ of the wiring diagram (highlighted in red).

\begin{figure}[t]\centering
\begin{tikzpicture}
\begin{scope}[>=latex,xshift=0pt]
{\color{red}
\fill (1,0.5) circle(2pt) coordinate(A);
\draw (1.1,0.8) node {\scriptsize$3$};
\fill (2,1.5) circle(2pt) coordinate(B);
\draw (2.1,1.2) node {\scriptsize$2$};
\fill (3,0.5) circle(2pt) coordinate(C);
\draw (3.1,0.8) node {\scriptsize$1$};
}
\draw [-] (0,2) to [out = 0, in = 135] (B);
\draw [-] (B) -- (C);
\draw [-] (C) to [out = -45, in = 180] (4,0);
\draw [-] (0,1) to [out = 0, in = 135] (A);
\draw [-] (A) to [out = -45, in = -135] (C);
\draw [-] (C) to [out = 45, in = 180] (4,1);
\draw [-] (0,0) to [out = 0, in = -135] (A);
\draw [-] (A) -- (B);
\draw [-] (B) to [out = 45, in = 180] (4,2);
{\color{red}
\coordinate (P1) at (4.5,1);
\coordinate (P2) at (5.5,1);
\draw[->] (5.5,1) -- (4.5,1);
\draw (5,1.8) circle(0pt) node[below]{$\widehat{R}_{123}$};
}
{\color{blue}
\draw (2,-0.5) circle(2pt) coordinate(1) node[below]{\scriptsize$1$};
\draw (0,0.5) circle(2pt) coordinate(2) node[below left]{\scriptsize$2$};
\draw (1,0.5) circle(2pt) coordinate(3) node[below=1pt]{\scriptsize$3$};
\draw (2,0.5) circle(2pt) coordinate(4) node[above left]{\scriptsize$4$};
\draw (3,0.5) circle(2pt) coordinate(5) node[below=1pt]{\scriptsize$5$};
\draw (4,0.5) circle(2pt) coordinate(6) node[below right]{\scriptsize$6$};
\draw (1,1.5) circle(2pt) coordinate(7) node[above left]{\scriptsize$7$};
\draw (2,1.5) circle(2pt) coordinate(8);
\draw (1.9,1.75) node {\scriptsize$8$};
\draw (3,1.5) circle(2pt) coordinate(9) node[above right]{\scriptsize$9$};
\draw (2,2.5) circle(2pt) coordinate(10) node[above]{\scriptsize$10$};
%
\draw[->,dashed, shorten >=2pt,shorten <=2pt] (1) to [out = 165, in = -60] (2) [thick];
\draw[->,shorten >=2pt,shorten <=2pt] (3) to [out = -60, in = 150] (1) [thick];
\qarrow{1}{4}
\draw[->,shorten >=2pt,shorten <=2pt] (5) to [out = -120, in = 30] (1) [thick];
\draw[->,dashed, shorten >=2pt,shorten <=2pt] (1) to [out = 15, in = -120] (6) [thick];
\qarrow{6}{5}
\qarrow{4}{5}
\qarrow{4}{3}
\qarrow{2}{3}
\qdarrow{7}{2}
\qarrow{3}{7}
\qarrow{8}{4}
\qarrow{5}{9}
\qdarrow{9}{6}
\qarrow{9}{8}
\qarrow{7}{8}
\qdarrow{10}{7}
\qarrow{8}{10}
\qdarrow{10}{9}
}
\end{scope}
\begin{scope}[>=latex,xshift=170pt]
{\color{red}
\fill (3,1.5) circle(2pt) coordinate(A);
\draw (3.1,1.2) node {\scriptsize$3$};
\fill (2,0.5) circle(2pt) coordinate(B);
\draw (2.1,0.8) node {\scriptsize$2$};
\fill (1,1.5) circle(2pt) coordinate(C);
\draw (1.1,1.2) node {\scriptsize$1$};
}
\draw [-] (0,0) to [out = 0, in = -135] (B);
\draw [-] (B) -- (A);
\draw [-] (A) to [out = 45, in = 180] (4,2);
\draw [-] (0,1) to [out = 0, in = -135] (C);
\draw [-] (C) to [out = 45, in = 135] (A);
\draw [-] (A) to [out = -45, in = 180] (4,1);
\draw [-] (0,2) to [out = 0, in = 135] (C);
\draw [-] (C) -- (B);
\draw [-] (B) to [out = -45, in = 180] (4,0);
{\color{blue}
\draw (2,-0.5) circle(2pt) coordinate(1) node[below]{\scriptsize$1$};
\draw (1,0.5) circle(2pt) coordinate(2) node[below left]{\scriptsize$2$};
\draw (3,1.5) circle(2pt) coordinate(3) node[above]{\scriptsize$3$};
\draw (2,1.5) circle(2pt) coordinate(4) node[below left]{\scriptsize$4$};
\draw (1,1.5) circle(2pt) coordinate(5) node[above]{\scriptsize$5$};
\draw (3,0.5) circle(2pt) coordinate(6) node[below right]{\scriptsize$6$};
\draw (0,1.5) circle(2pt) coordinate(7) node[above left]{\scriptsize$7$};
\draw (2,0.5) circle(2pt) coordinate(8);
\draw (1.9,0.2) node {\scriptsize$8$};
\draw (4,1.5) circle(2pt) coordinate(9) node[above right]{\scriptsize$9$};
\draw (2,2.5) circle(2pt) coordinate(10) node[above]{\scriptsize$10$};
\draw[->,dashed, shorten >=2pt,shorten <=2pt] (10) to [out = -165, in = 60] (7) [thick];
\draw[->,shorten >=2pt,shorten <=2pt] (5) to [out = 60, in = -150] (10) [thick];\qarrow{10}{4}
\draw[->,shorten >=2pt,shorten <=2pt] (3) to [out = 120, in = -30] (10) [thick];
\draw[->,dashed, shorten >=2pt,shorten <=2pt] (10) to [out = -15, in = 120] (9) [thick];
\qarrow{7}{5}
\qarrow{4}{5}
\qarrow{4}{3}
\qarrow{9}{3}
\qdarrow{2}{7}
\qarrow{8}{4}
\qarrow{5}{2}
\qarrow{3}{6}
\qdarrow{6}{9}
\qarrow{2}{8}
\qarrow{6}{8}
\qdarrow{1}{2}
\qarrow{8}{1}
\qdarrow{1}{6}
}
\end{scope}
\end{tikzpicture}

\caption{Cluster transformation $\widehat{R}_{123}$, which acts on the
 $q$-Weyl variables attached to the vertices $1$, $2$, $3$ of the
 wiring diagram colored in red.}
\label{fig:R123}
\end{figure}
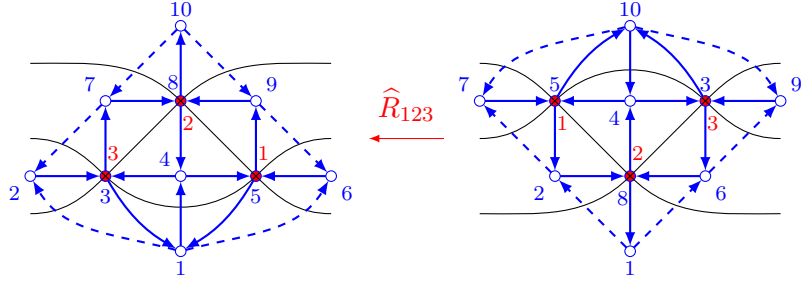

The following result is essentially due to \cite{SY22}.
\begin{Proposition}\label{pr:sy}
$\widehat{R}$ satisfies the tetrahedron equation twisted by permutations of $Y$-variables
\begin{align}\label{TE1}
\widehat{R} _{124}\widehat{R} _{135}\widehat{R} _{236} \widehat{R} _{456}\sigma_{7,12}=
\widehat{R} _{456}\widehat{R} _{236}\widehat{R} _{135}\widehat{R} _{124}\sigma_{7,14} .
\end{align}
\end{Proposition}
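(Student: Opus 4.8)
The plan is to read the tetrahedron equation~\eqref{TE1} as an identity between two mutation sequences on the symmetric butterfly quiver attached to the wiring diagram of the longest element of $W(A_3)$, and then to descend from the quantum level to the tropical level by Theorem~\ref{thm:piv}. First I would fix this $A_3$ wiring diagram together with its symmetric butterfly quiver, whose vertices lie on the six crossings and in the domains. For each of the triples $124,135,236,456$ occurring in~\eqref{TE1}, the relevant quiver vertices form a ten-vertex sub-configuration of the $A_2$ type treated in Figures~\ref{fig:3.2} and~\ref{fig:mus}, associated with the three wires involved in the corresponding braid move. This realizes each $\widehat{R}_{ijk}$ as the composite cluster transformation~\eqref{rhat} performed on the associated $A_2\hookrightarrow A_3$ sub-diagram.

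Next I would invoke that $\widehat{R}$ is the cluster-algebraic incarnation of the cubic Coxeter relation $s_is_{i+1}s_i=s_{i+1}s_is_{i+1}$, so that applying it performs the corresponding braid move on the wiring diagram. The two sides of~\eqref{TE1} then correspond to the two length-four chains of braid moves connecting one fixed reduced word of the longest element of $W(A_3)$ to another, with the commutations $s_is_j=s_js_i$~($|i-j|\ge 2$) absorbed into the reordering of indices. Both chains transform one and the same initial quantum $y$-seed into seeds that agree up to a relabeling of the auxiliary domain vertices; the transpositions $\sigma_{7,12}$ on the left and $\sigma_{7,14}$ on the right are exactly the permutations reconciling these two labelings. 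This is where the statement being ``essentially due to~\cite{SY22}'' enters: the braid-move bookkeeping is identical to the square-quiver case, only the quiver is symmetrized.

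With this in place, \eqref{TE1} becomes the assertion that the two composite mutation sequences, each a product of four $\widehat{R}$'s followed by the appropriate transposition, act identically on the quantum $y$-seed of the $A_3$ symmetric butterfly quiver. By Theorem~\ref{thm:piv} this quantum identity is equivalent to the corresponding identity for \emph{tropical} $y$-seeds, which is controlled entirely by the $c$-vectors and tropical signs of Theorem~\ref{thm:sign-coherence} and Remark~\ref{re:sgni}. I would therefore reduce the whole claim to checking that the two sequences, applied to $(B,u)$ for the $A_3$ quiver, produce the same tropical $y$-seed. This is a finite computation: one propagates each $c$-vector through the prescribed mutations and transpositions on both sides and confirms that the outcomes coincide.

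The main obstacle is not this tropical verification, which is mechanical once organized, but the bookkeeping that precedes it: correctly drawing the $A_3$ symmetric butterfly quiver, pinning down which vertices and internal mutations constitute each $\widehat{R}_{ijk}$ inside it, and---most delicately---determining the compensating transpositions $\sigma_{7,12}$ and $\sigma_{7,14}$ so that the two terminal seeds match exactly rather than merely up to an unspecified permutation. Once the two mutation sequences and their terminal relabelings are fixed, the passage from the tropical identity to~\eqref{TE1} is immediate by Theorem~\ref{thm:piv}.
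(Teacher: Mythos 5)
Your proposal is correct and follows essentially the same route as the paper: both sides of \eqref{TE1} are realized as the left and right mutation-sequence paths on the $A_3$ symmetric butterfly quiver (with the terminal transpositions $\sigma_{7,12}$, $\sigma_{7,14}$ reconciling the labelings), and Theorem~\ref{thm:piv} reduces the quantum identity to the equality of tropical $y$-seeds. The only difference is cosmetic: the paper cites the finite tropical $c$-vector check from \cite[Section~A.2]{SY22} rather than redoing the computation you outline.
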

\begin{proof}
 For each reduced word for the longest element of the Weyl group
 $W(A_3)$, draw a wiring diagram and a symmetric butterfly quiver
 extending Figure \ref{fig:R123} naturally. The quivers and the
 crossings of the wiring diagrams (red vertices $1, \dots, 6$) are
 connected by the cluster transformations $\widehat{R}_{ijk}$ as in
 Figure \ref{fig:bTE}. 
In Figure \ref{fig:bTE},
let $\nu$ and $\nu'$ be the mutation sequences corresponding to the left path
\begin{gather*}
\bigl(B^{(1)},Y^{(1)}\bigr) \rightarrow
\bigl(B^{(6)},Y^{(6)}\bigr) \rightarrow
\bigl(B^{(11)},Y^{(11)}\bigr) \rightarrow
\bigl(B^{(16)},Y^{(16)}\bigr) \rightarrow
\bigl(B^{(22)},Y^{(22)}\bigr)\\
\qquad=\nu\bigl(B^{(1)},Y^{(1)}\bigr)
\end{gather*}
and the right path
\begin{gather*}
\bigl(B^{(1)\prime},Y^{(1)\prime}\bigr) \rightarrow
\bigl(B^{(6)\prime},Y^{(6)\prime}\bigr) \rightarrow
\bigl(B^{(11)\prime},Y^{(11)\prime}\bigr) \rightarrow
\bigl(B^{(16)\prime},Y^{(16)\prime}\bigr) \rightarrow
 \bigl(B^{(22)\prime},Y^{(22)\prime}\bigr)\\
 \qquad=\nu'\bigl(B^{(1)},Y^{(1)}\bigr),
\end{gather*}
 respectively.
Let \smash{$\nu\bigl(B^{(1)},y^{(1)}\bigr)$} and \smash{$\nu'\bigl(B^{(1)},y^{(1)}\bigr)$} be the
tropical $y$-seeds generated by the same mutation sequences.
It has been checked \cite[Section~A.2]{SY22}
that they satisfy the equality
$\nu\bigl(B^{(1)},y^{(1)}\bigr)= \nu'\bigl(B^{(1)},y^{(1)}\bigr)$.
Thus Theorem \ref{thm:piv} enforces the equality of quantum $y$-seeds
\smash{$\nu\bigl(B^{(1)}, Y^{(1)}\bigr) = \nu'\bigl(B^{(1)\prime}, Y^{(1)\prime}\bigr)$}.
In terms of cluster transformations, it means that the twisted tetrahedron equation
$\widehat{R}_{124}\widehat{R}_{135}\widehat{R}_{236}\widehat{R}_{456}\sigma_{7,12}
=\widehat{R}_{456}\widehat{R}_{236}\widehat{R}_{135}\widehat{R}_{124} \sigma_{7,14}$ is valid.
\end{proof}

\begin{figure}[t]\centering

\includegraphics[clip,scale=0.77]{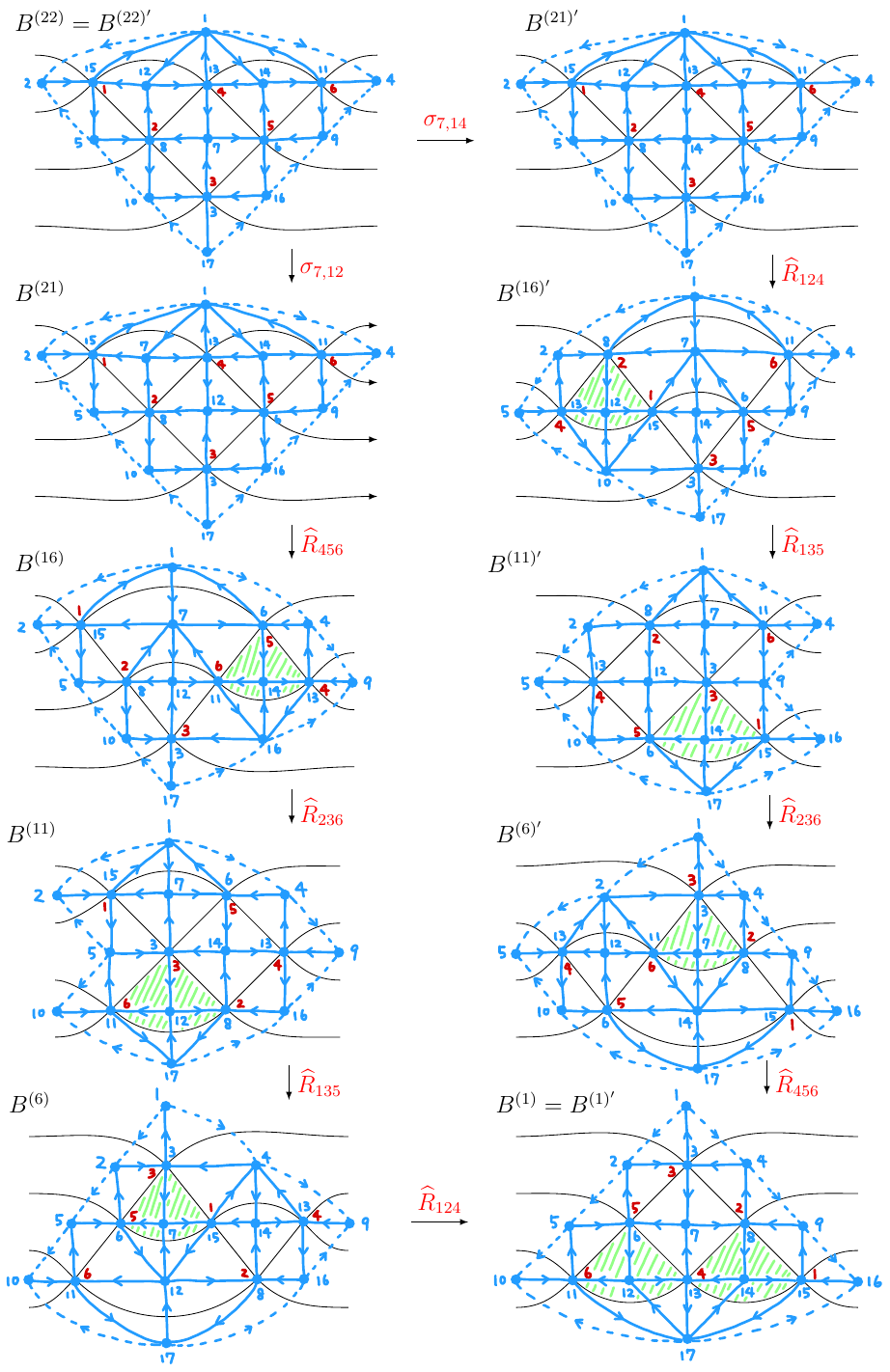}
\vspace{-2mm}
\caption{Cluster transformations $\widehat{R}_{ijk}$.
The wiring diagrams have 6 crossings (red).
The quivers (blue) have 17 vertices.
Triangles relevant to the image of $\hat{R}_{ijk}$ are hatched in green.
The seeds \smash{$\bigl(B^{(t)}, \bY^{(t)}\bigr)$} and~\smash{$\bigl(B^{(t)\prime}, \bY^{(t)\prime}\bigr)$} will be
explained in detail in Section \ref{ss:ms}.}\label{fig:bTE}\vspace{-1mm}
\end{figure}

\subsection{Monomial solutions to the tetrahedron equation}\label{ss:ms}

In this subsection, we provide additional details regarding Figure~\ref{fig:bTE} and Proposition~\ref{pr:sy}. Let
\smash{$\bigl(B^{(1)}, \bY^{(1)}\bigr) = \bigl(B^{(1)\prime}, \bY^{(1)\prime}\bigr)$} be the
initial quantum $y$-seed corresponding to the quiver at the bottom of
Figure~\ref{fig:bTE}. The quantum $y$-seeds $\bigl(B^{(t)}, \bY^{(t)}\bigr)$
and $\bigl(B^{(t)\prime}, \bY^{(t)\prime}\bigr)$ ($t=2, \dots, 21$), which
pertain to the left and the right paths are determined from it by the
mutation sequences, and we have just shown that the final results
coincide, i.e.,
$\bigl(B^{(22)}, \bY^{(22)}\bigr) = \bigl(B^{(22)\prime}, \bY^{(22)\prime}\bigr)$. Set~\smash{$\bY^{(t)}=\bigl(Y^{(t)}_1,\dotsc, Y^{(t)}_{17}\bigr)$} and
\smash{$\bY^{(t)\prime}=\bigl(Y^{(t)\prime}_1,\dotsc, Y^{(t)\prime}_{17}\bigr)$}.

The quantum $y$-seeds $\bigl(B^{(t)}, \bY^{(t)}\bigr)$ ($t=2,\dotsc, 21$) are
determined from the initial one $\bigl(B^{(1)},\allowbreak \bY^{(1)}\bigr) $ by
\begin{gather}
\bigl(B^{(1)}, \bY^{(1)}\bigr) \underset{\varepsilon_1}{\overset{\mu_{14}}{\longleftrightarrow}}
\bigl(B^{(2)}, \bY^{(2)}\bigr) \underset{\varepsilon_2}{\overset{\mu_{13}}{\longleftrightarrow}}
\bigl(B^{(3)}, \bY^{(3)}\bigr) \underset{\varepsilon_3}{\overset{\mu_{15}}{\longleftrightarrow}}
\bigl(B^{(4)}, \bY^{(4)}\bigr)\nonumber
\\
\phantom{\bigl(B^{(1)}, \bY^{(1)}\bigr)}{}
 \underset{\varepsilon_4}{\overset{\mu_{8}}{\longleftrightarrow}}
\bigl(B^{(5)}, \bY^{(5)}\bigr) \overset{\sigma_{13,15}\sigma_{8,14}}{\longleftrightarrow}
\bigl(B^{(6)}, \bY^{(6)}\bigr),\nonumber
\\
\bigl(B^{(6)}, \bY^{(6)}\bigr) \underset{\varepsilon_1}{\overset{\mu_{7}}{\longleftrightarrow}}
\bigl(B^{(7)}, \bY^{(7)}\bigr) \underset{\varepsilon_2}{\overset{\mu_{6}}{\longleftrightarrow}}
\bigl(B^{(8)}, \bY^{(8)}\bigr) \underset{\varepsilon_3}{\overset{\mu_{15}}{\longleftrightarrow}}
\bigl(B^{(9)}, \bY^{(9)}\bigr)\nonumber
\\
\phantom{\bigl(B^{(6)}, \bY^{(6)}\bigr)}{}
\underset{\varepsilon_4}{\overset{\mu_{3}}{\longleftrightarrow} }
\bigl(B^{(10)}, \bY^{(10)}\bigr) \overset{\sigma_{6,15}\sigma_{3,7}}{\longleftrightarrow}
\bigl(B^{(11)}, \bY^{(11)}\bigr),\nonumber
\\
\bigl(B^{(11)}, \bY^{(11)}\bigr) \underset{\varepsilon_1}{\overset{\mu_{12}}{\longleftrightarrow} }
\bigl(B^{(12)}, \bY^{(12)}\bigr) \underset{\varepsilon_2}{\overset{\mu_{11}}{\longleftrightarrow} }
\bigl(B^{(13)}, \bY^{(13)}\bigr) \underset{\varepsilon_3}{\overset{\begin{color}{red}\mu_{8}\end{color}}{\longleftrightarrow} }
\bigl(B^{(14)}, \bY^{(14)}\bigr)\nonumber
\\
\phantom{\bigl(B^{(11)}, \bY^{(11)}\bigr)}{}
 \underset{\varepsilon_4}{ \overset{\mu_{3}}{\longleftrightarrow}}
\bigl(B^{(15)}, \bY^{(15)}\bigr) \overset{\sigma_{8,11}\sigma_{3,12}}{\longleftrightarrow}
\bigl(B^{(16)}, \bY^{(16)}\bigr),\nonumber
\\
\bigl(B^{(16)}, \bY^{(16)}\bigr) \underset{\varepsilon_1}{\overset{\begin{color}{red}\mu_{14}\end{color}}{\longleftrightarrow}}
\bigl(B^{(17)}, \bY^{(17)}\bigr) \underset{\varepsilon_2}{\overset{\mu_{11}}{\longleftrightarrow}}
\bigl(B^{(18)}, \bY^{(18)}\bigr) \underset{\varepsilon_3}{\overset{\mu_{13}}{\longleftrightarrow}}
\bigl(B^{(19)}, \bY^{(19)}\bigr)\nonumber
\\
\phantom{\bigl(B^{(16)}, \bY^{(16)}\bigr) }{}
\underset{\varepsilon_4}{ \overset{\mu_{6}}{\longleftrightarrow}}
\bigl(B^{(20)}, \bY^{(20)}\bigr) \overset{\sigma_{11,13}\sigma_{6,14}}{\longleftrightarrow}
\bigl(B^{(21)}, \bY^{(21)}\bigr),\nonumber
\\
\bigl(B^{(21)}, \bY^{(21)}\bigr) \overset{\sigma_{7,12}}{\longleftrightarrow}
\bigl(B^{(22)}, \bY^{(22)}\bigr),\label{b21L}
\end{gather}
where the notation is parallel with \eqref{mus}. In particular the
choice of signs $\varepsilon_1, \dots, \varepsilon_4$ does not
influence the mutations themselves.
According to \eqref{rhat}, each line in the above corresponds to a~cluster transformation appearing in the left path of
Figure \ref{fig:bTE} as follows:
\begin{gather}
\widehat{R}_{124} =
\mathrm{Ad}\bigl(\Psi_q\bigl(\bigl(Y^{(1)}_{14}\bigr)^{\varepsilon_1}\bigr)^{\varepsilon_1}\bigr)\tau_{14,\varepsilon_1}
\mathrm{Ad}\bigl(\Psi_q\bigl(\bigl(Y^{(2)}_{13}\bigr)^{\varepsilon_2}\bigr)^{\varepsilon_2}\bigr)\tau_{13,\varepsilon_2}\nonumber
\\
\phantom{\widehat{R}_{124} =}{} \times
\mathrm{Ad}\bigl(\Psi_q\bigl(\bigl(Y^{(3)}_{15}\bigr)^{\varepsilon_3}\bigr)^{\varepsilon_3}\bigr)\tau_{15,\varepsilon_3}
\mathrm{Ad}\bigl(\Psi_q\bigl(\bigl(Y^{(4)}_{8}\bigr)^{\varepsilon_4}\bigr)^{\varepsilon_4}\bigr)\tau_{8,\varepsilon_4}
\sigma_{13,15}\sigma_{8,14},\nonumber
\\
\widehat{R}_{135} =
\mathrm{Ad}\bigl(\Psi_q\bigl(\bigl(Y^{(6)}_{7}\bigr)^{\varepsilon_1}\bigr)^{\varepsilon_1}\bigr)\tau_{7,\varepsilon_1}
\mathrm{Ad}\bigl(\Psi_q\bigl(\bigl(Y^{(7)}_{6}\bigr)^{\varepsilon_2}\bigr)^{\varepsilon_2}\bigr)\tau_{6,\varepsilon_2}\nonumber
\\
\phantom{\widehat{R}_{135} =}{} \times
\mathrm{Ad}\bigl(\Psi_q\bigl(\bigl(Y^{(8)}_{15}\bigr)^{\varepsilon_3}\bigr)^{\varepsilon_3}\bigr)\tau_{15,\varepsilon_3}
\mathrm{Ad}\bigl(\Psi_q\bigl(\bigl(Y^{(9)}_{3}\bigr)^{\varepsilon_4}\bigr)^{\varepsilon_4}\bigr)\tau_{3,\varepsilon_4}
\sigma_{6,15}\sigma_{3,7},\nonumber
\\
\widehat{R}_{236} =
\mathrm{Ad}\bigl(\Psi_q\bigl(\bigl(Y^{(11)}_{12}\bigr)^{\varepsilon_1}\bigr)^{\varepsilon_1}\bigr)\tau_{12,\varepsilon_1}
\mathrm{Ad}\bigl(\Psi_q\bigl(\bigl(Y^{(12)}_{11}\bigr)^{\varepsilon_2}\bigr)^{\varepsilon_2}\bigr)\tau_{11,\varepsilon_2}\nonumber
\\
\phantom{\widehat{R}_{236} =}{} \times
\mathrm{Ad}\bigl(\Psi_q\bigl(\bigl(Y^{(13)}_{8}\bigr)^{\varepsilon_3}\bigr)^{\varepsilon_3}\bigr)\tau_{8,\varepsilon_3}
\mathrm{Ad}\bigl(\Psi_q\bigl(\bigl(Y^{(14)}_{3}\bigr)^{\varepsilon_4}\bigr)^{\varepsilon_4}\bigr)\tau_{3,\varepsilon_4}
\sigma_{8,11}\sigma_{3,12},\nonumber
\\
\widehat{R}_{456} =
\mathrm{Ad}\bigl(\Psi_q\bigl(\bigl(Y^{(16)}_{14}\bigr)^{\varepsilon_1}\bigr)^{\varepsilon_1}\bigr)\tau_{14,\varepsilon_1}
\mathrm{Ad}\bigl(\Psi_q\bigl(\bigl(Y^{(17)}_{11}\bigr)^{\varepsilon_2}\bigr)^{\varepsilon_2}\bigr)\tau_{11,\varepsilon_2}\nonumber
\\
\phantom{\widehat{R}_{456} =}{} \times
\mathrm{Ad}\bigl(\Psi_q\bigl(\bigl(Y^{(18)}_{13}\bigr)^{\varepsilon_3}\bigr)^{\varepsilon_3}\bigr)\tau_{13,\varepsilon_3}
\mathrm{Ad}\bigl(\Psi_q\bigl(\bigl(Y^{(19)}_{6}\bigr)^{\varepsilon_4}\bigr)^{\varepsilon_4}\bigr)\tau_{6,\varepsilon_4}
\sigma_{11,13}\sigma_{6,14}.\label{r124a}
\end{gather}

The quantum $y$-seeds $\bigl(B^{(t)\prime}, Y^{(t)\prime}\bigr)$
($t=2, \dots, 21$) are determined from the initial one
$\bigl(B^{(1)\prime},\allowbreak Y^{(1)\prime}\bigr)$ by
\begin{gather}
\bigl(B^{(1)\prime}, \bY^{(1)\prime}\bigr) \underset{\varepsilon_1}{\overset{\mu_{12}}{\longleftrightarrow}}
\bigl(B^{(2)\prime}, \bY^{(2)\prime}\bigr) \underset{\varepsilon_2}{\overset{\mu_{11}}{\longleftrightarrow}}
\bigl(B^{(3)\prime}, \bY^{(3)\prime}\bigr) \underset{\varepsilon_3}{\overset{\mu_{13}}{\longleftrightarrow}}
\bigl(B^{(4)\prime}, \bY^{(4)\prime}\bigr)\nonumber
\\
\phantom{\bigl(B^{(1)\prime}, \bY^{(1)\prime}\bigr) }{}
\underset{\varepsilon_4}{\overset{\mu_{6}}{\longleftrightarrow}}
\bigl(B^{(5)\prime}, \bY^{(5)\prime}\bigr) \overset{\sigma_{11,13}\sigma_{6,12}}{\longleftrightarrow}
\bigl(B^{(6)\prime}, \bY^{(6)\prime}\bigr),\nonumber
\\
\bigl(B^{(6)\prime}, \bY^{(6)\prime}\bigr) \underset{\varepsilon_1}{\overset{\mu_{7}}{\longleftrightarrow}}
\bigl(B^{(7)\prime}, \bY^{(7)\prime}\bigr) \underset{\varepsilon_2}{\overset{\mu_{11}}{\longleftrightarrow}}
\bigl(B^{(8)\prime}, \bY^{(8)\prime}\bigr) \underset{\varepsilon_3}{\overset{\mu_{8}}{\longleftrightarrow}}
\bigl(B^{(9)\prime}, \bY^{(9)\prime}\bigr)\nonumber
\\
\phantom{\bigl(B^{(6)\prime}, \bY^{(6)\prime}\bigr)}{}
\underset{\varepsilon_4}{\overset{\mu_{3}}{\longleftrightarrow}}
\bigl(B^{(10)\prime}, \bY^{(10)\prime}\bigr) \overset{\sigma_{8,11}\sigma_{3,7}}{\longleftrightarrow}
\bigl(B^{(11)\prime}, \bY^{(11)\prime}\bigr),\nonumber
\\
\bigl(B^{(11)\prime}, \bY^{(11)\prime}\bigr) \underset{\varepsilon_1}{\overset{\mu_{14}}{\longleftrightarrow}}
\bigl(B^{(12)\prime}, \bY^{(12)\prime}\bigr) \underset{\varepsilon_2}{\overset{\begin{color}{red}\mu_{6}\end{color}}{\longleftrightarrow}}
\bigl(B^{(13)\prime}, \bY^{(13)\prime}\bigr) \underset{\varepsilon_3}{\overset{\mu_{15}}{\longleftrightarrow}}
\bigl(B^{(14)\prime}, \bY^{(14)\prime}\bigr)\nonumber
\\
\phantom{\bigl(B^{(11)\prime}, \bY^{(11)\prime}\bigr)}{}
\underset{\varepsilon_4}{\overset{\mu_{3}}{\longleftrightarrow}}
\bigl(B^{(15)\prime}, \bY^{(15)\prime}\bigr) \overset{\sigma_{6,15}\sigma_{3,14}}{\longleftrightarrow}
\bigl(B^{(16)\prime}, \bY^{(16)\prime}\bigr)\nonumber
\\
\bigl(B^{(16)\prime}, \bY^{(16)\prime}\bigr) \underset{\varepsilon_1}{\overset{\begin{color}{red}\mu_{12}\end{color}}{\longleftrightarrow} }
\bigl(B^{(17)\prime}, \bY^{(17)\prime}\bigr) \underset{\varepsilon_2}{\overset{\mu_{13}}{\longleftrightarrow} }
\bigl(B^{(18)\prime}, \bY^{(18)\prime}\bigr) \underset{\varepsilon_3}{\overset{\mu_{15}}{\longleftrightarrow} }
\bigl(B^{(19)\prime}, \bY^{(19)\prime}\bigr)\nonumber
\\
\phantom{\bigl(B^{(16)\prime}, \bY^{(16)\prime}\bigr)}{}
\underset{\varepsilon_4}{\overset{\mu_{8}}{\longleftrightarrow} }
\bigl(B^{(20)\prime}, \bY^{(20)\prime}\bigr) \overset{\sigma_{13,15}\sigma_{8,12}}{\longleftrightarrow}
\bigl(B^{(21)\prime}, \bY^{(21)\prime}\bigr),\nonumber
\\
\bigl(B^{(21)\prime}, \bY^{(21)\prime}\bigr) \overset{\sigma_{7,14}}{\longleftrightarrow}
\bigl(B^{(22)\prime}, \bY^{(22)\prime}\bigr).\label{b11R}
\end{gather}
They correspond to the cluster transformations in the right path of Figure \ref{fig:bTE} as follows:
\begin{align}
\widehat{R}_{456} ={}&
\mathrm{Ad}\bigl(\Psi_q\bigl(\bigl(Y^{(1)\prime}_{12}\bigr)^{\varepsilon_1}\bigr)^{\varepsilon_1}\bigr)\tau_{12,\varepsilon_1}
\mathrm{Ad}\bigl(\Psi_q\bigl(\bigl(Y^{(2)\prime}_{11}\bigr)^{\varepsilon_2}\bigr)^{\varepsilon_2}\bigr)\tau_{11,\varepsilon_2}\nonumber
\\
& \times
\mathrm{Ad}\bigl(\Psi_q\bigl(\bigl(Y^{(3)\prime}_{13}\bigr)^{\varepsilon_3}\bigr)^{\varepsilon_z3}\bigr)\tau_{13,\varepsilon_3}
\mathrm{Ad}\bigl(\Psi_q\bigl(\bigl(Y^{(4)\prime}_{6}\bigr)^{\varepsilon_4}\bigr)^{\varepsilon_4}\bigr)\tau_{6,\varepsilon_4}
\sigma_{11,13}\sigma_{6,12},\nonumber
\\
\widehat{R}_{236} ={}&
\mathrm{Ad}\bigl(\Psi_q\bigl(\bigl(Y^{(6)\prime}_{7}\bigr)^{\varepsilon_1}\bigr)^{\varepsilon_1}\bigr)\tau_{7,\varepsilon_1}
\mathrm{Ad}\bigl(\Psi_q\bigl(\bigl(Y^{(7)\prime}_{11}\bigr)^{\varepsilon_2}\bigr)^{\varepsilon_2}\bigr)\tau_{11,\varepsilon_2}\nonumber
\\
& \times
\mathrm{Ad}\bigl(\Psi_q\bigl(\bigl(Y^{(8)\prime}_{8}\bigr)^{\varepsilon_3}\bigr)^{\varepsilon_3}\bigr)\tau_{8,\varepsilon_3}
\mathrm{Ad}\bigl(\Psi_q\bigl(\bigl(Y^{(9)\prime}_{3}\bigr)^{\varepsilon_4}\bigr)^{\varepsilon_4}\bigr)\tau_{3,\varepsilon_4}
\sigma_{8,11}\sigma_{3,7},\nonumber
\\
\widehat{R}_{135} ={}&
\mathrm{Ad}\bigl(\Psi_q\bigl(\bigl(Y^{(11)\prime}_{14}\bigr)^{\varepsilon_1}\bigr)^{\varepsilon_1}\bigr)\tau_{14,\varepsilon_1}
\mathrm{Ad}\bigl(\Psi_q\bigl(\bigl(Y^{(12)\prime}_{6}\bigr)^{\varepsilon_2}\bigr)^{\varepsilon_2}\bigr)\tau_{6,\varepsilon_2}\nonumber
\\
& \times
\mathrm{Ad}\bigl(\Psi_q\bigl(\bigl(Y^{(13)\prime}_{15}\bigr)^{\varepsilon_3}\bigr)^{\varepsilon_3}\bigr)\tau_{15,\varepsilon_3}
\mathrm{Ad}\bigl(\Psi_q\bigl(\bigl(Y^{(14)\prime}_{3}\bigr)^{\varepsilon_4}\bigr)^{\varepsilon_4}\bigr)\tau_{3,\varepsilon_4}
\sigma_{6,15}\sigma_{3,14},\nonumber
\\
\widehat{R}_{124} ={}&
\mathrm{Ad}\bigl(\Psi_q\bigl(\bigl(Y^{(16)\prime}_{12}\bigr)^{\varepsilon_1}\bigr)^{\varepsilon_1}\bigr)\tau_{12,\varepsilon_1}
\mathrm{Ad}\bigl(\Psi_q\bigl(\bigl(Y^{(17)\prime}_{13}\bigr)^{\varepsilon_2}\bigr)^{\varepsilon_2}\bigr)\tau_{13,\varepsilon_2}\nonumber
\\
& \times
\mathrm{Ad}\bigl(\Psi_q\bigl(\bigl(Y^{(18)\prime}_{15}\bigr)^{\varepsilon_3}\bigr)^{\varepsilon_3}\bigr)\tau_{15,\varepsilon_3}
\mathrm{Ad}\bigl(\Psi_q\bigl(\bigl(Y^{(19)\prime}_{8}\bigr)^{\varepsilon_4}\bigr)^{\varepsilon_4}\bigr)\tau_{8,\varepsilon_4}
\sigma_{13,15}\sigma_{8,12}.\label{r456b}
\end{align}
Although the formulas \eqref{r124a} and \eqref{r456b} may appear distinct,
they all signify the same transformation described in Proposition \ref{pr:RY}
for the corresponding subsets of $Y$-variables.
This fact justifies denoting them by the common symbol $\widehat{R}$.

\begin{Remark}\label{re:red}
 Consider the tropical $y$-seeds generated by the same mutation
 sequences from the initial one
 \smash{$\bigl(B^{(1)}, y^{(1)}\bigr) = \bigl(B^{(1)\prime}, y^{(1)\prime}\bigr)$}. Suppose
 \smash{$y^{(1)}_i = y^{(1)\prime}_i$} is positive for all $i=1, \dots,
 17$. Then the four mutations highlighted in red in \eqref{b21L}
 and \eqref{b11R} are associated to a negative tropical sign of the
 $y$-variable at the mutation point (the $y$-seed in the left), while
 the remaining ones are positive.
\end{Remark}

Let us introduce the monomial parts of the cluster transformations \eqref{r124a} and \eqref{r456b}
\begin{gather*}
\tau_{124 | \varepsilon_1,\varepsilon_2,\varepsilon_3,\varepsilon_4} :=
\tau_{14,\varepsilon_1}\tau_{13,\varepsilon_2}\tau_{15,\varepsilon_3}\tau_{8,\varepsilon_4}\sigma_{13,15}\sigma_{8,14}\colon\
 \mathcal{Y}\bigl(B^{(6)}\bigr) \rightarrow \mathcal{Y}\bigl(B^{(1)}\bigr),
\\
\tau_{135 | \varepsilon_1,\varepsilon_2,\varepsilon_3,\varepsilon_4} :=
\tau_{7,\varepsilon_1}\tau_{6,\varepsilon_2}\tau_{15,\varepsilon_3}\tau_{3,\varepsilon_4}\sigma_{6,15}\sigma_{3,7}\colon\
 \mathcal{Y}\bigl(B^{(11)}\bigr) \rightarrow \mathcal{Y}\bigl(B^{(6)}\bigr),
\\
\tau_{236 | \varepsilon_1,\varepsilon_2,\varepsilon_3,\varepsilon_4} :=
\tau_{12,\varepsilon_1}\tau_{11,\varepsilon_2}\tau_{8,\varepsilon_3}\tau_{3,\varepsilon_4}\sigma_{8,11}\sigma_{3,12}\colon\
 \mathcal{Y}\bigl(B^{(16)}\bigr) \rightarrow \mathcal{Y}\bigl(B^{(11)}\bigr),
\\
\tau_{456 | \varepsilon_1,\varepsilon_2,\varepsilon_3,\varepsilon_4} :=
\tau_{14,\varepsilon_1}\tau_{11,\varepsilon_2}\tau_{13,\varepsilon_3}\tau_{6,\varepsilon_4}\sigma_{11,13}\sigma_{6,14}\colon\
 \mathcal{Y}\bigl(B^{(21)}\bigr) \rightarrow \mathcal{Y}\bigl(B^{(16)}\bigr),
\\
\tau'_{456 | \varepsilon_1,\varepsilon_2,\varepsilon_3,\varepsilon_4} :=
\tau_{12,\varepsilon_1}\tau_{11,\varepsilon_2}\tau_{13,\varepsilon_3}\tau_{6,\varepsilon_4}\sigma_{11,13}\sigma_{6,12}\colon\
 \mathcal{Y}\bigl(B^{(6)\prime}\bigr) \rightarrow \mathcal{Y}\bigl(B^{(1)\prime}\bigr),
\\
\tau'_{236 | \varepsilon_1,\varepsilon_2,\varepsilon_3,\varepsilon_4} :=
\tau_{7,\varepsilon_1}\tau_{11,\varepsilon_2}\tau_{8,\varepsilon_3}\tau_{3,\varepsilon_4}\sigma_{8,11}\sigma_{3,7}\colon\
 \mathcal{Y}\bigl(B^{(11)\prime}\bigr) \rightarrow \mathcal{Y}\bigl(B^{(6)\prime}\bigr),
\\
\tau'_{135 | \varepsilon_1,\varepsilon_2,\varepsilon_3,\varepsilon_4} :=
\tau_{14,\varepsilon_1}\tau_{6,\varepsilon_2}\tau_{15,\varepsilon_3}\tau_{3,\varepsilon_4}\sigma_{6,15}\sigma_{3,14}\colon\
 \mathcal{Y}\bigl(B^{(16)\prime}\bigr) \rightarrow \mathcal{Y}\bigl(B^{(11)\prime}\bigr),
\\
\tau'_{124 | \varepsilon_1,\varepsilon_2,\varepsilon_3,\varepsilon_4} :=
\tau_{12,\varepsilon_1}\tau_{13,\varepsilon_2}\tau_{15,\varepsilon_3}\tau_{8,\varepsilon_4}\sigma_{13,15}\sigma_{8,12}\colon\
 \mathcal{Y}\bigl(B^{(21)\prime}\bigr) \rightarrow \mathcal{Y}\bigl(B^{(16)\prime}\bigr).
\end{gather*}
The primes in $\tau'_{\!ijk | \varepsilon_1,\varepsilon_2,\varepsilon_3,\varepsilon_4}$
are added just for distinction.
The maps $\tau_{\!ijk | \varepsilon_1,\varepsilon_2,\varepsilon_3,\varepsilon_4}$ and
$\tau'_{\!ijk | \varepsilon_1,\varepsilon_2,\varepsilon_3,\varepsilon_4}$
consistently adhere to $\tau_{\varepsilon_1,\varepsilon_2,\varepsilon_3,\varepsilon_4}$ in
\eqref{taue} with respect to the subset of $Y$-variables.

Now we are ready to explain monomial solutions to the twisted tetrahedron equation.
Proposition \ref{pr:sy}, Figure \ref{fig:bTE} and Remark \ref{re:sgni} indicate
the equality of the tropical $y$-variables $y^{(22)} = y^{(22)\prime}$ {\em provided} that
all the signs associated with the monomial part of the mutation are chosen to be the tropical signs.
Considering Remark \ref{re:red} alongside, we find that
\begin{gather}
\tau_{124| + + + +} \tau_{135| + + + +}\tau_{236| + + - +}\tau_{456| - + + +}\sigma_{7,12}\nonumber\\
\qquad
=
\tau'_{456| + + + +}\tau'_{236| + + + +}\tau'_{135| + - + +}\tau'_{124| - + + +}\sigma_{7,14}\label{ihte}
\end{gather}
is valid instead of the naive choice of $\tau_{ijk| ++++}$ and $\tau'_{ijk| ++++}$ everywhere.
This is an inhomogeneous version of the twisted tetrahedron equation,
as the maps involved are not always uniform in their sign indices.
The coincident image of \smash{$Y^{(22)}_1, \dots, Y^{(22)}_{17}$} by the
two sides are sign coherent monomials in the initial $Y$-variables
$\bY^{(1)} = (Y_1,\dotsc, Y_{17})$. Their explicit form is available
in~\eqref{y22}.

A natural question is whether there are monomial solutions to the twisted tetrahedron equation
with the signs homogeneously chosen as $(\varepsilon_1,\varepsilon_2,\varepsilon_3,\varepsilon_4)$
\begin{gather}
\tau_{124| \varepsilon_1,\varepsilon_2,\varepsilon_3,\varepsilon_4}
\tau_{135| \varepsilon_1,\varepsilon_2,\varepsilon_3,\varepsilon_4}
\tau_{236| \varepsilon_1,\varepsilon_2,\varepsilon_3,\varepsilon_4}
\tau_{456| \varepsilon_1,\varepsilon_2,\varepsilon_3,\varepsilon_4} \sigma_{7,12}\nonumber
\\
\qquad=
\tau'_{456| \varepsilon_1,\varepsilon_2,\varepsilon_3,\varepsilon_4}
\tau'_{236| \varepsilon_1,\varepsilon_2,\varepsilon_3,\varepsilon_4}
\tau'_{135| \varepsilon_1,\varepsilon_2,\varepsilon_3,\varepsilon_4}
\tau'_{124| \varepsilon_1,\varepsilon_2,\varepsilon_3,\varepsilon_4}\sigma_{7,14}.\label{hteq}
\end{gather}
The answer is given by a direct calculation as follows.
\begin{Proposition} \label{pr:mono}
The monomial part
satisfies the tetrahedron equation \eqref{hteq}
if and only if~${\varepsilon_1=-\varepsilon_4 = -}$, i.e.,
$(\varepsilon_1,\varepsilon_2,\varepsilon_3,\varepsilon_4) \in
\{(-,+,+,+), (-,+,-,+), (-,-,+,+), (-,-,-,+)\}$.
\end{Proposition}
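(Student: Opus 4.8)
The plan is to exploit that every map in \eqref{hteq} is \emph{monomial}: by \eqref{taft} each $\tau_{k,\varepsilon}$ sends a generator $\rY_i$ to a Laurent monomial in the $\rY_j$ times a power of $q$, and each $\sigma_{r,s}$ is a permutation, so both sides of \eqref{hteq} are monomial automorphisms of the quantum torus $\mathcal{T}\bigl(B^{(1)}\bigr)$. Two monomial maps coincide iff they agree on every generator, i.e.\ iff their exponent matrices on $\Z^{17}$ agree \emph{and} their $q$-prefactors (coming from the $q^{-b_{ik}[\varepsilon b_{ik}]_+}$ in \eqref{tauy}) agree. The sequence of exchange matrices along both paths is fixed by \eqref{b21L} and \eqref{b11R} and is \emph{independent} of the signs, since \eqref{muB} contains no sign data. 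Thus the problem reduces to a finite linear-algebra comparison over the sixteen choices of $(\varepsilon_1,\varepsilon_2,\varepsilon_3,\varepsilon_4)$.

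Rather than recompute all sixteen cases from scratch, I would anchor the computation at the tropical baseline. By Remark~\ref{re:sgni}, when the sign at a mutation equals the tropical sign the monomial part \eqref{taft} coincides with the $c$-vector transformation, and Remark~\ref{re:red} records these tropical signs along the two paths; consequently the inhomogeneous identity \eqref{ihte} already gives equality of the two sides when the tropical signs are used. The effect of departing from a tropical sign is captured by a single \emph{shear}: writing $T_k^{\varepsilon}$ for the linear part of $\tau_{k,\varepsilon}$ on $\Z^{17}$, one checks from \eqref{taft} and $[a]_+-[-a]_+=a$ that $T_k^{+}=D_k T_k^{-}$, where $D_k$ is the transvection $e_i\mapsto e_i+b_{ik}e_k$ ($i\neq k$), $e_k\mapsto e_k$, with $b_{ik}$ the entries of the exchange matrix at that step. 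Hence each homogeneous side of \eqref{hteq} equals the corresponding tropical side with a shear $D_k$ inserted at each mutation slot where $(\varepsilon_1,\varepsilon_2,\varepsilon_3,\varepsilon_4)$ differs from the tropical signs $(+,+,+,+),(+,+,+,+),(+,+,-,+),(-,+,+,+)$ on the left and $(+,+,+,+),(+,+,+,+),(+,-,+,+),(-,+,+,+)$ on the right.

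Because the tropical compositions already agree, \eqref{hteq} collapses to the identity asserting that, after transporting all inserted shears to one end, the net shear matches on the two sides. I would then carry out this cancellation explicitly, using the sign-independent exchange matrices to read off the relevant $b_{ik}$ at each of the eight mutation points per side, and also tracking the $q$-prefactors. I expect the shears produced by flipping the inner signs $\varepsilon_2,\varepsilon_3$ to be common to both sides, or to commute harmlessly through the adjacent blocks, so that $\varepsilon_2,\varepsilon_3$ remain unconstrained, whereas the shears produced at the outer slots $\varepsilon_1,\varepsilon_4$ interact asymmetrically with the boundary transpositions $\sigma_{7,12}$ and $\sigma_{7,14}$ and with the neighbouring blocks; matching them should force exactly $\varepsilon_1=-$ and $\varepsilon_4=+$. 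This is the conceptual content of Examples~\ref{ex:1} and~\ref{ex:2}: precisely for $\varepsilon_1=-,\varepsilon_4=+$ does $\widehat R$ split as in \eqref{dode} and \eqref{dode2} into $\mathrm{Ad}(\cdots Y)\,\tau\,\mathrm{Ad}(\cdots Y')$ with the outer dilogarithms peeling off to the two ends, which is what lets the monomial part alone solve the equation.

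The main obstacle is this shear bookkeeping: determining the correct $b_{ik}$ at each mutation point along the path-dependent (but sign-independent) exchange matrices, and verifying that the inserted transvections cancel between the two sides exactly in the four allowed cases, including agreement of the $q$-prefactors. An efficient organization is to first compute the ten-variable block map $\tau_{\varepsilon_1,\varepsilon_2,\varepsilon_3,\varepsilon_4}$ of \eqref{taue} for all sixteen sign patterns by the same method used in Examples~\ref{ex:1} and~\ref{ex:2}, and then assemble the $17$-variable maps $\tau_{ijk|\cdots}$ and $\tau'_{ijk|\cdots}$ block by block. The ``if'' direction is then the cancellation above, while the ``only if'' direction follows by exhibiting, for each of the twelve excluded patterns, a single generator $Y'_i$ whose images under the two sides of \eqref{hteq} disagree.
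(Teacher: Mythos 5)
Your proposal is correct in substance and reduces Proposition~\ref{pr:mono} to the same finite verification that the paper performs, but it organizes that verification differently. The paper's own proof is a bare direct calculation: one computes the block maps $\tau_{\varepsilon_1,\varepsilon_2,\varepsilon_3,\varepsilon_4}$ for the sixteen sign patterns (the four admissible ones are recorded in Examples~\ref{ex:1} and~\ref{ex:2} and Appendix~\ref{ap:sup}) and compares the two sides of \eqref{hteq} on generators, using that the exchange matrices along \eqref{b21L} and \eqref{b11R} are sign-independent by \eqref{muB}. You instead anchor at the inhomogeneous tropical identity \eqref{ihte} and measure each homogeneous side's deviation from it by transvections; your relation $T_k^{+}=D_kT_k^{-}$ is correct, following from $[a]_+=a+[-a]_+$ applied to \eqref{taft}, and your reading of the baseline sign patterns from Remark~\ref{re:red} is accurate. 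Two simplifications are available to you: in the Weyl-ordered formalism \eqref{taft} each $\tau_{k,\varepsilon}$, and hence each composition, acts as $\rY^{\alpha}\mapsto\rY^{T\alpha}$ for a single matrix $T\in GL_{17}(\Z)$ intertwining the skew forms of \eqref{qyy}, so the $q$-prefactors of \eqref{tauy} are automatic and need no separate tracking; and both sides of \eqref{hteq} are isomorphisms $\mathcal{FT}\bigl(B^{(22)}\bigr)\to\mathcal{FT}\bigl(B^{(1)}\bigr)$ with common source and target, rather than automorphisms of $\mathcal{T}\bigl(B^{(1)}\bigr)$, though this slip is harmless. What your scaffolding buys is economy: given \eqref{ihte}, only the transported transvections need comparing, instead of recomputing sixteen $17\times17$ matrices per side. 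What it does not buy is escape from the computation itself: your sentence ``I expect \dots should force exactly $\varepsilon_1=-$ and $\varepsilon_4=+$'' is precisely the content that must still be checked, and the appeal to the splittings \eqref{dode} and \eqref{dode2} is not evidence for it, since pushing $\tau$ through the adjoint factors is available for \emph{every} sign choice (the genuinely distinguished feature of $(-,-,+,+)$ and $(-,+,-,+)$, noted after Theorem~\ref{th:main}, concerns the adjoint realization of the monomial part, not Proposition~\ref{pr:mono}). With the shear cancellation carried out for the four claimed patterns and a disagreeing generator exhibited for each of the twelve excluded ones, your argument becomes a complete proof, essentially a better-organized version of the paper's direct calculation.
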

Examples \ref{ex:1} and \ref{ex:2} describe the monomial parts
$\tau_{--++}$ and $\tau_{-+-+}$ explicitly.
Analogous information is supplied for the remaining two cases in Appendix \ref{ap:sup}.

\subsection{Dilogarithm identities}\label{ss:did}

Now we turn to the dilogarithm identities that will be utilized later.
Substitute \eqref{r124a} and \eqref{r456b} into the
left-hand side and the right-hand side of \eqref{TE1} respectively.
The result takes the form
\begin{gather}
\mathrm{Ad}\bigl(
\Psi_q(U_1)^{\varepsilon_1} \dotsm \Psi_q(U_4)^{\varepsilon_4}\bigr)
\tau_{124| \varepsilon_1, \varepsilon_2, \varepsilon_3, \varepsilon_4}
\mathrm{Ad}\bigl(
\Psi_q(U_5)^{\varepsilon_1} \dotsm \Psi_q(U_8)^{\varepsilon_4}\bigr)
\tau_{135| \varepsilon_1, \varepsilon_2, \varepsilon_3, \varepsilon_4}\nonumber
\\
\phantom{\qquad =}{}\times
\mathrm{Ad}\bigl(
\Psi_q(U_9)^{\varepsilon_1} \dotsm \Psi_q(U_{12})^{\varepsilon_4}\bigr)
\tau_{236 | \varepsilon_1, \varepsilon_2, \varepsilon_3, \varepsilon_4}\nonumber
\\
\phantom{\qquad =}{}\times
\mathrm{Ad}\bigl(
\Psi_q(U_{13})^{\varepsilon_1} \dotsm \Psi_q(U_{16})^{\varepsilon_4}\bigr)
\tau_{456| \varepsilon_1, \varepsilon_2, \varepsilon_3, \varepsilon_4}\sigma_{7,12}\nonumber
\\
\qquad {}=
\mathrm{Ad}\bigl(
\Psi_q\bigl(U'_1\bigr)^{\varepsilon_1} \dotsm \Psi_q\bigl(U'_4\bigr)^{\varepsilon_4}\bigr)
\tau'_{456| \varepsilon_1, \varepsilon_2, \varepsilon_3, \varepsilon_4}
\mathrm{Ad}\bigl(
\Psi_q\bigl(U'_5\bigr)^{\varepsilon_1} \dotsm \Psi_q\bigl(U'_8\bigr)^{\varepsilon_4}\bigr)
\tau'_{236| \varepsilon_1, \varepsilon_2, \varepsilon_3, \varepsilon_4}\nonumber
\\
\phantom{\qquad =}{} \times
\mathrm{Ad}\bigl(
\Psi_q\bigl(U'_9\bigr)^{\varepsilon_1} \dotsm \Psi_q\bigl(U'_{12}\bigr)^{\varepsilon_4}\bigr)
\tau'_{135| \varepsilon_1, \varepsilon_2, \varepsilon_3, \varepsilon_4}\nonumber
\\
\phantom{\qquad =}{} \times
\mathrm{Ad}\bigl(
\Psi_q\bigl(U'_{13}\bigr)^{\varepsilon_1} \dotsm \Psi_q\bigl(U'_{16}\bigr)^{\varepsilon_4}\bigr)
\tau'_{124| \varepsilon_1, \varepsilon_2, \varepsilon_3, \varepsilon_4}\sigma_{7,14},\label{psiu}
\end{gather}
where $U_t$ and $U'_t$ ($t=1, \dots, 16$) denote the $Y$-variables
depending on
$(\varepsilon_1, \varepsilon_2, \varepsilon_3, \varepsilon_4)$.
Pushing the monomial parts to the right brings \eqref{psiu} into the form
\begin{gather}
\mathrm{Ad}\bigl(\Psi_q(Z_1)^{\varepsilon_1}\dotsm \Psi_q(Z_{16})^{\varepsilon_{4}}\bigr)
\tau_{124|\varepsilon_1,\varepsilon_2,\varepsilon_3,\varepsilon_4}
\tau_{135|\varepsilon_1,\varepsilon_2,\varepsilon_3,\varepsilon_4}
\tau_{236|\varepsilon_1,\varepsilon_2,\varepsilon_3,\varepsilon_4}
\tau_{456|\varepsilon_1,\varepsilon_2,\varepsilon_3,\varepsilon_4}\sigma_{7,12}\nonumber
\\
\qquad=
\mathrm{Ad}\bigl(\Psi_q\bigl(Z'_1\bigr)^{\varepsilon_1}\dotsm \Psi_q\bigl(Z'_{16}\bigr)^{\varepsilon_{4}}\bigr)
\tau'_{456|\varepsilon_1,\varepsilon_2,\varepsilon_3,\varepsilon_4}
\tau'_{236|\varepsilon_1,\varepsilon_2,\varepsilon_3,\varepsilon_4}\nonumber
\\
\phantom{\qquad=}{}\times
\tau'_{135|\varepsilon_1,\varepsilon_2,\varepsilon_3,\varepsilon_4}
\tau'_{124|\varepsilon_1,\varepsilon_2,\varepsilon_3,\varepsilon_4}\sigma_{7,14},\label{zteq}
\end{gather}
where $Z_i$ and $Z'_i$ are monomials of $Y_1, \dots, Y_{16}$
determined by
\begin{align}\label{zu}
Z_i = \begin{cases} U_i, & i = 1,\dotsc, 4,
\\
 \tau_{124|\varepsilon_1,\varepsilon_2,\varepsilon_3,\varepsilon_4}
 (U_i), & i=5,\dotsc, 8,
\\
\tau_{124|\varepsilon_1,\varepsilon_2,\varepsilon_3,\varepsilon_4}
\tau_{135|\varepsilon_1,\varepsilon_2,\varepsilon_3,\varepsilon_4}
(U_i), & i=9, \ldots, 12,
\\
\tau_{124|\varepsilon_1,\varepsilon_2,\varepsilon_3,\varepsilon_4}
\tau_{135|\varepsilon_1,\varepsilon_2,\varepsilon_3,\varepsilon_4}
\tau_{236|\varepsilon_1,\varepsilon_2,\varepsilon_3,\varepsilon_4}
(U_i), & i=13, \ldots, 16.
\end{cases}
\end{align}
The elements $Z'_i$ are similarly determined from the
right-hand side of \eqref{psiu}.
From \eqref{zteq} and Proposition~\ref{pr:mono}, we deduce
\begin{align}\label{adeq}
\mathrm{Ad}\bigl(\Psi_q(Z_1)^{\varepsilon_1}\cdots \Psi_q(Z_{16})^{\varepsilon_{4}}\bigr)
=\mathrm{Ad}\bigl(\Psi_q\bigl(Z'_1\bigr)^{\varepsilon_1}\cdots \Psi_q\bigl(Z'_{16}\bigr)^{\varepsilon_{4}}\bigr)
\end{align}
for $(\varepsilon_1,\varepsilon_2,\varepsilon_3,\varepsilon_4) \in
\{(-,+,+,+), (-,+,-,+), (-,-,+,+), (-,-,-,+)\}$.
Actually a stronger equality holds.

\begin{Proposition}\label{pr:dilog}
For $(\varepsilon_1,\varepsilon_2,\varepsilon_3,\varepsilon_4) \in \{(-,+,-,+), (-,-,+,+)\}$,
the products of quantum dilogarithms within $\mathrm{Ad}$ in \eqref{adeq}
are well defined formal Laurent series in
the nine $Y$-variables
$Y_3$, $Y_6$, $Y_7$, $Y_8$, $Y_{11}$, $Y_{12}$, $Y_{13}$, $Y_{14}$ and $Y_{15}$.
Moreover, they are equal, i.e.,
\begin{align}\label{pseq}
\Psi_q(Z_1)^{\varepsilon_1}\dotsm \Psi_q(Z_{16})^{\varepsilon_{4}}
=
\Psi_q\bigl(Z'_1\bigr)^{\varepsilon_1}\dotsm \Psi_q\bigl(Z'_{16}\bigr)^{\varepsilon_{4}}.
\end{align}
\end{Proposition}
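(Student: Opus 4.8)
The plan is to upgrade the $\mathrm{Ad}$-level identity \eqref{adeq}, which we already have in hand from Proposition~\ref{pr:mono}, to the honest identity \eqref{pseq} by gaining control over the central discrepancy between the two products. Throughout, write $P = \Psi_q(Z_1)^{\varepsilon_1}\dotsm \Psi_q(Z_{16})^{\varepsilon_4}$ and $P' = \Psi_q\bigl(Z'_1\bigr)^{\varepsilon_1}\dotsm \Psi_q\bigl(Z'_{16}\bigr)^{\varepsilon_4}$ for the two sides of \eqref{pseq}.

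First I would compute all the arguments $Z_i$ and $Z'_i$ explicitly. By \eqref{zu} each $Z_i$ is obtained from the variables $U_i$ of \eqref{psiu} by applying a composite of the monomial maps $\tau_{ijk|\varepsilon_1,\varepsilon_2,\varepsilon_3,\varepsilon_4}$, whose action is read off from Examples~\ref{ex:1} and~\ref{ex:2} and Appendix~\ref{ap:sup}; the $Z'_i$ are handled the same way. Carrying this out for the two sign patterns $(-,+,-,+)$ and $(-,-,+,+)$, one verifies directly that every $Z_i$ and every $Z'_i$ is a Laurent monomial in the nine variables $Y_3,Y_6,Y_7,Y_8,Y_{11},Y_{12},Y_{13},Y_{14},Y_{15}$ alone, the remaining eight initial variables dropping out. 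For well-definedness I would exhibit a single $\Z$-grading on the monomials in these nine variables, i.e. a linear functional $d$ on the exponent lattice, that is strictly positive on all sixteen exponent vectors of the $Z_i$ (and likewise of the $Z'_i$). By \eqref{expa} each factor $\Psi_q(Z_i)^{\varepsilon_i}$ is a power series in $Z_i$ with constant term $1$; positivity of $d$ then ensures that a monomial of any fixed $d$-degree receives only finitely many contributions, so the sixteen-fold product is a well-defined element of the $d$-graded completion whose degree-$0$ part is $1$. It is precisely the existence of such a uniform positive grading that singles out the two sign choices named in the statement.

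With well-definedness established, the identity \eqref{adeq} gives $\mathrm{Ad}(P) = \mathrm{Ad}(P')$, so $P^{-1}P'$ commutes with every $Y_i$ and is a central element of the completed quantum torus. Moreover $P^{-1}P'$ lies in the sub-quantum-torus on the sublattice $L \subseteq \Z^9$ generated by the exponent vectors of all the $Z_i$ and $Z'_i$, and its degree-$0$ part equals $1$ by the previous paragraph. The center of the quantum torus on $L$ is governed by the radical of the skew form $\langle\cdot,\cdot\rangle$ of \eqref{qyy} restricted to $L$: central monomials correspond exactly to exponent vectors in that radical. I would therefore compute the exchange matrix on the nine relevant vertices and check that the induced skew form is nondegenerate on $L$, equivalently that $L$ meets the radical of the nine-variable form only in $0$. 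Nondegeneracy forces the center of the corresponding torus to consist of scalars, so $P^{-1}P' \in \mathbb{Q}(q)$, and comparing degree-$0$ parts yields $P^{-1}P' = 1$, which is \eqref{pseq}.

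The main obstacle is this last step. Producing the monomials $Z_i$ is routine if tedious, but verifying nondegeneracy of the form on $L$ carries the real content, and it is exactly what distinguishes \eqref{pseq} from the weaker \eqref{adeq}. Note that the nine-variable form is automatically degenerate, since a skew form of odd rank always is; so the crux is to confirm that this unavoidable central direction escapes the lattice $L$ actually generated by the $Z_i$. A secondary technical point is checking that one and the same positive grading $d$ serves all sixteen arguments in both sign patterns, which is what makes $P$ and $P'$ legitimate formal series rather than merely formal expressions.
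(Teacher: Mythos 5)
Your first two steps are sound. The identification of the $Z_i$, $Z'_i$ as Laurent monomials in the nine variables matches the paper's data \eqref{zdL}--\eqref{zdR}, and your positive-grading criterion for well-definedness is exactly the Gordan dual of the paper's check that the nine equations \eqref{pne} admit at most finitely many solutions $\mathbf{n}\in(\Z_{\ge 0})^{16}$: such a functional does exist, e.g., assigning $(Y_3,Y_6,Y_7,Y_8,Y_{11},Y_{12},Y_{13},Y_{14},Y_{15})$ the degrees $(1,0,-1,2,0,-1,2,-3,4)$ makes all thirty-two monomials in \eqref{zdL} and \eqref{zdR} strictly positive, and your observation that the existence of such a grading singles out the two admissible sign choices agrees with the remark following the paper's proof. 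Your degree-zero comparison also reproduces the paper's determination of the scalar as $1$.

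The gap is in the final step, and it is not repairable as stated: the lattice $L$ is provably all of $\Z^9$, so your crux check cannot succeed. Indeed $Z_1=Y_{14}^{-1}$, $Z_9=Y_{12}^{-1}$, $Z_{13}=Y_7^{-1}$ are already unit vectors, and the quotients $Z_2Z_1^{-1}$, $Z_3Z_1$, $Z_4Z_3^{-1}$, $Z_{10}Z_9^{-1}$, $Z_{14}Z_{13}^{-1}$, $Z_{16}Z_{15}^{-1}$ peel off the remaining six unit vectors. Hence $L=\Z^9$ contains the radical of the nine-variable skew form, which is nonzero for the parity reason you yourself note, and the "unavoidable central direction" does not escape $L$; centrality of $P^{-1}P'$ relative to the nine-variable torus therefore only constrains it to be a formal series in central monomials, which is precisely the ambiguity separating \eqref{pseq} from \eqref{adeq}. (A side slip: for a proper sublattice, nondegeneracy of the restricted form and $L\cap\mathrm{rad}=0$ are not equivalent conditions, though here both fail simultaneously.) To rescue your route you would have to use the full strength of \eqref{adeq}, namely that $\Phi=P(P')^{-1}$ commutes with all seventeen $Y$-variables of $\mathcal{Y}\bigl(B^{(1)}\bigr)$, and then verify that no nonzero vector supported on the nine mutated vertices lies in the radical of the $17\times 17$ exchange form --- a contingent computation requiring the explicit $B^{(1)}$, which you have not assembled and whose outcome is not guaranteed a priori. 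The paper avoids any such center computation by the Kashaev--Nakanishi device: extend the degenerate $B^{(1)}$ to a nondegenerate exchange matrix $\tilde B$ of twice the size as in \cite{KN11}, invoke the extension theorem of \cite{N11} to conclude that the periodicity of the seed persists for $\bigl(\tilde B,\tilde Y\bigr)$, whence $\Phi$ commutes with the entire nondegenerate quantum torus $\mathcal{T}\bigl(\tilde B\bigr)$ and must be a scalar; your constant-term argument then closes the proof exactly as in the paper.
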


\begin{proof}
 We show the claim for
 $(\varepsilon_1,\varepsilon_2,\varepsilon_3,\varepsilon_4)
 =(-,-,+,+)$. The case $(-,+,-,+)$ is similar. The data $Z_1,
 \dots, Z_{16}$ for
 $(\varepsilon_1,\varepsilon_2,\varepsilon_3,\varepsilon_4)=(-,-,+,+)$
 is given by
\begin{align}\label{zdL}
\begin{pmatrix}
Y_{14}^{-1} &
qY^{-1}_{13}Y^{-1}_{14} &
q^{-1}Y_{14}Y_{15} &
q^{-2}Y_{8}Y_{14}Y_{15}
\\
q^{2}Y^{-1}_7Y^{-1}_{13}Y^{-1}_{14} &
q^3Y^{-1}_6Y^{-1}_7Y^{-1}_{13}Y^{-1}_{14} &
q^{-3}Y_7Y_8Y_{14}Y_{15} &
q^{-4}Y_3Y_7Y_8Y_{14}Y_{15}
\\
Y^{-1}_{12} &
qY^{-1}_{11}Y^{-1}_{12} &
q^{-1}Y_{12}Y_{13}&
q^{-2}Y_6Y_{12} Y_{13}
\\
Y^{-1}_{7} &
qY^{-1}_6Y^{-1}_7 &
q^{-1}Y_7Y_8 &
q^{-2}Y_3Y_7Y_8
\end{pmatrix},
\end{align}
where the element at $i$th row and the $j$th column from the top left signifies
$Z_{4i+j-4}$.
Similarly,
the data $Z'_1, \dotsc, Z'_{16}$
is given as follows:
\begin{align}\label{zdR}
\begin{pmatrix}
Y^{-1}_{12} &
qY^{-1}_{11}Y^{-1}_{12} &
q^{-1}Y_{12} Y_{13}&
q^{-2}Y_6Y_{12} Y_{13}
\\
Y^{-1}_7 &
qY^{-1}_6Y^{-1}_7 &
q^{-1}Y_7Y_8 &
q^{-2}Y_3Y_7Y_8
\\
Y^{-1}_{12}Y^{-1}_{13}Y^{-1}_{14} &
qY^{-1}_{11}Y^{-1}_{12}Y^{-1}_{13}Y^{-1}_{14} &
q^{-1}Y_{12} Y_{13} Y_{14} Y_{15} &
q^{-2}Y_6Y_{12} Y_{13} Y_{14} Y_{15}
\\
Y_6Y^{-1}_{11}Y^{-1}_{14} &
qY^{-1}_{13}Y^{-1}_{14} &
q^{-1}Y_{14}Y_{15} &
q^{-2}Y_3Y^{-1}_6Y_8Y_{14}Y_{15}
\end{pmatrix}.
\end{align}
Note that $Z'_{13}$ and $Z'_{16}$ in \eqref{zdR} are not sign
coherent. In order to show the well-definedness of the left-hand side of
\eqref{pseq}, expand the 16 $\Psi_q$'s via \eqref{expa} with the
summation variables $n_1, \dots, n_{16} \in \Z_{\ge 0}$. By using
the $q$-commutativity of $Y$-variables, one can arrange each term of
the expansion uniquely~as
\begin{gather}
\Psi_q(Z_1)^{-1}\Psi_q(Z_2)^{-1}\Psi_q(Z_3)\Psi_q(Z_4)
\dotsm \Psi_q(Z_{13})^{-1}\Psi_q(Z_{14})^{-1}\Psi_q(Z_{15})\Psi_q(Z_{16})\nonumber
 \\
 \qquad= \sum_{\mathbf{n} \in (\Z_{\ge 0})^{16}}C(\mathbf{n})
 Y_3^{p_1} Y_6^{p_2} Y_7^{p_3} Y_8^{p_4} Y_{11}^{p_5} Y_{12}^{p_6} Y_{13}^{p_7} Y_{14}^{p_8}Y_{15}^{p_9},\label{c16}
 \end{gather}
 where $C(\mathbf{n})$ is a rational function of $q$ depending on
 $\mathbf{n}=(n_1, \dotsc, n_{16})$. The powers $p_i$'s are given~by
\begin{gather}
p_1 = n_8+n_{16},\qquad p_2 = -n_6+n_{12}-n_{14},\qquad p_3 = -n_{5} - n_{6} - n_{7} + n_{12}-n_{14}-n_{16},
\nonumber\\
p_4 = n_4-n_7+n_{12}+n_{13}-n_{16}, \qquad
p_5 = -n_{10},
 \qquad p_6 = -n_9-n_{10}-n_{11}-n_{13}-n_{15},
\nonumber\\
 p_7 = -n_2-n_5-n_{6}-n_{11}-n_{13}-n_{15},
\qquad
 p_8 = -n_1 -n_2-n_3-n_5-n_6 - n_{11}-n_{13},\nonumber\\
 p_9 = -n_3-n_{11}-n_{13}.
\label{pne}
\end{gather}
The series \eqref{c16} is well defined if the coefficient of the monomial
\[
Y_3^{p_1} Y_6^{p_2} Y_7^{p_3} Y_8^{p_4} Y_{11}^{p_5} Y_{12}^{p_6} Y_{13}^{p_7} Y_{14}^{p_8} Y_{15}^{p_9}
\]
for any given $(p_1,\dotsc, p_9) \in \Z^9$ is finite.
This is shown by checking that there are none or finitely many
$\mathbf{n} \in (\Z_{\ge 0})^{16}$
satisfying the nine equations \eqref{pne}.
This is straightforward.
The well-definedness of the right-hand side is verified in the same manner.

Next we prove \eqref{pseq}.
Write $\Phi$ for $(\text{LHS of \eqref{pseq}})(\text{RHS of
\eqref{pseq}})^{-1}$.
From an argument similar to the proof of \cite[Theorem~3.5]{KN11}, we prove
that $\Phi= c$ where $c$ only depends on $q$ as follows.
We can extend the degenerate exchange matrix $B^{(1)}$ to a non-degenerate one
$\tilde{B}$ which has a twice size as $B^{(1)}$ (see \cite[Example 2.5]{KN11}).
Then, due to the extension theorem \cite[Theorem~4.3]{N11} the
periodicity of the seed $\bigl(B^{(1)},Y^{(1)}\bigr)$ is also that of the seed
$\bigl(\tilde{B},\tilde{Y}\bigr)$. Hence~$\Phi$ commutes with any element of the quantum torus algebra
\smash{$\mathcal{T}\bigl(\tilde{B}\bigr)$}. This means that $\Phi=c$, since~$\tilde{B}$
is nondegenerate.
To determine $c$, we compare the constant terms contained in the
left-hand side and the right-hand side of \eqref{pseq}. For the left-hand side, one looks for
$\mathbf{n} \in (\Z_{\ge 0})^{16}$ such that $p_1 = \dotsb = p_9 = 0$.
It is easy to see that $\mathbf{n}=(0,\dotsc, 0)$ is the only solution
indicating that the constant term of the left-hand side is 1. Similarly, the
constant term of the right-hand side is found to be 1. Therefore, $c=1$.
\end{proof}

\begin{Remark}
 For the two cases
 $(\varepsilon_1,\varepsilon_2,\varepsilon_3,\varepsilon_4)=(-,+,+,+),
 (-,-,-,+)$ in Proposition \ref{pr:mono}, there are infinitely many
 choices of $\mathbf{n} \in (\Z_{\ge 0})^{16}$ satisfying
 \eqref{pne}. Therefore, the simple argument in the above proof is not
 valid.
\end{Remark}

\section[Realization in terms of q-Weyl algebras]{Realization in terms of $\boldsymbol{q}$-Weyl algebras}\label{s:qw}

\subsection[Y-variables and q-Weyl algebras]{$\boldsymbol{Y}$-variables and $\boldsymbol{q}$-Weyl algebras}

Hereafter we also use $\hbar $ which is related to $q$ by
$q=\e^\hbar$. By a $q$-Weyl algebra we mean an associative algebra
generated by $U^{\pm 1}$ and $W^{\pm 1}$ obeying the relations
$U U^{-1} = U^{-1} U= W W^{-1} = W^{-1}W = 1$ and $UW = q WU$.
To each crossing $i$ of the wiring diagram we associate parameters~${\mathscr{P}_i = (a_i, b_i, c_i, d_i, e_i)}$ and canonical variables
$\uu_i$, $\ww_i$ satisfying
\begin{align}
&[\uu_i, \ww_j]= \hbar \delta_{ij}, \qquad [\uu_i, \uu_j]=[\ww_i, \ww_j]=0.
\label{uwh}
\\
&a_i+b_i+c_i+d_i+e_i=0.
\label{ae0}
\end{align}
The exponentials of the canonical variables obey the relations in a
direct product of $q$-Weyl algebras, e.g.,
$\e^{\uu_i} \e^{\ww_j} = q^{\delta_{ij}}\e^{\ww_j}\e^{\uu_i}$. Given a wiring
diagram and the associated symmetric butterfly quiver, we
``parametrize'' the $Y$-variables by the graphical rule explained in
Figure~\ref{fig:para}.

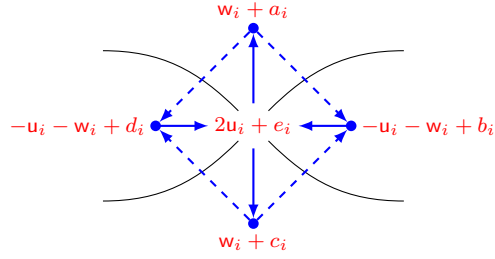
\begin{figure}[t]\centering
\begin{tikzpicture}
\begin{scope}[>=latex,xshift=0pt]
\draw [-] (0,2) to [out = 0, in = 135] (1.8,1.2);
\draw [-] (0,0) to [out = 0, in = -135] (1.8,0.8);
\draw [-] (2.2,0.8) to [out = -45, in = 180] (4,0);
\draw [-] (2.2,1.2) to [out = 45, in = 180] (4,2);
{\color{blue}
\fill (0.7,1) circle(2pt) coordinate(A) node[left]{\color{red} \scriptsize{$-\uu_i-\ww_i+d_i$}};
\fill (3.3,1) circle(2pt) coordinate(B) node[right]{\color{red} \scriptsize{$-\uu_i-\ww_i+b_i$}};
\fill (2,2.3) circle(2pt) coordinate(C) node[above]{\color{red} \scriptsize{$\ww_i+a_i$}};
\fill (2,-0.3) circle(2pt) coordinate(D) node[below]{\color{red} \scriptsize{$\ww_i+c_i$}};
\draw (2,1) node {\color{red} \scriptsize{$2\uu_i+e_i$}};
\draw[->,shorten >=2pt] (2,1.3) -- (C) [thick];
\draw[->,shorten >=2pt] (2,0.7) -- (D) [thick];
\draw[->] (A) -- (1.4,1) [thick];
\draw[->] (B) -- (2.6,1) [thick];
\qdarrow{C}{A}
\qdarrow{C}{B}
\qdarrow{D}{A}
\qdarrow{D}{B}
}
\end{scope}
\end{tikzpicture}
\caption{Graphical rule to parametrize the $Y$-variables in terms of
 $q$-Weyl algebra generators in the vicinity of the crossing $i$
 (center) of the wiring diagram (black). A $Y$-variable situated at
 a vertex (blue circle) of the symmetric butterfly quiver acquires
 factors $\e^{a_i+ \ww_i}$, $\e^{b_i-\uu_i-\ww_i}$, $\e^{c_i+\ww_i}$,
 $\e^{d_i-\uu_i-\ww_i}$ from the neighboring crossing $i$ of the wiring
 diagram if the vertex is located at the north, east, south, west of
 $i$, respectively. A $Y$-variable on the vertex $i$ is
 $\e^{e_i+2\uu_i}$. The ordering of these factors from different $i$'s
 (if any) is inconsequential due to the commutativity of the
 associated canonical variables. }\label{fig:para}
\end{figure}

The claim is that the relation
$Y_r Y_s = q^{2b_{rs}}Y_sY_r$ \eqref{yyq} is satisfied under this parametrization.
To state it formally, let $\mathcal{W}_n$ be the direct product of the
$q$-Weyl algebras generated by~$\e^{\pm \uu_i}$,~$\e^{\pm \ww_i}$ for
$i=1, \dots, n$. Let further $\mathcal{A}_n$ be the non-commuting
fractional field of $\mathcal{W}_n$. Then for $B$ corresponding to
the left diagram in Figure \ref{fig:R123}, we have a morphism
$\phi_\mathrm{SB}\colon \mathcal{Y}(B) \rightarrow \mathcal{A}_3$
defined by
\begin{align}
\phi_\mathrm{SB}\colon\
\begin{cases}
Y_1 \mapsto \e^{c_1+c_3+\ww_1+\ww_3},
& Y_6 \mapsto \e^{b_1-\uu_1-\ww_1},
\\
Y_2 \mapsto \e^{d_3-\uu_3-\ww_3},
& Y_7 \mapsto \e^{d_2+a_3-\uu_2-\ww_2+\ww_3},
\\
Y_3 \mapsto \e^{e_3+2\uu_3},
& Y_8 \mapsto \e^{e_2+2\uu_2},
\\
Y_4 \mapsto \e^{d_1+c_2+b_3-\uu_1-\ww_1+\ww_2-\uu_3-\ww_3},
& Y_9 \mapsto \e^{a_1+b_2+\ww_1-\uu_2-\ww_2},
\\
Y_5 \mapsto \e^{e_1+2\uu_1},
& Y_{10} \mapsto \e^{a_2+\ww_2},
\end{cases}
\label{Yw}
\end{align}
where SB signifies ``symmetric butterfly''. Similarly, for the right
diagram of Figure \ref{fig:R123}, we have a morphism
$\phi'_\mathrm{SB}\colon \mathcal{Y}\bigl(B'\bigr) \rightarrow \mathcal{A}_3$ as
\begin{align}
\phi'_\mathrm{SB}\colon\
\begin{cases}
Y'_1 \mapsto \e^{c_2+\ww_2},
& Y'_6 \mapsto \e^{b_2+c_3-\uu_2-\ww_2+\ww_3},
\\
Y'_2 \mapsto \e^{c_1+d_2+\ww_1-\uu_2-\ww_2},
& Y'_7 \mapsto \e^{d_1-\uu_1-\ww_1},
\\
Y'_3 \mapsto \e^{e_3+2\uu_3},
& Y'_8 \mapsto \e^{e_2+2\uu_2},
\\
Y'_4 \mapsto \e^{b_1+a_2+d_3-\uu_1-\ww_1+\ww_2-\uu_3-\ww_3},
& Y'_9 \mapsto \e^{b_3-\uu_3-\ww_3},
\\
Y'_5 \mapsto \e^{e_1+2\uu_1},
& Y'_{10} \mapsto \e^{a_1+a_3+\ww_1+\ww_3}.
\end{cases}
\label{Ypw}
\end{align}

\begin{Remark}\label{re:c3}
In the parametrization \eqref{Yw}, the centers in Remark \ref{re:c1} take the following values:
\begin{gather*}
\phi_\mathrm{SB}\bigl(Y^{-1}_1Y_7Y_8Y_9Y^2_{10}\bigr) =\e^{a_1-c_1+a_2-c_2+a_3-c_3},
\\
\phi_\mathrm{SB}\bigl(Y_2Y^{-1}_4Y_6Y_{10}\bigr) =\e^{b_1-d_1+a_2-c_2+d_3-b_3},
\\
\phi_\mathrm{SB}\bigl(Y_3Y^2_4Y^{-2}_6Y^2_7Y_8\bigr) = \e^{2d_1-2b_1+2c_2+2d_2+e_2+2a_3+2b_3+e_3},
\\
\phi_\mathrm{SB}\bigl(Y_5Y^2_6Y_8Y^2_9Y^2_{10}\bigr) =q^{-4}\e^{2a_1+2b_1+e_1+2a_2+2b_2+e_2}.
\end{gather*}
In both parametrization \eqref{Yw} and \eqref{Ypw}, the invariants in
Remark \ref{re:c2} take the following forms:
\begin{align}
&\phi_\mathrm{SB}(Y_3Y_8) = \phi'_\mathrm{SB}\bigl(Y'_3Y'_8\bigr) = \e^{e_2+e_3+\uu_2+\uu_3},
\label{Y3Y8-uw}
\\
&\phi_\mathrm{SB}(Y_5Y_8) = \phi'_\mathrm{SB}\bigl(Y'_5Y'_8\bigr) = \e^{e_1+e_2+\uu_1+\uu_2},
\label{Y5Y8-uw}
\\
&\phi_\mathrm{SB}\bigl(Y_1Y_2Y_4Y_6Y_7Y^{-1}_8Y_9Y_{10}\bigr)
=\phi'_\mathrm{SB}\bigl( Y'_1Y'_2Y'_4Y'_6Y'_7Y'^{-1}_8Y'_9Y'_{10}\bigr)
= \e^{-e_1-2e_2-e_3-2\uu_1-4\uu_2-2\uu_3}.\nonumber
\end{align}
The combinations $\uu_1+\uu_2$ and $\uu_2+\uu_3$ will reemerge as
conserved quantities within the delta functions in the matrix elements of the
$R$-matrix in coordinate representations.
See \eqref{Rf1} and~\eqref{Rm0}.
\end{Remark}

\subsection[Extracting R\_123 from R\_123]{Extracting $\boldsymbol{R_{123}}$ from $\boldsymbol{\widehat{R}_{123}}$}

Let us illustrate the action of the monomial part
$\tau_{\varepsilon_1, \varepsilon_2, \varepsilon_3, \varepsilon_4}$
 \eqref{taue} of $\widehat{R}_{123}$ on the canonical variables for the case
 $(\varepsilon_1, \varepsilon_2, \varepsilon_3, \varepsilon_4) = (-,-,+,+)$.
 From Example \ref{ex:1} and \eqref{Yw}--\eqref{Ypw}, we find that
$\tau_{--++}$ is translated into a transformation $\tau^{\uu\ww}_{--++}$
of the canonical variables\footnote{$\tau^{\uu\ww}_{--++}$ is naturally regarded also as
a transformation in $\mathcal{W}_3$ via exponentials.} as
\begin{align}
\tau^{\uu\ww}_{--++}\colon\
\begin{cases}
\uu_1 \mapsto \uu_1+\uu_2-\uu_3+\lambda_0,
&
\ww_1 \mapsto \ww_1+ \lambda_1,
\\
\uu_2 \mapsto \uu_3-\lambda_0,
&
\ww_2 \mapsto \ww_1+\ww_3+\lambda_3,
\\
\uu_3 \mapsto \uu_2+\lambda_0,
&
\ww_3 \mapsto -\ww_1+\ww_2+\lambda_2,
\end{cases}
\label{uwL1}
\end{align}
where $\lambda_r=\lambda_r(\mathscr{P}_1,\mathscr{P}_2,\mathscr{P}_3)$
for $r=0, 1, 2, 3$ is defined, under the condition \eqref{ae0},
by
\begin{gather}
\lambda_0 = \frac{e_2-e_3}{2},\qquad
\lambda_1 = a_2-a_3+b_2-b_3 + \lambda_0,\qquad
\lambda_2 = -a_1-b_2+b_3-\lambda_0,\nonumber\\
\lambda_3 = c_1-c_2+c_3.\label{lad}
\end{gather}

In order to realize \eqref{uwL1} as an adjoint action, we introduce
the group $N_n$ generated by
\begin{gather*}
\e^{\pm \tfrac{1}{\hbar}\uu_i\ww_j}\quad (i \neq j),\qquad
\e^{ \tfrac{a}{\hbar}\uu_i}, \qquad \e^{ \tfrac{a}{\hbar}\ww_i}  \quad (a \in \C) ,\qquad b \in \C^\times
\end{gather*}
with $i, j\in \{1,\dotsc, n\}$. The multiplication is defined by
the (generalized) Baker--Campbell--Hausdorff (BCH) formula and
\eqref{uwh}, which is well defined due to the grading by $\hbar^{-1}$.
Let $\mathfrak{S}_n$ be the symmetric group generated by the
transpositions $\rho_{ij}$ ($i, j \in \{1,\dotsc, n\}$). It acts on
$N_n$ via the adjoint action, inducing permutations of the indices of
the canonical variables. Thus one can form the semi-direct product
$N_n \rtimes \mathfrak{S}_n$, and let it act on $\mathcal{W}_n$ by the
adjoint action.

Now the monomial part $\tau^{\uu\ww}_{--++}$
is described as the adjoint action as follows:
\begin{align}
&\tau^{\uu\ww}_{--++} = \mathrm{Ad}(P_{--++}),
\label{Pbch0}\\
&P_{--++}=
\e^{\tfrac{1}{\hbar}(\uu_3-\uu_2)\ww_1}
\e^{\tfrac{\lambda_0}{\hbar}(-\ww_1-\ww_2+\ww_3)}
\e^{\tfrac{1}{\hbar}(\lambda_1\uu_1+\lambda_2\uu_2+\lambda_3\uu_3)}\rho_{23}
\in N_3 \rtimes \mathfrak{S}_3,
\label{Pbch1}
\end{align}
where $\rho_{23}$ acts trivially on the parameters $\lambda_r$.
Extending the indices and suppressing the sign choice of $--++$ in \eqref{Pbch0} and \eqref{Pbch1}, we introduce
\begin{align}
\tau^{\uu\ww}_{ijk} &= \mathrm{Ad}(P_{ijk}),
\label{tijk}\\
P_{ijk} &=
\e^{\tfrac{1}{\hbar}(\uu_k-\uu_j)\ww_i}
\e^{\tfrac{\lambda_0}{\hbar}(-\ww_i-\ww_j+\ww_k)}
\e^{\tfrac{1}{\hbar}(\lambda_1\uu_i+\lambda_2\uu_j+\lambda_3\uu_k)}\rho_{jk}
\in N_6 \rtimes \mathfrak{S}_6,
\label{pijk}
\end{align}
where $\lambda_r=\lambda_r(\mathscr{P}_i, \mathscr{P}_j, \mathscr{P}_k)$ is given by
\eqref{lad} by replacing the parameters as
$(\mathscr{P}_1 ,\mathscr{P}_2 ,\mathscr{P}_3) \rightarrow
(\mathscr{P}_i ,\mathscr{P}_j ,\mathscr{P}_k)$.
By a straightforward calculation using the BCH formula, one can prove the following.
\begin{Lemma}\label{le:tep}
$P_{ijk}$ satisfies the tetrahedron equation in $N_6 \rtimes \mathfrak{S}_6$ by itself
\begin{align}\label{pte}
P_{124}P_{135}P_{236}P_{456} =
P_{456}P_{236}P_{135}P_{124}.
\end{align}
\end{Lemma}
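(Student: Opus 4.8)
The plan is to reduce \eqref{pte} to an identity of affine transformations of the canonical variables by passing to the adjoint action, and then to account separately for the residual scalar. The key point is that $\Ad\colon N_6\rtimes\mathfrak{S}_6\to\mathrm{Aut}(\mathcal{W}_6)$ has kernel exactly the central subgroup $\C^\times$: an element acting trivially must have trivial transposition part (any nontrivial permutation moves some $\uu_a$) and be central in $N_6$, hence a scalar. Since $\Ad$ is a homomorphism, applying it to \eqref{pte} and using \eqref{tijk} turns the claim into
$$\tau^{\uu\ww}_{124}\tau^{\uu\ww}_{135}\tau^{\uu\ww}_{236}\tau^{\uu\ww}_{456}=\tau^{\uu\ww}_{456}\tau^{\uu\ww}_{236}\tau^{\uu\ww}_{135}\tau^{\uu\ww}_{124},$$
so it suffices to (i) verify this equality of automorphisms of $\mathcal{W}_6$ and (ii) check that the leftover scalar $c\in\C^\times$ equals $1$.

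For (i) I would first write $\tau^{\uu\ww}_{ijk}=\Ad(P_{ijk})$ explicitly, generalizing \eqref{uwL1}: it fixes every $\uu_a,\ww_a$ with $a\notin\{i,j,k\}$ and sends the remaining six variables to affine combinations whose \emph{linear} part is dictated solely by the quadratic factor $\e^{\frac1\hbar(\uu_k-\uu_j)\ww_i}$ together with the transposition $\rho_{jk}$, while the constant \emph{shift} part is a linear form in $\lambda_0,\lambda_1,\lambda_2,\lambda_3$. Composing the four maps in each order splits the check into two finite pieces. The linear parts agree by a direct computation that reproduces the monomial tetrahedron relation of Proposition~\ref{pr:mono}; as a warm-up, the transposition parts alone coincide, both reducing to the permutation $(2\,4)(3\,6)$. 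The shift parts need more care: because the triples feeding $P_{124},P_{135},\dots$ carry \emph{different} parameter sets $\mathscr{P}_i$, the $\lambda_r$ do not match termwise, and the two accumulated shift vectors coincide only after each $\lambda_r$ is expanded through \eqref{lad} and the constraint \eqref{ae0} on every $\mathscr{P}_i$ is used to cancel the surviving $(a_i,\dots,e_i)$.

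For (ii) I would normal-order both products with the BCH formula. The relevant brackets close within a finite-dimensional nilpotent Lie algebra, since $[\uu_a\ww_b,\uu_c\ww_d]=\hbar(\delta_{ad}\uu_c\ww_b-\delta_{bc}\uu_a\ww_d)$, $[\uu_a\ww_b,\uu_c]=-\hbar\delta_{bc}\uu_a$, $[\uu_a\ww_b,\ww_c]=\hbar\delta_{ac}\ww_b$ and $[\uu_a,\ww_b]=\hbar\delta_{ab}$, so the series terminate and each side can be brought to a canonical form $\e^{(\mathrm{quad})}\,\e^{(\mathrm{lin}\ \ww)}\,\e^{(\mathrm{lin}\ \uu)}\,c_{\pm}\,\pi$ with the common permutation $\pi=(2\,4)(3\,6)$. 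Step (i) forces the non-scalar factors to agree, whence $c=c_+/c_-$; tracking the central contributions accumulated in the reorderings (equivalently, evaluating both sides on a reference vector) then gives $c=1$.

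I expect the main obstacle to be the shift computation in step (i): the threefold-different parametrizations $\mathscr{P}_i$ prevent any termwise matching, so one must fully expand via \eqref{lad} and impose \eqref{ae0} to expose the cancellations. Once the affine identity is secured, closure of the brackets makes the scalar bookkeeping of step (ii) routine.
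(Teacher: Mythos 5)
Your proposal is, in substance, the paper's own proof: the paper establishes \eqref{pte} precisely ``by a straightforward calculation using the BCH formula,'' i.e., by normal-ordering both sides in $N_6 \rtimes \mathfrak{S}_6$ and comparing canonical forms, which is exactly your step (ii); your step (i) is the non-scalar layer of that same computation, and your observations that both $\mathfrak{S}_6$-components equal $(2\,4)(3\,6)$ and that the affine identity is the canonical-variable avatar of Proposition \ref{pr:mono} for $(\varepsilon_1,\varepsilon_2,\varepsilon_3,\varepsilon_4)=(-,-,+,+)$ (untwisted, since $P_{ijk}$ acts on canonical variables rather than $Y$-variables) match the paper's remark immediately after the lemma.

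One local flaw worth correcting: your one-line kernel argument for $\Ad\colon N_6\rtimes\mathfrak{S}_6\to\mathrm{Aut}(\mathcal{W}_6)$ is not valid as stated. Conjugation by elements of $N_6$ can itself permute the linear spans of the canonical variables: the generators $\e^{\pm\uu_i\ww_j/\hbar}$ act as integer transvections $\uu_j\mapsto\uu_j\mp\uu_i$, $\ww_i\mapsto\ww_i\pm\ww_j$, and products of such transvections realize, for example, $3$-cycle permutation matrices (any even permutation matrix lies in the group they generate). Hence ``any nontrivial permutation moves some $\uu_a$'' does not show that a kernel element has trivial transposition part, since a nontrivial even $\pi$ could in principle be compensated by $\Ad(n)$ with $n\in N_6$. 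This gap is harmless in your specific argument, but only because of two things you do anyway: you compute directly that both sides of \eqref{pte} carry the \emph{same} $\mathfrak{S}_6$-component, so the discrepancy lies in $N_6$; and your step (ii) brings both sides to explicit normal-ordered canonical form via BCH, so the quadratic, linear, and scalar data are compared termwise as polynomials in the parameters, which settles the identity without ever invoking the kernel claim. In other words, once step (ii) is carried out in full, it subsumes both step (i) and the kernel discussion, and what remains is exactly the paper's straightforward BCH verification.
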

The fact that $P_{ijk}$ acts on the canonical variables rather than $Y$-variables
has led to the tetrahedron equation {\em without} a twist by permutations.

Define $R_{123}= R(\mathscr{P}_1,\mathscr{P}_2,\mathscr{P}_3)_{123}$ by
\begin{align}
R_{123}
={}&
\Psi_q\bigl(\e^{-d_1-c_2-b_3+\uu_1+\uu_3+\ww_1-\ww_2+\ww_3}\bigr)^{-1}
\Psi_q\bigl(\e^{-d_1-c_2-b_3-e_3+\uu_1-\uu_3+\ww_1-\ww_2+\ww_3}\bigr)^{-1}
\nonumber \\
& \times
\Psi_q\bigl(\e^{d_1+e_1+c_2+b_3+\uu_1-\uu_3-\ww_1+\ww_2-\ww_3}\bigr)\nonumber \\
& \times
\Psi_q\bigl(\e^{d_1+e_1+c_2+e_2+b_3+\uu_1+2\uu_2-\uu_3-\ww_1+\ww_2-\ww_3}\bigr)P_{123}
\label{RL0}
\\
={}&
\Psi_q\bigl(\e^{-d_1-c_2-b_3+\uu_1+\uu_3+\ww_1-\ww_2+\ww_3}\bigr)^{-1}
\Psi_q\bigl(\e^{-d_1-c_2-b_3-e_3+\uu_1-\uu_3+\ww_1-\ww_2+\ww_3}\bigr)^{-1}
\nonumber\\
& \times P_{123}
\Psi_q\bigl(\e^{-b_1-a_2-d_3-e_3+\uu_1-\uu_3+\ww_1-\ww_2+\ww_3}\bigr)\nonumber \\
& \times
\Psi_q\bigl(\e^{-b_1-a_2-d_3+\uu_1+\uu_3+\ww_1-\ww_2+\ww_3}\bigr).
\label{RL1}
\end{align}
Let $\widehat{R}^{\uu\ww}_{123}$ be the cluster transformation $\widehat{R}_{123}$ \eqref{dode}
viewed as the one for the canonical variables~${\{\uu_i, \ww_i\}_{i=1,2,3}}$.
Then from \eqref{Yw}, \eqref{Ypw} and \eqref{Pbch0}, we have
\begin{align}\label{knm}
\widehat{R}^{\uu\ww}_{123} &= \mathrm{Ad}\bigl(R(\mathscr{P}_1,\mathscr{P}_2,\mathscr{P}_3)_{123}\bigr).
\end{align}
Note that the right-hand side is invariant under $R \rightarrow cR$ for any scalar $c$.
A proper normalization of $R$ based on a symmetry consideration will be proposed
in Section \ref{ss:symR}.

Formally the results \eqref{uwL1} and \eqref{knm} may be stated as the commutativity of the diagrams
\begin{align*}
\begin{CD}
\mathcal{Y}\bigl(B'\bigr) @> {\tau_{--++}}>> \mathcal{Y}(B) \\
@V{\phi'_\mathrm{SB}}VV @VV{\phi_\mathrm{SB}}V \\
\mathcal{A}_3 @>{\tau^{\uu\ww}_{--++}}>> \mathcal{A}_3,
\end{CD}
\qquad\qquad
\begin{CD}
\mathcal{Y}\bigl(B'\bigr) @> {\widehat{R}_{123}}>> \mathcal{Y}(B) \\
@V{\phi'_\mathrm{SB}}VV @VV{\phi_\mathrm{SB}}V \\
\mathcal{A}_3 @>{\widehat{R}^{\uu\ww}_{123}}>> \mathcal{A}_3.
\end{CD}
\end{align*}

Extending \eqref{RL0} and \eqref{RL1}, we introduce
$R_{ijk}=R(\mathscr{P}_i ,\mathscr{P}_j ,\mathscr{P}_k)_{ijk}$
by
\begin{align}
R_{ijk}
={}&
\Psi_q\bigl(\e^{-d_i-c_j-b_k+\uu_i+\uu_k+\ww_i-\ww_j+\ww_k}\bigr)^{-1}
\Psi_q\bigl(\e^{-d_i-c_j-b_k-e_k+\uu_i-\uu_k+\ww_i-\ww_j+\ww_k}\bigr)^{-1}
\nonumber \\
& \times
\Psi_q\bigl(\e^{d_i+e_i+c_j+b_k+\uu_i-\uu_k-\ww_i+\ww_j-\ww_k}\bigr)\nonumber \\
& \times
\Psi_q\bigl(\e^{d_i+e_i+c_j+e_j+b_k+\uu_i+2\uu_j-\uu_k-\ww_i+\ww_j-\ww_k}\bigr)P_{ijk}
\label{rijk0}\\
={}&
\Psi_q\bigl(\e^{-d_i-c_j-b_k+\uu_i+\uu_k+\ww_i-\ww_j+\ww_k}\bigr)^{-1}
\Psi_q\bigl(\e^{-d_i-c_j-b_k-e_k+\uu_i-\uu_k+\ww_i-\ww_j+\ww_k}\bigr)^{-1}
\nonumber\\
& \times P_{ijk}
\Psi_q\bigl(\e^{-b_i-a_j-d_k-e_k+\uu_i-\uu_k+\ww_i-\ww_j+\ww_k}\bigr)
\Psi_q\bigl(\e^{-b_i-a_j-d_k+\uu_i+\uu_k+\ww_i-\ww_j+\ww_k}\bigr),
\label{rijk}
\end{align}
where $P_{ijk}$ is given by \eqref{pijk}.
Now we state the main result of the paper.

\begin{Theorem}\label{th:main}
$R(\mathscr{P}_i, \mathscr{P}_j, \mathscr{P}_k)_{ijk} $ satisfies the tetrahedron equation
\begin{gather}
 R(\mathscr{P}_1, \mathscr{P}_2, \mathscr{P}_4)_{124}
R(\mathscr{P}_1, \mathscr{P}_3, \mathscr{P}_5)_{135}
R(\mathscr{P}_2, \mathscr{P}_3, \mathscr{P}_6)_{236}
R(\mathscr{P}_4, \mathscr{P}_5, \mathscr{P}_6)_{456}\nonumber
\\
\qquad= R(\mathscr{P}_4, \mathscr{P}_5, \mathscr{P}_6)_{456}
R(\mathscr{P}_2, \mathscr{P}_3, \mathscr{P}_6)_{236}
R(\mathscr{P}_1, \mathscr{P}_3, \mathscr{P}_5)_{135}
R(\mathscr{P}_1, \mathscr{P}_2, \mathscr{P}_4)_{124}.\label{tela}
\end{gather}
\end{Theorem}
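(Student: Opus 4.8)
The plan is to reduce the operator identity \eqref{tela} to two tetrahedron-type statements already in hand: Lemma \ref{le:tep} for the monomial factors and Proposition \ref{pr:dilog} for the dilogarithm factors. I work throughout with the sign choice $(\varepsilon_1,\varepsilon_2,\varepsilon_3,\varepsilon_4)=(-,-,+,+)$, for which \eqref{rijk0} displays the factorization $R_{ijk}=D_{ijk}P_{ijk}$, where $D_{ijk}$ denotes the product of the four quantum dilogarithms and $P_{ijk}$ is the monomial operator \eqref{pijk}.

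First I would substitute $R_{ijk}=D_{ijk}P_{ijk}$ into the left-hand side of \eqref{tela} and move all four monomial factors to the right, so that the dilogarithm blocks collect on the left, each conjugated by the $\mathrm{Ad}$ of the monomial factors moved past it. Using $\tau^{\uu\ww}_{ijk}=\mathrm{Ad}(P_{ijk})$ from \eqref{tijk} together with the commuting square relating the six-crossing realization $\phi_\mathrm{SB}$ to the monomial maps, the conjugated arguments become precisely the $\phi_\mathrm{SB}$-images of the monomials $Z_1,\dots,Z_{16}$ recorded in \eqref{zu} and \eqref{zdL}. Thus the left-hand side collapses to
\[
\phi_\mathrm{SB}\bigl(\Psi_q(Z_1)^{-1}\Psi_q(Z_2)^{-1}\Psi_q(Z_3)\Psi_q(Z_4)\dotsm\Psi_q(Z_{16})\bigr)\,P_{124}P_{135}P_{236}P_{456},
\]
and treating the right-hand side of \eqref{tela} in the same way gives the analogous expression with the monomials $Z'_i$ from \eqref{zdR} and the reversed tail $P_{456}P_{236}P_{135}P_{124}$.

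Second I would invoke the two ingredients. Lemma \ref{le:tep} gives $P_{124}P_{135}P_{236}P_{456}=P_{456}P_{236}P_{135}P_{124}$ as an exact identity in $N_6\rtimes\mathfrak{S}_6$, so the monomial tails coincide. Proposition \ref{pr:dilog}, taken for the sign choice $(-,-,+,+)$ and pushed through the algebra homomorphism $\phi_\mathrm{SB}$, yields the equality of the two dilogarithm heads. Multiplying the matching heads by the matching tails then establishes \eqref{tela}.

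The main obstacle is the bookkeeping in the first step: one must check that commuting the $P_{ijk}$ past the dilogarithm blocks reproduces exactly the list $\phi_\mathrm{SB}(Z_i)$ and not some other arguments. This is guaranteed by the chain of commuting diagrams, which forces $\phi_\mathrm{SB}\circ\tau_{124}\tau_{135}\tau_{236}=\mathrm{Ad}(P_{124}P_{135}P_{236})\circ\phi_\mathrm{SB}$ on each dilogarithm argument, so that the $Y$-variable identity \eqref{pseq} transfers verbatim to the $q$-Weyl realization. I emphasize that no permutation twist survives at this stage: the twists $\sigma_{7,12}$ and $\sigma_{7,14}$ appearing in the intermediate monomial equation \eqref{hteq} were already absorbed in the passage to the twist-free statements \eqref{pte} and \eqref{pseq}, so the operator identity \eqref{tela} emerges genuinely untwisted.
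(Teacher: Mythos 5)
Your proposal is correct and takes essentially the same route as the paper's own proof: the paper likewise combines the dilogarithm identity of Proposition \ref{pr:dilog} for the sign choice $(-,-,+,+)$, transported to canonical variables via $\tilde{\phi}_\mathrm{SB}$, with the monomial identity of Lemma \ref{le:tep}, and your step of pushing the $P_{ijk}$'s to the right using $\tau^{\uu\ww}_{ijk}=\mathrm{Ad}(P_{ijk})$ and \eqref{zu} is exactly the paper's insertion of $P\,P^{-1}$ factors in \eqref{tochu1}, merely run in the opposite direction. Your closing remark that no permutation twist survives also agrees with the paper's observation after Lemma \ref{le:tep} that $P_{ijk}$ acts on canonical variables rather than $Y$-variables, so the tetrahedron equation \eqref{tela} holds untwisted.
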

\begin{proof}
 Consider the dilogarithm identity \eqref{pseq} with
 $(\varepsilon_1,\varepsilon_2,\varepsilon_3,\varepsilon_4) =(-, -,
 +, +)$ in terms of canonical variables\footnote{The signs
 $\varepsilon_1$, $\varepsilon_2$, $\varepsilon_3$, $\varepsilon_4$
 are actually $-$, $-$, $+$, $+$, but for clarity, they are left as
 they are.}
\begin{align}\label{zt16}
\Psi_q\bigl(\tilde{Z}_1\bigr)^{\varepsilon_1}\dotsm \Psi_q\bigl({\tilde Z}_{16}\bigr)^{\varepsilon_4}
=
\Psi_q\bigl(\tilde{Z'}_1\bigr)^{\varepsilon_1}\dotsm \Psi_q\bigl(\tilde{Z'}_{16}\bigr)^{\varepsilon_4}.
\end{align}
Here $\tilde{Z}_i = \tilde{\phi}_\mathrm{SB}(Z_i)$ and
$\tilde{Z'}_i =\tilde{\phi}'_\mathrm{SB}(Z'_i)$ with $Z_i$ and $Z'_i$
given in \eqref{zdL} and \eqref{zdR}. The morphisms~$\tilde{\phi}_\mathrm{SB}$ and $\tilde{\phi}'_\mathrm{SB}$ are natural
generalizations of \eqref{Yw} and \eqref{Ypw}. They send the
$Y$-variables to $\mathcal{W}_6$ according to the rule in Figure
\ref{fig:para} applied to the bottom right diagram in Figure
\ref{fig:bTE}.\footnote{The map $\phi_\mathrm{SB}$ does not spoil the
 well-definedness of the expansion like \eqref{c16} with respect to
 $\e^{\uu_i}$ and $\e^{\ww_i}$ since it preserves the rank of the quantum
 torus generated by $Y_i$ ($i=3, 6 , 7 , 8 , 11 , 12 , 13 ,
 14 , 15$).}
Multiplication of \eqref{pte} to \eqref{zt16} from the right leads to
\begin{gather}
\Psi_q\bigl(\tilde{Z}_1\bigr)^{\varepsilon_1}\dotsm \Psi_q\bigl({\tilde Z}_{16}\bigr)^{\varepsilon_4}
P_{124}P_{135}P_{236}P_{456}\nonumber\\
\qquad=
\Psi_q\bigl(\tilde{Z'}_1\bigr)^{\varepsilon_1}\dotsm \Psi_q\bigl(\tilde{Z'}_{16}\bigr)^{\varepsilon_4}
P_{456}P_{236}P_{135}P_{124}.\label{zp16}
\end{gather}
Let us consider the left-hand side.
It is obviously equal to
\begin{gather}
\Psi_q\bigl(\tilde{Z}_1\bigr)^{\varepsilon_1}\dotsm \Psi_q\bigl({\tilde Z}_{4}\bigr)^{\varepsilon_4}
P_{124}
P^{-1}_{124}
\Psi_q\bigl(\tilde{Z}_5\bigr)^{\varepsilon_1}\dotsm \Psi_q\bigl({\tilde Z}_{8}\bigr)^{\varepsilon_4}
P_{124} \cdot P_{135}\nonumber
\\
\qquad \times
P^{-1}_{135} P^{-1}_{124}
\Psi_q\bigl(\tilde{Z}_9\bigr)^{\varepsilon_1}\dotsm \Psi_q\bigl({\tilde Z}_{12}\bigr)^{\varepsilon_4}
P_{124} P_{135} \cdot P_{236}\nonumber
\\
\qquad \times
P^{-1}_{236} P^{-1}_{135} P^{-1}_{124}
\Psi_q\bigl(\tilde{Z}_{13}\bigr)^{\varepsilon_1}\dotsm \Psi_q\bigl({\tilde Z}_{16}\bigr)^{\varepsilon_4}
P_{124} P_{135} P_{236} \cdot P_{456}.\label{tochu1}
\end{gather}
On the other hand from \eqref{tijk} and the image of $\eqref{zu}$ by $\tilde{\phi}_\mathrm{SB}$, we know
\begin{align*}
{\tilde U}_i = \begin{cases}
\tilde{Z}_i, & i=1,\dotsc, 4,
\\
P_{124}^{-1} \tilde{Z}_i P_{124}, & i=5,\dotsc, 8,
\\
P_{135}^{-1}P_{124}^{-1} \tilde{Z}_i P_{124}P_{135}, & i=9,\dotsc, 12,
\\
P_{236}^{-1}P_{135}^{-1}P_{124}^{-1} \tilde{Z}_i
P_{124}P_{135} P_{236}, & i=13,\dotsc, 16,
\end{cases}
\end{align*}
where $\tilde{U}_i = \tilde{\phi}_\mathrm{SB}(U_i)$.
Thus \eqref{tochu1} is cast into the form
\begin{gather*}
\Psi_q\bigl(\tilde{U}_1\bigr)^{\varepsilon_1}\cdots \Psi_q\bigl({\tilde U}_{4}\bigr)^{\varepsilon_4}
P_{124}
\Psi_q\bigl(\tilde{U}_5\bigr)^{\varepsilon_1}\cdots \Psi_q\bigl({\tilde U}_{8}\bigr)^{\varepsilon_4}
P_{135}
\\
\qquad\times
\Psi_q\bigl(\tilde{U}_9\bigr)^{\varepsilon_1}\cdots \Psi_q\bigl({\tilde U}_{12}\bigr)^{\varepsilon_4}
P_{236}
\Psi_q\bigl(\tilde{U}_{13}\bigr)^{\varepsilon_1}\cdots \Psi_q\bigl({\tilde U}_{16}\bigr)^{\varepsilon_4}
P_{456}.
\end{gather*}
This is identified with the left-hand side of \eqref{tela} for \eqref{rijk0}.
The right-hand side of \eqref{tela} is similarly derived from that in \eqref{zp16}.
\end{proof}

The monomial part $\tau_{\ve_1,\ve_2,\ve_3,\ve_4}$ which admits the
adjoint action description as \eqref{Pbch0} can be searched in the
same manner as explained around \cite[equation~(4.27)]{IKT2}. We find
that $(\ve_1,\ve_2,\ve_3,\allowbreak\ve_4)=(-,-,+,+)$ and $(-,+,-,+)$ are the
only such cases. The formulas corresponding to $(-,+,-,+)$ are
summarized in Appendix \ref{app:ruw}. They are obtained from
$(-,-,+,+)$ case by the interchange of the parameters
$a_i \leftrightarrow a_{4-i}$, $c_i \leftrightarrow c_{4-i}$,
$e_i \leftrightarrow e_{4-i}$ and $b_i \leftrightarrow d_{4-i}$ which
is compatible with \eqref{ae0}.

\subsection[Symmetries of R-matrices]{Symmetries of $\boldsymbol{R}$-matrices}\label{ss:symR}

Let $(B,Y)$ and $\bigl(B',Y'\bigr)$ be the quantum $y$-seeds corresponding to
the quivers in Figure \ref{fig:3.2}\,(a) and (b), respectively, and
\smash{$\bigl(\widetilde{B}, \widetilde{Y}\bigr) = \mu_8 \mu_5 \mu_3(B,Y)$} and
\smash{$\bigl(\widetilde{B}', \widetilde{Y}'\bigr) = \mu_8 \mu_5 \mu_3\bigl(B',Y'\bigr)$}. Note
that $\widetilde{B} = -B$ and $\widetilde{B}' = -B'$. Define
isomorphisms
\begin{gather*}
 \alpha
 := \sigma_{2,6} \sigma_{3,5} \sigma_{7,9}
\colon\ \mathcal{Y}(B) \to \mathcal{Y}(B),
 \qquad
 \alpha'
 := \sigma_{2,6} \sigma_{3,5} \sigma_{7,9}
\colon\ \mathcal{Y}\bigl(B'\bigr) \to \mathcal{Y}\bigl(B'\bigr),
 \\
 \beta
 := \bigl(Y \to Y'\bigr) \sigma_{1,10} \sigma_{2,9} \sigma_{6,7}
\colon\ \mathcal{Y}(B) \to \mathcal{Y}\bigl(B'\bigr),
 \\
 \beta'
 := \bigl(Y' \to Y\bigr) \sigma_{1,10} \sigma_{2,9} \sigma_{6,7}
\colon\ \mathcal{Y}\bigl(B'\bigr) \to \mathcal{Y}(B),
 \\
 \gamma
 := \bigl(q \to q^{-1}\bigr) \bigl(\widetilde{Y} \to Y^{-1}\bigr) \mu_3^* \mu_5^* \mu_8^*
\colon\ \mathcal{Y}(B) \to \mathcal{Y}(B),
 \\
 \gamma'
 := \bigl(q \to q^{-1}\bigr) \bigl(\widetilde{Y}' \to Y^{\prime-1}\bigr) \mu_3^* \mu_5^* \mu_8^*
 \colon\ \mathcal{Y}\bigl(B'\bigr) \to \mathcal{Y}\bigl(B'\bigr),
\end{gather*}
where $(x \to y)$ denotes the operation of replacing $x$ with $y$.

\begin{Proposition}
 The cluster transformation
 $\widehat{R}_{123}\colon \mathcal{Y}\bigl(B'\bigr) \to \mathcal{Y}(B)$
 satisfies
 \begin{align}
 \label{sym-alpha}
 \alpha\widehat{R}_{123} &= \widehat{R}_{123} \alpha',
 \\
 \label{sym-beta}
 \beta\widehat{R}_{123} &= \widehat{R}_{123}^{-1} \beta',
 \\
 \label{sym-gamma}
 \gamma\widehat{R}_{123} &= \widehat{R}_{123} \gamma'.
 \end{align}
 \end{Proposition}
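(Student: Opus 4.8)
The plan is to verify each of the three identities by evaluating both sides on the generators $Y'_1,\dots,Y'_{10}$ of $\mathcal{Y}\bigl(B'\bigr)$. Since every map in sight is an isomorphism of skew fields and $\widehat{R}_{123}$ is given explicitly on generators in Proposition~\ref{pr:RY}, each statement reduces to a finite comparison of rational expressions in $Y_1,\dots,Y_{10}$ and the polynomials $\Lambda_0,\Lambda_3,\Lambda_4,\Lambda_5$. The action of each transposition $\sigma_{r,s}$ is read off from \eqref{qys} (note that all permutations occurring in $\alpha,\beta,\gamma$ are involutions, so there is no $\sigma$ versus $\sigma^{-1}$ ambiguity), the action of $\mu_k^{\ast}$ from the mutation rule \eqref{muY}, and the invariants recorded in Remark~\ref{re:c2} serve as consistency checks throughout.

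For \eqref{sym-alpha}, the structural point is that the transposition $\sigma_{3,5}$ inside $\alpha$ (and $\alpha'$) induces $\Lambda_3\leftrightarrow\Lambda_5$ while fixing $\Lambda_4=1+qY_4$ and $\Lambda_0=\Lambda_3\Lambda_5+Y_4Y_3Y_5Y_8\Lambda_4$, the latter being manifestly symmetric in the indices $3,5$. Hence applying $\sigma_{2,6}\sigma_{3,5}\sigma_{7,9}$ to the right-hand sides in Proposition~\ref{pr:RY} reproduces exactly the images of the $\alpha'$-permuted generators under $\widehat{R}_{123}$; I would run through the ten generators, each equality being immediate once this $\Lambda_3\leftrightarrow\Lambda_5$ symmetry is invoked. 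Since \eqref{sym-alpha} is orientation- and $q$-preserving, one could alternatively lift it from the corresponding tropical identity via Theorem~\ref{thm:piv}.

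For \eqref{sym-beta}, I would first use the invertibility of $\widehat{R}_{123}$ to rewrite the claim in the equivalent form $\widehat{R}_{123}\,\beta\,\widehat{R}_{123}=\beta'$, which avoids computing $\widehat{R}_{123}^{-1}$ at all. Here $\beta,\beta'$ encode the symmetry of Figure~\ref{fig:3.2} interchanging the two wiring diagrams, which is precisely the operation turning $\widehat{R}_{123}$ into its inverse; concretely $\beta$ acts by the index permutation $\sigma=(1\,10)(2\,9)(6\,7)$ followed by the relabeling $(Y\to Y')$. On each generator I would compute $\widehat{R}_{123}\bigl(Y'_i\bigr)$ by Proposition~\ref{pr:RY}, apply $\beta$ to relabel the result, then apply $\widehat{R}_{123}$ once more (again by Proposition~\ref{pr:RY}, now to the resulting rational function of the $Y'$), and check that the outcome equals $\beta'\bigl(Y'_i\bigr)=Y_{\sigma(i)}$.

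The remaining identity \eqref{sym-gamma} is where I expect the real work to lie. Here $\gamma,\gamma'$ contain the mutation composition $\mu_3^{\ast}\mu_5^{\ast}\mu_8^{\ast}$ at the three crossing vertices, whose images under \eqref{muY} are genuinely rational (carrying factors $1+q^{2j-1}Y_k^{\mp1}$), combined with the operations $(q\to q^{-1})$ and $\bigl(\widetilde{Y}\to Y^{-1}\bigr)$ that render $\gamma$ an endomorphism of $\mathcal{Y}(B)$ via $\widetilde{B}=-B$. The main obstacle is twofold: one must propagate these rational mutation images through $\widehat{R}_{123}$ on both sides while tracking the $q$-inversion, and because $\gamma$ reverses orientation and inverts $q$, Theorem~\ref{thm:piv} does not apply, so the comparison cannot be outsourced to a tropical computation. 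I would organize the check by first evaluating $\mu_3^{\ast}\mu_5^{\ast}\mu_8^{\ast}$ on each $Y_i$ explicitly, exploiting that the three crossing vertices are pairwise non-adjacent in Figure~\ref{fig:3.2}\,(a) so that $\mu_3,\mu_5,\mu_8$ commute; then composing with $\bigl(\widetilde{Y}\to Y^{-1}\bigr)(q\to q^{-1})$ to obtain $\gamma$ on generators; and finally matching $\gamma\circ\widehat{R}_{123}$ against $\widehat{R}_{123}\circ\gamma'$ generator by generator, using the $\Lambda$-identities and the invariants of Remark~\ref{re:c2} to compress the bookkeeping.
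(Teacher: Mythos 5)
Your plan is viable in principle---all three identities are equalities of skew-field homomorphisms, so verifying them on the ten generators via Proposition~\ref{pr:RY} is a legitimate, if laborious, proof---but it is genuinely different from the paper's argument, which is structural rather than computational. For \eqref{sym-alpha} the paper simply notes that conjugating the mutation sequence $\mu_4,\mu_3,\mu_5,\mu_8,\sigma_{3,5}\sigma_{4,8}$ by the reflection $\sigma_{2,6}\sigma_{3,5}\sigma_{7,9}$ returns the same sequence, because vertices $3$ and $5$ are disconnected so $\mu_3$ and $\mu_5$ commute; this is the sequence-level avatar of your $\Lambda_3\leftrightarrow\Lambda_5$ observation (which is correct: $\Lambda_0$ and $\Lambda_4$ are symmetric under $3\leftrightarrow 5$, and the residual $q$-powers are absorbed by $q$-commutation, e.g., $q^2 Y_5Y_4Y_8 = Y_4Y_5Y_8$). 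For \eqref{sym-beta} the paper turns Figure~\ref{fig:mus} upside down and uses the identity $\sigma_{3,5}\sigma_{4,8}\,\mu_8\mu_5\mu_3\mu_4 = \mu_4\mu_3\mu_5\mu_8\,\sigma_{3,5}\sigma_{4,8}$: the flipped sequence is the reverse of the original with labels $1,10$, labels $2,9$ and labels $6,7$ swapped, which yields $\beta\widehat{R}_{123}=\widehat{R}_{123}^{-1}\beta'$ with no manipulation of the $\Lambda$'s at all. Your reformulation $\widehat{R}_{123}\,\beta\,\widehat{R}_{123}=\beta'$ is type-correct and avoids computing $\widehat{R}_{123}^{-1}$, but it forces two applications of Proposition~\ref{pr:RY}, compounding nested rational expressions; the paper's route buys you both checks essentially for free from quiver symmetries.

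Where you go astray---not fatally for your own proof, but materially---is the assertion that, since $\gamma$ inverts $q$, ``Theorem~\ref{thm:piv} does not apply, so the comparison cannot be outsourced to a tropical computation.'' The paper's proof of \eqref{sym-gamma} does exactly that outsourcing, by factoring $\gamma$ into its mutation part $\mu_3^*\mu_5^*\mu_8^*$ and the replacement part $\bigl(q\to q^{-1}\bigr)\bigl(\widetilde{Y}\to Y^{-1}\bigr)$. The needed input is the pure seed identity $\sigma_{3,5}\sigma_{4,8}\,\mu_8\mu_5\mu_3\mu_4\,\mu_8\mu_5\mu_3(B,y)=\mu_8\mu_5\mu_3\,\sigma_{3,5}\sigma_{4,8}\,\mu_8\mu_5\mu_3\mu_4(B,y)$, which involves no $q$-inversion whatsoever, is checked tropically by direct calculation, and lifts by Theorem~\ref{thm:piv} to the quantum commutation $\mu_3^*\mu_5^*\mu_8^*\bigl(\mu_4^*\mu_3^*\mu_5^*\mu_8^*\sigma_{3,5}\sigma_{4,8}\bigr)=\bigl(\mu_4^*\mu_3^*\mu_5^*\mu_8^*\sigma_{3,5}\sigma_{4,8}\bigr)\mu_3^*\mu_5^*\mu_8^*$. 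One then multiplies by $\bigl(q\to q^{-1}\bigr)\bigl(\widetilde{Y}\to Y^{-1}\bigr)$ and uses the observation that $\widetilde{Y}'_i$ transforms under mutations exactly as $Y_i^{\prime -1}$ does with $q$ replaced by $q^{-1}$. Your brute-force alternative---explicit evaluation of $\mu_3^*\mu_5^*\mu_8^*$ (these do pairwise commute, as you correctly note) followed by generator-by-generator matching with the $q$-inversion tracked---is in principle complete, but it is precisely the heavy computation the paper's decomposition is designed to avoid; as written, your proposal for \eqref{sym-gamma} is a program rather than a proof, and you should either carry the matching out in full or adopt the tropical reduction you prematurely ruled out.
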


\begin{proof}
 The first equation is a consequence of the reflection symmetries of
 the quivers in Figure~\ref{fig:3.2} and the mutation sequence
 for $\widehat{R}$ about the vertical axis going through vertices $1$
 and $10$. The mutation sequence is symmetric because vertices $3$
 and $5$ are disconnected in the relevant quiver.

 The second equation can be understood by turning Figure
 \ref{fig:mus} upside down. Since
\[
\sigma_{3,5} \sigma_{4,8} \mu_8 \mu_5 \mu_3 \mu_4 = \mu_4 \mu_3
 \mu_5 \mu_8 \sigma_{3,5} \sigma_{4,8}, \]
 the mutation sequence going
 from $B^{(1)}$ to $B^{(6)}$ in the upside down figure is the same as
 the reverse sequence going from $B^{(6)}$ to $B^{(1)}$ in the
 original figure, with labels $2$ and~$9$, labels~$6$ and~$7$, and labels $1$
 and $10$ swapped.

 To show the third equation, we use the fact that
 \[
 \sigma_{3,5} \sigma_{4,8} \mu_8 \mu_5 \mu_3 \mu_4
 \mu_8 \mu_5 \mu_3(B,y)
 =
 \mu_8 \mu_5 \mu_3
 \sigma_{3,5} \sigma_{4,8} \mu_8 \mu_5 \mu_3 \mu_4(B,y),
 \]
 as one can check by direct calculation. By Theorem \ref{thm:piv},
 this implies the following equality between maps from
 $\mathcal{Y}\bigl(B'\bigr)$ to $\mathcal{Y}\bigl(\widetilde{B}\bigr)$
 \[
 \mu_3^* \mu_5^* \mu_8^*
 (\mu_4^* \mu_3^* \mu_5^* \mu_8^* \sigma_{3,5} \sigma_{4,8})
 =
 (\mu_4^* \mu_3^* \mu_5^* \mu_8^* \sigma_{3,5} \sigma_{4,8})
 \mu_3^* \mu_5^* \mu_8^*.
 \]
 Multiplying both sides with
 $\bigl(q \to q^{-1}\bigr) \bigl(\widetilde{Y} \to Y^{-1}\bigr)$, we get
 \[
 \gamma\widehat{R}
 =
 \bigl(q \to q^{-1}\bigr) \bigl(\widetilde{Y} \to Y^{-1}\bigr)
 (\mu_4^* \mu_3^* \mu_5^* \mu_8^* \sigma_{3,5} \sigma_{4,8})
 \mu_3^* \mu_5^* \mu_8^*.
 \]
 This is the desired equation because $\widetilde{Y}'_i$ transforms
 under mutations in the same way as $Y_i^{\prime-1}$, except that $q$
 appearing in the formula is replaced by $q^{-1}$.
\end{proof}

Let $\alpha^{\uu\ww}$, $\beta^{\uu\ww}$, $\gamma^{\uu\ww}$ be $\alpha$, $\beta$,
$\gamma$ expressed in terms of the canonical variables
$\{\uu_i, \ww_i\}_{i=1,2,3}$. In other words, these are operators such
that $\alpha^{\uu\ww} \circ \phi_\mathrm{SB} = \phi_\mathrm{SB} \circ \alpha$
and
$\alpha^{\uu\ww} \circ \phi'_\mathrm{SB} = \phi'_\mathrm{SB} \circ \alpha$, etc.
Explicitly, they act on the parameters and the canonical variables as follows:
\begin{gather}
 \alpha^{\uu\ww}\colon\
 a_1 \leftrightarrow a_3, \qquad
 b_1 \leftrightarrow d_3, \qquad
 c_1 \leftrightarrow c_3, \qquad
 d_1 \leftrightarrow b_3, \qquad
 e_1 \leftrightarrow e_3, \qquad
 b_2 \leftrightarrow d_2, \qquad
\nonumber \\
 \phantom{ \alpha^{\uu\ww}\colon\ }{} \uu_1 \leftrightarrow \uu_3, \qquad
 \ww_1 \leftrightarrow \ww_3,
\label{uwal} \\
 \beta^{\uu\ww}\colon\
 a_i \leftrightarrow c_i, \qquad
 b_i \leftrightarrow d_i,\nonumber
 \\
 \gamma^{\uu\ww}\colon\
 q \mapsto q^{-1}, \qquad
 a_i \leftrightarrow c_i, \qquad
 b_i \leftrightarrow d_i, \qquad
 \e^{-\ww_i} \mapsto \e^{\ww_i + a_i + c_i} \bigl(1 + q \e^{2\uu_i + e_i}\bigr).\nonumber
\end{gather}
Acting on \eqref{sym-alpha}, \eqref{sym-beta} and \eqref{sym-gamma}
with $\phi_\mathrm{SB}$, we obtain the following relations that hold
in~$\phi_\mathrm{SB}\bigl(\mathcal{Y}\bigl(B'\bigr)\bigr)$:
\begin{gather}
 \alpha^{\uu\ww} \widehat{R}^{\uu\ww}_{123} = \widehat{R}^{\uu\ww}_{123} \alpha^{\uu\ww},
 \qquad
 \beta^{\uu\ww} \widehat{R}^{\uu\ww}_{123} = \widehat{R}^{\uu\ww -1}_{123} \beta^{\uu\ww}, \label{alphauw-Rhat}
 \\
 \gamma^{\uu\ww} \widehat{R}^{\uu\ww}_{123} = \widehat{R}^{\uu\ww}_{123} \gamma^{\uu\ww}.
 \label{gammauw-Rhat}
\end{gather}

The symmetry \eqref{alphauw-Rhat} can also be deduced from the formula
\eqref{RL0} for $R_{123}$ and its counterpart~\eqref{RL3} for the sign
choice $(-,+,-,+)$, which are mapped to each other by $\alpha^{\uu\ww}$.
In fact, not only the adjoint action of $R_{123}$ but $R_{123}$ itself
enjoys the symmetries $\alpha^{\uu\ww}$ and $\beta^{\uu\ww}$.
\begin{Proposition}
 Let $f$ be a function of the parameters
 $(a_i, b_i, c_i, d_i)_{i=1,2,3}$ such that
 \begin{align} 
& \alpha^{\uu\ww}(f) f^{-1}
= \exp\biggl(-\frac{1}{4\hbar}
 (e_1 - e_3)(a_3 - 2c_2 + c_3 - d_1 + d_3 + a_1 + c_1 - b_3 + b_1)\biggr),
\nonumber
 \\
 &\beta^{\uu\ww} (f) f
=
 \exp\biggl(
 \frac{1}{2\hbar}
 (e_2 - e_3)(a_1 - a_2 + a_3 + c_1 - c_2 + c_3)
 \biggr).\label{alphauw-f}
 \end{align}
 As an operator in either the $u$-diagonal representation or the
 $w$-diagonal representation introduced in Section {\rm\ref{s:me}},
 $R_{123}$ satisfies
 \begin{align}
 \alpha^{\uu\ww} (f R_{123}) &= f R_{123},
 \label{alphauw-R}
 \\
 \beta^{\uu\ww} (f R_{123}) &= (f R_{123})^{-1}.
 \label{betauw-R}
 \end{align}
\end{Proposition}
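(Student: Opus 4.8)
The plan is to upgrade the adjoint-level symmetries \eqref{alphauw-Rhat} to the operator level by a Schur-type argument and then pin down the resulting proportionality constants by a direct computation from the explicit formulas \eqref{RL0}--\eqref{RL1} for $R_{123}$.

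First I would record the general fact that for any involutive substitution automorphism $\sigma$ of the operator algebra one has $\sigma\circ\mathrm{Ad}(R)\circ\sigma^{-1}=\mathrm{Ad}(\sigma(R))$, since $\sigma\bigl(RXR^{-1}\bigr)=\sigma(R)\,\sigma(X)\,\sigma(R)^{-1}$. Applying this with $\sigma=\alpha^{\uu\ww}$ to the $\alpha$-relation in \eqref{alphauw-Rhat}, and using $\widehat{R}^{\uu\ww}_{123}=\mathrm{Ad}(R_{123})$ from \eqref{knm}, gives $\mathrm{Ad}\bigl(\alpha^{\uu\ww}(R_{123})\bigr)=\mathrm{Ad}(R_{123})$; with $\sigma=\beta^{\uu\ww}$ the $\beta$-relation yields $\mathrm{Ad}\bigl(\beta^{\uu\ww}(R_{123})\bigr)=\mathrm{Ad}\bigl(R_{123}^{-1}\bigr)$. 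In the $u$-diagonal (or $w$-diagonal) representation the three $q$-Weyl algebras act irreducibly for generic $q$, so an invertible operator commuting with all of them is a scalar. Hence $R_{123}^{-1}\alpha^{\uu\ww}(R_{123})$ and $R_{123}\,\beta^{\uu\ww}(R_{123})$ are scalars, i.e.
\[
\alpha^{\uu\ww}(R_{123})=c\,R_{123},\qquad \beta^{\uu\ww}(R_{123})=c'\,R_{123}^{-1},
\]
where $c,c'$ depend only on the parameters $\mathscr{P}_i$ and on $q$.

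Next I would determine $c$ and $c'$ explicitly. For $c$, apply $\alpha^{\uu\ww}$ to \eqref{RL0}. Its parameter part $a_1\leftrightarrow a_3$, $c_1\leftrightarrow c_3$, $e_1\leftrightarrow e_3$, $b_i\leftrightarrow d_{4-i}$ is exactly the exchange that relates the $(-,-,+,+)$ expression to the $(-,+,-,+)$ expression \eqref{RL3}, which organizes the bookkeeping; the residual swaps $\uu_1\leftrightarrow\uu_3$, $\ww_1\leftrightarrow\ww_3$ then permute the four quantum-dilogarithm arguments and the factors of $P_{123}$. Restoring the canonical order of \eqref{RL0} using the recursion and pentagon identities \eqref{Prec} together with the Baker--Campbell--Hausdorff formula for the $P_{123}$ part produces the scalar, and the outcome is $c=\exp\bigl(\tfrac{1}{4\hbar}(e_1-e_3)(a_3-2c_2+c_3-d_1+d_3+a_1+c_1-b_3+b_1)\bigr)$; as a consistency check $\alpha^{\uu\ww}(c)=c^{-1}$, as it must since $\alpha^{\uu\ww}$ is involutive. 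For $c'$ the same procedure is applied to \eqref{RL1} with $\beta^{\uu\ww}$, which fixes $\uu_i,\ww_i$ and only swaps $a_i\leftrightarrow c_i$, $b_i\leftrightarrow d_i$; since the result must be proportional to $R_{123}^{-1}$, the four dilogarithms reverse their order and get inverted, again reconciled via \eqref{Prec}, and tracking the residual BCH factor from $P_{123}$ gives $c'=\exp\bigl(-\tfrac{1}{2\hbar}(e_2-e_3)(a_1-a_2+a_3+c_1-c_2+c_3)\bigr)$, which satisfies $\beta^{\uu\ww}(c')=c'$.

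Finally, with $c,c'$ in hand the conclusion is immediate. Since $f$ is a scalar function of the parameters only, it commutes with $R_{123}$, and the hypotheses of the statement read $\alpha^{\uu\ww}(f)f^{-1}=c^{-1}$ and $\beta^{\uu\ww}(f)f=(c')^{-1}$ (the latter being \eqref{alphauw-f}). Therefore
\[
\alpha^{\uu\ww}(fR_{123})=\alpha^{\uu\ww}(f)\,\alpha^{\uu\ww}(R_{123})=\alpha^{\uu\ww}(f)\,c\,R_{123}=f\,R_{123},
\]
which is \eqref{alphauw-R}, and
\[
\beta^{\uu\ww}(fR_{123})=\beta^{\uu\ww}(f)\,c'\,R_{123}^{-1}=f^{-1}R_{123}^{-1}=(fR_{123})^{-1},
\]
which is \eqref{betauw-R}. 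I expect the genuine work --- and the main obstacle --- to be the explicit evaluation of the scalars $c$ and $c'$: this is a careful reordering of noncommuting exponentials using \eqref{Prec} and the BCH formula, most delicate in the $\beta^{\uu\ww}$ case, where one must pass from a product of four dilogarithms to the reversed, inverted product defining $R_{123}^{-1}$.
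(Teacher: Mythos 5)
Your overall architecture is sound and genuinely different from the paper's. You promote the adjoint-level symmetries \eqref{alphauw-Rhat} to operator-level proportionality by a commutant argument and then evaluate the two scalars; the paper instead (a)~reads the $\alpha^{\uu\ww}$-symmetry directly off the explicit matrix-element formulas of Theorems \ref{th:re} and \ref{th:ew}, where by Remark \ref{re:al} the only piece not manifestly invariant is the factor $\e^{-C_5^2/\hbar}$ (which $\alpha^{\uu\ww}$ turns into $\e^{-C_6^2/\hbar}$, exactly compensated by the hypothesis on $f$), and (b)~for $\beta^{\uu\ww}$ computes $\beta^{\uu\ww}(R_{123})R_{123}$ from \eqref{RL1} and checks it is a scalar at the level of $N_3 \rtimes \mathfrak{S}_3$. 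Your route is independent of the matrix-element computations; the paper's is essentially computation-free once Theorems \ref{th:re} and \ref{th:ew} are available. Your scalars $c$, $c'$ and the consistency checks $\alpha^{\uu\ww}(c)=c^{-1}$, $\beta^{\uu\ww}(c')=c'$ are all correct.

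Two caveats, one minor and one substantive. First, your Schur step invokes irreducibility of the full triple of $q$-Weyl algebras, but \eqref{alphauw-Rhat} is asserted only on $\phi_\mathrm{SB}\bigl(\mathcal{Y}\bigl(B'\bigr)\bigr)$, so $R_{123}^{-1}\alpha^{\uu\ww}(R_{123})$ is only known to commute with the image of $\phi'_\mathrm{SB}$, which contains $\e^{2\uu_i}$, $\e^{\ww_2}$, $\e^{-\uu_1-\ww_1}$, $\e^{-\uu_3-\ww_3}$ but not $\e^{\uu_i}$ or $\e^{\ww_1}$ individually; one must check that the commutant of this subalgebra is still trivial (it is: the $\e^{2\uu_i}$ force diagonality up to parity functions, which the unit-shift generators then kill). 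Second, you mislocate the hard part. The $\beta^{\uu\ww}$ case requires no reordering of dilogarithms at all: since $\beta^{\uu\ww}$ fixes the canonical variables and swaps the arguments of the two left $\Psi_q^{-1}$ factors in \eqref{RL1} with those of the two right $\Psi_q$ factors, in the product $\beta^{\uu\ww}(R_{123})R_{123}$ the four inner dilogarithms cancel in adjacent inverse pairs, and everything reduces to showing that $\beta^{\uu\ww}(P_{123})P_{123}$ is the scalar $c'$ --- a short BCH computation, which is precisely the paper's proof. It is rather the $\alpha^{\uu\ww}$ case that is delicate on your route: $\alpha^{\uu\ww}$ maps the $(-,-,+,+)$ expression \eqref{RL0} to the $(-,+,-,+)$ expression \eqref{RL3}, and relating these two products \emph{exactly}, not merely up to $\mathrm{Ad}$, requires a dilogarithm identity whose overall normalization must be fixed, e.g.\ by a well-definedness and constant-term argument of the kind used to prove Proposition \ref{pr:dilog}; the consistency check $\alpha^{\uu\ww}(c)=c^{-1}$ constrains but does not determine $c$. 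You assert the correct value (it equals $\e^{(C_5^2-C_6^2)/\hbar}$, matching Remark \ref{re:al}) without carrying out this step, so as written it remains a sketch --- the paper avoids it entirely by inspecting the matrix elements.
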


\begin{proof}
 The symmetry \eqref{alphauw-R} under $\alpha^{\uu\ww}$ can be seen from
 the formulas for the matrix elements of $R_{123}$ in the
 $u$-diagonal representation and the $w$-diagonal representation,
 obtained in Theorems~\ref{th:re} and~\ref{th:ew},
 respectively. In both cases, the only part of the matrix elements
 that is not manifestly invariant under $\alpha^{\uu\ww}$ is the factor
 $\e^{-C_5^2}$. (See Remark \ref{re:al}.) The symmetry~\eqref{betauw-R} under $\beta^{\uu\ww}$ actually holds at the level of
 an element of $N_3 \rtimes \mathfrak{S}_3$, as one can check by
 calculating~${\beta^{\uu\ww}(R_{123}) R_{123}}$, say using the expression
 \eqref{RL1} for $R_{123}$.
\end{proof}

An example of a function that satisfies the above two conditions is
\[
 f
 =\exp\left(\frac{1}{4\hbar}
 (e_2 - e_3)(a_1 + a_3 + c_1 - 2c_2 + c_3 + b_1 - b_3 - d_1 + d_3)\right).
\]

\section[Matrix elements of R]{Matrix elements of $\boldsymbol{R}$}\label{s:me}

In this section, we derive explicit formulas for the elements of the
$R$-matrix given in \eqref{RL1} and~\eqref{pijk} in some infinite
dimensional representations of the $q$-Weyl algebra. When the overall
normalization is not our primary concern, we will use notation such as
\smash{$A^{n_1, n_2, n_3}_{n'_1, n'_2, n'_3} \equiv B^{n_1, n_2, n_3}_{n'_1,
 n'_2, n'_3}$} to imply
\smash{$A^{n_1, n_2, n_3}_{n'_1, n'_2, n'_3} = c B^{n_1, n_2, n_3}_{n'_1,
 n'_2, n'_3}$} for some $c$ that does not depend on the indices $n_1$,
$n_2$, $n_3$, $n'_1$, $n'_2$, $n'_3$. When discussing the symmetry of
the elements, it is important to consider whether $c$ depends on the
parameters $C_1, \dots, C_8$ in \eqref{C8} or not. In such a
circumstance, we will address the dependence case by case. For
simplicity, $\bigl(z;q^2\bigr)_m$ will be denoted as $(z)_m$.

\subsection{Parameters}\label{ss:para}
Recall that we have the parameters $\mathscr{P}_i=(a_i,b_i,c_i,d_i,e_i)$
satisfying \eqref{ae0} attached to each vertex~$i$ of the quiver.
In what follows, we will also use the following:
\begin{gather}
C_1 =\frac{1}{2}(b_1-b_2+c_1-c_3+d_2-d_3),
\qquad
C_2 = -\frac{1}{2}(c_1-c_2+c_3+b_1+a_2+d_3),\nonumber
\\
C_3 =\frac{1}{2}(c_1-c_2+c_3),
\qquad
C_4 = \frac{1}{2}(a_2+b_2+c_2+d_2),
\qquad
C_5 = \frac{1}{2}(a_3-c_2+c_3-d_1+d_3),\nonumber
\\
C_6 = \frac{1}{2}(a_1+b_1-b_3+c_1-c_2),
\qquad
C_7 = \frac{1}{2}(-d_1-c_2-b_3),\nonumber
\\
C_8 = \frac{1}{2}(a_1+a_3+b_1+c_1-c_2+c_3+d_3).\label{C8}
\end{gather}
They satisfy the relation
\begin{equation}\label{cz}
C_5+C_6-C_7-C_8=0.
\end{equation}
The parameters $e_i$ in \eqref{ae0} and $\lambda_i$ in \eqref{lad} are expressed as
\begin{gather}
\lambda_0 = -C_4+C_5-C_7,
\qquad
\lambda_1 = -C_1-C_2-C_5+C_7,
\qquad
e_1 = 2(C_7-C_6),
\qquad
g_1 = \frac{e_1}{2\hbar},\nonumber
\\
\lambda_2 = C_1-C_2-C_6-C_8,
\qquad
e_2 = -2C_4,
\qquad
g_2 = \frac{e_2}{2\hbar},\nonumber
\\
\lambda_3 = 2C_3,
\quad
e_3 = 2(C_7-C_5),
\qquad
g_3 = \frac{e_3}{2\hbar},\label{lac}
\end{gather}
where we have also introduced $g_1$, $g_2$ and $g_3$.
Now the $R$-matrix $R=R_{123}$ \eqref{RL1} is expressed as
\begin{align}
R ={}&\Psi_q\bigl(\e^{2C_7+\uu_1+\uu_3+\ww_1-\ww_2+\ww_3}\bigr)^{-1}
\Psi_q\bigl(\e^{2C_5+\uu_1-\uu_3+\ww_1-\ww_2+\ww_3}\bigr)^{-1}\nonumber
\\
& \times P
\Psi_q\bigl(\e^{\alpha_6+\uu_1-\uu_3+\ww_1-\ww_2+\ww_3}\bigr)
\Psi_q\bigl(\e^{\alpha_8+\uu_1+\uu_3+\ww_1-\ww_2+\ww_3}\bigr),\label{RL12}
\end{align}
where $\alpha_6= -b_1-a_2-d_3-e_3$ and
$\alpha_8 = -b_1-a_2-d_3$.
They are also expressed as
\begin{equation}\label{C68}
\begin{pmatrix}
\alpha_6
\\
\alpha_8
\end{pmatrix}
=
\begin{pmatrix}
-\lambda_1-\lambda_2+\lambda_3-2C_6
\\
-\lambda_1-\lambda_2+\lambda_3-2C_8
\end{pmatrix}
=
\begin{pmatrix}
2 C_2 + 2 C_3 + 2 C_5 - 2 C_7
\\
2 C_2 + 2 C_3
\end{pmatrix}.
\end{equation}
The operator $P=P_{123}$ \eqref{pijk} reads
\begin{align}\label{pp123}
P=
\e^{\tfrac{1}{\hbar}(\uu_3-\uu_2)\ww_1}
\e^{g_{23}(-\ww_1-\ww_2+\ww_3)}
\e^{\tfrac{1}{\hbar}(\lambda_1\uu_1+\lambda_2\uu_2+\lambda_3\uu_3)}\rho_{23},
\end{align}
where we have set $g_{23}=g_2-g_3$, which is equal to
$\frac{\lambda_0}{\hbar}= \frac{-C_4+C_5-C_7}{\hbar}$.
The formula \eqref{RL12} is the same with \eqref{Rint}.
We note that the transformation $\alpha^{\uu\ww}$ \eqref{uwal} is expressed as
\begin{align}\label{alC}
\alpha^{\uu\ww}\colon\ C_1 \rightarrow -C_1,\qquad C_5 \leftrightarrow C_6,
\qquad \text{other $C_i$'s are invariant},
\end{align}
as far as the parameters are concerned.

\begin{Remark}\label{re:X}
By shifting the canonical variables $\uu_i$ and $\ww_i$, one can set $c_i = d_i = 0$ without loss of generality.
See Figure \ref{fig:para}. In this parametrization, the constraint $C_3 = C_1 + C_2 + C_4 = 0$ holds in addition to \eqref{cz}.
Consequently, our solution \eqref{Rint} involves five parameters, in addition to the parameter~$q$.
\end{Remark}

\subsection[Elements of R in u-diagonal representation]{Elements of $\boldsymbol{R}$ in $\boldsymbol{\uu}$-diagonal representation}\label{ss:ud}

Let $F= \bigoplus_{n_1,n_2,n_3 \in \Z}\C |n_1,n_2,n_3\rangle$
and $F^\ast = \bigoplus_{n_1,n_2,n_3 \in \Z}\C \langle n_1,n_2,n_3|$ be the infinite dimensional spaces.
Define the representations of the direct product of the $q$-Weyl algebra
(see the explanation around \eqref{uwh}) on them by
\begin{gather}
\e^{\uu_k}|\mathbf{n}\rangle = i q^{n_k+\tfrac{1}{2}}|\mathbf{n}\rangle,
\qquad
\e^{\ww_k}|\mathbf{n}\rangle = |\mathbf{n} + \mathbf{e}_k\rangle,
\qquad
\langle \mathbf{n}|\e^{\uu_k}=\langle \mathbf{n}| i q^{n_k+\tfrac{1}{2}},\nonumber
\\
\langle \mathbf{n}|\e^{\ww_k} = \langle \mathbf{n} - \mathbf{e}_k|\label{qrep}
\end{gather}
for $k=1,2,3$. Here $|n_1,n_2,n_3\rangle$ (resp.\
$\langle n_1,n_2, n_3|$) is simply denoted by $|\mathbf{n}\rangle$
(resp.\ $\langle \mathbf{n}|$) with $\mathbf{n} \in \Z^3$, and
$\mathbf{e}_1=(1,0,0)$, $\mathbf{e}_2=(0,1,0)$, $\mathbf{e}_3=(0,0,1)$. The
dual pairing of $F$ and $F^\ast$ is defined by~${\langle \mathbf{n}|\mathbf{n}'\rangle = \delta_{\mathbf{n}, \mathbf{n}'}}$,
which satisfies
$(\langle \mathbf{n}|X) |\mathbf{n}'\rangle = \langle \mathbf{n}|(X|{\bf
 n}'\rangle)$.

In the rest of this subsection, we assume that $g_i$'s defined in \eqref{lac} satisfy the condition
\begin{align}\label{gas}
g_i \in \Z\quad (i=1,2,3).
\end{align}

\begin{Theorem}\label{th:re}
Under the assumption \eqref{gas},
the element \smash{$R^{n_1, n_2, n_3}_{n'_1, n'_2, n'_3} :=
\langle n_1, n_2, n_3| R \bigl| n'_1, n'_2, n'_3\bigr\rangle$} of the $R$-matrix \eqref{RL12} is given by
\begin{align}
R^{n_1, n_2, n_3}_{n'_1, n'_2, n'_3}
={}& \kappa \delta^{n_1+n_2}_{n'_1+n'_2} \delta^{n_2+n_3}_{n'_2+n'_3}
\e^{\lambda_1 n'_1+\lambda_2n'_3+\lambda_3 n'_2}
\bigl(\e^{2C_5}q^{n_1+g_3}\bigr)^{n_3+g_3}q^{n'_2+g_2}\nonumber
\\
& \times
\oint\frac{{\rm d}z}{2\pi {\rm i} z^{n'_2+g_2+1}}
\frac{\bigl(-z\e^{-2C_8}q^{2+n'_1+n'_3}\bigr)_\infty\bigl(-z\e^{-2C_7}q^{-n_1-n_3}\bigr)_\infty}
{\bigl(-z\e^{-2C_6}q^{n'_1-n'_3}\bigr)_\infty\bigl(-z\e^{-2C_5}q^{n'_3-n'_1}\bigr)_\infty},\label{Rf1}
\end{align}
where
\smash{$\kappa = \e^{\tfrac{1}{2}(\tfrac{ {\rm i} \pi}{\hbar}+1)(\lambda_1+\lambda_2+\lambda_3)}$}.
The integral encircles $z=0$ anti-clockwise picking the residue at the origin.
\end{Theorem}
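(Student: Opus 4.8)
The plan is to compute the matrix element $\langle \mathbf{n}|R|\mathbf{n}'\rangle$ directly from the factorized form \eqref{RL12}, inserting the $\uu$-diagonal representation \eqref{qrep} and reducing everything to a single contour integral. The strategy is to turn the four quantum dilogarithms into contour integrals via the integral (Fourier--Barnes) representation of $\Psi_q$ — or equivalently to keep them as $q$-infinite products and let the permutation operator $P$ do the combinatorial bookkeeping — so that the action on $|\mathbf{n}'\rangle$ becomes a sequence of shifts and scalar multiplications that I can read off term by term.

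\medskip

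First I would record how each building block acts. In \eqref{qrep} the operators $\e^{\uu_k}$ are diagonal with eigenvalue $iq^{n_k+1/2}$ and the $\e^{\ww_k}$ act as the shift $|\mathbf{n}\rangle \mapsto |\mathbf{n}+\mathbf{e}_k\rangle$. I would therefore first evaluate the monomial part $P$ of \eqref{pp123}: the factor $\rho_{23}$ swaps the indices $2$ and $3$, the exponential $\e^{\frac{1}{\hbar}(\uu_3-\uu_2)\ww_1}$ produces a $q$-power depending on $n_2-n_3$ times a shift in the first slot, and the remaining purely-$\ww$ and purely-$\uu$ exponentials give an overall shift and a scalar $\e^{\lambda_1 n'_1+\lambda_2 n'_3+\lambda_3 n'_2}$ respectively. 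This is precisely where the conservation laws emerge: because $\e^{\pm\ww_i}$ shift coordinates while the overall operator conserves $\uu_1+\uu_2$ and $\uu_2+\uu_3$ (Remark \ref{re:c3}), I would obtain the two Kronecker deltas $\delta^{n_1+n_2}_{n'_1+n'_2}\delta^{n_2+n_3}_{n'_2+n'_3}$, which fix the combinations $n_1+n_2$ and $n_2+n_3$. I expect the cleanest route is to use the expansion $\Psi_q(Y)^{-1}=\sum_n q^{n^2}Y^n/(q^2;q^2)_n$ and $\Psi_q(Y)=1/(-qY;q^2)_\infty$ from \eqref{expa}, so that the two $\Psi_q^{-1}$ on the left and the two $\Psi_q$ on the right of $P$ become, after substituting the eigenvalues $\e^{\uu_k}=iq^{n_k+1/2}$, products of $q$-shifted factorials in the shifted coordinate.

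\medskip

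The key device is to introduce a single auxiliary summation/integration variable $z$ that packages the remaining free shift. After the deltas are imposed, one shift variable (the $n_2$-direction, governed by $\ww_2$) remains unconstrained; the contour integral $\oint \frac{dz}{2\pi i\, z^{n'_2+g_2+1}}$ with its residue at $z=0$ is exactly the generating-function bookkeeping of this free shift, with $g_2=e_2/(2\hbar)$ supplying the correct offset coming from the half-integer $q^{1/2}$ in \eqref{qrep} and the constant $\lambda_0$-shifts in $P$. The four $q$-dilogarithms then collapse into the ratio of four $q$-Pochhammer symbols appearing in \eqref{Rf1}: the two numerator factors $(-z\e^{-2C_8}q^{2+n'_1+n'_3};q^2)_\infty$ and $(-z\e^{-2C_7}q^{-n_1-n_3};q^2)_\infty$ come from the two $\Psi_q$ on the right, while the two denominator factors come from the two $\Psi_q^{-1}$ on the left, the arguments being read off from \eqref{RL12} after converting $\uu_i,\ww_i$ to their eigenvalues and using \eqref{C68} to rewrite $\alpha_6,\alpha_8$ in terms of the $C_i$. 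The prefactor $\kappa=\e^{\frac{1}{2}(\frac{i\pi}{\hbar}+1)(\lambda_1+\lambda_2+\lambda_3)}$ and the quadratic-in-$q$ factor $\bigl(\e^{2C_5}q^{n_1+g_3}\bigr)^{n_3+g_3}q^{n'_2+g_2}$ arise from normal-ordering the exponentials (collecting the $i$'s and the $q^{1/2}$'s) via the $q$-Weyl commutation relations \eqref{uwh}.

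\medskip

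The main obstacle, and the part requiring genuine care, is the \emph{ordering and normal-ordering bookkeeping}: tracking the precise powers of $q$, the factors of $i=\sqrt{-1}$ coming from $\e^{\uu_k}|\mathbf{n}\rangle=iq^{n_k+1/2}|\mathbf{n}\rangle$, and the constant shifts $\lambda_0,\lambda_1,\lambda_2,\lambda_3$ as the non-commuting exponentials in $P$ are moved past one another and past the $\Psi_q$'s. The assumption \eqref{gas} that $g_1,g_2,g_3\in\Z$ is essential precisely here: it guarantees that the offsets $n'_2+g_2$ in the exponent of $z$ and the arguments $q^{\pm g_i}$ inside the Pochhammer symbols are integer powers, so the contour integral genuinely extracts a well-defined residue (a coefficient in a Laurent expansion in $z$) rather than a formal fractional object. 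I would carry out the normal-ordering using repeated application of $\e^{\uu_i}\e^{\ww_j}=q^{\delta_{ij}}\e^{\ww_j}\e^{\uu_i}$, verify the deltas and the scalar prefactors against the two invariants \eqref{Y3Y8-uw} and \eqref{Y5Y8-uw}, and finally confirm that the residue at $z=0$ reproduces the stated finite expression — the only subtlety being convergence/well-definedness of the four $\Psi_q$ expansions, which is guaranteed because each argument in \eqref{RL12} is a genuine monomial in the $q$-Weyl generators.
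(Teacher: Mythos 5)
Your overall route --- expand the four $\Psi_q$'s via \eqref{expa}, read off the matrix elements of $P$ (the two Kronecker deltas plus a third delta fixing the free shift, the scalar $\e^{\lambda_1 n'_1+\lambda_2 n'_3+\lambda_3 n'_2}$, and $\kappa$ coming from the eigenvalue $iq^{n_k+1/2}$ in \eqref{qrep}), then package the one surviving constrained four-fold summation as a residue in an auxiliary variable $z$ --- is exactly the paper's proof up to the point where the fourfold sum is written as a contour integral. But your account of the final collapse has a genuine gap. After substituting eigenvalues, the two \emph{left} factors $\Psi_q\bigl(\e^{2C_7+\cdots}\bigr)^{-1}$ and $\Psi_q\bigl(\e^{2C_5+\cdots}\bigr)^{-1}$ enter the generating function with $z^{-1}$, not $z$: by the Euler identities they resum to $\bigl(-z^{-1}\e^{2C_5}q^{1+n'_1-n'_3}\bigr)_\infty/\bigl(-z^{-1}\e^{2C_7}q^{1+n_1+n_3}\bigr)_\infty$, while the right pair gives the $z$-products $\bigl(-z\e^{-2C_8}q^{3+n'_1+n'_3}\bigr)_\infty/\bigl(-z\e^{-2C_6}q^{1+n'_1-n'_3}\bigr)_\infty$. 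So the integrand is \emph{not} yet the ratio of four $z$-Pochhammers in \eqref{Rf1}, and your attribution (both numerators from the right $\Psi_q$'s, both denominators from the left $\Psi_q^{-1}$'s) is incorrect: in \eqref{Rf1} the $C_7$ numerator and the $C_5$ denominator both descend from the left pair, after a nontrivial flip.

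The missing idea is the quasi-periodicity of $f(\xi) = (-q\xi)_\infty\bigl(-q\xi^{-1}\bigr)_\infty$, namely $f(\xi) = q^{m^2}\xi^{-m}f\bigl(\xi q^{-2m}\bigr)$ for $m \in \Z$. Applying it with $\xi = z\e^{-2C_5}q^{n_3-n_1}$ and $m = n_3+g_3$, and using $\e^{-2C_5} = \e^{-2C_7}q^{2g_3}$ (which encodes $e_3 = 2(C_7-C_5)$) together with $n_1-n_3 = n'_1-n'_3$ enforced by the deltas, converts the mixed $z^{\pm 1}$ quotient into the monomial $\bigl(\e^{2C_5}q^{n_1+g_3}\bigr)^{n_3+g_3}z^{-n_3-g_3}$ --- which is precisely the prefactor you misattributed to ``normal-ordering the exponentials.'' This is also where the hypothesis \eqref{gas} really bites: $g_3 \in \Z$ is needed so that $m = n_3+g_3$ is an integer and the quasi-periodicity applies; your justification covers only the role of $g_2 \in \Z$ in making the power of $z$ integral. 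After this reduction and the final substitution $z \to q^{-1}z$ (which produces the remaining factor $q^{n'_2+g_2}$ and shifts every Pochhammer argument by $q^{-1}$), the formula \eqref{Rf1} follows; without it, your computation stalls at an integrand containing genuine theta-type quotients in both $z$ and $z^{-1}$, and the stated four-Pochhammer form cannot be reached as you describe.
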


\begin{proof}
From the expansions \eqref{Psiq} and
the commutation relations $\e^{\uu_i} \e^{\ww_j}=q^{\delta_{ij}}\e^{\ww_j}\e^{\uu_i}$,
$\e^{\uu_i}\e^{\uu_j}=\e^{\uu_j}\e^{\uu_i}$ and $\e^{\ww_i}\e^{\ww_j}=\e^{\ww_j}\e^{\ww_i}$, one has
\begin{gather*}
\Psi_q\bigl(\e^{2C_7+\uu_1+\uu_3+\ww_1-\ww_2+\ww_3}\bigr)^{-1}=\sum_{k\ge 0}\frac{1}{\bigl(q^2\bigr)_k}
\bigl(\e^{2C_7+\uu_1+\uu_3}\bigr)^k\bigl(\e^{\ww_1-\ww_2+\ww_3}\bigr)^k,
\\
\Psi_q\bigl(\e^{2C_5+\uu_1-\uu_3+\ww_1-\ww_2+\ww_3}\bigr)^{-1} = \sum_{l \ge 0}\frac{q^{l^2}}{\bigl(q^2\bigr)_l}
\bigl(\e^{2C_5+\uu_1-\uu_3}\bigr)^l\bigl(\e^{\ww_1-\ww_2+\ww_3}\bigr)^l,
\\
\Psi_q(\e^{\alpha_6+\uu_1-\uu_3+\ww_1-\ww_2+\ww_3}\bigr) = \sum_{r\ge 0}\frac{q^r}{\bigl(q^2\bigr)_r}
\bigl(\e^{\ww_1-\ww_2+\ww_3}\bigr)^r\bigl(-\e^{\alpha_6+\uu_1-\uu_3}\bigr)^r,
\\
\Psi_q\bigl(\e^{\alpha_8+\uu_1+\uu_3+\ww_1-\ww_2+\ww_3}\bigr)
 = \sum_{s\ge 0}\frac{q^{s^2+s}}{\bigl(q^2\bigr)_s}
\bigl(\e^{\ww_1-\ww_2+\ww_3}\bigr)^s\bigl(-\e^{\alpha_8+\uu_1+\uu_3}\bigr)^s.
\end{gather*}
By utilizing them and \eqref{qrep}, we get
\begin{gather}
\langle n_1, n_2, n_3| \Psi_q\bigl(\e^{2C_7+\uu_1+\uu_3+\ww_1-\ww_2+\ww_3}\bigr)^{-1}\Psi_q\bigl(\e^{2C_5+\uu_1-\uu_3+\ww_1-\ww_2+\ww_3}\bigr)^{-1}
\nonumber\\
\qquad= \sum_{k,l\ge 0}\frac{q^{l^2}}{\bigl(q^2\bigr)_k\bigl(q^2\bigr)_l}
\bigl(-\e^{2C_7}q^{1+n_1+n_3}\bigr)^k\nonumber\\
\phantom{\qquad= }{}\times\bigl(\e^{2C_5}q^{n_1-n_3}\bigr)^l
\langle n_1-k-l, n_2+k+l, n_3-k-l|,
\label{npp}\\
\Psi_q\bigl(\e^{\alpha_6+\uu_1-\uu_3+\ww_1-\ww_2+\ww_3}\bigr) \Psi_q\bigl(\e^{\alpha_8+\uu_1+\uu_3+\ww_1-\ww_2+\ww_3}\bigr) \bigl| n'_1, n'_2, n'_3\bigr\rangle
\nonumber\\
\qquad=\sum_{r,s\ge 0}\frac{q^{s^2}}{\bigl(q^2\bigr)_r\bigl(q^2\bigr)_s}
\bigl(\e^{\alpha_8}q^{2+n'_1+n'_3}\bigr)^s\nonumber\\
\phantom{\qquad=}{}\times\bigl(-\e^{\alpha_6}q^{1+n'_1-n'_3}\bigr)^r
\bigl|n'_1+r+s, n'_2-r-s, n'_3+r+s\bigr\rangle.
\label{ppn}
\end{gather}
Elements of $P$ \eqref{pp123} are calculated as
\begin{gather}
\langle n_1,n_2,n_3 | P \bigl|n'_1, n'_2, n'_3\bigr\rangle
\nonumber \\
\qquad=
 \langle n_1,n_2,n_3 |
\e^{(n_3-n_2)\ww_1}
\e^{g_{23}(-\ww_1-\ww_2+\ww_3)}
\e^{\tfrac{1}{\hbar}(\lambda_1\uu_1+\lambda_2\uu_2+\lambda_3\uu_3)}
 \bigl|n'_1, n'_3, n'_2\bigr\rangle
 \nonumber\\
\qquad=
\kappa \langle n_1+n_2-n_3, n_2,n_3|\e^{g_{23}(-\ww_1-\ww_2+\ww_3)}
 \bigl|n'_1, n'_3, n'_2\bigr\rangle \e^{\lambda_1 n'_1+\lambda_2n'_3+\lambda_3 n'_2}
 \nonumber\\
\qquad=
\kappa \langle n_1+n_2-n_3, n_2,n_3|
 \bigl|n'_1-g_{23}, n'_3-g_{23}, n'_2+g_{23}\bigr\rangle \e^{\lambda_1 n'_1+\lambda_2n'_3+\lambda_3 n'_2}
 \nonumber\\
\qquad=
 \kappa \delta^{n_1+n_2}_{n'_1+n'_2} \delta^{n_2+n_3}_{n'_2+n'_3} \delta^{n_3}_{n'_2+g_{23}}
 \e^{\lambda_1 n'_1+\lambda_2n'_3+\lambda_3 n'_2},
 \label{npn}
\end{gather}
where $\kappa$ is defined after \eqref{Rf1}.
Combining \eqref{npp}, \eqref{ppn} and \eqref{npn}, we get
\begin{gather*}
R^{n_1, n_2, n_3}_{n'_1, n'_2, n'_3}
\nonumber \\
\qquad= \kappa \delta^{n_1+n_2}_{n'_1+n'_2} \delta^{n_2+n_3}_{n'_2+n'_3}
\sum_{k,l,r,s\ge 0}\delta^{n_3-k-l}_{n'_2-r-s+g_{23}} q^{l^2+s^2}
\e^{\lambda_1(n'_1+r+s)+\lambda_2(n'_3+r+s)+\lambda_3(n'_2-r-s)}
\nonumber \\
\phantom{\qquad=}{} \times \frac{\bigl(-\e^{2C_7}q^{1+n_1+n_3}\bigr)^k\bigl(\e^{2C_5}q^{n_1-n_3}\bigr)^l
\bigl(\e^{\alpha_8}q^{2+n'_1+n'_3}\bigr)^s\bigl(-\e^{\alpha_6}q^{1+n'_1-n'_3}\bigr)^r}{\bigl(q^2\bigr)_k\bigl(q^2\bigr)_l\bigl(q^2\bigr)_s\bigl(q^2\bigr)_r}
\nonumber\\
\qquad= \kappa \delta^{n_1+n_2}_{n'_1+n'_2} \delta^{n_2+n_3}_{n'_2+n'_3}
 \e^{\lambda_1n'_1+\lambda_2n'_3+\lambda_3n'_2}
 \sum_{k,l,r,s\ge 0}\delta^{n_3-k-l}_{n'_2-r-s+g_{23}} q^{l^2+s^2}
\nonumber \\
\phantom{\qquad=}{}\times \frac{\bigl(-\e^{2C_7}q^{1+n_1+n_3}\bigr)^k\bigl(\e^{2C_5}q^{n'_1-n'_3}\bigr)^l
\bigl(\e^{-2C_8}q^{2+n'_1+n'_3}\bigr)^s\bigl(-\e^{-2C_6}q^{1+n'_1-n'_3}\bigr)^r}{\bigl(q^2\bigr)_k\bigl(q^2\bigr)_l\bigl(q^2\bigr)_s\bigl(q^2\bigr)_r},
\end{gather*}
where the last step uses \eqref{C68} and $n_1-n_3=n'_1-n'_3$ under the two Kronecker delta's.
The last line is the coefficient of $z^{n'_2-n_3+g_{23}}$ of
\begin{gather*}
\sum_{k,l,r,s\ge 0}
\frac{\bigl(-z^{-1}\e^{2C_7}q^{1+n_1+n_3}\bigr)^k}{\bigl(q^2\bigr)_k}
\frac{q^{l^2}\bigl(z^{-1}\e^{2C_5}q^{n'_1-n'_3}\bigr)^l}{\bigl(q^2\bigr)_l}
\frac{q^{s^2}\bigl(z\e^{-2C_8}q^{2+n'_1+n'_3}\bigr)^s}{\bigl(q^2\bigr)_s}\\
\qquad\times
\frac{\bigl(-z\e^{-2C_6}q^{1+n'_1-n'_3}\bigr)^r}{\bigl(q^2\bigr)_r}
=\frac{\bigl(-z^{-1}\e^{2C_5}q^{1+n'_1-n'_3}\bigr)_\infty\bigl(-z\e^{-2C_8}q^{3+n'_1+n'_3}\bigr)_\infty}
{\bigl(-z^{-1}\e^{2C_7}q^{1+n_1+n_3}\bigr)_\infty\bigl(-z\e^{-2C_6}q^{1+n'_1-n'_3}\bigr)_\infty}
=\Gamma(z)H(z),
\end{gather*}
where
\begin{gather*}
\Gamma(z)
= \frac{\bigl(-z\e^{-2C_7}q^{1-n_1-n_3}\bigr)_\infty\bigl(-z\e^{-2C_8}q^{3+n'_1+n'_3}\bigr)_\infty}
{\bigl(-z\e^{-2C_5}q^{1+n'_3-n'_1}\bigr)_\infty\bigl(-z\e^{-2C_6}q^{1+n'_1-n'_3}\bigr)_\infty},
\\
H(z)
=\frac{\bigl(-z\e^{-2C_5}q^{1+n'_3-n'_1}\bigr)_\infty\bigl(-z^{-1}\e^{2C_5}q^{1+n'_1-n'_3}\bigr)_\infty}
{\bigl(-z\e^{-2C_7}q^{1-n_1-n_3}\bigr)_\infty\bigl(-z^{-1}\e^{2C_7}q^{1+n_1+n_3}\bigr)_\infty}.
\end{gather*}
Thus far we have shown
\begin{align}\label{pre1}
R^{n_1, n_2, n_3}_{n'_1, n'_2, n'_3} &= \kappa \delta^{n_1+n_2}_{n'_1+n'_2} \delta^{n_2+n_3}_{n'_2+n'_3}
\e^{\lambda_1n'_1+\lambda_2n'_3+\lambda_3n'_2}
\oint\frac{{\rm d}z}{2\pi {\rm i} z}z^{-n'_2+n_3-g_{23}}\Gamma(z)H(z).
\end{align}
Note that \smash{$f(\xi) := (-q\xi)_\infty\bigl(-q\xi^{-1}\bigr)_\infty$} satisfies
\[
f(\xi)=q\xi^{-1}f\bigl(\xi q^{-2}\bigr)=q^{m^2}\xi^{-m}f\bigl(\xi q^{-2m}\bigr)
\]
for any $m \in \Z$.
Setting $\xi = z\e^{-2C_5}q^{n'_3-n'_1}= z\e^{-2C_5}q^{n_3-n_1}$, $m=n_3+g_3$, and using
$\e^{-2C_5} = \e^{-2C_7+e_3} = \e^{-2C_7}q^{2g_3}$, we find
\[
H(z) = f(\xi)/f\bigl(\xi q^{-2m}\bigr) = \bigl(\e^{2C_5} q^{n_1+g_3}\bigr)^{n_3+g_3}z^{-n_3-g_3}.
\]
Substituting this into \eqref{pre1} and replacing $z$ by $q^{-1}z$, we obtain \eqref{Rf1}.
\end{proof}

\begin{Remark}\label{re:al}
The factor \smash{$\e^{\lambda_1 n'_1+\lambda_2n'_3+\lambda_3 n'_2}
\bigl(\e^{2C_5}q^{n_1+g_3}\bigr)^{n_3+g_3}q^{n'_2+g_2}$} in the first line of \eqref{Rf1} is equal to
\[
\e^{-C_4 + \tfrac{1}{\hbar} (-C_5^2 + C_7^2)
+C_7 (n_1 + n_3) + (C_1 + C_5 - C_6) (n_3' - n_1') - (C_2 + C_8)(n_1' + n_3') + 2 C_3 n_2'}
q^{n_1 n_3 + n'_2}
\]
under the condition implied by \smash{$\delta^{n_1+n_2}_{n'_1+n'_2} \delta^{n_2+n_3}_{n'_2+n'_3}$}.
Therefore, the result~\eqref{Rf1} fulfills the symmetry~\eqref{alphauw-R}\footnote{$\alpha^{\uu\ww}$ should
also be accompanied by the interchange $\bigl(n_i, n'_i\bigr) \leftrightarrow \bigl(n_{4-i},n'_{4-i}\bigr)$
in view of \eqref{uwal}.}
due to \eqref{alC} and the fact that the right-hand side of \eqref{alphauw-f} is equal to
$\exp\bigl(\bigl(C_6^2-C_5^2\bigr)/\hbar\bigr)$.
\end{Remark}

\begin{Remark}\label{re:R}
 When $a_i=b_i=c_i=d_i=e_i=0$ for $i=1,2,3$, hence
 $\forall C_i = \forall\lambda_i=\forall g_i=0$, the formula
 \eqref{Rf1} coincides, including the overall normalization, with
 \smash{$R^{n'_1,n'_2,n'_3}_{n_1,n_2,n_3}$} from \cite[equation~(3.81)]{K22} for
 the elements of the $R$-matrix \cite{KV94} originally discovered
 from the representation theory of quantized coordinate ring. A
 similar integral formula was recognized earlier in the footnote of
 \cite[p.\ 5]{BS06}. In this case, $\Gamma(z)$ reduces to a rational
 function of $z$ (see the explanation after \cite[equation~(3.81)]{K22}),
 and the tetrahedron equation closes among elements with non-negative
 integer indices.
\end{Remark}

\subsection[Elements of R in w-diagonal representation]{Elements of $\boldsymbol{R}$ in $\boldsymbol{\ww}$-diagonal representation}\label{ss:wd}

Let us turn to the representation of the canonical variables in which $\ww_k$'s are diagonal,
\begin{gather}
\e^{\uu_k}|\mathbf{n}\rangle = |\mathbf{n} - \mathbf{e}_k\rangle,
\qquad
\e^{\ww_k}|\mathbf{n}\rangle = q^{n_k}|\mathbf{n}\rangle,
\qquad
\langle \mathbf{n}|\e^{\uu_k}=\langle \mathbf{n} + \mathbf{e}_k|,\nonumber
\\
\langle \mathbf{n}|\e^{\ww_k} = \langle \mathbf{n}|q^{n_k},\label{wrep}
\end{gather}
where notations are similar to \eqref{qrep}.
We employ the same pairing $\langle \mathbf{n}|\mathbf{n}'\rangle = \delta_{\mathbf{n}, \mathbf{n}'}$ and the
notation~${g_{23}=g_2-g_3 = (-C_4+C_5-C_7)\hbar^{-1}}$ introduced after \eqref{pp123}.
In this subsection, we assume
\begin{align}\label{els}
\ell_i := \frac{\lambda_i}{\hbar} \in \Z,\qquad i=1,2,3,
\end{align}
where $\lambda_i$'s are defined in \eqref{lac}.

\begin{Theorem}\label{th:ew}
Under the assumption \eqref{els}, the element
\smash{$S^{n_1,n_2, n_3}_{n'_1, n'_2, n'_3} :=
\langle n_1, n_2, n_3| R\bigl| n'_1, n'_2, n'_3\bigr\rangle$} of the $R$-matrix \eqref{RL12} is given by
\begin{gather}
S^{n_1,n_2, n_3}_{n'_1, n'_2, n'_3} =
\frac{(-1)^{\frac{\nu_1}{2}}q^{\psi+\omega}(q^{\nu_3+\nu_4})_\infty\bigl(q^2\bigr)^3_\infty}
{(q^{\nu_1})_\infty (q^{\nu_2})_\infty (q^{\nu_3})_\infty (q^{\nu_4})_\infty},
\label{Sf}\\
\nu_1 = 2C_3\hbar^{-1}+n_1+n_3-n'_2,
\qquad
\nu_2 = -2(C_2+C_8)\hbar^{-1}+ n_2-n'_1-n'_3,
\\
\nu_3 = 2(C_1-C_3+C_5)\hbar^{-1}-n_1-n_2+n_3+n'_1+n'_2-n'_3,
\\
\nu_4 = 2(-C_1-C_3+C_6)\hbar^{-1}+ n_1-n_2-n_3-n'_1+n'_2+n'_3,
\\
\psi= \tfrac{1}{4}\bigl(-(\nu_1-\nu_2)(\nu_3+\nu_4)+\nu_3\nu_4-\nu_1^2+2\nu_1\bigr)
\nonumber\\
\phantom{\psi= }{} + \tfrac{1}{2\hbar}
\bigl((C_8-C_7)(\nu_1+\nu_2)+(C_8-C_6-C_4)\nu_3 + (C_8-C_5-C_4)\nu_4\bigr),
\label{Spf}
\end{gather}
where $\omega$ is independent of $n_i$, $n'_i$, and given by
$\omega= (C_5+C_6)(C_4-C_5+C_7)/\hbar^2$.
\end{Theorem}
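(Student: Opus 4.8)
The plan is to mirror the computation that established Theorem~\ref{th:re}, but now carried out in the $\ww$-diagonal representation \eqref{wrep} rather than the $\uu$-diagonal one. First I would expand each of the four quantum dilogarithms in \eqref{RL12} by \eqref{expa}, introducing four nonnegative summation indices (two from the inverse factors $\Psi_q^{-1}$ and two from the $\Psi_q$ factors), exactly as in \eqref{npp}--\eqref{ppn}. The essential change from the proof of Theorem~\ref{th:re} is that the roles of $\uu$ and $\ww$ are interchanged: here $\e^{\ww_k}$ is diagonal with eigenvalue $q^{n_k}$, so the common factor $\e^{\ww_1-\ww_2+\ww_3}$ appearing in every dilogarithm argument acts as a \emph{scalar} on a basis vector, while the pieces $\e^{\uu_1\pm\uu_3}$ act as integer shifts of the indices.

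The next step is to process the monomial operator $P$ of \eqref{pp123} in this representation. The factors $\e^{\frac1\hbar(\lambda_1\uu_1+\lambda_2\uu_2+\lambda_3\uu_3)}$ and $\rho_{23}$ produce, under the assumption \eqref{els}, honest integer shifts and a transposition of the indices $2,3$; indeed $\ell_i=\lambda_i/\hbar\in\Z$ is exactly what guarantees that these shifts preserve the lattice $\Z^3$, playing the role that \eqref{gas} played before. The one genuinely new ingredient is the quadratic exponential $\e^{\frac1\hbar(\uu_3-\uu_2)\ww_1}$: since $\ww_1$ is diagonal, on an eigenvector it reduces to $\e^{(\uu_3-\uu_2)n_1}$, an index-dependent shift, and normal-ordering it against the remaining shift operators is what generates the Gaussian prefactor $q^{\psi}$ with its mixed quadratic terms in the $\nu_i$. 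I would then insert $\langle\mathbf n|$ and $|\mathbf n'\rangle$ and use the pairing $\langle\mathbf n|\mathbf n'\rangle=\delta_{\mathbf n,\mathbf n'}$ to convert the matching of total shifts into three linear relations among the four summation indices and the $n_i,n'_i$; solving them fixes three of the summation variables and produces precisely the combinations $\nu_1,\dots,\nu_4$ of \eqref{Sf}, leaving no residual Kronecker delta.

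With three of the four indices eliminated, a single $q$-series in the remaining variable survives. I expect this sum to be summable in closed form by a standard $q$-hypergeometric evaluation in base $q^2$ -- most plausibly the $q$-Gauss (${}_2\phi_1$) summation or an iterated application of the $q$-binomial theorem -- producing the ratio $(q^{\nu_3+\nu_4})_\infty(q^2)^3_\infty\big/\big[(q^{\nu_1})_\infty(q^{\nu_2})_\infty(q^{\nu_3})_\infty(q^{\nu_4})_\infty\big]$ of \eqref{Sf}. Finally I would collect all the $q$-powers and signs accumulated from the four expansions \eqref{expa}, from the normal-ordering of $\e^{\frac1\hbar(\uu_3-\uu_2)\ww_1}$, and from $P$, and match them against $q^{\psi+\omega}$ and $(-1)^{\nu_1/2}$ in \eqref{Spf}, using \eqref{C68} and \eqref{lac} to rewrite everything in terms of the $C_i$.

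The main obstacle will be the quadratic bookkeeping rather than any conceptual step: correctly normal-ordering $\e^{\frac1\hbar(\uu_3-\uu_2)\ww_1}$ and assembling the resulting quadratic form so that it reproduces $\psi$ in \eqref{Spf} term by term, and identifying the exact $q$-series identity that collapses the surviving sum. Pinning down the index-independent constant $\omega=(C_5+C_6)(C_4-C_5+C_7)/\hbar^2$ is the most delicate piece: it requires tracking the overall normalization through every step, and I would fix it at the end by specializing to one convenient value of $(\mathbf n,\mathbf n')$, just as the constant in Proposition~\ref{pr:dilog} was pinned down by comparing constant terms.
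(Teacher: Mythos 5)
Your proposal is correct and is essentially the paper's own proof: the paper likewise expands the four dilogarithms of \eqref{RL12} via \eqref{expa} in the $\ww$-diagonal representation \eqref{wrep}, uses the matrix element of $P$ (with \eqref{els} guaranteeing integer shifts) to fix $k$, $s$ and the combination $l+r=M$ at the values $-\nu_1/2$, $-\nu_2/2$, $-\nu_4/2$, and evaluates the single surviving sum in closed form by the finite $q$-binomial identity $\sum_{l+r=M}q^{l^2+r}A^lB^r/\bigl(\bigl(q^2\bigr)_l\bigl(q^2\bigr)_r\bigr)=(qB)^M\bigl(-AB^{-1}\bigr)_M/\bigl(q^2\bigr)_M$ with $AB^{-1}=-q^{\nu_3+\nu_4}$, exactly as you anticipate. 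The only cosmetic deviations are that the paper tracks the constant $\omega$ directly through the bookkeeping rather than fixing it by specializing $(\mathbf{n},\mathbf{n}')$, and that the quadratic prefactor $q^\psi$ comes from the $q$-powers of the expansions after substituting the fixed values of $k$, $s$, $M$ together with the diagonal factor $\e^{g_{23}(-\ww_1-\ww_2+\ww_3)}$, while $\e^{(\uu_3-\uu_2)\ww_1/\hbar}$ itself (its two factors commuting) contributes only the index-dependent shift and no extra $q$-power.
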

\begin{proof}
The derivation is similar to Theorem \ref{th:re}.
We have
\begin{gather*}
\langle n_1, n_2, n_3| \Psi_q\bigl(\e^{2C_7+\uu_1+\uu_3+\ww_1-\ww_2+\ww_3}\bigr)^{-1}\Psi_q\bigl(\e^{2C_5+\uu_1-\uu_3+\ww_1-\ww_2+\ww_3}\bigr)^{-1}
\nonumber\\
\qquad= \sum_{k,l\ge 0}\frac{q^{2k^2+l^2}}{\bigl(q^2\bigr)_k\bigl(q^2\bigr)_l}
\bigl(\e^{2C_7}q^{n_1-n_2+n_3}\bigr)^k A^l
\langle n_1+k+l, n_2, n_3+k-l|,
\\
\Psi_q\bigl(\e^{\alpha_6+\uu_1-\uu_3+\ww_1-\ww_2+\ww_3}\bigr) \Psi_q\bigl(\e^{\alpha_8+\uu_1+\uu_3+\ww_1-\ww_2+\ww_3}\bigr) \bigl| n'_1, n'_2, n'_3\bigr\rangle
\nonumber\\
\qquad=\sum_{r,s\ge 0}\frac{q^{-s^2+s+r}}{\bigl(q^2\bigr)_r\bigl(q^2\bigr)_s}
\bigl(-\e^{\alpha_8}q^{n'_1-n'_2+n'_3}\bigr)^s B^r
\bigl|n'_1-r-s, n'_2, n'_3+r-s\bigr\rangle,
\\
\langle n_1, n_2, n_3|P\bigl| n'_1, n'_2, n'_3\bigr\rangle
= q^{g_{23}(n_1-n_2+n_3)}\delta^{n_1+\ell_1}_{n'_1}
\delta^{n_1+n_3+\ell_3}_{n'_2}\delta^{n_2-n_1+\ell_2}_{n'_3},
\end{gather*}
where
$A=\e^{2C_5}q^{n_1-n_2+n_3+2k}$ and $B=-\e^{\alpha_6}q^{n'_1-n'_2+n'_3-2s}$.
They lead to
\begin{align}
\langle n_1, n_2, n_3| R\bigl| n'_1, n'_2, n'_3\bigr\rangle
 ={}& \frac{q^{2k^2-s^2+s}}{\bigl(q^2\bigr)_k\bigl(q^2\bigr)_s}q^{g_{23}(n_1-n_2+n_3)}
 \bigl(\e^{2C_7}q^{n_1-n_2+n_3}\bigr)^k\bigl(-\e^{\alpha_8}q^{n'_1-n'_2+n'_3}\bigr)^s
 \nonumber \\
 & \times \sum_{l+r=M}\frac{q^{l^2+r}}{\bigl(q^2\bigr)_r\bigl(q^2\bigr)_l}A^lB^r
 \label{kono}
\end{align}
with $k$, $s$ and $M$ fixed as\footnote{There is a parity condition on
 $n_i$, $n'_i$, $\ell_i$ in order to ensure $k, s \in \Z$ in
 \eqref{pari}. However the final formula \eqref{Sf} makes sense for
 generic $C_i$'s.}
\begin{gather}
k=\frac{1}{2}\bigl(n'_2-n_1-n_3-\ell_3\bigr)=-\frac{\nu_1}{2},
\qquad
s = \frac{1}{2}\bigl(n'_1+n'_3-n_2-\ell_1-\ell_2\bigr) = -\frac{\nu_2}{2},
\nonumber
\\
M = n'_1-n_1-k-s-\ell_1 \nonumber\\
\phantom{M }{} = \frac{1}{2}\bigl(-n_1+n_2+n_3+n'_1-n'_2-n'_3-\ell_1+\ell_2+\ell_3\bigr)
= -\frac{\nu_4}{2}.\label{pari}
\end{gather}
The last line of \eqref{kono} is
$(qB)^M\bigl(-AB^{-1}\bigr)_M/\bigl(q^2\bigr)_M$ with $AB^{-1}= -q^{\nu_3+\nu_4}$.
Thus \eqref{kono} is equal~to
\[
\frac{q^{\psi'}\bigl(q^{\nu_3+\nu_4}\bigr)_{-\nu_4/2}}
{\bigl(q^2\bigr)_{-\nu_1/2}\bigl(q^2\bigr)_{-\nu_2/2}\bigl(q^2\bigr)_{-\nu_4/2}}
= \frac{q^{\psi'}\bigl(q^{\nu_3+\nu_4}\bigr)_\infty}
{\bigl(q^2\bigr)_{-\nu_1/2}\bigl(q^2\bigr)_{-\nu_2/2}\bigl(q^2\bigr)_{-\nu_4/2}(q^{\nu_3})_\infty}
\]
for some power $\psi'$.
Rewriting \smash{$\bigl(q^2\bigr)_{-\nu_i/2}$} in the denominator $(i=1,2,4)$ as
$(-1)^{-\frac{\nu_i}{2}}q^{\frac{\nu_i}{2}(\frac{\nu_i}{2}-1)}\allowbreak\times(q^{\nu_i})_\infty/\bigl(q^2\bigr)_\infty$,
we obtain \eqref{Sf}--\eqref{Spf}.
\end{proof}

It is easily confirmed that the result \eqref{Sf} fulfills the symmetry \eqref{alphauw-R}.

A slightly more explicit form of \eqref{Sf} is
\begin{gather}
S^{n_1,n_2, n_3}_{n'_1, n'_2, n'_3} \nonumber\\
\qquad\equiv q^{\psi_0}
\bigl(-\e^{-2C_7}\bigr)^{\frac{m_1}{2}}
\bigl(\e^{2C_8-2C_3}\bigr)^{\frac{m_2}{2}}
\bigl(\e^{-C_1-C_2-2C_3-C_4}\bigr)^{\frac{m_3}{2}}
\bigl(\e^{C_1-C_2-2C_3-C_4}\bigr)^{\frac{m_4}{2}}
\nonumber\\
\phantom{\qquad\equiv}{} \times
\frac{\bigl(\e^{-4C_3+2C_5+2C_6}q^{m_3+m_4}\bigr)_\infty\bigl(q^2\bigr)^3_\infty}
{\bigl(\e^{2C_3}q^{m_1}\bigr)_\infty
\bigl(\e^{-2C_2-2C_8}q^{m_2}\bigr)_\infty
\bigl(\e^{2C_1-2C_3+2C_5}q^{m_3}\bigr)_\infty
\bigl(\e^{-2C_1-2C_3+2C_6}q^{m_4}\bigr)_\infty},
\label{Sf2}\\
\begin{pmatrix}m_1 \\ m_2\end{pmatrix} =
\begin{pmatrix}n_1+n_3-n'_2 \\ n_2-n'_1-n'_3 \end{pmatrix},
\qquad
\begin{pmatrix}m_3 \\ m_4\end{pmatrix} =
\begin{pmatrix}-n_1-n_2+n_3+n'_1+n'_2-n'_3 \\ n_1-n_2-n_3-n'_1+n'_2+n'_3 \end{pmatrix},
\label{mdef}
\\
\psi_0 = \frac{1}{4}\bigl(-(m_1-m_2)(m_3+m_4)+m_3m_4-m_1^2+2m_1\bigr),
\label{p0def}
\end{gather}
up to an overall factor depending on $C_1, \dots, C_8$. The formula
\eqref{mo} is obtained, up to an overall factor, by replacing the
infinite products $(zq^m)_\infty$ appearing here with
\smash{$\bigl(z;q^2\bigr)_\infty/\bigl(z;q^2\bigr)_{\tfrac{m}{2}}$}.\footnote{For a proper
 treatment of indices with both parities, see \cite[equation~(49)]{KMY23}
 and also \cite[Proposition~2]{KMY23}.}

Let us compare the above \smash{$S^{n_1,n_2, n_3}_{n'_1, n'_2, n'_3}$} with
the $R$-matrix $R^{ZZZ}$ obtained in \cite[equation~(45)]{KMY23}. We
write the element \smash{$R^{a,b,c}_{i,j,k}$} therein as \smash{$X^{a,b,c}_{i,j,k}$}
here for distinction. It contains twelve parameters~${(r_j ,s_j, t_j, w_j)}$ ($j=1, 2, 3$). Apply \cite[equation~(51)]{KMY23} to rewrite the first factor in its denominator and
replace the parameters as
\smash{$(t_j,w_j) \rightarrow \bigl(-i q^{-\frac{1}{2}} t_j, t_j^{-1}w_j\bigr)$}. The
result reads
\begin{gather}
X^{n_1,n_2,n_3}_{n'_1,n'_2,n'_3}=
q^{\varphi} \left(\frac{r_1r_3}{t_3w_1}\right)^{\frac{m_1}{2}}
\left(-\frac{s_2}{t_1w_3}\right)^{\frac{m_2}{2}}
\left( \frac{t_2}{s_1t_3}\right)^{\frac{m_3}{2}}
\left(\frac{w_2}{s_3w_1}\right)^{\frac{m_4}{2}}\nonumber
\\
\phantom{X^{n_1,n_2,n_3}_{n'_1,n'_2,n'_3}=}{}\times
\frac{
\Theta_{m_1}\bigl(\frac{r_2}{r_1r_3}\bigr)
\Theta_{m_2}\bigl(\frac{s_1s_3}{s_2}\bigr)
\Theta_{m_3}\bigl(\frac{r_3t_1w_2}{s_3t_2w_1}\bigr)
\Theta_{m_4}\bigl(\frac{r_1t_2w_3}{s_1t_3w_2}\bigr)}
{\Theta_{m_3+m_4}\bigl(\frac{r_1r_3t_1w_3}{s_1s_3t_3w_1}\bigr)},\nonumber
\\
\varphi = \frac{1}{4}\bigl((m_1-m_2)(m_3+m_4)+m_3m_4-m^2_2+2m_2\bigr),\label{Xn}
\end{gather}
where $m_j$'s are those in \eqref{mdef}.
The function $\Theta_m(z)$ is defined up to normalization by
$\Theta_{m+2}(z) = (1-zq^m)\Theta_m(z)$.

\begin{Remark}\label{re:kmy0}
With the choice $\Theta_m(z)=1/\bigl(zq^m;q^2\bigr)_\infty$ and the identification of parameters as
\begin{gather}
\e^{C_1} = \sqrt{\frac{r_1 t_2 w_3}{r_3 t_1 w_2}}, \qquad
\e^{C_2} = \sqrt{\frac{r_2 t_1 w_3}{r_1 r_3 s_2}}, \qquad
\e^{C_3} = \sqrt{\frac{r_1 r_3}{r_2}}, \qquad
\e^{C_4} = \sqrt{\frac{r_2 s_2}{t_2 w_2}},\nonumber
\\
\e^{C_5} = \sqrt{\frac{r_3 s_3 w_1}{r_2 w_3}}, \qquad
\e^{C_6} = \sqrt{\frac{r_1 s_1 t_3}{r_2 t_1}}, \qquad
\e^{C_7} = \sqrt{\frac{t_3 w_1}{r_2}}, \qquad
\e^{C_8} = \sqrt{\frac{r_1 r_3 s_1 s_3}{r_2 t_1 w_3}},\label{Css}
\end{gather}
the elements \eqref{Sf2} and \eqref{Xn} are related as
\[
S^{n_1,n_2, n_3}_{n'_1, n'_2, n'_3} \equiv
X^{-n'_3,-n'_2,-n'_1}_{-n_3,-n_2, -n_1}\big|_{(r_i,s_i,t_i,w_i) \rightarrow
(s_{4-i},r_{4-i}, t_{4-i}, w_{4-i})} .
\]
The replacement $\bigl(n_1,n_2,n_3,n'_1,n'_2,n'_3\bigr) \rightarrow
\bigl(-n'_3,-n'_2,-n'_1,-n_3,-n_2,-n_1\bigr)$ in the right-hand side induces
the exchange $m_1 \leftrightarrow m_2$ and $m_3 \leftrightarrow m_4$,
converting $\varphi$ into $\psi_0$.
Thus we have elucidated a quantum cluster algebra theoretic origin of
the $R$-matrix $R^{ZZZ}$ \cite{KMY23}.
\end{Remark}

\begin{Remark}
Apart from trivial rescaling of generators,
a $q$-Weyl algebra $\bigl\langle \e^{\pm \uu}, \e^{\pm \ww}\bigr\rangle$
with the commutation relation
$\e^\uu \e^\ww = q \e^\ww \e^\uu$
has the automorphism labeled with ${\rm SL}(2,\Z)$
\[
\iota_f\colon\
\begin{cases}\e^\uu \mapsto \e^{\alpha \uu} \e^{\beta \ww},
\\
\e^\ww \mapsto \e^{\gamma \uu} \e^{\delta \ww},
\end{cases}
\qquad
f= \begin{pmatrix} \alpha & \beta \\ \gamma & \delta \end{pmatrix}
\in {\rm SL}(2,\Z).
\]
Recall the $n$-fold direct product of the $q$-Weyl algebra $\mathcal{W}_n$ introduced in Section \ref{s:qw}.
Given a~representation $\rho_1 \otimes \rho_2 \otimes \rho_3\colon \mathcal{W}_3
\rightarrow \mathrm{End}(V_1 \otimes V_2 \otimes V_3)$ of $\mathcal{W}_3$,
one generates an infinite family of representations via the pullback
\begin{align*}
\rho^{f_1}_1 \otimes \rho^{f_2}_2 \otimes \rho^{f_3}_3\colon\
\mathcal{W}_3\xrightarrow{\iota_{f_1} \otimes \iota_{f_2} \otimes \iota_{f_3}}
\mathcal{W}_3
\xrightarrow{\rho_1 \otimes \rho_2 \otimes \rho_3}
\mathrm{End}(V_1 \otimes V_2 \otimes V_3).
\end{align*}
The $u$-diagonal representation and the $w$-diagonal representation
considered in this section are essentially transformed by the above
automorphism. They are just two special ``homogeneous'' cases
$\rho_1=\rho_2=\rho_3$, where the computation of the elements of
$\e^{(\uu_3-\uu_2)\ww_1/\hbar}$ in \eqref{pp123} is simple. A~similar
remark applies also to the treatment in the next section. In the
context of the $RLLL$ relation (cf.\ Section \ref{s:6v}), a study of
the case $\rho^{f_1} \otimes \rho^{f_2} \otimes \rho^{f_3}$ with
non-identical $f_1$, $f_2$, $f_3$ has been undertaken in \cite{KMY23}.
\end{Remark}

\section[Modular R and its elements]{Modular $\boldsymbol{\mathcal{R}}$ and its elements}\label{s:md}

\subsection[Modular R]{Modular $\boldsymbol{\mathcal{R}}$}
We use a parameter $\bb$ and set
\begin{align}\label{qq}
\hbar = {\rm i}\pi \bb^2,\qquad q= \e^{ {\rm i} \pi \bb^2},
\qquad q^\vee = \e^{ {\rm i} \pi \bb^{-2}},\qquad \bar{q} = \e^{- {\rm i} \pi \bb^{-2}},
\qquad \eta = \frac{\bb+\bb^{-1}}{2}.
\end{align}
The non-compact quantum dilogarithm is defined by
\begin{align}\label{ncq}
\Phi_\bb(z) = \exp\left(
\frac{1}{4}\int_{\R+{\rm i} 0} \frac{\e^{-2{\rm i}zw}}{\sinh(w \bb)\sinh(w/\bb)}\frac{{\rm d}w}{w}\right)
= \frac{\bigl(\e^{2\pi(z+{\rm i}\eta)\bb};q^2\bigr)_\infty}{\bigl(\e^{2\pi(z-{\rm i}\eta)\bb^{-1}};\bar{q}^2\bigr)_\infty},
\end{align}
where the integral avoids the singularity at $w=0$ from above.
The infinite product formula is valid in the so-called strong coupling regime $0<\eta < 1$
with $0<\operatorname{Im}\bb <\frac{\pi}{2}$.
It enjoys the symmetry
$\Phi_\bb(z) = \Phi_{\bb^{-1}}(z)$, and
has the following properties (see also \cite{FKV01})
\begin{gather}
\Phi_\bb(z)\Phi_\bb(-z) = \e^{ {\rm i} \pi z^2- {\rm i} \pi (1-2\eta^2)/6},
\label{pinv}\\
\frac{\Phi_\bb\bigl(z-{\rm i} \bb^{\pm 1}/2\bigr)}{\Phi_\bb\bigl(z+{\rm i} \bb^{\pm 1}/2\bigr)}
= 1+\e^{2\pi z \bb^{\pm1}},
\label{prec}
\\
\Phi_\bb(z) \rightarrow
\begin{cases} 1, & \operatorname{Re} z \rightarrow -\infty,
\\
\e^{ {\rm i} \pi z^2- {\rm i} \pi (1-2\eta^2)/6}, & \operatorname{Re} z \rightarrow \infty,
\end{cases}
\label{pa}
\\
\text{poles of $\Phi_\bb(z)^{\pm 1}$} = \bigl\{ \pm \bigl({\rm i}\eta + {\rm i} m \bb + {\rm i}n \bb^{-1}\bigr)\mid m,n \in \Z_{\ge 0}\bigr\}.
\label{pz}
\end{gather}

Recall that $\uu_k$, $\ww_k$ ($k=1, 2, 3$) are canonical variables
obeying \eqref{uwh}. In this section, we work with the
$\pi \bb$-rescaled canonical variables $\hat{x}_k$, $\hat{p}_k$ and
the parameters defined as follows:
\begin{gather}
\uu_k =\pi \bb \hat{x}_k,\qquad
\ww_k = \pi \bb \hat{p}_k,
\qquad [\hat{x}_j, \hat{p}_k] = \frac{\rm i}{\pi} \delta_{j,k},\nonumber
\\
(a_k,b_k, c_k, d_k,e_k) =
\pi \bb \bigl(\tilde{a}_k, \tilde{b}_k, \tilde{c}_k, \tilde{d}_k, \tilde{e}_k\bigr),
\qquad \lambda_k = \pi \bb \tilde{\lambda}_k,\qquad C_k =\pi \bb \mathcal{C}_k.
\label{cbc}
\end{gather}
From \eqref{Prec} and \eqref{prec}, we have
\[
\frac{\Psi_q\bigl(\e^{2\pi \bb(z+ {\rm i}\bb/2)}\bigr)}
{\Psi_q\bigl(\e^{2\pi \bb(z- {\rm i}\bb/2)}\bigr)}
= \frac{\Phi_\bb(z-{\rm i} \bb/2)}{\Phi_\bb(z+{\rm i} \bb/2)}.
\]
It indicates the formal correspondence
\begin{equation}\label{cor}
\Psi_q\bigl(\e^{2\pi \bb z}\bigr) \leftrightarrow
\Phi_\bb(z)^{-1}.
\end{equation}
Applying it to \eqref{RL12} and \eqref{pp123}, we define
\begin{align}
\mathcal{R}
={}& f\bigl({\tilde a}, {\tilde b}, {\tilde c}, {\tilde d}\bigr)
\Phi_\bb\left(\frac{1}{2}(\hat{x}_1 + \hat{x}_3 + \hat{p}_1 - \hat{p}_2 + \hat{p}_3 +2\mathcal{C}_7)\right)\nonumber\\
& \times
\Phi_\bb\left(\frac{1}{2}(\hat{x}_1 - \hat{x}_3 + \hat{p}_1 - \hat{p}_2 + \hat{p}_3 +2\mathcal{C}_5)\right)
\nonumber\\
& \times \mathcal{P}
\Phi_\bb\left(\frac{1}{2}(\hat{x}_1 - \hat{x}_3 + \hat{p}_1 - \hat{p}_2 + \hat{p}_3 +\tilde{\alpha}_6)\right)^{-1}\nonumber\\
& \times
\Phi_\bb\left(\frac{1}{2}(\hat{x}_1 + \hat{x}_3 + \hat{p}_1 - \hat{p}_2 + \hat{p}_3 +\tilde{\alpha}_8)\right)^{-1},
\nonumber
\\
\mathcal{P}={}& \e^{\pi {\rm i}(\hat{x}_2-\hat{x}_3)\hat{p}_1}
\e^{\pi {\rm i}\tilde{\lambda}_0(\hat{p}_1+\hat{p}_2-\hat{p}_3)}
\e^{-\pi {\rm i}(\tilde{\lambda}_1\hat{x}_1+\tilde{\lambda}_2\hat{x}_2+\tilde{\lambda}_3\hat{x}_3)}
\rho_{23},
\label{RRf}
\end{align}
where
$\tilde{\alpha}_6= -\tilde{b}_1-\tilde{a}_2-\tilde{d}_3-\tilde{e}_3$ and
$\tilde{\alpha}_8 = -\tilde{b}_1-\tilde{a}_2-\tilde{d}_3$.
They are also determined by
$\tilde{\alpha}_6+\tilde{\lambda}_1+\tilde{\lambda}_2-\tilde{\lambda}_3=-2\mathcal{C}_6$ and
$\tilde{\alpha}_8+\tilde{\lambda}_1+\tilde{\lambda}_2-\tilde{\lambda}_3=-2\mathcal{C}_8$
as in \eqref{C68}.

The normalization of $\mathcal{R}$ remains inherently undetermined from the postulate on
$\mathrm{Ad}(\mathcal{R})$.
Following the symmetry argument in Section \ref{ss:symR} with the rescaling \eqref{cbc} of parameters,
we choose the prefactor $f\bigl({\tilde a}, {\tilde b}, {\tilde c}, {\tilde d}\bigr)$ as
\begin{align}
f\bigl(\tilde{a}, \tilde{b}, \tilde{c}, \tilde{d}\bigr)
& = \exp\left( {\rm i} \pi (\mathcal{C}_4-\mathcal{C}_5+\mathcal{C}_7)(\mathcal{C}_5+\mathcal{C}_6)\right)\nonumber
\\
&=\exp\left( -\frac{{\rm i}\pi }{4}(\tilde{e}_2-\tilde{e}_3)\bigl(\tilde{a}_1+\tilde{a}_3+\tilde{c}_1-2\tilde{c}_2+\tilde{c}_3
+\tilde{b}_1-\tilde{b}_3-\tilde{d}_1+\tilde{d}_3\bigr)\right).\label{fc}
\end{align}
Then $\mathrm{Ad}(\mathcal{R})$ is invariant under the simultaneous
exchange $1 \leftrightarrow 3$ and
${\tilde b}_i \leftrightarrow {\tilde d}_i$ of indices and parameters.
Furthermore, $\mathrm{Ad}(\mathcal{R})$ becomes
$\mathrm{Ad}(\mathcal{R})^{-1}$ under the exchange
${\tilde a}_i \leftrightarrow {\tilde c}_i$,
${\tilde b}_i \leftrightarrow {\tilde d}_i$ of parameters. We can
multiply $f$ by any function
$h\bigl(\tilde{a}, \tilde{b}, \tilde{c}, \tilde{d}\bigr)$ such that
\begin{align*}
&h\bigl(\tilde{a}, \tilde{b}, \tilde{c}, \tilde{d}\bigr)h\bigl(\tilde{c}, \tilde{d}, \tilde{a}, \tilde{b}\bigr)=1,
\\
&h\bigl(\tilde{a}_1, \tilde{b}_1, \tilde{c}_1, \tilde{d}_1,
\tilde{a}_2, \tilde{b}_2, \tilde{c}_2, \tilde{d}_2,
\tilde{a}_3, \tilde{b}_3, \tilde{c}_3, \tilde{d}_3\bigr)
=
h\bigl(\tilde{a}_3, \tilde{d}_3, \tilde{c}_3, \tilde{b}_3,
\tilde{a}_2, \tilde{d}_2, \tilde{c}_2, \tilde{b}_2,
\tilde{a}_1, \tilde{d}_1, \tilde{c}_1, \tilde{b}_1\bigr),
\end{align*}
and the result still preserves the symmetries.

\subsection[Elements of R in coordinate representation]{Elements of $\boldsymbol{\mathcal{R}}$ in coordinate representation}\label{ss:cr}

We consider the representation of canonical variables on a space of
functions $G(\mathbf{x})$ of $\mathbf{x}=(x_1,x_2,x_3)$, where the
``coordinate'' $\hat{x}_k$ acts diagonally, i.e.,
$(\hat{x}_kG)(\mathbf{x}) = x_k G(\mathbf{x})$,
$(\hat{p}_k G)(\mathbf{x}) = -\frac{\rm i}{\pi}\frac{\partial G}{\partial
 x_k}(\mathbf{x})$. The functions $G(\mathbf{x})$ are actually
supposed to belong to a subspace of $L^2\bigl(\R^3\bigr)$. See \cite[Section~5.2]{FG09b} for details. We find it convenient to employ the bracket
notation as ${G(\mathbf{x}) = \langle \mathbf{x}| G\rangle}$. Then the
coordinate representation along with its dual can be summarized
formally in a~difference (exponential) form as follows:
\begin{gather}
\e^{\pi \bb \hat{x}_k}|\mathbf{x}\rangle = \e^{\pi \bb x_k}|\mathbf{x}\rangle,
\qquad
\e^{\pi \bb \hat{p}_k}|\mathbf{x}\rangle = |\mathbf{x} + {\rm i} \bb \mathbf{e}_k\rangle,\nonumber
\\
\langle \mathbf{x}|\e^{\pi \bb \hat{x}_k} = \langle \mathbf{x}|\e^{\pi \bb x_k},
\qquad
\langle \mathbf{x}|\e^{\pi \bb \hat{p}_k} = \langle \mathbf{x}-{\rm i} \bb \mathbf{e}_k|\label{xrep}
\end{gather}
for $k=1,2,3$, where $|x_1,x_2,x_3\rangle$ (resp.\
$\langle x_1,x_2,x_3|$) is denoted by $|\mathbf{x}\rangle$ (resp.\
$\langle \mathbf{x}|$). The dual pairing is specified by
$\bigl\langle \mathbf{x} | \mathbf{x}'\bigr\rangle =
\delta\bigl(x_1-x'_1\bigr)\delta\bigl(x_2-x'_2\bigr)\delta\bigl(x_3-x'_3\bigr)$.

\begin{Theorem}\label{th:Rm}
The matrix element
\smash{${\mathcal R}^{x_1,x_2,x_3}_{x'_1,x'_2, x'_3} = \langle x_1, x_2, x_3 |{\mathcal R}\bigl|x'_1, x'_2, x'_3 \bigr\rangle$}
of \eqref{RRf} with $f$ specified in \eqref{fc} is given, up to an overall numerical factor, by
\begin{gather}
{\mathcal R}^{x_1,x_2,x_3}_{x'_1,x'_2, x'_3}
\equiv g\bigl(\tilde{a}, \tilde{b}, \tilde{c}, \tilde{d}\bigr)
\delta\bigl(x_1+x_2-x'_1-x'_2\bigr)\delta\bigl(x_2+x_3-x'_2-x'_3\bigr)\e^{ {\rm i} \pi \phi} I^{x_1,x_2,x_3}_{x'_1,x'_2, x'_3}
\label{Rm0}
\\
 I^{x_1,x_2,x_3}_{x'_1,x'_2, x'_3} =
\int_{-\infty}^\infty {\rm d}z \e^{2\pi {\rm i}z(-x_2-{\rm i}\eta+{\mathcal C}_4)}
\nonumber \\
\phantom{ I^{x_1,x_2,x_3}_{x'_1,x'_2, x'_3} =}{} \times \frac{\Phi_\bb\bigl(z+\frac{1}{2}(x_1-x_3+{\rm i}\eta)+{\mathcal C}_5\bigr)
\Phi_\bb\bigl(z+\frac{1}{2}(-x_1+x_3+{\rm i}\eta)+{\mathcal C}_6\bigr)}
{\Phi_\bb\bigl(z+\frac{1}{2}(x_1+x_3-{\rm i}\eta)+{\mathcal C}_7\bigr)
\Phi_\bb\bigl(z+\frac{1}{2}\bigl(-x'_1-x'_3-{\rm i}\eta\bigr)+{\mathcal C}_8\bigr)},
\label{Rm1}
\\
\phi = x'_1x'_3+{\rm i}\eta\bigl(x'_1+x'_3-x_2\bigr)+{\mathcal C}_1(x_1-x_3)+{\mathcal C}_2\bigl(x'_1+x'_3\bigr)-2{\mathcal C}_3x'_2,
\label{Rm2}\\
g\bigl(\tilde{a}, \tilde{b}, \tilde{c}, \tilde{d}\bigr) =
\exp\left({\rm i}\pi(\mathcal{C}_4(\mathcal{C}_7+\mathcal{C}_8)
+(\mathcal{C}_5-\mathcal{C}_7)(\mathcal{C}_6-\mathcal{C}_7)
+ {\rm i}\eta(\mathcal{C}_4-2\mathcal{C}_8))\right).\nonumber
\end{gather}
\end{Theorem}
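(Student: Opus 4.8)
The plan is to compute the kernel $\langle x_1,x_2,x_3|\mathcal{R}|x_1',x_2',x_3'\rangle$ of \eqref{RRf} directly, mirroring the proof of Theorem~\ref{th:re} step by step: the role played there by the series expansions \eqref{expa} of $\Psi_q$ will here be played by the integral (Fourier) representations of the non-compact quantum dilogarithm collected in Appendix~\ref{ap:nc}, and the sums over non-negative integers become integrals over $\mathbb{R}$. First I would isolate the monomial--permutation operator $\mathcal{P}$ and compute its kernel using \eqref{xrep}. The diagonal factor $\e^{-\pi {\rm i}(\tilde{\lambda}_1\hat{x}_1+\tilde{\lambda}_2\hat{x}_2+\tilde{\lambda}_3\hat{x}_3)}$ contributes a scalar phase, the translation $\e^{\pi {\rm i}\tilde{\lambda}_0(\hat{p}_1+\hat{p}_2-\hat{p}_3)}$ shifts coordinates, the shear $\e^{\pi {\rm i}(\hat{x}_2-\hat{x}_3)\hat{p}_1}$ couples $x_1$ to $x_2-x_3$, and $\rho_{23}$ permutes the last two slots; the net effect is the two constraints giving $\delta(x_1+x_2-x_1'-x_2')\delta(x_2+x_3-x_2'-x_3')$ together with part of the phase $\e^{{\rm i}\pi\phi}$ of \eqref{Rm2}. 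This is the exact modular counterpart of \eqref{npn}.

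Next come the four dilogarithm factors. The key algebraic observation is that, writing $P:=\hat{p}_1-\hat{p}_2+\hat{p}_3$, one has $[\hat{x}_1-\hat{x}_3,P]=0$ while $[\hat{x}_1+\hat{x}_3,P]=2{\rm i}/\pi$. Hence the two factors carrying $\hat{x}_1-\hat{x}_3$ (with constants $2\mathcal{C}_5$ and $\tilde{\alpha}_6$) are functions of the single operator $\hat{m}:=\tfrac12(\hat{x}_1-\hat{x}_3+P)$, the two carrying $\hat{x}_1+\hat{x}_3$ (with $2\mathcal{C}_7$ and $\tilde{\alpha}_8$) are functions of $\hat{n}:=\tfrac12(\hat{x}_1+\hat{x}_3+P)$, and $[\hat{m},\hat{n}]=-{\rm i}/(2\pi)$ forms a conjugate pair. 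To let these act on $\langle\mathbf{x}|$ and on the state produced by $\mathcal{P}$, I would linearize each $\Phi_\bb$ through its Fourier representation from Appendix~\ref{ap:nc}, so that the exponential $\e^{2\pi {\rm i}w\,(\cdot)}$ factorizes by the Baker--Campbell--Hausdorff formula into a diagonal phase times a translation acting via \eqref{xrep}. Carrying out the resulting elementary Gaussian integrals over the auxiliary spectral variables collapses the product into the single integral \eqref{Rm1}, with $\Phi_\bb(\cdot+\mathcal{C}_5),\Phi_\bb(\cdot+\mathcal{C}_6)$ in the numerator and $\Phi_\bb(\cdot+\mathcal{C}_7),\Phi_\bb(\cdot+\mathcal{C}_8)$ in the denominator arising as the modular images, under \eqref{cor}, of the four $q$-Pochhammer symbols of \eqref{Rf1}, the $z$-integral replacing the residue integral there.

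It then remains to pin down the phase $\phi$ and the prefactor $g$. I would collect all quadratic phases generated above, use the inversion relation \eqref{pinv} and the asymptotics \eqref{pa} to reorganize any reflected dilogarithms --- the analogue of the manipulation with $f(\xi)$ that closes the proof of Theorem~\ref{th:re} --- and fold in the explicit $f(\tilde{a},\tilde{b},\tilde{c},\tilde{d})$ of \eqref{fc}. Because the assertion is only up to an overall numerical factor, constants independent of $\mathbf{x},\mathbf{x}'$ may be discarded, so this reduces to tracking the coordinate- and $\mathcal{C}$-dependence and matching it against \eqref{Rm1}--\eqref{Rm2}.

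The main obstacle, I expect, is the central reduction of the several auxiliary spectral integrals to the single integral \eqref{Rm1}: this hinges on the correct non-compact beta/Fourier integral identity for $\Phi_\bb$ from Appendix~\ref{ap:nc} (the modular substitute for the generating-function identity that turned the quadruple sum into a contour integral in Theorem~\ref{th:re}), together with careful control of contours and convergence dictated by the pole pattern \eqref{pz}. A secondary delicate point is the exact accounting of the Gaussian phases produced by the conjugate pair $\hat{m},\hat{n}$ and by the shear in $\mathcal{P}$, which must be matched to \eqref{Rm2} and, via \eqref{pinv}, absorbed into $g$; throughout, the heuristic correspondence \eqref{cor} and the established $q$-case \eqref{Rf1} serve as a guide and consistency check.
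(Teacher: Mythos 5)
Your proposal is correct and follows, in all essentials, the paper's own proof: the kernel of $\mathcal{P}$ (three delta functions times a phase, whose convolution with the dilogarithm kernels leaves the two conservation deltas), the dilogarithm kernels obtained from the commutativity structure of their arguments — your conjugate pair $\hat{m}$, $\hat{n}$ is exactly the observation the paper uses when it inserts mixed position--momentum eigenbases in which each $\Phi_\bb$ acts diagonally, equivalent to your Fourier-representing each factor via Appendix~\ref{ap:nc} — the merging of neighboring kernels through the special case of \eqref{ram1}--\eqref{ram2} (with the inverse dilogarithms handled by unitarity), and the final phase bookkeeping using \eqref{pinv} and the prefactor $f$ of \eqref{fc}. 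The differences are purely presentational, so nothing further is needed.
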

\begin{proof}
 In order to calculate the matrix elements of $\mathcal{R}$, we
 insert appropriate complete bases between each factor in the
 expression \eqref{RRf} and use quantum dilogarithm identities.

 Let us consider the matrix elements of the first quantum
 dilogarithm. Noting that $\hat{x}_1 + \hat{p}_3$, $\hat{p_2}$ and
 $\hat{x}_3 + \hat{p_1}$ in the argument commute with one another, we
 expand this quantum dilogarithm in the powers of these combinations
 of coordinates and momenta, sandwich the resulting series between
 $\bigl\langle x''_1, p_2, p_3\bigr|$ and $\bigl|p_1, p'_2, x''_3 \bigr\rangle$, and
 resum the series back to a quantum dilogarithm to get
 \begin{gather*}
 \bigl\langle x''_1, p_2, p_3\bigr|
 \Phi_\bb\left(\frac{1}{2}(\hat{x}_1 + \hat{x}_3 + \hat{p}_1 - \hat{p}_2 + \hat{p}_3 +2\mathcal{C}_7)\right)
 \bigl|p_1, p'_2, x''_3 \bigr\rangle
 \\
 \qquad =
 \Phi_\bb\left(\frac{1}{2}\bigl(x''_1 + x''_3 + p_1 - p_2 + p_3 +2\mathcal{C}_7\bigr)\right)
 \bigl\langle x''_1, p_2, p_3|p_1, p'_2, x''_3 \bigr\rangle.
 \end{gather*}
 Thus, the matrix elements are given, up to an overall numerical
 factor, by
 \begin{gather*}
 \langle x_1, x_2, x_3|
 \Phi_\bb\left(\frac{1}{2}(\hat{x}_1 + \hat{x}_3
 + \hat{p}_1 - \hat{p}_2 + \hat{p}_3 +2\mathcal{C}_7)\right)
 \bigl|x'_1, x'_2, x'_3 \bigr\rangle
 \\
\qquad
 \equiv \langle x_1, x_2, x_3| \int {\rm d}x''_1 {\rm d}p_2 {\rm d}p_3
 \bigl|x''_1, p_2, p_3 \bigr\rangle \bigl\langle x''_1, p_2, p_3 \bigr|
 \\
\phantom{\qquad
 \equiv}{} \times
 \Phi_\bb\left(\frac{1}{2}(\hat{x}_1 + \hat{x}_3
 + \hat{p}_1 - \hat{p}_2 + \hat{p}_3 +2\mathcal{C}_7)\right)\\
\phantom{\qquad
 \equiv}{} \times
 \int {\rm d}p_1 {\rm d}p'_2 {\rm d}x''_3 \bigl|p_1 , p'_2 , x''_3 \bigr\rangle
 \bigl\langle p_1 , p'_2 , x''_3\bigr|x'_1, x'_2, x'_3 \bigr\rangle
 \\
 \qquad
 \equiv
 \int {\rm d}p_1 {\rm d}p_2 {\rm d}p_3
 \Phi_\bb\left(\frac{1}{2}\bigl(x_1 + x'_3 + p_1 - p_2 + p_3 +2\mathcal{C}_7\bigr)\right)\\
 \phantom{\qquad
 \equiv}{} \times
 \e^{{\rm i}\pi(x_2 p_2 +x_3 p_3 + x_1 p_1 - x'_3 p_3 - x'_1 p_1 -x'_2 p_2)}
 .
 \end{gather*}
 Introducing $z_1 = p_2 + p_3 - p_1$, $z_2 = p_3 + p_1 - p_2$,
 $z_3 = p_1 + p_2 - p_3$ and performing the integration over $z_1$
 and $z_3$, we are left with
 \begin{gather*}
 \langle x_1, x_2, x_3|
 \Phi_\bb\left(\frac{1}{2}(\hat{x}_1 + \hat{x}_3 + \hat{p}_1 - \hat{p}_2 + \hat{p}_3 +2\mathcal{C}_7)\right)
 \bigl|x'_1, x'_2, x'_3 \bigr\rangle
 \\
 \qquad\equiv
 \delta\bigl(x_1 - x'_1 + x_2 - x'_2\bigr) \delta\bigl(x_2 - x'_2 + x_3 - x'_3\bigr)\\
 \phantom{ \qquad\equiv}{}\times
 \int {\rm d}z_2 \Phi_\bb\left(\frac{1}{2}\bigl(x_1 + z_2 + x'_3 +2\mathcal{C}_7\bigr)\right)
 \e^{{\rm i} \tfrac{\pi}{2} z_2(x_1 - x'_1 + x_3 - x'_3)}.
 \end{gather*}

 The matrix elements of the second quantum dilogarithm can be
 calculated in a similar manner. This time, $\hat{x}_1 - \hat{x}_3$,
 $\hat{p}_2$ and $\hat{p}_1 + \hat{p}_3$ in the argument mutually
 commute, so we can insert the completeness relation in the basis
 $\{|p_1, p_2, p_3 \rangle\}$ and get
 \begin{gather*}
 \langle x_1, x_2, x_3|
 \Phi_\bb\left(\frac{1}{2}(\hat{x}_1 - \hat{x}_3
 + \hat{p}_1 - \hat{p}_2 + \hat{p}_3 +2\mathcal{C}_5)\right)
 \bigl|x'_1, x'_2, x'_3 \bigr\rangle
 \\
 \qquad \equiv
 \delta\bigl(x_1 - x'_1 + x_2 - x'_2\bigr) \delta\bigl(x_2 - x'_2 + x_3 - x'_3\bigr)\\
 \phantom{ \qquad\equiv}{}\times
 \int {\rm d}z_2 \Phi_\bb\left(\frac{1}{2}(x_1 - x_3 + z_2 + 2\mathcal{C}_5)\right)
 \e^{{\rm i}\tfrac{\pi}{2} z_2(x_1 - x'_1 + x_3 - x'_3)}.
 \end{gather*}

 To calculate the product of the above two matrices, we use the
 Fourier transform identity
 \[
 \int {\rm d}x \Phi_\bb(x)^{\pm1}\e^{2{\rm i}\pi wx}
 =
 \e^{\mp {\rm i}\pi w^2 \pm {\rm i} \tfrac{\pi}{12} (1+4\eta^{2})}
 \Phi_\bb(\pm w\pm {\rm i}\eta)^{\pm1},
 \]
 which is a special case of \eqref{ram1} and \eqref{ram2}.
 We find
 \begin{gather*}
 \langle x_1, x_2, x_3|\Phi_\bb\left(\frac{1}{2}(\hat{x}_1 + \hat{x}_3 + \hat{p}_1 - \hat{p}_2 + \hat{p}_3 +2\mathcal{C}_7)\right)\\
 \qquad
 \times \Phi_\bb\left(\frac{1}{2}(\hat{x}_1 - \hat{x}_3 + \hat{p}_1 - \hat{p}_2 + \hat{p}_3 +2\mathcal{C}_5)\right)\bigl|x'_1, x'_2, x'_3 \bigr\rangle
 \\
 \phantom{ \qquad
 \times }{}\equiv
 \delta\bigl(x_1 + x_2 -x'_1 - x'_2\bigr) \delta\bigl(x_2 + x_3 -x'_2 - x'_3\bigr)
 \\
 \phantom{ \qquad\times\equiv}{} \times
 \int {\rm d}z_2
 \Phi_\bb\left(-\frac{1}{2}(z_2 + x_1 + x_3 + 2\mathcal{C}_7)+{\rm i}\eta\right) \Phi_\bb\left(\frac{1}{2}(z_2 + x_1 - x_3 + 2\mathcal{C}_5)\right)
 \\
 \phantom{ \qquad\times\equiv}{} \times
 \e^{-{\rm i} \tfrac{\pi}{4} (z_2+x_1+x_3+2\mathcal{C}_7)(z_2+x_1+x_3+2\mathcal{C}_7-4{\rm i}\eta)
 - {\rm i} \tfrac{\pi}{2} z_2(x'_3-x_3+x'_1-x_1)}.
 \end{gather*}

 Calculation of the matrix elements of the last two quantum
 dilogarithms can be done analogously. A quick way to write down the
 result is to consider the case $\operatorname{Im} \bb = 0$ or
 $|\bb| = 1$, which allows us to make use of the unitarity
\smash{$\overline{\Phi_{\bb}(z)} = \Phi_{\bb}(\bar{z})^{-1}
$}
 and deduce
 \begin{gather*}
\langle x_1,x_2,x_3|
 \Phi_\bb\left(\frac{1}{2}(\hat{x}_1 - \hat{x}_3 + \hat{p}_1 - \hat{p}_2 + \hat{p}_3 +\tilde{\alpha}_6)\right)^{-1}\\
 \qquad\times
\Phi_\bb\left(\frac{1}{2}(\hat{x}_1 + \hat{x}_3 + \hat{p}_1 - \hat{p}_2 + \hat{p}_3 +\tilde{\alpha}_8)\right)^{-1}
\bigl|x'_1,x'_2,x'_3\bigr\rangle
\\
\phantom{ \qquad\times}{}\equiv \delta\bigl(x_1 + x_2 -x'_1 - x'_2\bigr) \delta\bigl(x_2 + x_3 -x'_2 - x'_3\bigr)
\\
\phantom{ \qquad\times\equiv}{} \times \int{\rm d}z_2 \Phi_\bb\left(\frac{1}{2}(x_1-x_3+z_2+{\tilde{\alpha}_6})\right)^{-1}
\Phi_\bb\left(-\frac{1}{2}\bigl(x'_1+x'_3+z_2+\tilde{\alpha}_8\bigr)-{\rm i}\eta\right)^{-1}
\\
\phantom{ \qquad\times\equiv}{} \times \e^{{\rm i}\tfrac{\pi}{4}(z_2+x'_1+x'_3+\tilde{\alpha}_8)(z_2+x'_1+x'_3+\tilde{\alpha}_8+4{\rm i} \eta)
+{\rm i} \tfrac{\pi}{2}z_2(x_1-x'_1+x_3-x'_3)}.
 \end{gather*}
 Finally, we can also easily calculate
 \begin{gather*}
 \langle x_1, x_2, x_3|\mathcal{P}\bigl|x'_1, x'_2, x'_3 \bigr\rangle
 \\
\qquad \equiv
 \delta\bigl(x_1+x_2-x_3-x'_1+\tilde{\lambda}_{0}\bigr)
 \delta\bigl(x_2-x'_3+\tilde{\lambda}_{0}\bigr)
 \delta\bigl(x_3-x'_2-\tilde{\lambda}_{0}\bigr)
 \e^{-{\rm i}\pi(\tilde{\lambda}_{1}x'_1+\tilde{\lambda}_{3}x'_2+\tilde{\lambda}_{2}x'_3)}.
 \end{gather*}

 From the various matrix elements calculated above, we obtain
 \begin{align*}
 \mathcal{R}^{x_1,x_2,x_3}_{x'_1,x'_2, x'_3}\equiv{}&
 \e^{{\rm i}\pi(\mathcal{C}_4 - \mathcal{C}_5 + \mathcal{C}_7)(\mathcal{C}_5 + \mathcal{C}_6)}
 \delta\bigl(x_1 + x_2 - x'_1 - x'_2\bigr)\delta\bigl(x_2 + x_3 - x'_2 - x'_3\bigr)
 \\
&\times
 \e^{-{\rm i}\tfrac{\pi}{4}(x_1 + x_3 - x'_1 - x'_3 + 2\mathcal{C}_{7} + 2\mathcal{C}_{8} - 4{\rm i}\eta)(x_1 + x_3 + x'_1 + x'_3 + 2\mathcal{C}_{7} - 2\mathcal{C}_{8})
 - {\rm i}\pi(\tilde{\lambda}_{1} x'_1
 + \tilde{\lambda}_{3} x'_2
 + \tilde{\lambda}_{2} x'_3)}
 \\
 & \times
 \int {\rm d}z_2
 \frac{
 \Phi_\bb\bigl(\tfrac{1}{2}(z_2 + x_1 - x_3 + 2\mathcal{C}_5)\bigr)
 \Phi_\bb\bigl(-\tfrac{1}{2}(z_2 + x_1 + x_3 + 2\mathcal{C}_7)+{\rm i}\eta\bigr)
 }{
 \Phi_\bb\bigl(-\tfrac{1}{2}\bigl(z_2 - x'_1 + x'_3 + 2\mathcal{C}_6\bigr)\bigr)
 }\\
  & \times\frac{\e^{-{\rm i}\pi z_2 (\tilde{\lambda}_0 + x'_1 + x'_2 + \mathcal{C}_7 - \mathcal{C}_8)}}{\Phi_\bb\bigl(\tfrac{1}{2}\bigl(z_2 - x'_1 - x'_3 + 2\mathcal{C}_8\bigr) - {\rm i}\eta\bigr)},
\end{align*}
where the first exponential factor comes from the function $f$ in \eqref{fc}.
Changing the integration variable to $z = (z_2 - {\rm i}\eta)/2$ and using
the identity \eqref{pinv}, we arrive at the desired formula.
\end{proof}

Under the transformation \eqref{uwal},
$\mathcal{C}_k = (\pi \bb)^{-1}C_k$ has the same symmetry as that for $C_k$
mentioned in \eqref{alC}.
Therefore, $\mathcal{R}$ is indeed invariant.

\begin{Remark}\label{re:xn}
Comparison of \eqref{qrep} and \eqref{xrep} indicates the correspondence
\begin{align}\label{xn}
x_k = {\rm i}\bb n_k + {\rm i} \eta
\end{align}
between the indices of $R$ and $\mathcal{R}$.
In fact, by using \eqref{lac}, \eqref{pinv} and \eqref{cbc},
one can check that~\smash{$R^{n_1,n_2, n_3}_{n'_1, n'_2, n'_3}$} in Theorem \ref{th:re} is transformed to
\smash{$\mathcal{R}^{x_1,x_2,x_3}_{x'_1,x'_2,x'_3}$} in Theorem \ref{th:Rm} up to normalization by
replacing $\Psi_q\bigl(\e^{2\pi \bb z}\bigr)$ by $\Phi_\bb(z)^{-1}$ according to \eqref{cor}
and substituting \eqref{xn}.
The strange normalization of $\e^{\uu_k}$ in \eqref{qrep} is attributed
to the second term of \eqref{xn}, which may be viewed as a~modular
double analogue of the ``zero point energy''.
\end{Remark}

\subsection[Elements of R in momentum representation]{Elements of $\boldsymbol{\mathcal{R}}$ in momentum representation}

Let us consider the modular ${\mathcal R}$ \eqref{RRf} in the
``momentum representation'' in which $\hat{p}_j$ becomes the diagonal
operator of multiplying $p_j$ as
\begin{gather}
\e^{\pi \bb \hat{x}_k}|\mathbf{p}\rangle = |\mathbf{p}-{\rm i} \bb \mathbf{e}_k\rangle,
\qquad
\e^{\pi \bb \hat{p}_k}|\mathbf{p}\rangle = \e^{\pi \bb p_k}|\mathbf{p}\rangle,\nonumber
\\
\langle \mathbf{p}|\e^{\pi \bb \hat{x}_k} = \langle \mathbf{p}+{\rm i} \bb \mathbf{e}_k|,
\qquad
\langle \mathbf{p}|\e^{\pi \bb \hat{p}_k} = \langle \mathbf{p}|\e^{\pi \bb p_k},\label{prep}
\end{gather}
where $k=1, 2, 3$ and $|p_1,p_2, p_3\rangle$ (resp.\
$\langle p_1,p_2,p_3|$) is denoted by $|\mathbf{p}\rangle$ (resp.\
$\langle \mathbf{p}|$). The dual pairing is specified by
$\bigl\langle \mathbf{p} | \mathbf{p}'\bigr\rangle =
\delta\bigl(p_1-p'_1\bigr)\delta\bigl(p_2-p'_2\bigr)\delta\bigl(p_3-p'_3\bigr)$.

From $\langle \mathbf{x}|\mathbf{p}\rangle \equiv \e^{\pi {\rm i}(p_1x_1+p_2x_2+p_3x_3)}$,
its matrix element
\smash{${\mathcal S}^{p_1,p_2,p_3}_{p'_1,p'_2,p'_3} := \langle p_1,p_2,p_3| \mathcal{R} \bigl|p'_1,p'_2,p'_3\bigr\rangle$}
is obtained by taking the Fourier transformation
\begin{align}\label{ft}
{\mathcal S}^{p_1,p_2,p_3}_{p'_1,p'_2,p'_3} = \int_{\R^6} {\rm d}x_1{\rm d}x_2{\rm d}x_3
{\rm d}x'_1{\rm d}x'_2{\rm d}x'_3
\e^{\pi {\rm i}(p'_1x'_1+p'_2x'_2+p'_3x'_3-p_1x_1-p_2x_2-p_3x_3)}
{\mathcal R}^{x_1,x_2,x_3}_{x'_1,x'_2, x'_3},
\end{align}
where
\smash{${\mathcal R}^{x_1,x_2,x_3}_{x'_1,x'_2, x'_3}$}
is the coordinate representation given in Theorem \ref{th:Rm}.

\begin{Theorem}\label{th:ft}
 Up to an overall factor depending on $\mathcal{C}_1, \dots,
 \mathcal{C}_8$, the following formula is valid:
\begin{gather}
{\mathcal S}^{p_1,p_2,p_3}_{p'_1,p'_2,p'_3} \equiv
\e^{ {\rm i} \pi(\alpha+\beta)}
\frac{\Phi_{\bb}(z_1+{\rm i}\eta)\Phi_{\bb}(z_2+{\rm i}\eta)\Phi_{\bb}(z_3+{\rm i}\eta)\Phi_{\bb}(z_4+{\rm i}\eta)}
{\Phi_{\bb}(z_3+z_4+{\rm i}\eta)},
\label{S1}\\
z_1 = -{\mathcal C}_3-\frac{1}{2}\bigl(p_1+p_3-p'_2\bigr),\qquad
z_3 = -{\mathcal C}_1+{\mathcal C}_3-{\mathcal C}_5-\frac{1}{2}\bigl(-p_1-p_2+p_3+p'_1+p'_2-p'_3\bigr),\nonumber
\\
z_2= {\mathcal C}_2+{\mathcal C}_8-\frac{1}{2}\bigl(p_2-p'_1-p'_3\bigr),\qquad
z_4 = {\mathcal C}_1+{\mathcal C}_3-{\mathcal C}_6-\frac{1}{2}\bigl(p_1-p_2-p_3-p'_1+p'_2+p'_3\bigr),\nonumber
\\
\alpha = (z_1-z_2)(z_3+z_4)+z_3z_4-z_2^2-2{\rm i}\eta z_2,\nonumber
\\
\beta = ({\mathcal C}_8 - {\mathcal C}_7) (z_1+z_2) + ({\mathcal C}_8-{\mathcal C}_6-{\mathcal C}_4) z_3
+ ({\mathcal C}_8-{\mathcal C}_5-{\mathcal C}_4) z_4.\nonumber
 \end{gather}
\end{Theorem}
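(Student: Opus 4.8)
The plan is to prove Theorem \ref{th:ft} by directly evaluating the Fourier transform \eqref{ft} of the coordinate matrix element \smash{$\mathcal{R}^{x_1,x_2,x_3}_{x'_1,x'_2,x'_3}$} obtained in Theorem \ref{th:Rm}. First I would substitute \eqref{Rm0}--\eqref{Rm2} into \eqref{ft}, turning $\mathcal{S}$ into a single multiple integral over the six coordinates $x_1,x_2,x_3,x'_1,x'_2,x'_3$ and the inner variable $z$ of \eqref{Rm1}. The integrand is a product of the Fourier kernel $\e^{\pi\mathrm{i}(p'\cdot x'-p\cdot x)}$, the Gaussian phase $\e^{\mathrm{i}\pi\phi}$ with $\phi$ from \eqref{Rm2}, the linear exponential $\e^{2\pi\mathrm{i}z(-x_2-\mathrm{i}\eta+\mathcal{C}_4)}$, and the four non-compact quantum dilogarithms of \eqref{Rm1}. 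The two delta functions of \eqref{Rm0} are then used to eliminate two coordinate integrations at once, for instance solving for $x_1$ and $x_3$.

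The remaining integrations I would carry out in an order that reduces each step either to a Gaussian integral or to an application of the Fourier/beta integral identities \eqref{ram1}--\eqref{ram2} of Appendix \ref{ap:nc}. The point is that the dilogarithm arguments of \eqref{Rm1} involve the coordinates only through a small number of linear combinations (essentially $x_1-x_3$, $x_1+x_3$ and $x'_1+x'_3$); after passing to these variables, the directions on which the integrand depends purely exponentially integrate out as Gaussians, while the directions conjugate to a dilogarithm are integrated by the beta integral. This beta integral is precisely the mechanism that merges a pair of dilogarithms into the quotient $\Phi_\bb(z_3+\mathrm{i}\eta)\Phi_\bb(z_4+\mathrm{i}\eta)/\Phi_\bb(z_3+z_4+\mathrm{i}\eta)$ in \eqref{S1}, accounting for the single denominator there, whereas the two further integrations supply $\Phi_\bb(z_1+\mathrm{i}\eta)$ and $\Phi_\bb(z_2+\mathrm{i}\eta)$. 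The asymptotics \eqref{pa} and the pole locations \eqref{pz} govern convergence and the placement of the contours.

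The final and most delicate step is the phase bookkeeping. Every Gaussian integration and every $\mathrm{i}\eta$-shift produced by the beta integral or by the reflection identity \eqref{pinv} contributes quadratic and linear exponents; these must be gathered and shown to collapse exactly to $\e^{\mathrm{i}\pi(\alpha+\beta)}$, with the quadratic part reproducing $\alpha$ and the part linear in the $\mathcal{C}_j$ reproducing $\beta$, once the integration variables are matched with the $z_i$ of the statement. I expect this accounting --- and the parallel task of tracking the $\mathcal{C}$-dependent overall constant absorbed into the symbol ``$\equiv$'' --- to be the main obstacle, since it requires completing several squares and repeatedly normalizing dilogarithm arguments to the canonical form $z_i+\mathrm{i}\eta$ via \eqref{pinv}.

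As a guide and a consistency test I would check that \eqref{S1} is the modular-double image of the $\ww$-diagonal formula \eqref{Sf} of Theorem \ref{th:ew} under the correspondence \eqref{cor}: each denominator factor $(q^{\nu_i})_\infty$ of \eqref{Sf} becomes a numerator $\Phi_\bb(z_i+\mathrm{i}\eta)$, and the numerator $(q^{\nu_3+\nu_4})_\infty$ becomes the denominator $\Phi_\bb(z_3+z_4+\mathrm{i}\eta)$, with $z_i$ the continuous analogues of $\nu_i$ and $\alpha+\beta$ that of $\psi$. This structural match both predicts the final form and offers a stage-by-stage check on the computation.
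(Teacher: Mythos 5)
Your proposal follows essentially the same route as the paper's proof: substitute \eqref{Rm0}--\eqref{Rm2} into \eqref{ft}, eliminate two coordinates by the delta functions, pass to the combinations $x_1\pm x_3$ so the integral factorizes, evaluate the separated one-dimensional integrals by the Appendix~\ref{ap:nc} formulas (the Ramanujan-type integral \eqref{rama2} producing exactly the ratio $\Phi_\bb(z_3+{\rm i}\eta)\Phi_\bb(z_4+{\rm i}\eta)/\Phi_\bb(z_3+z_4+{\rm i}\eta)$ and the limits \eqref{ram1}--\eqref{ram2} supplying the remaining two factors), and collect phases via repeated use of \eqref{pinv}, with the correspondence to Theorem~\ref{th:ew} under \eqref{cor} as a check (cf.\ Remark~\ref{re:kon}). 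The only cosmetic difference is that where you speak of Gaussian integrations, the paper instead arranges (by the shift $x_2 \to x_2+x'_2$ and \eqref{pinv}) for the exponents to become \emph{linear} in the remaining variables, so the $x'_2$- and $z$-integrations collapse through delta functions rather than Gaussians --- an inessential variation of the same computation.
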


\begin{proof}
Substitute \eqref{Rm0} into \eqref{ft} and eliminate $x'_1$ and $x'_3$ by the delta functions.
With the shift $x_2 \rightarrow x_2+x'_2$,
the exponent of $\e$ in the result becomes {\em linear} in $x'_2$.
In fact, up to an overall factor, \eqref{ft} is equal to
\begin{gather*}
\int {\rm d}x_1{\rm d}x_2{\rm d}x_3{\rm d}x'_2{\rm d}z
\e^{{\rm i}\pi x'_2(p'_2-p_2-2z-{\rm i}\eta-2{\mathcal C}_3)+{\rm i}\pi\alpha_1}
\nonumber \\
\qquad \times
\frac{\Phi_{\bb}\bigl(z+\frac{x_1-x_3+{\rm i}\eta}{2}+{\mathcal C}_5\bigr)
\Phi_{\bb}\bigl(z+\frac{x_3-x_1+{\rm i}\eta}{2}+{\mathcal C}_6\bigr)}
{\Phi_{\bb}\bigl(z+\frac{x_1+x_3-{\rm i}\eta}{2}+{\mathcal C}_7\bigr)
\Phi_{\bb}\bigl(z+\frac{-x_1-2x_2-x_3-{\rm i}\eta}{2}+{\mathcal C}_8\bigr)},
\\
\alpha_1 =-p_1x_1-p_2x_2-p_3x_3
	+p'_1(x_1+x_2)+p'_3(x_2+x_3)
+(x_1+x_2)(x_2+x_3)
\nonumber\\
 \phantom{\alpha_1 =}{} +{\rm i}\eta(x_1+x_2+x_3)-2({\rm i}\eta+x_2)z+{\mathcal C}_1(x_1-x_3)
+{\mathcal C}_2(x_1+2x_2+x_3)
\nonumber \\
 \phantom{\alpha_1 =}{} +{\rm i}\eta({\mathcal C}_4-2{\mathcal C}_8)+2{\mathcal C}_4z.
\end{gather*}
The integral over $x_2'$ yields $2\delta(p'_2-p_2-2z-{\rm i}\eta-2{\mathcal C}_3)$.
Further integral over $z$ after shifting the contour leads, up to an overall factor, to
\begin{gather*}
\int {\rm d}x_1{\rm d}x_2{\rm d}x_3
\e^{{\rm i}\pi \alpha_2}
\frac{\Phi_{\bb}\bigl(\frac{x_1-x_3+p'_2-p_2}{2}-{\mathcal C}_3+{\mathcal C}_5\bigr)
\Phi_{\bb}\bigl(\frac{x_3-x_1+p'_2-p_2}{2}-{\mathcal C}_3+{\mathcal C}_6\bigr)}
{\Phi_{\bb}\bigl(\frac{x_1+x_3+p'_2-p_2}{2}-{\rm i}\eta-{\mathcal C}_3+{\mathcal C}_7\bigr)
\Phi_{\bb}\bigl(\frac{-x_1-2x_2-x_3+p'_2-p_2}{2}-{\rm i}\eta-{\mathcal C}_3+{\mathcal C}_8\bigr)},
\\
\alpha_2 =\alpha_1\vert_{z=\frac{p'_2-p_2-{\rm i}\eta}{2}-{\mathcal C}_3}.
\end{gather*}
Set $x_2 \rightarrow x_2 - (x_1+x_3)/2$ and
apply \eqref{pinv}
to the second (right) $\Phi_\bb$ in the numerator and the denominator,
which makes the power of $\e$ linear in all the integration variables.
Up to an overall factor, the result reads
\begin{gather*}
\int {\rm d}x_1{\rm d}x_2{\rm d}x_3
\e^{{\rm i}\pi \alpha_3}
\frac{\Phi_{\bb}\bigl(\frac{x_1-x_3+p'_2-p_2}{2}-{\mathcal C}_3+{\mathcal C}_5\bigr)
\Phi_{\bb}\bigl(x_2+\frac{-p'_2+p_2}{2}+{\rm i}\eta+{\mathcal C}_3-{\mathcal C}_8\bigr)}
{\Phi_{\bb}\bigl(\frac{x_1-x_3-p'_2+p_2}{2}+{\mathcal C}_3-{\mathcal C}_6\bigr)
\Phi_{\bb}\bigl(\frac{x_1+x_3+p'_2-p_2}{2}-{\rm i}\eta-{\mathcal C}_3+{\mathcal C}_7\bigr)},
\\
\alpha_3 = x_1\left({\mathcal C}_1-{\mathcal C}_6-p_1+\frac{p_2+p'_1-p'_3}{2}\right)
+ x_2\bigl(2{\mathcal C}_2+2{\mathcal C}_8-p_2+p'_1+p'_3\bigr)
 \\
 \phantom{\alpha_3 =}{} +x_3\left(-{\mathcal C}_1-2{\mathcal C}_3+{\mathcal C}_6-p_3+p'_2+\frac{-p_2-p'_1+p'_3}{2}\right).
\end{gather*}
By setting $x_1=s+t$, $x_3=s-t$, this can be separated into three
independent integrals as
\begin{gather*}
\int {\rm d}s \frac{\e^{{\rm i}\pi s(-2{\mathcal C}_3-p_1-p_3+p'_2)}}
{\Phi_{\bb}\bigl(-{\mathcal C}_3+{\mathcal C}_7+s+\frac{p'_2-p_2}{2}-{\rm i}\eta\bigr)}
\nonumber\\
\qquad \times
\int {\rm d}x_2 \e^{{\rm i}\pi x_2(2{\mathcal C}_2+2{\mathcal C}_8-p_2+p'_1+p'_3)}
\Phi_{\bb}\left({\mathcal C}_3-{\mathcal C}_8+x_2+\frac{p_2-p'_2}{2}+{\rm i}\eta\right)
\nonumber \\
\qquad\times
\int {\rm d}t
\e^{\pi {\rm i}t(2{\mathcal C}_1+2{\mathcal C}_3-2{\mathcal C}_6-p_1+p_2+p_3+p'_1-p'_2-p'_3)}
\frac{\Phi_{\bb}\bigl(-{\mathcal C}_3+{\mathcal C}_5+t+\frac{p'_2-p_2}{2}\bigr)}
{\Phi_{\bb}\bigl({\mathcal C}_3-{\mathcal C}_6+t+\frac{p_2-p'_2}{2}\bigr)},
\end{gather*}
where the Jacobian value 2 has not been included.
They can be evaluated by the formulas in Appendix~\ref{ap:nc}.
After applying \eqref{pinv} again in the result, we obtain~\eqref{S1}.
\end{proof}

By construction, the $R$-matrix in the momentum representation
\smash{$\bigl(\mathcal{S}^{p_1,p_2,p_3}_{p'_1,p'_2,p'_3}\bigr)$} in Theorem \ref{th:ft}
also satisfies the tetrahedron equation.

\begin{Remark}\label{re:kon}
From \eqref{wrep} and \eqref{prep},
one sees the correspondence $q^{n_k} \leftrightarrow \e^{\pi \bb p_k}$, i.e.,
$p_k \leftrightarrow {\rm i} \bb n_k$ in the $w$-diagonal/momentum representation.
In fact, in the formula \eqref{S1}, replace $\Phi_\bb(z+{\rm i}\eta)$ according to
\[
\Phi_\bb(z+{\rm i}\eta) \overset{\eqref{pinv}}{\equiv}
 \frac{\e^{\pi {\rm i}(z+{\rm i}\eta)^2}}{\Phi_\bb(-z-{\rm i}\eta)}
\overset{\eqref{cor}}{\longrightarrow}
\e^{\pi {\rm i}(z+{\rm i}\eta)^2}\Psi_q\bigl(\e^{-2\pi \bb(z+{\rm i}\eta)}\bigr)
\overset{\eqref{Psiq}}{=} \frac{\e^{\pi {\rm i}(z+{\rm i}\eta)^2}}{\bigl(\e^{-2\pi \bb z}\bigr)_\infty}.
\]
Then under the identification $p_k ={\rm i} \bb n_k$, $p'_k = {\rm i} \bb n'_k$
($k=1, 2, 3$) and $C_j = \pi \bb \mathcal{C}_j$ \eqref{cbc}, the
modular $R$-matrix \smash{${\mathcal S}^{p_1,p_2,p_3}_{p'_1,p'_2,p'_3}$} in
\eqref{S1} is transformed to \smash{$S^{n_1,n_2,n_3}_{n'_1,n'_2,n'_3}$} in
\eqref{Sf2} up to normalization. Taking Remark~\ref{re:kmy0} into
account, \smash{$\bigl(\mathcal{S}^{p_1,p_2,p_3}_{p'_1,p'_2,p'_3}\bigr)$} may be
regarded as a modular version of~$R^{ZZZ}$ in~\cite{KMY23}.
\end{Remark}

\section{Relation to quantized six-vertex model}\label{s:6v}

In this section, we show that the $R$-matrix obtained in Section
\ref{s:qw} satisfies the $RLLL = LLLR$ relation ($RLLL$ relation for
short) for the quantized six-vertex model with full parameters
\cite{KMY23}. This result is a quantum version of the observation
made in \cite{GSZ21} that the classical limit of the $RLLL$ relation
arises from a mutation sequence of a symmetric butterfly quiver
associated with a perfect network. We also provide a separate proof
of the $RLLL$ relation for the modular $\mathcal{R}$ based on
properties of the non-compact quantum dilogarithm.

\subsection[3D L operator]{3D $\boldsymbol{L}$ operator}

Let $V = \C v_0 \oplus \C v_1$ be a two-dimensional vector space and
$\mathcal{W}(p)$ be the $p$-Weyl algebra generated by
$Z^{\pm 1}$, $X^{\pm 1}$ with the relation
\begin{equation}\label{zx}
 ZX = pXZ.
\end{equation}
We consider a $\mathcal{W}(p)$-valued operator
\begin{gather}
\mathscr{L}^p = \sum_{a,b,i,j=0,1} E_{ai}\otimes E_{bj} \otimes \mathscr{L}^{ab}_{ij}
\in \mathrm{End}(V \otimes V) \otimes \mathcal{W}(p),
\nonumber
\\
\mathscr{L}^{ab}_{ij}=0\qquad \text{unless}\qquad a+b=i+j,
\label{L2}
\\
\mathscr{L}^{00}_{00} = r,\qquad \mathscr{L}^{11}_{11} = s,\qquad \mathscr{L}^{10}_{10} = w X,\qquad
\mathscr{L}^{01}_{01} = tX,
\qquad \mathscr{L}^{10}_{01} = Z,\nonumber
\\ \mathscr{L}^{01}_{10} = rsZ^{-1}+ tw XZ^{-1}X.
\label{L3}
\end{gather}
Here $r$, $s$, $t$, $w$ are parameters. Note that
$\mathscr{L}^{ab}_{ij}=\mathscr{L}(r,s,t,w)^{ab}_{ij}$ depends also on
$p$ via \eqref{zx}. The symbol $E_{ij}$ denotes the matrix unit on
$V$ acting on the basis as $E_{ij}v_k = \delta_{jk}v_i$. The operator~$\mathscr{L}^p$ may be viewed as a quantized six-vertex model where
the Boltzmann weights are $\mathcal{W}(p)$-valued. It is obtained
from \cite[Figure~1]{KMY23} by (i) gauge transformation
$\mathscr{L}^{ab}_{ij} \rightarrow \alpha^{a-j}\mathscr{L}^{ab}_{ij}$
\cite[Remark~3.23]{K22}, preserving the $RLLL$ relation \eqref{rlll}
described below, with \smash{$\alpha= {\rm i} q^{\frac{1}{2}}$}, (ii)
\smash{$t \rightarrow -{\rm i} q^{-\frac{1}{2}}t$}, (iii) $w \rightarrow t^{-1}w$.%
\footnote{The last term in \eqref{L3} originates from $-t^2wZ^{-1}X^2$
 in \cite[equation~(15)]{KMY23}.
 It is transformed to $q^{-1}twZ^{-1}X^2$
 by (i)--(iii) and further to $twXZ^{-1}X$ by $XZ=qZX$ in
 \cite[equation~(6)]{KMY23} to eliminate the explicit $q$-dependence. The
 relation $XZ = qZX$ \cite[equation~(15)]{KMY23} corresponds to
 $p = q^{-1}$ and differs from the choice $p = q$ \big(or $q^\vee$\big) made
 in this paper.}
See Figure \ref{fig:6v} for a graphical representation.

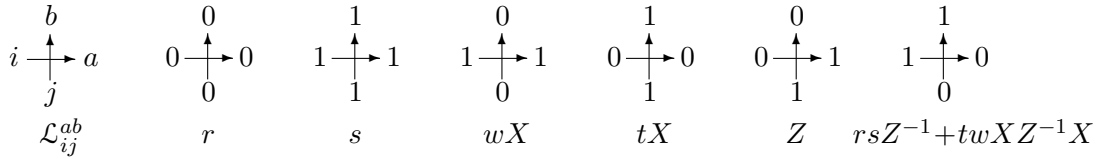
\begin{figure}[h]
\begin{picture}(330,60)(-40,30)
{\unitlength 0.011in
\put(6,80){
\put(-11,0){\vector(1,0){23}}\put(0,-10){\vector(0,1){22}}
}
\multiput(81,80.5)(70,0){6}{
\put(-11,0){\vector(1,0){23}}\put(0,-10){\vector(0,1){22}}
}
\put(-74,0){
\put(60.5,77){$i$}\put(77.5,60){$j$}
\put(96,77){$a$}\put(77.5,96.5){$b$}
}
\put(61,77){0}\put(78,60){0}\put(96,77){0}\put(78,96.5){0}
\put(70,0){
\put(61,77){1}\put(78,60){1}\put(96,77){1}\put(78,96.5){1}
}
\put(140,0){
\put(61,77){1}\put(78,60){0}\put(96,77){1}\put(78,96.5){0}
}
\put(210,0){
\put(61,77){0}\put(78,60){1}\put(96,77){0}\put(78,96.5){1}
}
\put(280,0){
\put(61,77){0}\put(78,60){1}\put(96,77){1}\put(78,96.5){0}
}
\put(350,0){
\put(61,77){1}\put(78,60){0}\put(96,77){0}\put(78,96.5){1}
}
\put(78,40){
\put(-77,0){$\mathscr{L}^{ab}_{ij}$}
\put(0,0){$r$} \put(70,0){$s$} \put(134,0){$wX$}
\put(207,0){$tX$} \put(278,0){$Z$} \put(310,0){$rsZ^{-1}\!+\! tw XZ^{-1}X$}
}}
\end{picture}
\caption{The operator $\mathscr{L}^p = \mathscr{L}^p(r,s,t,w)$ as a
 $\mathcal{W}(p)$-valued six-vertex model.}
\label{fig:6v}
\end{figure}

\subsection[RLLL relation]{$\boldsymbol{RLLL}$ relation}

Consider the $RLLL$ relation, which takes the form of the Yang--Baxter equation up to
conjugation \cite{BMS10,K22}
\begin{equation}
 \mathscr{R}_{456} \mathscr{L}^p_{236} \mathscr{L}^p_{135} \mathscr{L}^p_{124}
 = \mathscr{L}^p_{124} \mathscr{L}^p_{135} \mathscr{L}^p_{236} \mathscr{R}_{456}.
\label{rlll}
\end{equation}
Here $\mathscr{R}_{456}$ is supposed to be an element of a group of operators
whose adjoint action yield linear maps on $\mathcal{W}(p)^{\otimes 3}$.
The indices denote the tensor components on
which the operators act nontrivially. In terms of the components
$\mathscr{L}^{ab}_{ij}$, the $RLLL$ relation reads
\begin{equation}\label{qybe}
 \mathscr{R} \sum_{\alpha, \beta, \gamma=0,1}\bigl(\mathscr{L}^{\alpha \beta}_{ij}
 \otimes \mathscr{L}^{a \gamma}_{\alpha k} \otimes \mathscr{L}^{bc}_{\beta\gamma}\bigr)
 = \sum_{\alpha, \beta, \gamma=0,1}
 \bigl(\mathscr{L}^{ab}_{\alpha\beta} \otimes
 \mathscr{L}^{\alpha c}_{i\gamma} \otimes \mathscr{L}^{\beta\gamma}_{jk}\bigr) \mathscr{R}
\end{equation}
for arbitrary $a, b, c, i, j, k \in \{0,1\}$. See Figure
\ref{fig:qybe}.

\begin{figure}[h]
\hspace*{1.1cm}
\includegraphics[clip,scale=0.4]{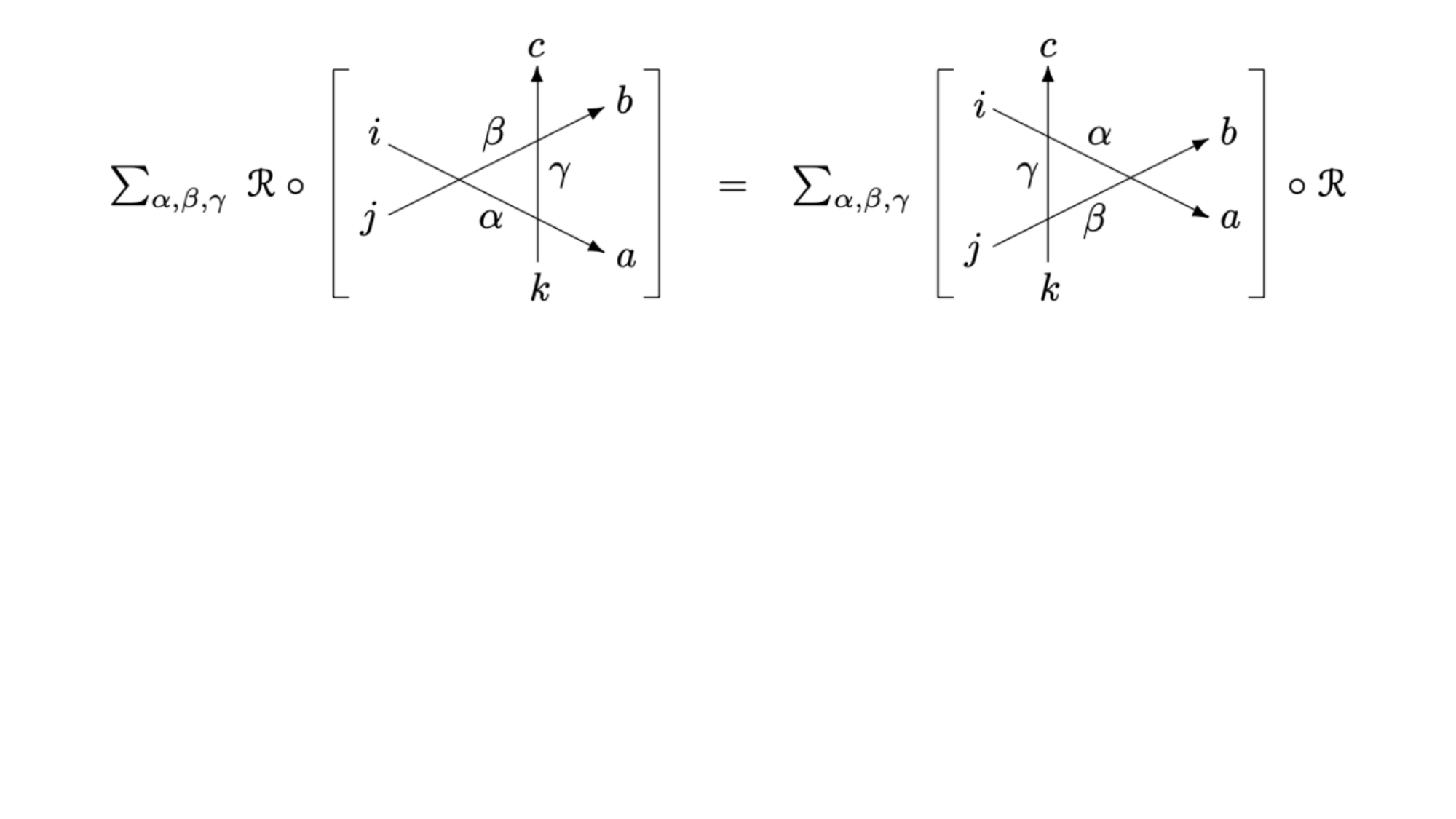}
\vspace*{-5cm}
\caption{A pictorial representation of
the quantized Yang--Baxter equation \eqref{qybe}. }
\label{fig:qybe}
\end{figure}

We take the parameters of $\mathscr{L}^p_{124}$,
$\mathscr{L}^p_{135}$, $\mathscr{L}^p_{236}$ on both sides of
\eqref{rlll} to be $(r_1,s_1,t_1,w_1)$, $(r_2,s_2,t_2,w_2)$,
$(r_3,s_3,t_3,w_3)$, respectively. From the conservation condition
\eqref{L2}, the equation~\eqref{qybe} becomes $0=0$ unless
$a+b+c=i+j+k$. There are 20 choices of $(a,b,c,i,j,k) \in \{0,1\}^6$
satisfying this condition. Among them, the cases $(0,0,0,0,0,0)$ and
$(1,1,1,1,1,1)$ yield the trivial relation
$\mathscr{R}(1 \otimes 1 \otimes 1) = (1 \otimes 1 \otimes
1)\mathscr{R}$. Thus, there are 18 nontrivial equations for
\eqref{rlll}. They are listed in Appendix \ref{app:rlll}.

\begin{Theorem}
 \label{th:rlll}
 The $RLLL$ relation \eqref{rlll} with $p=q$ holds for $\mathscr{R} = R$ in \eqref{rijk} under the identification
 \begin{equation}
 \label{pXZrstw}
 X_i = \e^{\uu_i}, \qquad
 Z_i = \e^{-\ww_i}, \qquad
 r_i = \e^{c_i}, \qquad
 s_i = \e^{a_i}, \qquad
 t_i = \e^{-b_i}, \qquad
 w_i = \e^{-d_i},
 \end{equation}
 where $X_1=X\otimes 1 \otimes 1$, $X_2=1 \otimes X \otimes 1$,
 $X_3=1 \otimes 1 \otimes X$, and $Z_i$ is defined similarly.\footnote{The parameter $w_i$ should not be confused with the
 canonical variable $\ww_i$.}

\end{Theorem}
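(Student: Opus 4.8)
The plan is to verify \eqref{rlll} in its equivalent component form \eqref{qybe}, which under the conservation law \eqref{L2} reduces to the $18$ nontrivial identities collected in Appendix \ref{app:rlll}. Since $\mathscr{R}=R$ acts only on the quantum spaces $\mathcal{W}(p)^{\otimes 3}\cong\mathcal{W}_3$ and commutes with the auxiliary spaces $V^{\otimes 3}$, each such identity can be recast as $\mathrm{Ad}(R)\bigl(\Xi^{abc}_{ijk}\bigr)=\widetilde{\Xi}^{abc}_{ijk}$, where $\Xi^{abc}_{ijk}$ and $\widetilde{\Xi}^{abc}_{ijk}$ are the matching entries of the two ordered products $\mathscr{L}^p_{236}\mathscr{L}^p_{135}\mathscr{L}^p_{124}$ and $\mathscr{L}^p_{124}\mathscr{L}^p_{135}\mathscr{L}^p_{236}$ built, through \eqref{L3} and \eqref{pXZrstw}, from monomials in $X_i=\e^{\uu_i}$ and $Z_i=\e^{-\ww_i}$. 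First I would record that \eqref{pXZrstw} together with \eqref{uwh} forces $Z_iX_i=qX_iZ_i$, so that $p=q$ is indeed the only consistent choice, matching the hypothesis of the theorem.

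The core of the computation is the adjoint action of $R$ on the generators $X_i,Z_i$. For this I would use the factorized expression \eqref{RL12}, writing $R=\Psi_{\mathrm{pre}}\,P\,\Psi_{\mathrm{post}}$ with $P=P_{123}$ the monomial part \eqref{pp123} and $\Psi_{\mathrm{pre}},\Psi_{\mathrm{post}}$ the surrounding products of quantum dilogarithms. Conjugation by $P$ is the explicit linear map \eqref{uwL1} on $(\uu_i,\ww_i)$, hence a monomial transformation of $X_i,Z_i$ with controlled powers of $q$. Conjugation by each $\Psi_q$ factor is then evaluated from the basic property \eqref{Prec} and the pentagon identity: for a generator that $q$-commutes with the argument of a $\Psi_q$, the adjoint action produces the expected binomial factors $1+q^{\bullet}(\cdot)$, and their products reassemble into the polynomial (Laurent) entries dictated by \eqref{L3}.

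To keep the verification manageable I would exploit the symmetries of Section \ref{ss:symR}: the relations \eqref{alphauw-Rhat}--\eqref{gammauw-Rhat}, expressed through $\alpha^{\uu\ww}$, $\beta^{\uu\ww}$, $\gamma^{\uu\ww}$ and their effect \eqref{uwal} on the parameters, together with the conserved combinations of Remark \ref{re:c3}, permute and invert the $18$ equations among themselves, so that only a few representatives need to be checked by hand. The hard part will be the family of equations in which the mixed entry $\mathscr{L}^{01}_{10}=rsZ^{-1}+twXZ^{-1}X$ occurs: being a genuine two-term sum rather than a single monomial, it renders both $\Xi^{abc}_{ijk}$ and $\widetilde{\Xi}^{abc}_{ijk}$ multi-term, and one must show that the dilogarithm conjugations, which a priori generate infinite series, collapse to exactly the prescribed finite combinations. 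This is where the specific arguments of the four $\Psi_q$'s in \eqref{RL12} and the constraint \eqref{cz} enter decisively.

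Finally, as an independent check and alternative route, I would note that in the $\ww$-diagonal representation the matrix elements of $R$ coincide, after the relabeling and parameter matching of Remark \ref{re:kmy0} and \eqref{Css}, with those of the operator $R^{ZZZ}$ of \cite{KMY23}, for which the $RLLL$ relation is known; since the matrix elements determine the operator, this reproves \eqref{rlll}. The direct verification above is nonetheless preferable, as it remains within the present quantum cluster algebra framework and runs parallel to the separate treatment of the modular $\mathcal{R}$ via the non-compact quantum dilogarithm.
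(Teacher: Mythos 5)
Your overall skeleton --- reducing \eqref{rlll} to the $18$ component identities of Appendix \ref{app:rlll} and using the symmetries \eqref{alphauw-Rhat}--\eqref{gammauw-Rhat} to check only a few representatives --- coincides with the paper's proof, and your handling of the ``trivial'' group via the conserved combinations of Remark \ref{re:c3} is exactly the paper's treatment of \eqref{d1}, \eqref{d7}, \eqref{d12}, \eqref{d18} via \eqref{Y3Y8-uw} and \eqref{Y5Y8-uw}. But your central computational device differs from the paper's, and this is where a genuine gap sits. You propose to compute $\mathrm{Ad}(R)$ on the \emph{bare generators} $X_i$, $Z_i$ by conjugating through the four dilogarithms of \eqref{RL12} using \eqref{Prec}. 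This does not always terminate: a single generator such as $X_1=\e^{\uu_1}$ $q$-commutes with the dilogarithm arguments $\e^{\cdots+\ww_1-\ww_2+\ww_3}$ by an \emph{odd} power of $q$, so $\Psi_q(A)\,X_1\,\Psi_q(A)^{-1}$ is an infinite product rather than a finite binomial factor; only the specific combinations occurring in the component equations (which are images of $Y$-monomials under $\phi_\mathrm{SB}$, $\phi'_\mathrm{SB}$) commute by even powers and collapse. The paper avoids conjugating generators altogether: it observes that each tractable component equation \emph{is} an instance of the already-established identity $\widehat{R}^{\uu\ww}_{123}\circ\phi'_\mathrm{SB}=\phi_\mathrm{SB}\circ\widehat{R}_{123}$ applied to a specific $Y'$-variable --- e.g., \eqref{d10} is the transformation of $Y_1^{\prime\,-1}$ (note $\phi'_\mathrm{SB}(Y_1^{\prime\,-1})\propto Z_2$) and \eqref{d4}, after multiplication by \eqref{d10}, is that of $Y_6^{\prime\,-1}$ --- so all the rational structure ($\Lambda_0,\Lambda_3,\Lambda_4,\Lambda_5$ of Proposition \ref{pr:RY}) is imported for free rather than re-derived.

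The sharpest instance of the gap is your group containing \eqref{d5} and \eqref{d14}: the terms $Y_1\otimes Z\otimes Y_3$ and $Z\otimes Y_2\otimes Z$ appearing on both sides are \emph{not} images of single $Y'$-monomials, so neither the cluster formulas nor a finite-binomial conjugation applies directly, and your plan that ``infinite series collapse to finite combinations'' is precisely the step with no mechanism behind it. The paper supplies the missing idea: the operator identity $\eqref{d10}\cdot\eqref{d5}=r_1r_2r_3\,\eqref{d16}+q\,\eqref{d4}\cdot\eqref{d11}$, which holds for both sides of the respective equations and deduces \eqref{d5} from already-verified relations, bypassing any dilogarithm computation. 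Finally, your fallback via Remark \ref{re:kmy0} and the $R^{ZZZ}$ of \cite{KMY23} is a legitimate alternative in spirit, but note two caveats: it reverses the paper's logic (the paper proves Theorem \ref{th:rlll} first and then uses the uniqueness result of \cite{KMY23} to identify the matrices), and as a standalone proof it requires faithfulness of the $\ww$-diagonal representation on the relevant algebra together with care about the parity sectors and the integrality assumption \eqref{els} under which Theorem \ref{th:ew} was derived, as well as the constraint \eqref{cz} relating the eight $C_i$ to the twelve parameters of \cite{KMY23}.
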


\begin{proof}
 Write the $RLLL$ relation as
 $\widehat{R}^{\uu\ww}_{456} \bigl(\mathscr{L}^q_{236} \mathscr{L}^q_{135}
 \mathscr{L}^q_{124}\bigr) = \mathscr{L}^q_{124} \mathscr{L}^q_{135}
 \mathscr{L}^q_{236}$. The symmetries
 \eqref{alphauw-Rhat}--\eqref{gammauw-Rhat} of \smash{$\widehat{R}^{\uu\ww}$}
 relate the component equations \eqref{d1}--\eqref{d18} by the
 following three transformations:
 \begin{itemize}\itemsep=0pt
 \item $r_1 \leftrightarrow r_3$, $s_1 \leftrightarrow s_3$,
 $t_1 \leftrightarrow w_3$, $w_1 \leftrightarrow t_3$,
 $t_2 \leftrightarrow w_2$, $X_1 \leftrightarrow X_3$,
 $Z_1 \leftrightarrow Z_3$,

 \item $r_i \leftrightarrow s_i$, $t_i \leftrightarrow w_i$,
 $\mathscr{R} \leftrightarrow \mathscr{R}^{-1}$,

 \item $q \mapsto q^{-1}$, $r_i \leftrightarrow s_i$,
 $t_i \leftrightarrow w_i$,
 $Z_i \leftrightarrow Y_i$,
 \end{itemize}
 where $Y_i$ is defined in Appendix \ref{app:rlll}.
 Accordingly, it suffices to check one equation in each of the
 following four groups:
 \begin{enumerate}\itemsep=0pt
 \item[1)] \eqref{d1}, \eqref{d7}, \eqref{d12}, \eqref{d18},

 \item[2)] \eqref{d3}, \eqref{d9}, \eqref{d10}, \eqref{d16},

 \item[3)] \eqref{d2}, \eqref{d4}, \eqref{d6}, \eqref{d8}, \eqref{d11}, \eqref{d13}, \eqref{d15}, \eqref{d17},

 \item[4)] \eqref{d5}, \eqref{d14}.
 \end{enumerate}

 The equations in (1) follow from \eqref{Y3Y8-uw} and
 \eqref{Y5Y8-uw}. Equation \eqref{d10} is equivalent to
 $\widehat{R}^{\uu\ww}_{123} \circ \smash{\phi'_\mathrm{SB}\bigl(Y_1^{\prime-1}\bigr) =
 \phi_\mathrm{SB} \circ \widehat{R}_{123}\bigl(Y_1^{\prime-1}\bigr)}$, as can be
 seen from \eqref{Yw}, \eqref{Ypw} and Proposition \ref{pr:RY}. One
 can reduce \eqref{d4} to
 \smash{$\widehat{R}^{\uu\ww}_{123} \circ \phi'_\mathrm{SB}\bigl(Y_6^{\prime-1}\bigr) =
 \phi_\mathrm{SB} \circ \widehat{R}_{123}\bigl(Y_6^{\prime-1}\bigr)$} by
 multiplying it by \eqref{d10}. Finally, to verify \eqref{d5}, one
 can check that the relation
 $\eqref{d10} \eqref{d5} = r_1 r_2 r_3 \eqref{d16} + q \eqref{d4}
 \eqref{d11}$ holds whether the left-hand sides or the right-hand
 sides of the equations are used.
\end{proof}

The relation \eqref{pXZrstw} between parameters agrees with \eqref{Css}.

In \cite{KMY23}, it has been shown that
the solutions $R$ to the $RLLL$ relation for the present $L$
are unique up to normalization within appropriate parity sectors \cite{KMY23}.
Thus, Theorem \ref{th:rlll} effectively identifies the concrete $R$-matrices obtained in \cite{KMY23}
with the images of \eqref{RL1} in the corresponding representations of the canonical variables.
Moreover, Theorem \ref{th:main} verifies the validity of the various tetrahedron equations
of the form $RRRR=RRRR$ for these $R$-matrices as conjectured in \cite[Section~6.2]{KMY23}.

\subsection[RLLL relation for the modular R]{$\boldsymbol{RLLL}$ relation for the modular $\boldsymbol{\mathcal{R}}$}

Let $\mathcal{V}$ be the space of ket vectors $|x\rangle (x \in \C)$
(cf. Section \ref{ss:cr}) and
consider the joint representations of $\mathcal{W}(q)$ and
$\mathcal{W}\bigl({q^\vee}\bigr)$ on $\mathcal{V}$ given by
\begin{gather*}
\pi_q\colon\ \mathcal{W}(q) \rightarrow \mathrm{End}(\mathcal{V})\colon\
 X|x\rangle = \e^{\pi \bb x}|x\rangle, \qquad Z|x\rangle = |x-{\rm i}\bb\rangle,
\\
\pi_{q^\vee}\colon\ \mathcal{W}\bigl({q^\vee}\bigr) \rightarrow \mathrm{End}(\mathcal{V})\colon\
 X|x\rangle = \e^{\pi \bb^{-1} x}|x\rangle, \quad Z|x\rangle = \bigl|x-{\rm i}\bb^{-1}\bigr\rangle.
\end{gather*}
See \eqref{qq} for the relations between the parameters $q$, $q^\vee$
and $\bb$. We introduce two $L$-operators that are modular dual to
each other as follows:
\begin{gather*}
\mathcal{L}^q = (1 \otimes 1 \otimes \pi_q)(\mathscr{L}^q) \in \mathrm{End}(V \otimes V \otimes \mathcal{V}),
\\
\mathcal{L}^{q^\vee} =(1 \otimes 1 \otimes \pi_{q^\vee})
\bigl(\mathscr{L}^{q^\vee}\bigr) \in \mathrm{End}(V \otimes V \otimes \mathcal{V}).
\end{gather*}
Since there is no explicit dependence on $p$ in \eqref{L3} or in Figure \ref{fig:6v},
Theorem \ref{th:rlll} implies the following.

\begin{Corollary}\label{co:mod}
 The $R$ matrix $\mathcal{R}$ in Theorem {\rm\ref{th:Rm}} satisfies the RLLL relation
 \begin{equation}
 \mathcal{R}_{456} \mathcal{L}^p_{236} \mathcal{L}^p_{135} \mathcal{L}^p_{124}
 = \mathcal{L}^p_{124} \mathcal{L}^p_{135} \mathcal{L}^p_{236}\mathcal{R}_{456}
 \label{rlllm}
 \end{equation}
 for
 $p = \exp\bigl({\rm i}\pi \bb^{\pm 2}\bigr) = \left(\begin{smallmatrix}q \\
 q^\vee \end{smallmatrix}\right)$ and the parameters $\mathcal{C}_1,
 \dotsc, \mathcal{C}_8$ given by
 \begin{alignat}{4}
& \e^{\pi \bb^{\pm 1} {\mathcal C}_1} = \sqrt{\frac{r_1 t_2 w_3}{r_3 t_1 w_2}}, \qquad&&
 \e^{\pi \bb^{\pm 1} {\mathcal C}_2} = \sqrt{\frac{r_2 t_1 w_3}{r_1 r_3 s_2}}, \qquad&&
 \e^{\pi \bb^{\pm 1} {\mathcal C}_3} = \sqrt{\frac{r_1 r_3}{r_2}}, &\nonumber
 \\
& \e^{\pi \bb^{\pm 1} {\mathcal C}_4} = \sqrt{\frac{r_2 s_2}{t_2 w_2}},\qquad&&
 \e^{\pi \bb^{\pm 1} {\mathcal C}_5} = \sqrt{\frac{r_3 s_3 w_1}{r_2 w_3}}, \qquad&&
 \e^{\pi \bb^{\pm 1} {\mathcal C}_6} = \sqrt{\frac{r_1 s_1 t_3}{r_2 t_1}},&\nonumber
 \\
 &\e^{\pi \bb^{\pm 1} {\mathcal C}_7} = \sqrt{\frac{t_3 w_1}{r_2}}, \qquad&&
 \e^{\pi \bb^{\pm 1} {\mathcal C}_8} = \sqrt{\frac{r_1 r_3 s_1 s_3}{r_2 t_1 w_3}}.\qquad &&&\label{Ckmy}
 \end{alignat}
\end{Corollary}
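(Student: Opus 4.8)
The plan is to deduce the statement directly from Theorem \ref{th:rlll} by transporting the $RLLL$ relation to the modular setting, the whole point being that the $L$-operator \eqref{L3} carries no explicit dependence on $p$: the parameter $p$ enters only through the commutation relation $ZX=pXZ$ of the Weyl generators. Thus the $RLLL$ relation \eqref{rlll}, read componentwise as the system \eqref{d1}--\eqref{d18}, is nothing but a list of identities describing how $\mathrm{Ad}(\mathscr{R})$ transforms monomials in $X_i$ and $Z_i$. In Theorem \ref{th:rlll} these were verified for $\mathscr{R}=R$ with $X_i=\e^{\uu_i}$, $Z_i=\e^{-\ww_i}$, where $\mathrm{Ad}(R)=\widehat{R}^{\uu\ww}$ acts on the canonical variables as in Proposition \ref{pr:RY} through \eqref{knm}.

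First I would observe that $\mathrm{Ad}(\mathcal{R})$ reproduces exactly the same transformation $\widehat{R}^{\uu\ww}$ of the canonical variables as $\mathrm{Ad}(R)$ does. Indeed, the monomial part $\mathcal{P}$ in \eqref{RRf} is a metaplectic (Gaussian) operator whose conjugation action on $\hat{x}_k,\hat{p}_k$ is the linear symplectic map determined by $\tilde{\lambda}_0,\dots,\tilde{\lambda}_3$, independent of $\bb$; and the conjugation by each $\Phi_\bb$ factor is governed by the shift relation \eqref{prec}, the modular counterpart of \eqref{Prec} underlying the $\Psi_q$-conjugation. Crucially, \eqref{prec} holds for \emph{both} $\bb$ and $\bb^{-1}$, and $\Phi_\bb=\Phi_{\bb^{-1}}$. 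Consequently, evaluating $\mathrm{Ad}(\mathcal{R})$ in the representation $\pi_q$ (where $X=\e^{\pi\bb\hat{x}}$, $ZX=qXZ$) produces precisely the formulas of $\mathrm{Ad}(R)$ with $q=\e^{{\rm i}\pi\bb^2}$, while evaluating it in $\pi_{q^\vee}$ (where $X=\e^{\pi\bb^{-1}\hat{x}}$, $ZX=q^\vee XZ$) produces the same formulas with $q$ replaced by $q^\vee=\e^{{\rm i}\pi\bb^{-2}}$. This is the mechanism by which the single operator $\mathcal{R}$ serves both Weyl structures simultaneously.

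Next I would feed this into the componentwise relations. Since the $L$-operator entries \eqref{L3} are $p$-independent and the verification of \eqref{d1}--\eqref{d18} in the proof of Theorem \ref{th:rlll} uses only the $q$-Weyl commutation relations together with the transformation $\widehat{R}^{\uu\ww}$, the identical algebra goes through with $\mathcal{R}$ in place of $R$ in each of the representations $\pi_q$ and $\pi_{q^\vee}$, yielding \eqref{rlllm} for $p=q$ and for $p=q^\vee$. The parameter identification \eqref{Ckmy} is then obtained from \eqref{Css}, which coincides with \eqref{pXZrstw} as noted after Theorem \ref{th:rlll}, by applying the rescaling $C_i=\pi\bb^{\pm1}\mathcal{C}_i$ of \eqref{cbc}: the exponent $\pi\bb^{\pm1}\mathcal{C}_i$ in \eqref{Ckmy} simply records the two scalings appropriate to $\pi_q$ and $\pi_{q^\vee}$.

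The main obstacle is not algebraic but analytic: one must justify that the formal correspondence \eqref{cor} genuinely upgrades the $\Psi_q$-level identities of Theorem \ref{th:rlll} to operator identities for the unbounded $\Phi_\bb$-built operator $\mathcal{R}$ on $\mathcal{V}$, and that the conjugations involved are well defined on a suitable dense domain. The resolution is that every identity needed is a consequence of the functional and difference equations \eqref{pinv}--\eqref{prec}, which $\Phi_\bb$ satisfies exactly for both $\bb$ and $\bb^{-1}$; this is precisely the content of the modular double and is what legitimizes the passage from the $\mathcal{W}(q)$-identity of Theorem \ref{th:rlll} to the joint $\mathcal{W}(q)$--$\mathcal{W}(q^\vee)$ statement.
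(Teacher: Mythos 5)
Your proposal is correct and follows essentially the same route as the paper: the paper's entire proof is the one-line observation that $\mathscr{L}$ in \eqref{L3} has no explicit $p$-dependence, so Theorem \ref{th:rlll} transports directly to both $p=q$ and $p=q^\vee$, with \eqref{Ckmy} coming from \eqref{pXZrstw}/\eqref{Css} under the rescaling \eqref{cbc}. Your added detail on the modular-double mechanism --- $\Phi_\bb=\Phi_{\bb^{-1}}$ and the shift relation \eqref{prec} holding for both $\bb^{\pm 1}$, so that $\mathrm{Ad}(\mathcal{R})$ realizes $\widehat{R}^{\uu\ww}$ in both $\pi_q$ and $\pi_{q^\vee}$ --- is exactly the implicit content of that remark, and the analytic caveat you raise is what the paper's subsequent matrix-element check in the strong coupling regime is there to address.
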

The upper choice of parameters in \eqref{Ckmy} is consistent with
\eqref{pXZrstw} and \eqref{Css}. (Recall the rescaling \eqref{cbc}.)
The parameters \eqref{Ckmy} satisfy the constraint \eqref{cz}.

 In the rest of this subsection, we illustrate an independent check of \eqref{rlllm} at the level of matrix elements
 in the strong coupling regime
 assuming that ${\mathcal C}_1,\ldots, {\mathcal C}_8$ are all real.
 Thanks to the modular duality, it suffices to consider the $p=q$ case.
 Note also $\eta = \mathrm{Im}(i \bb)$.
 There are three cases (i), (ii), (iii).

 (i) Trivial: \eqref{d1}, \eqref{d7}, \eqref{d12} and \eqref{d18}.
 They are satisfied due to the presence of the two delta functions in
 \eqref{Rm0}.

 (ii) Easy: \eqref{d4}, \eqref{d10}, \eqref{d11}, \eqref{d13},
 \eqref{d16}, \eqref{d17}. They are shown by direct substitution and
 the recursion relation \eqref{prec}. Let us illustrate the
 calculation along the example of \eqref{d17}. The matrix elements
 of $\mathrm{LHS} - \mathrm{RHS}$ for the transition
 $\bigl|x'_1,x'_2,x'_3+{\rm i}\bb\bigr\rangle \mapsto |x_1,x_2,x_3\rangle$ is
\begin{gather*}
w_3\e^{\pi \bb (x'_3+{\rm i}\bb)}\bigl(r_1s_1+t_1w_1\e^{\pi \bb(2x'_1+{\rm i}\bb)}\bigr)
R^{x_1,x_2,x_3}_{x'_1+{\rm i}\bb,x'_2-{\rm i}\bb,x'_3+{\rm i}\bb}
+s_2w_1\e^{\pi\bb x'_1}R^{x_1,x_2,x_3}_{x'_1,x'_2, x'_3}\\
\qquad
-s_1w_2\e^{\pi \bb x_2}
R^{x_1,x_2,x_3+{\rm i}\bb}_{x'_1,x'_2,x'_3+{\rm i}\bb}.
\end{gather*}
Upon substitution of \eqref{Rm0}--\eqref{Rm2} and \eqref{prec}, this
is equal, up to an overall factor, to
\begin{gather*}
\int_{-\infty}^\infty\!\! {\rm d}z \e^{2\pi {\rm i} z(-x_2-{\rm i}\eta+{\mathcal C}_4)}
\frac{\Phi_\bb\bigl(z+\frac{1}{2}(x_1-x_3+{\rm i}\eta)+{\mathcal C}_5\bigr)
\Phi_\bb\bigl(z+\frac{1}{2}(-x_1+x_3+{\rm i}\eta)+{\mathcal C}_6\bigr)}
{\Phi_\bb\bigl(z+\frac{1}{2}(x_1+x_3-{\rm i}\eta)+{\mathcal C}_7\bigr)
\Phi_\bb\bigl(z+\frac{1}{2}\bigl(-x'_1-x'_3-{\rm i}\eta\bigr)-{\rm i} \bb+{\mathcal C}_8\bigr)}\mathcal{D},
\\
\mathcal{D} = \e^{\pi \bb(-2{\mathcal C}_2-2{\mathcal C}_3-x'_1-2{\rm i}\eta)}\bigl(r_1s_1+t_1w_1\e^{\pi \bb(2x'_1+{\rm i}\bb)}\bigr)w_3\\
\phantom{\mathcal{D} =}{}
+\e^{\pi \bb x'_1}\bigl(1+\e^{\pi \bb(2{\mathcal C}_8-{\rm i}\bb -x'_1-x'_3-{\rm i}\eta+2z)}\bigr)s_2w_1
\\
\phantom{\mathcal{D} =}{} -\e^{\pi \bb({\mathcal C}_1-{\mathcal C}_2+{\mathcal C}_4-x'_1-2{\rm i}\eta)}
\bigl(1+\e^{\pi \bb(2{\mathcal C}_5-{\rm i}\bb+x_1-x_3+{\rm i}\eta+2z)}\bigr)s_1w_2.
\end{gather*}
Under the constraint $x_1-x_3=x'_1-x'_3$ deduced from the two delta
functions, $\mathcal{D}=0$ amounts to three equalities
\begin{gather*}
\e^{\pi \bb({\mathcal C}_2+2{\mathcal C}_8)}s_2w_1 = \e^{\pi \bb
 ({\mathcal C}_1+{\mathcal C}_4+2{\mathcal C}_5)}s_1w_2 ,\qquad
\e^{\pi \bb({\mathcal C}_1+{\mathcal C}_2+2{\mathcal C}_3+{\mathcal
 C}_4)}w_2=r_1w_3,\\
\e^{\pi \bb(2{\mathcal C}_2)} \e^{\pi\bb(2{\mathcal C}_3+2{\rm i}\eta)}s_2+\e^{\pi {\rm i}
 \bb^2}t_1w_3=0.
\end{gather*}
 They can be confirmed by using \eqref{Ckmy}.

(iii) The remaining cases \eqref{d2}, \eqref{d3}, \eqref{d5}, \eqref{d6}, \eqref{d8}, \eqref{d9}, \eqref{d14}, \eqref{d15}.
Direct substitution of the formula \eqref{Rm0} with
appropriate shift of the integration variable $z$ and the application
of \eqref{prec} lead to
\smash{$\mathrm{LHS}-\mathrm{RHS} = \int_{\R + {\rm i} f} {\rm d}z \Xi(z)$},
where $f \in \R$ represents a freedom to shift the integration contour.
Although~$\Xi(z)$ is not identically vanishing, one can always find~${\tilde \Xi}(z)$ such that
$\Xi(z) = {\tilde \Xi}(z+{\rm i} \bb)-{\tilde \Xi}(z)$.\footnote{An analogous
treatment can also be found in the proof of \cite[Theorem~3.18]{K22}.}
Thus, the claim reduces to the analyticity of~${\tilde \Xi}(z)$ in
$f< \operatorname{Im} z < f+\eta$ and the
damping in
$\operatorname{Re}(z) \rightarrow \pm \infty$ in this strip.
As an example, consider~\eqref{d6}, whose elements of $\text{LHS}-\text{RHS}$ for the transition
$\bigl|x'_1 - {\rm i} \bb,x'_2,x'_3\bigr\rangle \mapsto |x_1,x_2,x_3\rangle$ read
\begin{gather}
w_1 \e^{\pi \bb (x'_1-{\rm i} \bb)}\bigl(r_2s_2+t_2w_2\e^{\pi \bb(2x'_2+{\rm i}\bb)}\bigr)
 R^{x_1,x_2,x_3}_{x'_1-{\rm i} \bb, x'_2+{\rm i}\bb, x'_3-{\rm i}\bb} \nonumber
 \\
 \qquad{}+ r_2w_3 \e^{\pi \bb x'_3}\bigl(r_1s_1+t_1w_1\e^{\pi \bb(2x'_1-{\rm i}\bb)}\bigr)
 R^{x_1,x_2,x_3}_{x'_1,x'_2,x'_3}\nonumber
 \\
 \qquad{}- r_3w_2\e^{\pi \bb x_2}\bigl(r_1s_1+t_1w_1\e^{\pi \bb(2x_1-{\rm i}\bb)}\bigr)
 R^{x_1-{\rm i}\bb,x_2,x_3}_{x'_1-{\rm i}\bb,x'_2,x'_3},
 \label{dd6}
\end{gather}
where $x_1+x_2=x'_1+x'_2$ and $x_2+x_3=x'_2+x'_3$ are assumed in view of the two delta functions
in \eqref{Rm0}.
After shifting the integration variable $z$ for the last term to
$z+{\rm i} \bb/2$, the integrand for this expression can be shown to be
proportional to ${\tilde \Xi}(z+{\rm i} \bb)-{\tilde \Xi}(z)$ with
\[
{\tilde \Xi}(z) =
\e^{2\pi {\rm i}z(-x_2-{\rm i}\eta+{\mathcal C}_4)}
\frac{\Phi_\bb\bigl(z+\frac{1}{2}(x_1-x_3+{\rm i}\eta)+{\mathcal C}_5-{\rm i} \bb\bigr)
\Phi_\bb\bigl(z+\frac{1}{2}(-x_1+x_3+{\rm i}\eta)+{\mathcal C}_6\bigr)}
{\Phi_\bb\bigl(z+\frac{1}{2}(x_1+x_3-{\rm i}\eta)+{\mathcal C}_7\bigr)
\Phi_\bb\bigl(z+\frac{1}{2}\bigl(-x'_1-x'_3-{\rm i}\eta\bigr)+{\mathcal C}_8\bigr)}.
\]
Consider the region $-\eta < \operatorname{Im} x_2 = \operatorname{Im}x'_2<0$
with $x_1,x_3, x'_1, x'_3 \in \R$, which is compatible with the above mentioned condition.
From \eqref{pz}, ${\tilde \Xi}(z)$ is analytic in the strip ${-\eta/2 < \operatorname{Im}z < \eta/2}$.
Moreover, from \eqref{pa} and $\mathcal{C}_5+\mathcal{C}_6=\mathcal{C}_7+\mathcal{C}_8$,
${\tilde \Xi}(z)$ asymptotically tends to
$\e^{2\pi {\rm i} z(-x_2-{\rm i}\eta+\mathcal{C}_4)}$ for~${\mathop{\operatorname{Re}}z \rightarrow -\infty}$
and to
$\e^{2\pi {\rm i} z(-x'_2+{\rm i}\eta-{\rm i} \bb +\mathcal{C}_4)}$ for $\mathop{\operatorname{Re}}z \rightarrow +\infty$.
Therefore, from $0<\eta<1$, they are both decaying if~${-\eta < \operatorname{Im} x_2 = \operatorname{Im}x'_2<0}$ as long as the limit
$|\mathop{\operatorname{Re}}z| \rightarrow \infty$ is taken in the strip
$-\eta/2 < \operatorname{Im}z < \eta/2$.
(This corresponds to the choice $f=-\eta/2$.)
This verifies \eqref{d2}.

As seen in this example,
the precise region of $x_i$, $x'_i$ which validates such a check
is sensitive to how they appear shifted as the indices of $R$'s
as in \eqref{dd6}, and they indeed vary case by case.
We have checked, for all the equations of (iii), that
there is ${\tilde \Xi}(z)$ having a similar `factorized' form, and
there is a subregion of $-\eta < \operatorname{Im} x_2 = \operatorname{Im}x'_2< \eta$
with $x_1,x_3, x'_1, x'_3 \in \R$ which assures that
${\tilde \Xi}(z)$ possesses a strip $f< \operatorname{Im} z < f+\eta$
where it behaves in the same manner as the above example.

\begin{Remark}\label{re:bms}
 As a corollary of Theorem \ref{th:rlll}, it is evident that
 postulating the $RLLL$ relation~\eqref{rlllm} for $p=q$ and
 $q^\vee$ \emph{simultaneously} compels
 ${\mathcal C}_1 = \dotsb = {\mathcal C}_8=0$. The resulting
 parameter-free (except $\bb$) $R$-matrix \eqref{Rm0}--\eqref{Rm2}
 exactly reproduces \cite[equation~(51)]{BMS10}. We refer to this
 particular case as the {\em modular double} $\mathcal{R}$.
\end{Remark}

In Section \ref{s:md}, the term ``modular $\mathcal{R}$'' (without
``double'') is deliberately used to distinctively describe the results
with full parameters.
Note on the other hand that the condition
${\mathcal C}_1 = \dotsb = {\mathcal C}_8=0$ still leaves five free
parameters among $(r_j,s_j,t_j,w_j)_{j=1,2,3}$.

\section{Reduction to the Fock--Goncharov quiver}\label{s:red}

In this section, we explain that our $R$-matrix \eqref{RL0} for the symmetric butterfly (SB) quiver
reduces to that for the Fock--Goncharov (FG) quiver \cite{IKT1} in a certain limit of parameters.

\subsection[R-matrix for the FG quiver]{$\boldsymbol{R}$-matrix for the FG quiver}

Let $\mathrm{p}_i$, $\mathrm{u}_i$ ($i=1, 2, 3$) be canonical
variables obeying $[\mathrm{p}_i, \mathrm{u}_j] = \delta_{ij}\hbar$,
$[\mathrm{p}_i, \mathrm{p}_j] = [\mathrm{u}_i, \mathrm{u}_j] =0$.
Recall the $R$-matrix in \cite[equation~(4.14)]{IKT1}\footnote{The
 parameter $\lambda_i$ in \cite[equation~(3.5)]{IKT1} is denoted by
 $\theta_i$ here.} given as
\begin{align}\label{rfg}
R_\mathrm{FG} =
\Psi_q\bigl(\e^{\theta_1-\theta_3+\mathrm{p}_1+\mathrm{u}_1+\mathrm{p}_3
-\mathrm{u}_3-\mathrm{p}_2}\bigr) P_\mathrm{FG},
\qquad
P_\mathrm{FG} = \rho_{23}\e^{\tfrac{1}{\hbar}\mathrm{p}_1(\mathrm{u}_3-\mathrm{u}_2)}
\e^{\tfrac{\theta_2-\theta_3}{\hbar}(\mathrm{u}_3-\mathrm{u}_1)},
\end{align}
where $\theta_i$' s are parameters.
It has been deduced from
$\hat{R}_\mathrm{FG} = \mathrm{Ad}(R_\mathrm{FG})$,
where $\hat{R}_\mathrm{FG} $ is the cluster transformation
corresponding to $\mu^\ast_4$ in the
FG quivers depicted as follows:
\begin{align}
\label{FG}
\begin{split}&
\begin{tikzpicture}
\begin{scope}[>=latex,xshift=0pt]
{\color{red}
\fill (1,0.5) circle(2pt) coordinate(A) node[below]{$1$};
\fill (2,1.5) circle(2pt) coordinate(B) node[above]{$2$};
\fill (3,0.5) circle(2pt) coordinate(C) node[below]{$3$};
}
\coordinate (P1) at (4.5,1);
\coordinate (P2) at (5.5,1);
{\color{red}
\draw[<-] (4.5,1) -- (5.5,1);
\draw (5,1) circle(0pt) node[below]{$\hat{R}_\mathrm{FG}$};
}
{\color{blue}
\draw (0,0.5) circle(2pt) coordinate(B1) node[below]{$3$};
\draw (2,0.5) circle(2pt) coordinate(C1) node[below]{$4$};
\draw (4,0.5) circle(2pt) coordinate(F1) node[below]{$5$};
\draw (1,1.5) circle(2pt) coordinate(D1) node[above]{$1$};
\draw (3,1.5) circle(2pt) coordinate(E1) node[above]{$2$};
\qarrow{B1}{C1}
\qarrow{C1}{D1}
\qarrow{D1}{E1}
\qarrow{E1}{C1}
\qarrow{C1}{F1}
\qdarrow{D1}{B1}
\qdarrow{F1}{E1}
}
\end{scope}
\begin{scope}[>=latex,xshift=175pt]
{\color{red}
\fill (3,1.5) circle(2pt) coordinate(A) node[above]{$1$};
\fill (2,0.5) circle(2pt) coordinate(B) node[below]{$2$};
\fill (1,1.5) circle(2pt) coordinate(C) node[above]{$3$};
}
{\color{blue}
\draw (0,1.5) circle(2pt) coordinate(B1) node[above]{$1$};
\draw (2,1.5) circle(2pt) coordinate(C1) node[above]{$4$};
\draw (4,1.5) circle(2pt) coordinate(D1) node[above]{$2$};
\draw (1,0.5) circle(2pt) coordinate(E1) node[below]{$3$};
\draw (3,0.5) circle(2pt) coordinate(F1) node[below]{$5$};
\qarrow{B1}{C1}
\qarrow{C1}{D1}
\qarrow{E1}{F1}
\qarrow{F1}{C1}
\qarrow{C1}{E1}
\qdarrow{E1}{B1}
\qdarrow{D1}{F1}
}
\end{scope}
\end{tikzpicture}
\end{split}
\end{align}
As in Figure \ref{fig:R123}, the dots marked $1$, $2$, $3$ in red
signify the crossings of the associated wiring diagram, to which the
canonical variables are attached.

Let $B_{\mathrm{FG}}$ and $B'_{\mathrm{FG}}$ be (the exchange matrices
of) the left and the right quivers in \eqref{FG}, respectively. Let
$\mathcal{Y}(B_{\mathrm{FG}})$ be the skew field generated by
$\mathscr{Y}_1, \dots, \mathscr{Y}_5$ which are attached to the
vertices of $B_{\mathrm{FG}}$ and obey the commutation relation
\eqref{yyq} where $b_{ij}$ is taken to be the elements of
$B_{\mathrm{FG}}$. Define $\mathcal{Y}(B'_{\mathrm{FG}})$ generated
by $\mathscr{Y}'_1, \dots, \mathscr{Y}'_5$ from $B'_{\mathrm{FG}}$
similarly.

Let $\mathcal{W}^{\mathrm{pu}}_3$ be the direct product of the
$q$-Weyl algebras generated by $\e^{\pm \mathrm{p}_i}$,
$\e^{\pm \mathrm{u}_i}$ for $i=1, 2, 3$. The fractional field of
$\mathcal{W}^{\mathrm{pu}}_3$ is denoted by
$\mathcal{A}^{\mathrm{pu}}_3$. We denote the isomorphism $\pi_{123}$
of $\mathcal{W}^{\mathrm{pu}}_3$ \cite[equation~(3.7)]{IKT1} (in the sense
of exponentials) by $\pi_\mathrm{FG}$ and recall the embeddings
$\phi_\mathrm{FG}\colon \mathcal{Y}(B_\mathrm{FG}) \hookrightarrow
\mathcal{A}^{\mathrm{pu}}_3$ and
$\phi'_\mathrm{FG}\colon \mathcal{Y}(B'_\mathrm{FG}) \hookrightarrow
\mathcal{A}^{\mathrm{pu}}_3$ \cite[equation~(3.6)]{IKT1} given as
\begin{align*}
&\pi_\mathrm{FG}\colon\
\begin{cases}
\mathrm{p}_1 \mapsto \mathrm{p}_1+ \theta_2-\theta_3,
\qquad
\mathrm{p}_2 \mapsto \mathrm{p}_1+\mathrm{p}_3,
\qquad
\mathrm{p}_3 \mapsto \mathrm{p}_2-\mathrm{p}_1- \theta_2+\theta_3,
\\
\mathrm{u}_1 \mapsto \mathrm{u}_1+\mathrm{u}_2-\mathrm{u}_3,
\qquad
\mathrm{u}_2 \mapsto \mathrm{u}_3,
\qquad
\mathrm{u}_3 \mapsto \mathrm{u}_2,
\end{cases}
\\
&\phi_\mathrm{FG}\colon\
\begin{cases}
\mathscr{Y}_1 \mapsto \e^{-\theta_2+\mathrm{p}_2-\mathrm{u}_2-\mathrm{p}_1},
\\
\mathscr{Y}_2 \mapsto \e^{\theta_2+\mathrm{p}_2+\mathrm{u}_2-\mathrm{p}_3},
\\
\mathscr{Y}_3 \mapsto \e^{-\theta_1+\mathrm{p}_1-\mathrm{u}_1},
\\
\mathscr{Y}_4 \mapsto \e^{\theta_1-\theta_3+\mathrm{p}_1
+\mathrm{u}_1+\mathrm{p}_3-\mathrm{u}_3-\mathrm{p}_2},
\\
\mathscr{Y}_5 \mapsto \e^{\theta_3+\mathrm{p}_3+\mathrm{u}_3},
\end{cases}
\qquad
\phi'_\mathrm{FG}\colon\
\begin{cases}
\mathscr{Y}'_1 \mapsto \e^{-\theta_3 +\mathrm{p}_3-\mathrm{u}_3},
\\
\mathscr{Y}'_2 \mapsto \e^{\theta_1+\mathrm{p}_1+\mathrm{u}_1},
\\
\mathscr{Y}'_3 \mapsto \e^{-\theta_2+\mathrm{p}_2-\mathrm{u}_2-\mathrm{p}_3},
\\
\mathscr{Y}'_4 \mapsto \e^{-\theta_1+\theta_3+\mathrm{p}_3
+\mathrm{u}_3+\mathrm{p}_1-\mathrm{u}_1-\mathrm{p}_2},
\\
\mathscr{Y}'_5 \mapsto \e^{\theta_2+\mathrm{p}_2+\mathrm{u}_2-\mathrm{p}_1}.
\end{cases}
\end{align*}
One has $\pi_\mathrm{FG} = \mathrm{Ad}(P_\mathrm{FG})$.
With these notation, the $R$-matrix \eqref{rfg} is rephrased as
\[
R_\mathrm{FG} =
\Psi_q(\phi_\mathrm{FG}(\mathscr{Y}_4))P_\mathrm{FG}.
\]

\subsection{Embedding FG into SB}

We employ a parallel notation
$\mathcal{Y}(B_{\mathrm{SB}})$ and
$\mathcal{Y}(B'_{\mathrm{SB}})$ to signify the
skew fields corresponding to the left and the right quivers in Figure \ref{fig:R123}.
It is easy to see that the following maps yield morphisms of the skew
fields
$\alpha\colon \mathcal{Y}(B_{\mathrm{FG}}) \rightarrow
\mathcal{Y}(B_{\mathrm{SB}})$ and
$\alpha'\colon \mathcal{Y}(B'_{\mathrm{FG}}) \rightarrow
\mathcal{Y}(B'_{\mathrm{SB}})$
\begin{align}\label{Ymor}
\alpha\colon\ \begin{cases}
\mathscr{Y}_1 \mapsto Y_9,
\\
\mathscr{Y}_2 \mapsto q Y_8Y_7,
\\
\mathscr{Y}_3 \mapsto Y_6,
\\
\mathscr{Y}_4 \mapsto q Y_5Y_4,
\\
\mathscr{Y}_5 \mapsto qY_3Y_2,
\end{cases}
\qquad
\alpha'\colon\ \begin{cases}
\mathscr{Y}'_1 \mapsto Y'_9,
\\
\mathscr{Y}'_2 \mapsto q Y'_5Y'_7,
\\
\mathscr{Y}'_3 \mapsto Y'_6,
\\
\mathscr{Y}'_4 \mapsto q Y'_3Y'_4,
\\
\mathscr{Y}'_5 \mapsto qY'_8Y'_2.
\end{cases}
\end{align}

Recall that $\mathcal{A}_3$ defined after Figure \ref{fig:para} for
the SB quiver is a fractional field of $\mathcal{W}_3$ in which
$\e^{\uu_i}\e^{\ww_j}=q^{\delta_{ij}\hbar}\e^{\ww_j}\e^{\uu_i}$. On the other
hand, $\mathcal{A}^{\mathrm{pu}}_3$ for the FG quiver in the previous
subsection is a~fractional field of $\mathcal{W}^{\mathrm{pu}}_3$ in
which
$\e^{\mathrm{p}_i}\e^{\mathrm{u}_i}
=q^{\delta_{ij}\hbar}\e^{\mathrm{u}_i}\e^{\mathrm{p}_i}$ . Thus there
is an isomorphism
$\beta\colon \mathcal{W}^{\mathrm{pu}}_3 \rightarrow \mathcal{W}_3$
given by
\begin{align}\label{bed}
\beta\colon\ \mathrm{p}_i \mapsto -\ww_i,\qquad \mathrm{u}_i \mapsto \uu_i, \qquad i=1,2,3,
\end{align}
in the sense of exponentials.
We consider the diagram
\begin{align}\label{cd3}
\begin{CD}
\mathcal{Y}(B_\mathrm{FG}) @> {\alpha}>> \mathcal{Y}(B_\mathrm{SB}) \\
@V{\phi_\mathrm{FG}}VV @VV{\phi_\mathrm{SB}}V \\
\mathcal{W}^{\mathrm{pu}}_3 @>{\beta}>> \mathcal{W}_3\\
@V{\pi_\mathrm{FG}}VV @VV{\tau^{\uu\ww}_{--++}}V \\
\mathcal{W}^{\mathrm{pu}}_3 @>{\beta}>> \mathcal{W}_3\\
@A{\phi'_\mathrm{FG}}AA @AA{\phi'_\mathrm{SB}}A \\
\mathcal{Y}(B'_\mathrm{FG}) @> {\alpha'}>> \mathcal{Y}(B'_\mathrm{SB}),
\end{CD}
\end{align}
where $\phi_\mathrm{SB}$, $\phi'_\mathrm{SB}$ and $\tau^{\uu\ww}_{--++}$ are defined
in \eqref{Yw}, \eqref{Ypw} and \eqref{uwL1}, respectively.

\begin{Proposition}\label{pr:cd3}
 The diagram \eqref{cd3} is commutative if and only if the parameters
 $\theta_i$ $(i=1, 2, 3)$ and $(a_i,b_i, c_i, d_i, e_i)$ subject
 to \eqref{ae0} satisfy the relations
\begin{align}
&e_2 = e_3, \qquad a_1=-a_3=c_3=-c_1, \qquad a_2=c_2=0,
\label{pare1}
\\
&\theta_1 = -b_1, \qquad \theta_2 = -a_1-b_2, \qquad \theta_3= d_3+e_3.
\label{pare2}
\end{align}
\end{Proposition}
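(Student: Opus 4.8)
The plan is to exploit the fact that \eqref{cd3} is assembled from three elementary squares and that every arrow is an algebra morphism prescribed on generators in the sense of exponentials. The top square asks for $\beta\circ\phi_\mathrm{FG}=\phi_\mathrm{SB}\circ\alpha$, the middle square for $\beta\circ\pi_\mathrm{FG}=\tau^{\uu\ww}_{--++}\circ\beta$, and the bottom square for $\beta\circ\phi'_\mathrm{FG}=\phi'_\mathrm{SB}\circ\alpha'$. Commutativity of each square is equivalent to equality, on each generator, of two monomials in $\e^{\uu_i},\e^{\ww_i}$ (resp.\ $\e^{\mathrm p_i},\e^{\mathrm u_i}$); since all these morphisms are affine-linear in the exponents, the whole question collapses to a finite linear system in the parameters $\theta_i$ and $(a_i,b_i,c_i,d_i,e_i)$, to be analysed modulo \eqref{ae0}.

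First I would treat the top square. For each $\mathscr Y_j$ I compute $\phi_\mathrm{SB}(\alpha(\mathscr Y_j))$ from \eqref{Ymor} and \eqref{Yw}, collapsing the products $qY_8Y_7$, $qY_5Y_4$, $qY_3Y_2$ by the $q$-Weyl relations: reordering $\e^{2\uu_i}\e^{-\ww_i}$ contributes a factor $q^{-1}$ that is cancelled exactly by the explicit prefactor $q$ in \eqref{Ymor}, so each image is a clean exponential of an affine-linear form. Comparing with $\beta(\phi_\mathrm{FG}(\mathscr Y_j))$ (obtained from $\phi_\mathrm{FG}$ and \eqref{bed}) yields five scalar equations; the generators $\mathscr Y_1,\mathscr Y_3,\mathscr Y_5$ give directly $\theta_2=-a_1-b_2$, $\theta_1=-b_1$, $\theta_3=d_3+e_3$, which is precisely \eqref{pare2}, while $\mathscr Y_2,\mathscr Y_4$ give two further relations among the SB parameters. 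The bottom square is handled identically using \eqref{Ymor}, \eqref{Ypw} and $\phi'_\mathrm{FG}$, producing analogous identities such as $\theta_3=-b_3$, $\theta_2=-b_2-c_3$, $\theta_1=e_1+d_1$.

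Next I would do the middle square on each $\mathrm p_i$ and $\mathrm u_i$; here no reordering occurs, so one substitutes $\pi_\mathrm{FG}$ then \eqref{bed} on one side and \eqref{bed} then \eqref{uwL1} on the other. The three $\mathrm u_i$-equations all collapse to $\lambda_0=0$, i.e.\ $e_2=e_3$; the $\mathrm p_2$-equation gives $\lambda_3=0$, which by \eqref{lad} and the already-derived $c_1=-a_1$, $c_3=a_1$ forces $c_2=0$; and the $\mathrm p_1,\mathrm p_3$-equations give $\theta_2-\theta_3=-\lambda_1=\lambda_2$. The final step is to collate the system: cross-comparing the $\theta_1,\theta_2,\theta_3$ expressions from the top and bottom squares and reducing modulo \eqref{ae0} forces $a_1=-c_1$, $a_3=-c_3$, $a_1=c_3$, which chain into $a_1=-a_3=c_3=-c_1$; together with $e_2=e_3$ and $c_2=0$, the residual relation from $\mathscr Y_2$ (equivalently $\mathscr Y'_5$) then yields $a_2=0$, completing \eqref{pare1}. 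Conversely, substituting \eqref{pare1}--\eqref{pare2} back into all the generator comparisons verifies each equality, giving the ``if'' direction.

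The point demanding care, and the likely main obstacle, is the bookkeeping that shows the system is consistent rather than contradictorily overdetermined: several distinct generators produce ostensibly different constraints that must all reduce, via \eqref{ae0} and the $a$--$c$ chain, to the same relations. In particular, the individual vanishing of $a_2$ and $c_2$ (rather than merely $a_2+c_2=0$) hinges specifically on the middle-square condition $\lambda_3=0$, and is not obtainable from the top or bottom square alone; keeping track of which square supplies which relation is what makes the ``only if'' direction sharp.
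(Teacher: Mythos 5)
Your proof is correct and rests on the same computational engine as the paper's: compare the two images of each generator square by square, reduce to a linear system in the exponents, and solve modulo \eqref{ae0}. All of your individual computations check out, including the cancellation of the BCH factor $q^{-1}$ against the prefactor $q$ in \eqref{Ymor}, the top-square relations $\theta_1=-b_1$, $\theta_2=-a_1-b_2=e_2+d_2+a_3$, $\theta_3=d_3+e_3$, $\theta_1-\theta_3=e_1+d_1+c_2+b_3$, the bottom-square relations $\theta_3=-b_3$, $\theta_2=-b_2-c_3$, $\theta_1=e_1+d_1$, and the middle-square conditions $\lambda_0=\lambda_3=0$, $\theta_2-\theta_3=-\lambda_1=\lambda_2$. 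Where you differ organizationally from the paper: the paper extracts all nine constraints from the top and middle squares alone, shows these are equivalent to \eqref{pare1}--\eqref{pare2}, and then observes that commutativity of the bottom square is an automatic consequence; you instead treat the bottom square as an independent constraint source and obtain the chain $a_1=-a_3=c_3=-c_1$ by cross-comparing the $\theta_i$-expressions from top and bottom. Both routes are valid, and yours arguably makes the $a$--$c$ chain more transparent, at the cost of verifying the bottom square's equations rather than getting them for free. One side remark in your last paragraph is inaccurate, though it does not damage the proof: the individual vanishing of $c_2$ (and hence $a_2$) does \emph{not} hinge on the middle-square condition $\lambda_3=0$. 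Combining the top-square relation $\theta_1-\theta_3=e_1+d_1+c_2+b_3$ with $\theta_3=-b_3$ (bottom, via $\mathscr{Y}'_1$) and $\theta_1=e_1+d_1$ (bottom, via $\mathscr{Y}'_2$) already forces $c_2=0$, so top and bottom together suffice for it; what genuinely requires the middle square is $e_2=e_3$ (i.e., $\lambda_0=0$), since in the top and bottom squares $e_2$ and $e_3$ only ever enter through the combinations $e_2+d_2$ and $e_3+d_3$, which admit a one-parameter shift freedom.
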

\begin{proof}
 For the top square, it suffices to consider the image of
 $\mathscr{Y}_i \in \mathcal{Y}(B_\mathrm{FG})$ ($i=1, \dots, 5$).
 For instance, one has
 $\phi_\mathrm{SB}\circ \alpha(\mathscr{Y}_1) =
 \phi_\mathrm{SB}(Y_9)= \e^{a_1+b_2+\ww_1-\uu_2-\ww_2}$ and
 $\beta \circ \phi_\mathrm{FG}(\mathscr{Y}_1) =
 \beta\bigl(\e^{-\theta_2+\mathrm{p}_2-\mathrm{u}_2-\mathrm{p}_1}\bigr) =
 \e^{-\theta_2-\ww_2-\uu_2+\ww_1}$, hence the commutativity requires
 $\theta_2 = -a_1-b_2$. A similar calculation leads to
\begin{gather*}
\theta_2 = -a_1-b_2 = e_2+d_2+a_3, \qquad \theta_1 = -b_1, \qquad
\theta_1-\theta_3 = e_1+d_1+c_2+b_3, \\ \theta_3 = e_3+d_3.
\end{gather*}
For the middle square, it suffices to consider the image of
$\mathrm{p}_i$, $\mathrm{u}_i$, ($i=1, 2, 3$). The commutativity
leads to
\begin{align*}
\theta_2-\theta_3 = -\lambda_1 = \lambda_2, \qquad
\lambda_0=\lambda_3=0,
\end{align*}
where $\lambda_i$'s are specified in \eqref{lad}.
These nine relations are equivalent to \eqref{pare1} and \eqref{pare2}.
The commutativity of the bottom square follows from them.
\end{proof}

\subsection[R\_FG as a limit of R\_SB]{$\boldsymbol{R_\mathrm{FG}}$ as a limit of $\boldsymbol{R_\mathrm{SB}}$}

Let $R_\mathrm{SB}$ be the $R$-matrix \eqref{RL0} for the SB quiver under the specialization of the
parameters~\eqref{pare1} and \eqref{pare2}.
Explicitly, we have
\begin{align*}
R_\mathrm{SB} ={}& \Psi_q\bigl(\e^{-\Lambda+e_1+\uu_1+\uu_3+\ww_1-\ww_2+\ww_3}\bigr)^{-1}
\Psi_q\bigl(\e^{-\Lambda-e_3+e_1+\uu_1-\uu_3+\ww_1-\ww_2+\ww_3}\bigr)^{-1}
 \\
& \times
\Psi_q\bigl(\e^{\theta_1-\theta_3+\uu_1-\uu_3-\ww_1+\ww_2-\ww_3}\bigr)
\Psi_q\bigl(\e^{\Lambda+e_2+\uu_1+2\uu_2-\uu_3-\ww_1+\ww_2-\ww_3}\bigr)P_\mathrm{SB},
\\
P_\mathrm{SB} ={}&
\e^{\tfrac{1}{\hbar}(\uu_3-\uu_2)\ww_1}\e^{\tfrac{\theta_3-\theta_2}{\hbar}(\uu_1-\uu_2)}\rho_{23}
=
\rho_{23}\e^{\tfrac{1}{\hbar}(\uu_2-\uu_3)\ww_1}\e^{\tfrac{\theta_3-\theta_2}{\hbar}(\uu_1-\uu_3)},
\end{align*}
where $\Lambda =\theta_1-\theta_3= d_1+e_1+c_2+b_3$.
The above formula for $P_\mathrm{SB}$ follows from \eqref{Pbch1} under the specialization.

\begin{Theorem}\label{th:fg}
The $R$-matrix $R_\mathrm{FG}$ is reproduced from the specialized $R$-matrix
$R_\mathrm{SB}$ as
\[
\lim R_\mathrm{SB} = \beta(R_\mathrm{FG}),
\]
where the limit is taken as
\begin{gather}
e_1 \rightarrow -\infty, \qquad
e_2=e_3 \rightarrow -\infty, \qquad
e_1-e_3 \rightarrow -\infty,\nonumber
\\
e_i+d_i= \mathrm{finite}, \qquad i=1,2,3.\label{elim}
\end{gather}
\end{Theorem}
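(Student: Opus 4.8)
The plan is to exploit the fact that the specialized $R_\mathrm{SB}$ is a product of four quantum dilogarithms followed by the monomial operator $P_\mathrm{SB}$, and to show that under the parameter relations \eqref{pare1}--\eqref{pare2} together with the limit \eqref{elim}, three of these four dilogarithms collapse to the identity while the remaining one is precisely $\beta$ of the single dilogarithm appearing in $R_\mathrm{FG}$. The monomial parts will turn out to agree exactly, with no limit required, so the whole content of the theorem is carried by the three vanishing factors.

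First I would dispose of the monomial part. Applying $\beta$ (i.e.\ $\mathrm{p}_i \mapsto -\ww_i$, $\mathrm{u}_i \mapsto \uu_i$) to $P_\mathrm{FG}=\rho_{23}\e^{\frac{1}{\hbar}\mathrm{p}_1(\mathrm{u}_3-\mathrm{u}_2)}\e^{\frac{\theta_2-\theta_3}{\hbar}(\mathrm{u}_3-\mathrm{u}_1)}$ gives $\rho_{23}\e^{\frac{1}{\hbar}(\uu_2-\uu_3)\ww_1}\e^{\frac{\theta_3-\theta_2}{\hbar}(\uu_1-\uu_3)}$, using $[\uu_i,\ww_j]=\hbar\delta_{ij}$; this is verbatim the second expression given for $P_\mathrm{SB}$, so $P_\mathrm{SB}=\beta(P_\mathrm{FG})$ identically. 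Likewise $\beta$ sends the argument $\theta_1-\theta_3+\mathrm{p}_1+\mathrm{u}_1+\mathrm{p}_3-\mathrm{u}_3-\mathrm{p}_2$ of the $R_\mathrm{FG}$ dilogarithm to $\theta_1-\theta_3+\uu_1-\uu_3-\ww_1+\ww_2-\ww_3$, which matches the third factor of $R_\mathrm{SB}$; its scalar part $\theta_1-\theta_3$ stays finite, since \eqref{pare2} and \eqref{ae0} give $\theta_1-\theta_3=(e_1+d_1)-(e_3+d_3)$. Because $\beta$ is an algebra homomorphism it commutes with $\Psi_q$, so the third dilogarithm together with $P_\mathrm{SB}$ already equals $\beta(R_\mathrm{FG})$.

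The heart of the argument is then to show that the first, second and fourth dilogarithms tend to $1$. I would write each argument as $\e^{s+\mathsf{O}}=\e^{s}\e^{\mathsf{O}}$, with $s$ a scalar in the parameters and $\mathsf{O}$ a fixed element in the canonical variables, so that $\Psi_q(\e^{s}\e^{\mathsf{O}})\to\Psi_q(0)=1$ as $s\to-\infty$. Using \eqref{pare1}, \eqref{pare2} and \eqref{ae0} one gets $c_2=0$, $b_3=-(d_3+e_3)$, $b_1=-(e_1+d_1)$, $b_2=-(e_2+d_2)$, all finite, and the three scalars become $-\Lambda+e_1=e_1-(e_1+d_1)-b_3$, then $-\Lambda-e_3+e_1=(e_1-e_3)-(e_1+d_1)-b_3$, and $\Lambda+e_2=(e_1+d_1)+b_3+e_2$. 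Since $e_i+d_i$ is finite, these are driven to $-\infty$ respectively by $e_1\to-\infty$, by $e_1-e_3\to-\infty$, and by $e_2\to-\infty$; it is worth noting that each of the three divergence prescriptions in \eqref{elim} controls exactly one factor and none is redundant.

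The main obstacle is the precise operator-theoretic meaning of these limits and the reassembly of the ordered product. The cleanest route is to treat $\e^{s}$ as a small formal parameter and expand each $\Psi_q$ by \eqref{expa}: every term carrying a positive power of the vanishing $Y$-variable drops out, so the factor tends to the identity coefficientwise; alternatively one argues on a dense domain in the representations of Section \ref{s:me}, where each $\e^{\mathsf{O}}$ acts in a controlled fashion. Because the three collapsing factors tend to the multiplicative identity, their positions (two to the left of the surviving dilogarithm, one between it and $P_\mathrm{SB}$) are immaterial, and the limit of the ordered product equals the product of the limits, namely $\Psi_q(\e^{\theta_1-\theta_3+\uu_1-\uu_3-\ww_1+\ww_2-\ww_3})\,P_\mathrm{SB}=\beta(R_\mathrm{FG})$, which is the assertion.
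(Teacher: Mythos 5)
Your proof is correct and takes essentially the same route as the paper's: the paper likewise notes that $\Lambda=\theta_1-\theta_3$ stays finite in the limit \eqref{elim}, so that the three dilogarithm factors with divergent scalar parts tend to $1$ and $\lim R_\mathrm{SB} = \Psi_q\bigl(\e^{\theta_1-\theta_3+\uu_1-\uu_3-\ww_1+\ww_2-\ww_3}\bigr)P_\mathrm{SB}$, which is then compared with \eqref{rfg}. Your extra bookkeeping (the identity $P_\mathrm{SB}=\beta(P_\mathrm{FG})$, and the observation that each of the three prescriptions in \eqref{elim} controls exactly one vanishing factor) is precisely the ``easy check'' the paper leaves implicit.
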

\begin{proof}
Since $\Lambda$ remains finite in the limit,
one has
$\lim R_\mathrm{SB} = \Psi_q\bigl(\e^{\theta_1-\theta_3+\uu_1-\uu_3-\ww_1+\ww_2-\ww_3}\bigr)
P_\mathrm{SB}$.
By comparing this with \eqref{rfg}, the claim is checked easily.
\end{proof}

\begin{Remark}
Parallel results which fit the formula \eqref{RL3} can also be formulated.
One replaces~\eqref{Ymor} with
\begin{align*}
\alpha\colon\ \begin{cases}
\mathscr{Y}_1 \mapsto Y_7,
\\
\mathscr{Y}_2 \mapsto q Y_8Y_9,
\\
\mathscr{Y}_3 \mapsto Y_2,
\\
\mathscr{Y}_4 \mapsto q Y_3Y_4,
\\
\mathscr{Y}_5 \mapsto qY_5Y_6,
\end{cases}
\qquad
\alpha'\colon\ \begin{cases}
\mathscr{Y}'_1 \mapsto Y'_7,
\\
\mathscr{Y}'_2 \mapsto q Y'_3Y'_9,
\\
\mathscr{Y}'_3 \mapsto Y'_2,
\\
\mathscr{Y}'_4 \mapsto q Y'_5Y'_4,
\\
\mathscr{Y}'_5 \mapsto qY'_8Y'_6,
\end{cases}
\end{align*}
and \eqref{bed} with
$\beta\colon
(\mathrm{p}_1,\mathrm{p}_2,\mathrm{p}_3,\mathrm{u}_1,\mathrm{u}_2,\mathrm{u}_3)
\mapsto (-\ww_3,-\ww_2,-\ww_1,\uu_3,\uu_2,\uu_1)$. Then the diagram \eqref{cd3}
in which $\tau^{\uu\ww}_{--++}$ is replaced with $\tau^{\uu\ww}_{-+-+}$
\eqref{uwL2} becomes commutative if and only if \eqref{pare1} and $(\theta_1,\theta_2, \theta_3)=(-d_3, -d_2-a_3, b_1+e_1)$ hold.
\end{Remark}

\subsection[A limiting procedure for modular R]{A limiting procedure for modular $\boldsymbol{R}$}\label{ss:fg}

Let us demonstrate an explicit limiting procedure that essentially corresponds to
Theorem \ref{th:fg} in the context of the modular $R$ in the coordinate representation.
In the strong coupling regime~${0<\eta<1}$, one has $|\bb|=1$ and
$\mathop{\operatorname{Re}} \bb >0$. Thus the product representation
\eqref{ncq} indicates
\begin{equation}\label{plim}
\lim_{z \rightarrow -\infty}\Phi_\bb(z) = 1.
\end{equation}

\begin{Proposition}\label{pr:lim}
Specialize the parameters in $\mathcal{C}_1,\dotsc, \mathcal{C}_8$ in
Theorem {\rm\ref{th:Rm}} as
\begin{gather*}
{\mathcal C}_1 = \frac{1}{2}(\zeta-\xi)\ell_{21}-\frac{1}{2}\zeta\ell_{31},
\qquad
{\mathcal C}_2 = -{\mathcal C}_8 = -T - \frac{1}{2}\zeta\ell_{31}, \qquad
{\mathcal C}_3=0,
\\
{\mathcal C}_4 = T + \frac{1}{2}(\xi+\zeta)\ell_{21},\qquad
{\mathcal C}_5 = \frac{1}{2}\zeta\ell_{31},
\qquad
{\mathcal C}_7 = {\mathcal C}_5+{\mathcal C}_6-{\mathcal C}_8,
\end{gather*}
where $\ell_{ij}=\ell_i-\ell_j$.
Then the elements of the $R$-matrix associated with the FG quiver in {\rm\cite[\emph{Proposition}~7.4]{IKT1}},
with the exchange of components $1 \leftrightarrow 3$,
are reproduced as a limit of \eqref{Rm0} as follows:
\begin{gather*}
\lim_{\substack{ T \rightarrow \infty\\{\mathcal C}_6,{\mathcal C}_7\rightarrow -\infty}}
\e^{-{\rm i}\pi\gamma}g\bigl(\tilde{a}, \tilde{b}, \tilde{c}, \tilde{d}\bigr)^{-1}{\mathcal R}^{x_1,x_2,x_3}_{x'_1,x'_2, x'_3}
\\
\qquad= \delta\bigl(x_1+x_2-x'_1-x'_2\bigr)\delta\bigl(x_2+x_3-x'_2-x'_3\bigr)
\\
 \phantom{\qquad= }{} \times
\Phi_\bb\left(x_2-x'_1+{\rm i}\eta-\frac{1}{2}(\xi+\zeta)\ell_{21}\right)
\e^{{\rm i}\pi(x_2-x'_1-\xi\ell_{21})(x_1-x_3+\zeta\ell_{31}-2{\rm i}\eta)},
\end{gather*}
where
$\gamma= -T^2+({\rm i}\eta -\zeta \ell_{31})T -\tfrac{1}{2}{\rm i}\eta \ell_{21}(3\xi-\zeta)
+\tfrac{1}{2}\ell_{31}\zeta(\ell_{21}(\xi-\zeta)+2{\rm i}\eta)+\tfrac{1+4\eta^2}{12}
$.
\end{Proposition}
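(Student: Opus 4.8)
The plan is to take the explicit integral formula \eqref{Rm1}--\eqref{Rm2} for ${\mathcal R}^{x_1,x_2,x_3}_{x'_1,x'_2,x'_3}$ from Theorem \ref{th:Rm} and compute its behaviour under the stated specialization of ${\mathcal C}_1,\dots,{\mathcal C}_8$ together with the limit $T\to\infty$, ${\mathcal C}_6,{\mathcal C}_7\to-\infty$. First I would substitute the given values into the four non-compact dilogarithms appearing in the integrand of \eqref{Rm1}. The two delta functions in \eqref{Rm0} are unaffected by the specialization and survive verbatim, so the whole task reduces to analysing the single integral $I^{x_1,x_2,x_3}_{x'_1,x'_2,x'_3}$ and the prefactor $\e^{{\rm i}\pi\phi}$. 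The key observation to exploit is the asymptotic property \eqref{plim}: as ${\mathcal C}_7\to-\infty$ the argument of $\Phi_\bb(z+\tfrac12(x_1+x_3-{\rm i}\eta)+{\mathcal C}_7)$ in the denominator is driven to $-\infty$, so that factor tends to $1$; similarly the numerator factor carrying ${\mathcal C}_6$ degenerates. The role of sending $T\to\infty$ (recall ${\mathcal C}_2=-{\mathcal C}_8=-T-\tfrac12\zeta\ell_{31}$ and ${\mathcal C}_4=T+\cdots$) is to collapse the remaining $z$-integration: one of the surviving dilogarithms becomes sharply localized, effectively turning the Fourier-type integral over $z$ into a residue or a single Gaussian-times-dilogarithm evaluation.

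Concretely, after the dust settles I expect exactly one $\Phi_\bb$ to survive in the limit, namely $\Phi_\bb\bigl(x_2-x'_1+{\rm i}\eta-\tfrac12(\xi+\zeta)\ell_{21}\bigr)$, accompanied by a Gaussian exponential. The second step is therefore to perform the $z$-integral \emph{before} taking the limit, or to interchange limit and integration with the contour-shift justification already used in the proof of Theorem \ref{th:Rm}. I would use the Fourier-transform identity for $\Phi_\bb$ quoted in that proof (the special case of \eqref{ram1}--\eqref{ram2}) to carry out the integration over $z$ explicitly, producing a product of two dilogarithms times a quadratic exponential; then the specialization/limit kills one of them via \eqref{plim} and linearizes part of the exponent. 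The third step is pure bookkeeping: collect all the $z$-independent exponential factors — those coming from $\phi$ in \eqref{Rm2}, from $g(\tilde a,\tilde b,\tilde c,\tilde d)$, from the Gaussian produced by the integration, and from the pentagon/inversion relation \eqref{pinv} — and check that, after multiplying by $\e^{-{\rm i}\pi\gamma}g^{-1}$, they reorganize into the claimed factor $\e^{{\rm i}\pi(x_2-x'_1-\xi\ell_{21})(x_1-x_3+\zeta\ell_{31}-2{\rm i}\eta)}$. This is precisely what the explicitly given value of $\gamma$ is engineered to absorb, so the task is to verify the algebraic identity among the quadratic and linear forms in $x_i,x'_i,\xi,\zeta,\ell_{ij},\eta$.

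The main obstacle I anticipate is controlling the $z$-integral uniformly as the parameters are driven to infinity, i.e.\ justifying that the limit may be taken inside the integral and that the divergent pieces (the $T^2$, $T$, and ${\mathcal C}_6,{\mathcal C}_7$ dependence) are \emph{exactly} the ones stripped off by the prefactors $\e^{-{\rm i}\pi\gamma}$ and $g^{-1}$. Because $\Phi_\bb$ has the two-sided asymptotics \eqref{pa} (tending to $1$ on one side but to a Gaussian $\e^{{\rm i}\pi z^2-{\rm i}\pi(1-2\eta^2)/6}$ on the other), one must be careful which factor contributes which asymptotic regime; the relation ${\mathcal C}_5+{\mathcal C}_6={\mathcal C}_7+{\mathcal C}_8$ (equivalently \eqref{cz}) ensures the growing Gaussian tails cancel so that the surviving integrand is integrable in the strip, exactly as in the analyticity/damping argument in the $RLLL$ discussion of Section \ref{s:6v}. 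I would handle this by shifting the contour into a strip where the one surviving $\Phi_\bb$ is analytic and bounded, then applying dominated convergence. Once that interchange is legitimate, the remaining computation is the routine matching of exponents described above, and comparison with \cite[Proposition~7.4]{IKT1} (after the component exchange $1\leftrightarrow3$) completes the proof.
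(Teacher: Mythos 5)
Your proposal is correct and follows essentially the same route as the paper: there too one first sends $\mathcal{C}_6,\mathcal{C}_7\to-\infty$ inside the integral via \eqref{plim} to strip two dilogarithms, evaluates the surviving integral in closed form, applies \eqref{pinv}, and then lets $T\to\infty$ reduce the three resulting $\Phi_\bb$ factors to the single $\Phi_\bb\bigl(x_2-x'_1+{\rm i}\eta-\tfrac{1}{2}(\xi+\zeta)\ell_{21}\bigr)$ using $-\mathcal{C}_4-\mathcal{C}_5+\mathcal{C}_8=-\tfrac{1}{2}(\xi+\zeta)\ell_{21}$, with the exponent bookkeeping against $\gamma$ left as straightforward. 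One small correction of tooling: the surviving integrand is a ratio $\Phi_\bb(z+u)/\Phi_\bb(z+v)$ times a Fourier exponential, which needs the full Ramanujan-type formula \eqref{rama2} rather than only the degenerate single-dilogarithm transforms \eqref{ram1}--\eqref{ram2} you cite (and, contrary to your first paragraph, $T\to\infty$ does not localize the $z$-integral; it acts only after the exact evaluation, pushing two dilogarithm arguments to $-\infty$).
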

\begin{proof}
Due to \eqref{plim},
in the limit ${\mathcal C}_6, {\mathcal C}_7 \rightarrow -\infty$,
the integral \smash{$ I^{x_1,x_2,x_3}_{x'_1,x'_2, x'_3} $} in \eqref{Rm1} simplifies to
\begin{gather*}
 I^{x_1,x_2,x_3}_{x'_1,x'_2, x'_3} \rightarrow
\int_{-\infty}^\infty {\rm d}z \e^{2\pi {\rm i}z(-x_2-{\rm i}\eta+{\mathcal C}_4)}
\frac{\Phi_\bb\bigl(z+\frac{1}{2}(x_1-x_3+{\rm i}\eta\bigr)+{\mathcal C}_5\bigr)}
{\Phi_\bb\bigl(z+\frac{1}{2}\bigl(-x'_1-x'_3-{\rm i}\eta\bigr)+{\mathcal C}_8\bigr)}
\\
\qquad= \e^{ {\rm i} \pi \nu}\frac{\Phi_\bb\bigl(x_2-x'_1+{\rm i}\eta-{\mathcal C}_4-{\mathcal C}_5+{\mathcal C}_8\bigr)
\Phi_\bb\bigl(x'_1+{\mathcal C}_5-{\mathcal C}_8\bigr)}{\Phi_\bb(x_2-{\mathcal C}_4)},
\end{gather*}
where we have evaluated the integral by \eqref{rama2}, set
$x_1=x'_1-x'_3+x_3$ and then applied \eqref{pinv} in the result.
We omit the messy explicit form of the power $\nu$.
Noting that $-{\mathcal C}_4-{\mathcal C}_5+{\mathcal C}_8 = -\frac{1}{2}(\xi+\zeta)\ell_{21}$, the rest is straightforward.
\end{proof}

In view of the symmetry \eqref{alphauw-R} and the comment after Theorem \ref{th:Rm},
one can also reproduce the original form of \cite[equation~(7.12)]{IKT1}
without the exchange of components $1\leftrightarrow 3$ by
specializing the parameters in \eqref{Rm1} as
\begin{gather}
{\mathcal C}_1 = \frac{1}{2}(\xi-\zeta)\ell_{23}+\frac{1}{2}\zeta\ell_{13},
\qquad
{\mathcal C}_2 = -{\mathcal C}_8 = -T - \frac{1}{2}\zeta\ell_{13}, \qquad
{\mathcal C}_3=0,\nonumber
\\
{\mathcal C}_4 = T + \frac{1}{2}(\xi+\zeta)\ell_{23},\qquad
{\mathcal C}_6 = \frac{1}{2}\zeta\ell_{13},
\qquad
{\mathcal C}_7 = {\mathcal C}_5+{\mathcal C}_6-{\mathcal C}_8,\label{CT2}
\end{gather}
and taking the limit $-T$, $\mathcal{C}_5$,
$\mathcal{C}_7 \rightarrow -\infty$.

These results may be regarded as modular $R$ versions of Theorem
\ref{th:fg} at the level of matrix elements. In fact, under the
specialization \eqref{pare1}, one has
\begin{gather*}
{\mathcal C}_1 =\frac{1}{2}\bigl(\tilde{b}_1-\tilde{b}_2 + 2\tilde{c}_1+\tilde{d}_2-\tilde{d}_3\bigr),
\qquad
{\mathcal C}_2 = -\frac{1}{2}\bigl(\tilde{b}_1+\tilde{d}_3\bigr),
\qquad
{\mathcal C}_3 =0,
\qquad
{\mathcal C}_4 = \frac{1}{2}\bigl(\tilde{b}_2+\tilde{d}_2\bigr),
\\
{\mathcal C}_5 = \frac{1}{2}\bigl(-\tilde{d}_1+\tilde{d}_3\bigr),
\qquad
 {\mathcal C}_6 = \frac{1}{2}\bigl(\tilde{b}_1-\tilde{b}_3\bigr),
\qquad
{\mathcal C}_7 = \frac{1}{2}\bigl(-\tilde{d}_1-\tilde{b}_3\bigr),
\qquad
{\mathcal C}_8 = \frac{1}{2}\bigl(\tilde{b}_1+\tilde{d}_3\bigr).
\end{gather*}
See \eqref{C8} and \eqref{cbc}. Then the limit \eqref{elim} with
$e_i$, $d_i$ replaced with $\tilde{e}_i$, $\tilde{d}_i$ can be
identified with~$-T, \mathcal{C}_5,
\mathcal{C}_7 \rightarrow -\infty$ for \eqref{CT2}.

\appendix

\section[Supplement to Section 3.4]{Supplement to Section \ref{ss:ms}}\label{ap:sup}

The two sides of the inhomogeneous twisted tetrahedron equation \eqref{ihte} yield the
following monomial transformation:
\begin{gather}
Y^{(22)}_1 \mapsto Y_1, \qquad
Y^{(22)}_2 \mapsto Y_2, \qquad
Y^{(22)}_3 \mapsto qY^{-1}_{7}Y^{-1}_{12}Y^{-1}_{13}Y^{-1}_{14},\qquad
Y^{(22)}_4 \mapsto Y_4,\nonumber
\\
Y^{(22)}_5 \mapsto q^{-1}Y_5Y_6, \qquad
Y^{(22)}_6 \mapsto q Y_{13}Y_{14}, \qquad
Y^{(22)}_7 \mapsto Y^{-1}_{13},\qquad
Y^{(22)}_{8} \mapsto q^{-1}Y_{12}Y_{13},\nonumber\\
Y^{(22)}_{9} \mapsto qY_{8}Y_{9},
\qquad
Y^{(22)}_{10} \mapsto q^{-1}Y_{10}Y_{11}, \qquad
Y^{(22)}_{11} \mapsto Y_3Y_6Y_7Y_{11}Y_{12},\nonumber\\
Y^{(22)}_{12} \mapsto q^{-3}Y^{-1}_{3}Y^{-1}_{6}Y^{-1}_{7}Y^{-1}_{8}Y^{-1}_{14}Y^{-1}_{15},
\qquad
Y^{(22)}_ {13} \mapsto q^{-1}Y_3Y_6Y_7Y_8,\nonumber\\
Y^{(22)}_{14} \mapsto q^{-1}Y^{-1}_{3}Y^{-1}_{6}Y^{-1}_{7}Y^{-1}_{8}Y^{-1}_{11}Y^{-1}_{12},\qquad
Y^{(22)}_ {15} \mapsto q^{-4}Y_3Y_7Y_8Y_{14}Y_{15},\nonumber\\
Y^{(22)}_{16} \mapsto qY_{15}Y_{16},\qquad
Y^{(22)}_{17} \mapsto q^{2}Y_7Y_{12}Y_{13}Y_{14}Y_{17}.\label{y22}
\end{gather}

Let us describe the monomial parts $\tau_{-+++}$ and $\tau_{---+}$ in \eqref{taue} mentioned
in Proposition \ref{pr:mono}
\begin{gather*}
\tau_{-+++}\colon\
\begin{cases}
Y'_1 \mapsto Y_1,\qquad
Y'_2 \mapsto Y_2Y_3Y_4, \qquad
Y'_3 \mapsto q^2Y_3Y_4Y_8, \\ \qquad
Y'_4 \mapsto q^2Y^{-1}_3Y^{-2}_4Y^{-1}_5Y^{-1}_8,
\\
Y'_5 \mapsto Y_4Y_5Y_8, \qquad
Y'_6 \mapsto Y_4 Y_5 Y_6,\qquad
Y'_7 \mapsto Y_7,\qquad
Y'_8 \mapsto Y^{-1}_4,
\\
Y'_9 \mapsto Y_9, \qquad
Y'_{10} \mapsto Y_{10},
\end{cases}
\\
 \tau_{---+}\colon\
\begin{cases}
Y'_1 \mapsto Y_1,\qquad
Y'_2 \mapsto Y_2, \qquad
Y'_3 \mapsto q^{-2}Y^{-1}_4Y^{-1}_5Y_8, \qquad
Y'_4 \mapsto Y^{-1}_8, \qquad
\\
Y'_5 \mapsto Y^{-1}_3Y^{-1}_4Y_8, \qquad
Y'_6 \mapsto Y_6,\qquad
Y'_7 \mapsto Y_3Y_4Y_7,\qquad
Y'_8 \mapsto Y_3Y_4Y_5,
\\
Y'_9 \mapsto q^{-2} Y_4Y_5Y_9, \qquad
Y'_{10} \mapsto Y_{10}.
\end{cases}
\end{gather*}
Note that the image is not necessarily sign coherent.

\section[Formulas for tau\^uw\_-+-+, P\_-+-+ and R\_123 for
 (ve\_1,ve\_2,ve\_3,ve\_4)=(-,+,-,+)]{Formulas for $\boldsymbol{\tau^{\uu\ww}_{-+-+}}$, $\boldsymbol{P_{-+-+}}$ and $\boldsymbol{R_{123}}$\\ for
 $\boldsymbol{(\ve_1,\ve_2,\ve_3,\ve_4)=(-,+,-,+)}$}\label{app:ruw}

Let $\tau_{-+-+}$ be the one in Example \ref{ex:2}.
Under the parametrization by $q$-Weyl algebra generators \eqref{Yw}--\eqref{Ypw},
it is translated into the transformation of the canonical variables as
\begin{align}
\tau^{\uu\ww}_{-+-+}\colon\
\begin{cases}
\uu_1 \mapsto \uu_2 +\kappa_0,
&
\ww_1 \mapsto \ww_2-\ww_3+\kappa_2,
\\
\uu_2 \mapsto \uu_1-\kappa_0,
&
\ww_2 \mapsto \ww_1+\ww_3+\kappa_1,
\\
\uu_3 \mapsto -\uu_1+\uu_2+\uu_3+\kappa_0,
&
\ww_3 \mapsto \ww_3+\kappa_3,
\end{cases}
\label{uwL2}
\end{align}
where $\kappa_r=\kappa_r(\mathscr{P}_1,\mathscr{P}_2,\mathscr{P}_3)$
for $r=0, 1, 2, 3$ is defined, under the condition \eqref{ae0},
by
\begin{gather*}
\kappa_0 = \frac{e_2-e_1}{2},\qquad
\kappa_1 = c_1-c_2+c_3,\qquad
\kappa_2 =d_1-d_2-a_3 - \kappa_0,\\
\kappa_3 = b_1+c_1-b_2-c_2-\kappa_0.
\end{gather*}
This is realized as an adjoint action as
\begin{gather*}
\tau^{\uu\ww}_{-+-+} = \mathrm{Ad}(P_{-+-+}),
\\
P_{-+-+}=
\e^{\tfrac{1}{\hbar}(\uu_1-\uu_2)\ww_3}
\e^{\tfrac{\kappa_0}{\hbar}(\ww_1-\ww_2-\ww_3)}
\e^{\tfrac{1}{\hbar}(\kappa_1\uu_1+\kappa_2\uu_2+\kappa_3\uu_3)}\rho_{12}
\in N_3 \rtimes \mathfrak{S}_3.
\end{gather*}
From \eqref{dode1} and \eqref{dode2},
the formulas analogous to \eqref{RL0} and \eqref{RL1} become as follows:
\begin{gather}
\Psi_q\bigl(\e^{-d_1-c_2-b_3+\uu_1+\uu_3+\ww_1-\ww_2+\ww_3}\bigr)^{-1}
\Psi_q\bigl(\e^{d_1+c_2+b_3+e_3-\uu_1+\uu_3-\ww_1+\ww_2-\ww_3}\bigr)
\nonumber \\
\qquad\times
\Psi_q\bigl(\e^{-d_1-e_1-c_2-b_3-\uu_1+\uu_3+\ww_1-\ww_2+\ww_3}\bigr)^{-1}\nonumber \\
\qquad\times
\Psi_q\bigl(\e^{d_1+c_2+e_2+b_3+e_3-\uu_1+2\uu_2+\uu_3-\ww_1+\ww_2-\ww_3}\bigr)P_{-+-+}\nonumber
\\
\phantom{\qquad\times}{}=
\Psi_q\bigl(\e^{-d_1-c_2-b_3+\uu_1+\uu_3+\ww_1-\ww_2+\ww_3}\bigr)^{-1}
\Psi_q\bigl(\e^{d_1+c_2+b_3+e_3-\uu_1+\uu_3-\ww_1+\ww_2-\ww_3}\bigr)P_{-+-+}
\nonumber\\
\phantom{\qquad\times=}{}\times
\Psi_q\bigl(\e^{b_1+a_2+d_3+e_3-\uu_1+\uu_3-\ww_1+\ww_2-\ww_3}\bigr)^{-1}
\Psi_q\bigl(\e^{-b_1-a_2-d_3+\uu_1+\uu_3+\ww_1-\ww_2+\ww_3}\bigr).
\label{RL3}
\end{gather}

\section{Integral formula involving non-compact quantum dilogarithm}\label{ap:nc}

The following is known as a modular double analogue of the Ramanujan ${}_1\Psi_1$-sum
\begin{align}
\int {\rm d}t \frac{\Phi_\bb(t+u)}{\Phi_\bb(t+v)} \e^{2\pi {\rm i} wt}
&= \frac{\Phi_\bb(u-v-{\rm i}\eta)\Phi_\bb(w+{\rm i}\eta)}{K\Phi_\bb(u-v+w-{\rm i}\eta)}
\e^{-2\pi {\rm i} w(v+{\rm i}\eta)}
\nonumber\\
&= \frac{K\Phi_\bb(v-u-w+{\rm i}\eta)}{\Phi_\bb(v-u+{\rm i}\eta)\Phi_\bb(-w-{\rm i}\eta)}
\e^{-2\pi {\rm i} w(u-{\rm i}\eta)},
\label{rama2}
\end{align}
where $K = \e^{-{\rm i}\pi(4\eta^2+1)/12}$. See \cite[Section~6.3]{FKV01} for
the condition concerning the validity of the integrals. From
$\Phi_\bb(u)\vert_{u \rightarrow -\infty} \rightarrow 1$, their limit
$u, v \rightarrow -\infty$ reduces to
\begin{gather}
\int{\rm d}t \frac{\e^{2\pi {\rm i} w t}}{\Phi_\bb(t+v)} = \frac{\Phi_\bb(w+{\rm i}\eta)}{K}\e^{-2\pi {\rm i}w(v+{\rm i}\eta)},
\label{ram1}
\\
\int {\rm d}t \Phi_\bb(t+u)\e^{2\pi {\rm i} w t}
= \frac{K}{\Phi_\bb(-w-{\rm i}\eta)} \e^{-2\pi {\rm i} w(u-{\rm i}\eta)}.
\label{ram2}
\end{gather}

\section[Explicit form of RLLL relation (7.4)]{Explicit form of $\boldsymbol{RLLL}$ relation (\ref{rlll})}\label{app:rlll}

We write down the explicit form of \eqref{rlll} together with
the corresponding choice of $(abcijk)$ in~\eqref{qybe} or in Figure \ref{fig:qybe}.
As mentioned after Figure \ref{fig:qybe}, there are 18 non-trivial cases.
To save the space, we write
$Y_\alpha = r_\alpha s_\alpha Z^{-1}+ t_\alpha w_\alpha XZ^{-1}X$,
\begin{gather}
(001001)\colon\
 \mathscr{R}(1\otimes X\otimes X) = (1\otimes X \otimes X)\mathscr{R},
\label{d1}\\
(001010)\colon\ \mathscr{R}\bigl(
r_2t_1X \otimes 1 \otimes Y_3
+t_3Z\otimes Y_2 \otimes X\bigr) = r_1t_2\bigl( 1 \otimes X \otimes Y_3\bigr)\mathscr{R},
\label{d2}\\
(001100)\colon\ \mathscr{R}\bigl(
t_3w_1X \otimes Y_2\otimes X
+r_2 Y_1 \otimes 1 \otimes Y_3\bigr)
= r_1r_3\bigl(1 \otimes Y_2 \otimes 1\bigr)\mathscr{R},
\label{d3}\\
(010001)\colon\
r_1t_2\mathscr{R}(1 \otimes X \otimes Z) =
\bigl(r_2t_1X \otimes 1 \otimes Z
+ t_3Y_1 \otimes Z\otimes X \bigr)\mathscr{R},
\label{d4}\\
(010010)\colon\ \mathscr{R}\bigl(
r_2t_1w_3 X \otimes 1 \otimes X + Z \otimes Y_2 \otimes Z\bigr)\nonumber\\
\phantom{(101101)\colon\ }{}\qquad
= \bigl(
r_2t_1w_3 X \otimes 1 \otimes X
+Y_1 \otimes Z \otimes Y_3
\bigr)\mathscr{R},
\label{d5}\\
(010100)\colon\ \mathscr{R}\bigl(
w_1 X \otimes Y_2 \otimes Z
+ r_2w_3Y_1\otimes 1 \otimes X\bigr)
= r_3w_2 \bigl(Y_1 \otimes X \otimes 1\bigr) \mathscr{R},
\label{d6}\\
(011011)\colon\
 \mathscr{R}(X \otimes X \otimes 1) = (X \otimes X \otimes 1)\mathscr{R},
\label{d7}\\
(011101)\colon\ s_3t_2 \mathscr{R}\bigl(
Y_1\otimes X \otimes 1\bigr)
= \bigl(
t_1X \otimes Y_2 \otimes Z
+ s_2t_3 Y_1\otimes 1 \otimes X
\bigr)\mathscr{R},
\label{d8}\\
(011110)\colon\
 s_1s_3 \mathscr{R}\bigl(
1 \otimes Y_2 \otimes 1\bigr)
= \bigl(t_1w_3 X \otimes Y_2 \otimes X
+s_2Y_1\otimes 1 \otimes Y_3
\bigr)\mathscr{R},
\label{d9}\\
(100001)\colon\
 r_1r_3 \mathscr{R}(1\otimes Z \otimes 1) = (t_3w_1 X \otimes Z \otimes X +r_2 Z \otimes 1 \otimes Z)\mathscr{R},
\label{d10}\\
(100010)\colon\
 r_3w_2\mathscr{R}(Z \otimes X \otimes 1)
=\bigl( w_1 X \otimes Z \otimes Y_3
+ r_2w_3 Z \otimes 1 \otimes X\bigr)\mathscr{R},
\label{d11}\\
(100100)\colon\
 \mathscr{R}(X \otimes X \otimes 1) = (X \otimes X \otimes 1)\mathscr{R},
\label{d12}\\
(101011)\colon\
 \mathscr{R} (t_1X \otimes Z \otimes Y_3 + s_2 t_3 Z \otimes 1 \otimes X)
= s_3 t_2 (Z \otimes X \otimes 1)\mathscr{R},
\label{d13}\\
(101101)\colon\ \mathscr{R}\bigl(
s_2t_3w_1 X \otimes 1 \otimes X
+ Y_1 \otimes Z \otimes Y_3\bigr)\nonumber\\
\phantom{(101101)\colon\ }{}\qquad
=\bigl(s_2 t_3w_1X \otimes 1 \otimes X
+ Z \otimes Y_2 \otimes Z\bigr)\mathscr{R},
\label{d14}\\
(101110)\colon\ s_1w_2 \mathscr{R}\bigl(1 \otimes X \otimes Y_3)
=
\bigl(s_2w_1X \otimes 1 \otimes Y_3
+ w_3 Z \otimes Y_2 \otimes X\bigr)\mathscr{R},
\label{d15}\\
(110011)\colon\
 \mathscr{R}(t_1w_3 X \otimes Z \otimes X + s_2 Z \otimes 1 \otimes Z) = s_1s_3 (1 \otimes Z \otimes 1)\mathscr{R},
\label{d16}\\
(110101)\colon\
 \mathscr{R}\bigl (w_3 Y_1 \otimes Z \otimes X + s_2 w_1 X \otimes 1 \otimes Z\bigr)
= s_1 w_2 (1 \otimes X \otimes Z)\mathscr{R},
\label{d17}\\
(110110)\colon\
 \mathscr{R}(1\otimes X\otimes X) = (1\otimes X \otimes X)\mathscr{R}.
\label{d18}
\end{gather}

\subsection*{Acknowledgments}

The authors would like to thank Vladimir Bazhanov, Vladimir Mangazeev,
Sergey Sergeev and Akihito Yoneyama for stimulating discussions. The
authors also thank the anonymous referees for their careful reading
and valuable comments. RI is supported by JSPS KAKENHI Grant Number
19K03440 and 23K03048. AK is supported by JSPS KAKENHI Grant Number 24K06882.
YT is supported by JSPS KAKENHI Grant Number
JP21K03240 and 22H01117. JY and XS are supported by NSFC Grant Number
12375064.

\pdfbookmark[1]{References}{ref}
\LastPageEnding

\end{document}